\newcounter{theorem}
\newtheorem{theorem}[theorem]{Theorem}
\newtheorem{lemma}[theorem]{Lemma}
\newtheorem{proposition}[theorem]{Proposition}
\newtheorem{corollary}[theorem]{Corollary}
\newtheorem*{conjecture*}{Conjecture}
\newtheorem{introtheorem}[theorem]{Theorem}
\newtheorem{introcorollary}[theorem]{Corollary}
\theoremstyle{definition}
\newtheorem{definition}[theorem]{Definition}
\theoremstyle{remark}
\newtheorem*{remark*}{Remark}
\newtheorem{remark}[theorem]{Remark}
\numberwithin{equation}{section}
\newcommand{\Z}{\mathcal Z}
\newcommand{\K}{\mathbb{K}}
\newcommand{\N}{\mathbb{N}}
\newcommand{\dimnuc}{\dim_{\mathrm{nuc}}}
\newcommand{\dr}{\mathrm{dr}}
\title[Nuclear dimension of stably projectionless $\mathrm{C}^*$-algebras]{Nuclear dimension of simple stably projectionless $\mathrm{C}^*$-algebras}
\author[J.\ Castillejos]{Jorge Castillejos}
\address{\hskip-\parindent Jorge Castillejos, Department of Mathematics, KU Leuven, Celestijnenlaan 200b, 3001 Leuven, Belgium}
\email{jorge.castillejoslopez@kuleuven.be}
\author[S.\ Evington]{Samuel Evington}
\address{\hskip-\parindent Samuel Evington, Institute of Mathematics, Polish Academy of Sciences, ul. {\'S}niadeckich 8, 00-656 Warszawa, Poland}
\curraddr{School of Mathematics and Statistics, University of Glasgow, Glasgow, G12 8QW, Scotland}
\email{Samuel.Evington@glasgow.ac.uk}
\thanks{SE is supported by NCN (2014/14/E/ST1/00525) and EPSRC (EP/R025061/1); JC is partially supported by European Research Council Consolidator Grant 614195 RIGIDITY, and by a long term structural funding -- 
a Methusalem grant of the Flemish Government.}
\begin{document}
\maketitle

\begin{abstract}
We prove that $\Z$-stable, simple, separable, nuclear, non-unital $\mathrm{C}^*$-algebras have nuclear dimension at most $1$. This completes the equivalence between finite nuclear dimension and $\mathcal{Z}$-stability for simple, separable, nuclear, non-elementary $\mathrm{C}^*$-algebras.
\end{abstract}

\renewcommand*{\thetheorem}{\Alph{theorem}}

\section*{Introduction}

The Elliott Classification Programme, a 40-year endeavour involving generations of researchers, asks the following question: when are K-theory and traces a complete invariant for simple, separable, nuclear C$^*$-algebras? 

Fundamentally, there are two cases to consider: the \emph{unital} case and the \emph{stably projectionless} case. (This dichotomy follows from Brown's Theorem (\cite{Br77}) and is discussed further below.) Recall that a C$^*$-algebra $A$ is said to be stably projectionless if there are no non-zero projections in the matrix amplification $M_n(A)$ for any $n \in \mathbb{N}$. Stably projectionless, simple, separable, nuclear C$^*$-algebras arise naturally as crossed products (\cite{KK96}), and can also be constructed using inductive limits with a wide variety of K-theoretic and tracial invariants occurring (\cite{Bl80, Ra02,Ts05,Ja13,Go16,Go17,EGLN17,EGLN:gtr1}). 

In the unital case, a definitive answer for when K-theory and traces form a complete invariant is now known (\cite{Ki95,Phi00,GLN15,EGLN15,TWW17,Wi14}). Firstly, Rosenberg and Schochet's universal coefficient theorem (\cite{RS:UCT}) must hold for the C$^*$-algebras concerned. Secondly, the C$^*$-algebras must have \emph{finite nuclear dimension} (\cite{WZ10}). This second condition has a geometric flavour and generalises the notation of finite covering dimension for topological spaces. Recent results (\cite{Go16,E17,EGLN17,Go17}) are now converging on a similar classification result in the stably projectionless case; the state of the art will be discussed below.


A major programme of research now focuses on providing methods for verifying finite nuclear dimension in practice. In the unital setting, a recent result of the authors together with Tikuisis, White and Winter (\cite{CETWW}) shows that finite nuclear dimension can be accessed through the tensorial absorption condition known as \emph{$\Z$-stability}, where $\Z$ is the \emph{Jiang--Su algebra} (discussed in more detail below). 

In concrete examples, it can be very hard to prove directly that a C$^*$-algebra has finite nuclear dimension. The strategy of verifying $\Z$-stability instead has recently been used to prove that certain unital, simple, separable, nuclear C$^*$-algebras coming from dynamical systems are classifiable (\cite{CJKMST17,KS18}). However, since this strategy relies on \cite{CETWW}, it has until now only been available in the unital setting. 

In this paper, we consider and overcome the conceptual and technical challenges unique to the non-unital setting, allowing us prove the following: 

\begin{introtheorem}\label{thm:NewMain2}
	Let $A$ be a simple, separable, nuclear, $\Z$-stable $\mathrm{C}^*$-algebra. Then $A$ has nuclear dimension at most 1. 
\end{introtheorem}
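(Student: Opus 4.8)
The plan is to adapt the strategy of \cite{CETWW} from the unital to the non-unital setting, the decisive new difficulties being the absence of a unit and, in the stably projectionless case, the presence of densely-defined unbounded traces forming a cone rather than a compact simplex. First I would dispose of the case in which $A$ contains a non-zero projection $p$. By simplicity $p$ is full, so Brown's Theorem \cite{Br77} gives a stable isomorphism $A \otimes \K \cong pAp \otimes \K$; since $pAp$ is unital, simple, separable, nuclear and $\Z$-stable, \cite{CETWW} yields $\dimnuc(pAp) \le 1$, and as nuclear dimension is preserved under stabilisation and passage to full corners we get $\dimnuc(A) \le 1$. It therefore remains to treat the stably projectionless case, where the genuinely new work lies.

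In that case the backbone is the two-coloured approximation characterising $\dimnuc(A) \le 1$: given a finite set $\mathcal{F} \subset A$ and $\epsilon > 0$, I must produce finite-dimensional algebras $F^{(0)}, F^{(1)}$, a c.p.c.\ map $\psi \colon A \to F^{(0)} \oplus F^{(1)}$, and c.p.c.\ order-zero maps $\phi^{(j)} \colon F^{(j)} \to A$ with $\|\phi^{(0)}\psi^{(0)}(a) + \phi^{(1)}\psi^{(1)}(a) - a\| < \epsilon$ for all $a \in \mathcal{F}$. The construction runs through the central sequence algebra $F(A) = (A_\infty \cap A')/\mathrm{ann}(A)$, where $A_\infty = \ell^\infty(\N,A)/c_0(\N,A)$, with all tracial estimates measured in a uniform $2$-seminorm built from the lower semicontinuous traces of $A$, suitably normalised against a strictly positive element.

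The first main step is to extract from $\Z$-stability a non-unital \emph{uniform property~$\Gamma$}: for each $n$ there are approximately central, pairwise orthogonal positive contractions in $A_\infty \cap A'$ dividing every limiting trace into $n$ equal pieces. This follows as in the unital case from $A \cong A \otimes \Z$ together with the abundance of c.p.c.\ order-zero maps from matrix algebras into $\Z$, once the tracial normalisation is phrased locally, against an approximate unit, rather than against $1_A$. The second and technically hardest step is to upgrade this to a non-unital \emph{complemented partition of unity} (CPoU): for positive contractions $a_1,\dots,a_k$ and $\delta \ge \sup_\tau \min_i \tau(a_i)$ one seeks orthogonal positive elements $p_1, \dots, p_k \in A_\infty \cap A'$, which may be taken to be projections in $F(A)$, that form an approximate unit for $A$ and satisfy $\tau(p_i a_i) \le \delta\,\tau(p_i)$ for every limiting trace $\tau$. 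Re-proving CPoU is where the trace cone and the lack of a unit must be confronted head-on: the unital requirement $\sum_i p_i = 1$ has to be replaced by the condition that $\sum_i p_i$ act as a unit on $A$ modulo the trace-kernel ideal, and the compactness of the trace simplex exploited in \cite{CETWW} must be recovered locally on the bounded part of the cone.

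With CPoU in hand I would conclude as in the unital case. Nuclearity makes the relevant tracial von Neumann completions hyperfinite (Connes), so trace by trace there are finite-dimensional approximations; CPoU glues these trace-local data into a single approximately central order-zero map, and the two colours, hence the maps $\phi^{(0)},\phi^{(1)}$, are produced exactly as in \cite{CETWW} from the order-zero lifting of the finite-dimensional approximation. The estimate is first achieved in the uniform $2$-seminorm and then promoted to the operator norm via a non-unital version of the $2$-norm-to-operator-norm transfer supplied by $\Z$-stability. I expect the CPoU step, together with the bookkeeping needed to run the whole argument against an approximate unit in the presence of unbounded traces, to be the principal obstacle; the Morita reduction and the two-coloured assembly are comparatively routine once that machinery is in place.
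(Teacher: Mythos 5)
Your reduction to the stably projectionless case is essentially the right first move, though note that the correct dichotomy is between $A\otimes\K$ containing a non-zero projection and $A$ being stably projectionless: a projectionless $A$ need not be stably projectionless, so you should locate $p$ in $A\otimes\K$ and pass to the full hereditary subalgebra $p(A\otimes\K)p$. The genuine gap lies in your second and third paragraphs: you propose to re-prove uniform property $\Gamma$ and CPoU directly for a stably projectionless algebra whose traces form a non-compact cone containing unbounded elements, normalising against an approximate unit and ``recovering compactness locally on the bounded part of the cone.'' No argument is given for this, and it is exactly the step that does not obviously work: the CPoU machinery of \cite{CETWW} genuinely uses that $T(A)$ is a compact set of \emph{bounded} traces, so that $A^\omega$ is unital and the projections $p_1,\dots,p_k$ can sum to $1_{A^\omega}$. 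The paper sidesteps this entirely by a \emph{second} application of Brown's Theorem: by the Cuntz semigroup computation of \cite{ERS11} (Proposition \ref{prop:ExistenceRankFunction}) there is a positive contraction $a\in(A\otimes\K)_{+,1}$ whose rank function $\tau\mapsto d_\tau(a)$ is continuous and finite-valued, and the hereditary subalgebra $A_0=\overline{a(A\otimes\K)a}$ then satisfies $Q\widetilde{T}(A_0)=\widetilde{T}_b(A_0)\neq 0$ with $T(A_0)$ compact (Lemma \ref{lem:HerditartyContRankFunction}), while remaining stably isomorphic to $A$. After this reduction CPoU is available verbatim from \cite{CETWW}, and no cone version of it is ever needed. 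Without this reduction, or a worked-out CPoU for unbounded trace cones which you do not supply, your argument does not go through.

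A second, smaller omission: even once CPoU is secured, the endgame is not just the trace-by-trace gluing of \cite{CETWW}. One must compare the order zero map $\Phi$ assembled from the finite-dimensional approximations with the canonical inclusion $\iota:A_0\to(A_0)_\omega$ via the uniqueness theorem of \cite{BBSTWW} (the $2\times 2$ matrix trick, property (SI), classification of totally full positive elements), and that machinery is written for unital domains. The paper's Section 4 supplies the missing ingredient: a unitisation lemma extending a c.p.c.\ order zero map $A_0\to(A_0)_\omega$ to $A_0^\sim$ with prescribed tracial behaviour of the image of $1_{A_0^\sim}$. Your appeal to a ``non-unital $2$-norm-to-operator-norm transfer'' is doing a lot of unacknowledged work at precisely this point.
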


For the following reasons, the non-unital case is harder than the unital case and needs new methods. Obviously, we cannot just unitise our C$^*$-algebras because this breaks both simplicity and $\Z$-stability. A more fundamental issue is that non-unital, simple C$^*$-algebras need not actually be \emph{algebraically simple}. There can be non-trivial (non-closed) ideals. Examples of such ideals are the domains of \emph{unbounded traces}, which may now exist and must therefore be taken into account. Furthermore, \cite{CETWW} builds on the foundations of \cite{BBSTWW}, which has a global assumption of unitality and makes explicit use of the unit at a number of critical points in the argument (an example is the $2 \times 2$ matrix trick of \cite[Section 2]{BBSTWW}, which is inspired by ideas of Connes). 

To understand how we circumvent the issues associated to unbounded traces, it will be helpful to first discuss the folklore result, alluded to above, that Brown's Theorem (\cite{Br77}) implies a dichotomy for simple C$^*$-algebras between the unital and the stably projectionless case.

Writing $\K$ for the C$^*$-algebra of compact operators (on a separable, infinite-dimensional Hilbert space), recall that C$^*$-algebras $A,B$ are \emph{stably isomorphic} if $A \otimes \K \cong B \otimes \K$. Suppose now that $A$ is a simple, separable C$^*$-algebra that is not stably projectionless. Then there exists a non-zero projection $p \in A \otimes \K$, and so the hereditary subalgebra $p(A \otimes \K)p$ is unital. By \cite[Theorem 2.8]{Br77}, $p(A \otimes \K)p$ is stably isomorphic to $A \otimes \K$, and hence stably isomorphic to $A$ (see Section \ref{sec:Reductions} for more details).   

Crucial to proving Theorem \ref{thm:NewMain2} in general is the observation that the hypotheses and the conclusion depend only on the stable isomorphism class of $A$.\footnote{i.e. (a) $A$ is a simple, separable, nuclear and $\Z$-stable $\mathrm{C}^*$-algebra if and only if $A \otimes \mathbb{K}$ is likewise, and (b) $\dim_{\mathrm{nuc}} A \leq 1$ if and only if $\dim_{\mathrm{nuc}} (A \otimes \mathbb{K}) \leq 1$; see Proposition \ref{thm:StableIsoInvariants} for details and references.} Hence, by \cite[Theorem B]{CETWW} and the folklore result above based on Brown's Theorem, it suffices to prove Theorem \ref{thm:NewMain2} in the stably projectionless case.  

However, this folklore reduction is not enough for us. We go a step further and pass to a hereditary subalgebra $A_0 \subseteq A \otimes \K$ on which all tracial functionals are bounded and the set of tracial states $T(A_0)$ is weak$^*$ compact. The existence of such hereditary subalgebra follows from the Cuntz semigroup computation of \cite{ERS11} for $\Z$-stable $\mathrm{C}^*$-algebras, and Brown's Theorem assures us that $A_0$ is stably isomorphic to $A$. This second reduction puts us in a position where a similar proof strategy to that of \cite{BBSTWW} can be implemented, and where the key new ingredient from \cite{CETWW}, \emph{complemented partitions of unity} (CPoU), is also available.

Of course, we still have to deal with the global assumption of unitality in \cite{BBSTWW}. A key tool in this endeavour is our unitisation lemma for order zero maps into ultrapowers (Lemma \ref{lem:Unitisation}), which allows us to assume the domains of certain maps are unital in places where simplicity and $\Z$-stability are only really needed on the codomain side in \cite{BBSTWW}.  
\\
\\ 
We now turn to the broader context of Theorem \ref{thm:NewMain2} and its applications. As alluded to above, \emph{nuclear dimension} for C$^*$-algebras is a non-commutative dimension theory that reduces to the covering dimension of the spectrum in the commutative case. \emph{Finite nuclear dimension} has proven to be a technically useful strengthening of nuclearity, that is both necessary for classification (\cite{Vi99,Ro03,To08,GK14}) and a vital ingredient of the most recent classification theorems (\cite{Ki95,Phi00,GLN15,EGLN15,TWW17,Wi14}).  

The \emph{Jiang--Su algebra} $\Z$ (\cite{JS99}) is a simple $\mathrm{C}^*$-algebra, which plays a fundamental role in the classification of simple C$^*$-algebras since $A$ and $A \otimes \Z$ have the same K-theory and traces under mild hypotheses. A C$^*$-algebra is said to be \emph{$\Z$-stable} if $A \cong A \otimes \Z$. Moreover, any C$^*$-algebra can be $\Z$-stabilised by tensoring with the Jiang--Su algebra because $\Z \cong \Z \otimes \Z$. In many ways, the Jiang--Su algebra is the $\mathrm{C}^*$-algebraic analogue of the hyperfinite $\mathrm{II}_1$ factor $\mathcal{R}$ (\cite{MvN43}), with $\Z$-stability analogous to the McDuff property (\cite{McD70}).  

Combining Theorem \ref{thm:NewMain2} with the main results of \cite{Wi12} and \cite{Ti14}, we arrive at the following relationship between finite nuclear dimension and $\Z$-stability, which was conjectured by Toms--Winter (\cite{To09}).

\begin{theorem}\label{thm:NewMain}
	Let $A$ be a non-elementary, simple, separable, nuclear $\mathrm{C}^*$-algebra. The following are equivalent:
\begin{enumerate}[(i)]
	\item $A$ has finite nuclear dimension;
	\item $A$ is $\Z$-stable.
\end{enumerate}
\end{theorem}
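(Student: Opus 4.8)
The plan is to prove the two implications separately, observing that essentially all of the new work has been packaged into Theorem~\ref{thm:NewMain2}; what remains is to combine it with the converse implication already available in the literature.

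For (ii)~$\Rightarrow$~(i) I would appeal directly to Theorem~\ref{thm:NewMain2}: any simple, separable, nuclear, $\Z$-stable $\mathrm{C}^*$-algebra satisfies $\dimnuc A \le 1$, which is in particular finite. The non-elementarity hypothesis plays no role in this direction.

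For (i)~$\Rightarrow$~(ii) I would first record that finiteness of the nuclear dimension is a stable-isomorphism invariant, as are simplicity, separability, nuclearity, non-elementarity and $\Z$-stability (Proposition~\ref{thm:StableIsoInvariants}), and then split into the two cases furnished by Brown's dichotomy. If $A$ is not stably projectionless, then $A \otimes \K$ contains a non-zero projection $p$, and by \cite[Theorem~2.8]{Br77} the corner $B := p(A\otimes\K)p$ is a \emph{unital}, simple, separable, nuclear, non-elementary $\mathrm{C}^*$-algebra of finite nuclear dimension that is stably isomorphic to $A$; the main result of \cite{Wi12} then gives that $B$ is $\Z$-stable, and hence so is $A$ by the stable-isomorphism invariance of $\Z$-stability. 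If instead $A$ is stably projectionless, then it is automatically non-elementary and the main result of \cite{Ti14} applies directly to yield $\Z$-stability.

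The point I would emphasise is that the genuine difficulty lies entirely in Theorem~\ref{thm:NewMain2}; the remaining assembly is routine once the stable-isomorphism invariance and Brown's dichotomy are in hand, so there is no further obstacle to surmount here. I would also stress that the non-elementarity hypothesis is indispensable for (i)~$\Rightarrow$~(ii): the elementary algebras $M_n(\mathbb{C})$ and $\K$ have nuclear dimension $0$ yet fail to be $\Z$-stable, so they are excluded precisely so that the corner $B$ produced above is infinite-dimensional and the result of \cite{Wi12} can be invoked.
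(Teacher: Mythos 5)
Your proposal is correct, and the forward implication is handled exactly as in the paper: (ii)~$\Rightarrow$~(i) is an immediate consequence of Theorem~\ref{thm:NewMain2}, with non-elementarity playing no role. For the converse, however, the paper's proof (Theorem~\ref{cor:TW2}) is a one-line citation: if $\dimnuc(A)<\infty$ then $A$ is $\Z$-stable by \cite[Theorem 8.5]{Ti14}, which is already stated for arbitrary (not necessarily unital, not necessarily stably projectionless) simple, separable, non-elementary $\mathrm{C}^*$-algebras of finite nuclear dimension. Your route instead re-derives this from older ingredients: you run Brown's dichotomy, pass to a unital corner $p(A\otimes\K)p$ in the stably unital case so that Winter's unital theorem \cite{Wi12} applies, and reserve \cite{Ti14} for the stably projectionless case. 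This is a valid and correctly executed argument --- the stable-isomorphism invariances you invoke are all covered by Proposition~\ref{thm:StableIsoInvariants} (non-elementarity is also preserved, since a simple algebra stably isomorphic to an elementary one is itself elementary), and it makes transparent how the converse reduces to the unital case --- but it duplicates work that Tikuisis's Theorem~8.5 has already packaged; indeed, the reduction you describe is essentially how that theorem is obtained in \cite{Ti14}. Your closing remark on the indispensability of non-elementarity (the algebras $M_n(\mathbb{C})$ and $\K$ have nuclear dimension $0$ but are not $\Z$-stable) is accurate and a useful sanity check, though it is not part of the paper's proof.
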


One striking consequence of Theorems \ref{thm:NewMain2} and \ref{thm:NewMain} is that nuclear dimension can only attain three different values in the simple setting.

\begin{introcorollary}\label{cor:NewTrichotomy}
	The nuclear dimension 
	of a simple $\mathrm{C}^*$-algebra is $0, \, 1$ or $\infty$.
\end{introcorollary}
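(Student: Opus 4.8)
The plan is to prove the sharpened statement that any simple $\mathrm{C}^*$-algebra $A$ with $\dimnuc(A) < \infty$ in fact satisfies $\dimnuc(A) \leq 1$; combined with the trivial fact that $\dimnuc$ takes values in $\{0, 1, 2, \dots\} \cup \{\infty\}$, this yields Corollary \ref{cor:NewTrichotomy}. Since finite nuclear dimension forces nuclearity (the defining $n$-decomposable approximations are in particular a completely positive approximation property), I may assume throughout that $A$ is nuclear; a non-nuclear simple algebra simply contributes the value $\infty$.

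First I would dispose of the separable case, which is where Theorems \ref{thm:NewMain} and \ref{thm:NewMain2} do the work. Suppose $A$ is separable, simple, nuclear and $\dimnuc(A) < \infty$. If $A$ is elementary, then $A \cong \K(H)$ for some separable Hilbert space $H$, and since $\K(H)$ is approximately finite-dimensional we have $\dimnuc(A) = 0$. If instead $A$ is non-elementary, then the implication (i) $\Rightarrow$ (ii) of Theorem \ref{thm:NewMain} shows that $A$ is $\Z$-stable, whereupon Theorem \ref{thm:NewMain2} gives $\dimnuc(A) \leq 1$. In either case $\dimnuc(A) \leq 1$. The elementary algebras must be handled by hand precisely because they are not $\Z$-stable, so the route through $\Z$-stability is unavailable for them; this is exactly why Theorem \ref{thm:NewMain} carries the non-elementariness hypothesis.

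It remains to remove the separability assumption, and this is the step I expect to be the main technical obstacle. The idea is a Löwenheim--Skolem / back-and-forth reduction: given a simple, nuclear $A$ with $\dimnuc(A) = n < \infty$, I would show that every separable subalgebra $C \subseteq A$ is contained in a separable subalgebra $B$ with $C \subseteq B \subseteq A$ that is again simple and nuclear with $\dimnuc(B) \leq n$. Simplicity is arranged by repeatedly adjoining, for each pair of elements already present, approximate witnesses to the fact that one lies in the algebraic ideal generated by the other. The bound $\dimnuc(B) \leq n$ is arranged by adjoining, for each finite subset and tolerance, the finitely many elements in the range of an $n$-decomposable completely positive approximation for $A$, so that the global approximation maps restrict to witnesses for $B$; iterating and closing up the union produces the desired $B$.

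Applying the separable case to each such $B$ gives $\dimnuc(B) \leq 1$, whether or not $B$ turns out to be elementary. The subalgebras so produced form an upward-directed family whose union is dense in $A$, so $A$ is an inductive limit of $\mathrm{C}^*$-algebras of nuclear dimension at most $1$; since nuclear dimension does not increase under such inductive limits (\cite{WZ10}), I conclude $\dimnuc(A) \leq 1$. The only delicate point is verifying that the back-and-forth construction genuinely preserves the bound $\dimnuc(B) \leq n$ while keeping $B$ separable and simple — routine, but requiring care — after which everything reduces to a direct appeal to Theorems \ref{thm:NewMain} and \ref{thm:NewMain2}.
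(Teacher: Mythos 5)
Your proposal is correct and follows essentially the same route as the paper: in the separable case the paper likewise treats elementary algebras by hand and otherwise passes through $\Z$-stability via Theorems \ref{thm:NewMain} and \ref{thm:NewMain2}, and for the non-separable case it invokes exactly the L\"owenheim--Skolem-type separabilisation you sketch, simply citing the corresponding argument from \cite{CETWW} rather than spelling it out.
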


This is in stark contrast to the commutative case, where all non-negative integers can occur. Moreover, we remark that the $\mathrm{C}^*$-algebras of nuclear dimension zero are known to be precisely the approximately finite dimensional $\mathrm{C}^*$-algebras (\cite[Remark 2.2.(iii)]{WZ10}).\footnote{In the non-separable case, there are different and non-equivalent definitions of approximately finite dimensional (\cite{FK10}). The one required here is that any finite set is approximately contained in a finite dimensional subalgebra; see for example
\cite[Definition 2.2]{Cas17}.}   

Whilst Corollary \ref{cor:NewTrichotomy} is interesting in its own right, we believe the main applications of our results will be in classification of simple, stably projectionless C$^*$-algebras. Theorem \ref{thm:NewMain2} opens up a new pathway to proving that concrete examples of stably projectionless, simple, separable, nuclear C$^*$-algebras, such as C$^*$-algebras coming from flows on C$^*$-algebras or from actions of more general locally compact groups, have finite nuclear dimension: it now suffices to verify $\Z$-stability. 

We end this introduction with a discussion of the state of the art for the classification of simple, stably projectionless C$^*$-algebras. As mentioned above, there has been impressive progress in recent years (\cite{Go16,Go17, E17}). As in the unital case, the classification is via a functor constructed from the K-theory and the tracial data of the $\mathrm{C}^*$-algebra; this functor is called the \emph{Elliott invariant} and is typically denoted $\mathrm{Ell}(\cdot)$ (see \cite[Definition 2.9]{Go17} for a precise definition).

By combining Theorem \ref{thm:NewMain2} with \cite[Theorem 1.2]{Go17}, one obtains a classification of simple, separable, nuclear $\mathrm{C}^*$-algebras in the UCT class that tensorially absorb the C$^*$-algebra $\mathcal{Z}_0$ -- a stably projectionless analogue of the Jiang--Su algebra introduced in \cite[Definition 8.1]{Go17}.

\begin{corollary}\label{cor:Newclassification}
	Let $A$ and $B$ be simple, separable, nuclear $\mathrm{C}^*$-algebras which satisfy the UCT. Then 
	\begin{equation}
	A \otimes \mathcal{Z}_0 \cong B \otimes \mathcal{Z}_0 \text{ if and only if } \mathrm{Ell}(A \otimes \mathcal{Z}_0) \cong \mathrm{Ell}(B \otimes \mathcal{Z}_0). \notag
	\end{equation}
\end{corollary}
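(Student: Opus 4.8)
The plan is to check that $A \otimes \mathcal{Z}_0$ and $B \otimes \mathcal{Z}_0$ fall within the scope of the classification theorem \cite[Theorem 1.2]{Go17}, using Theorem \ref{thm:NewMain2} to supply the finite nuclear dimension hypothesis. The forward implication requires no work: $\mathrm{Ell}(\cdot)$ is a functor, so an isomorphism $A \otimes \mathcal{Z}_0 \cong B \otimes \mathcal{Z}_0$ immediately yields an isomorphism of the associated Elliott invariants. The content lies entirely in the reverse implication.

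For the reverse implication, I would first record that $A \otimes \mathcal{Z}_0$ is simple, separable and nuclear, since each of these properties is inherited by the minimal tensor product of two simple, separable, nuclear C$^*$-algebras (simplicity using nuclearity of the factors). The decisive observation is that $A \otimes \mathcal{Z}_0$ is $\Z$-stable: the algebra $\mathcal{Z}_0$ tensorially absorbs the Jiang--Su algebra, $\mathcal{Z}_0 \cong \mathcal{Z}_0 \otimes \Z$ (a property of $\mathcal{Z}_0$ established in \cite{Go17}), so that $A \otimes \mathcal{Z}_0 \cong A \otimes \mathcal{Z}_0 \otimes \Z$. Theorem \ref{thm:NewMain2} then applies and gives $\dimnuc(A \otimes \mathcal{Z}_0) \leq 1$; in particular, $A \otimes \mathcal{Z}_0$ has finite nuclear dimension. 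The same reasoning applies verbatim to $B \otimes \mathcal{Z}_0$. Finally, since $A$ and $B$ satisfy the UCT and the UCT class is closed under tensoring with the nuclear UCT algebra $\mathcal{Z}_0$, both $A \otimes \mathcal{Z}_0$ and $B \otimes \mathcal{Z}_0$ satisfy the UCT.

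With these hypotheses in hand -- simple, separable, nuclear, $\mathcal{Z}_0$-stable, finite nuclear dimension, and UCT -- the reverse implication follows directly from \cite[Theorem 1.2]{Go17}. I expect the only genuine point requiring care to be confirming that the regularity hypothesis of \cite[Theorem 1.2]{Go17} is exactly the finite nuclear dimension condition supplied by Theorem \ref{thm:NewMain2}, rather than an a priori stronger assumption; once this is matched up, the corollary is immediate.
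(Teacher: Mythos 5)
Your proposal is correct and follows essentially the same route as the paper: establish that $A \otimes \mathcal{Z}_0$ is simple, separable and nuclear, use the $\mathcal{Z}$-stability of $\mathcal{Z}_0$ to deduce $\mathcal{Z}$-stability of the tensor product, invoke Theorem \ref{thm:NewMain2} to get nuclear dimension at most $1$, and then apply \cite[Theorem 1.2]{Go17}. The paper's proof is exactly this argument, stated slightly more tersely (it does not spell out the UCT permanence or the forward implication, both of which are routine).
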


Corollary \ref{cor:Newclassification} reduces to the celebrated Kirchberg-Phillips classification (\cite{Ki95,Phi00}) in the traceless case and is otherwise a result about stably projectionless C$^*$-algebras. For these C$^*$-algebras, the difference between $\mathcal{Z}_0$-stability and $\mathcal{Z}$-stability, roughly speaking, comes down to how complex the interaction between the K-theory and traces is allowed to be; see \cite{Go17} for more details.   

\subsection*{Structure of Paper}

Section 1 reviews the necessary preliminary material as appropriate to the non-unital setting. Section 2 is concerned with the invariance of $\mathrm{C}^*$-algebraic properties under stable isomorphism and the reduction argument outlined above. The next three sections generalise the necessary technical machinery from \cite{BBSTWW} and \cite{CETWW}. Section 3 concerns the existence of an order zero embedding $\Phi:A \rightarrow A_\omega$ with appropriate finite dimensional approximations. Section 4 contains the aforementioned unitisation lemma for order zero maps into ultrapowers. Section 5 is devoted to a uniqueness theorem for maps into ultrapowers, which we shall use to compare (unitisations of)  $\Phi$ and the canonical embedding $A \rightarrow A_\omega$. Theorem \ref{thm:NewMain2} and its corollaries are proved in Section 6, with analogous results for decomposition rank (a forerunner of nuclear dimension) proved in Section 7. Since some preliminary lemmas from \cite{BBSTWW} are stated only in the unital case, we include an appendix with their non-unital versions. 

\subsection*{Acknowledgements} Part of this work was undertaken during a visit of JC to IMPAN. 
JC thanks SE and IMPAN for their hospitality. SE would like to thank George Elliott for his helpful comments on this research during SE's secondment at the Fields Institute, which was supported by the EU RISE Network \emph{Quantum Dynamics}.   
The authors would also like to thank Jamie Gabe, G\'abor Szab\'o, Stefaan Vaes and Stuart White for useful comments on an earlier version of this paper.

\numberwithin{theorem}{section}

\section{Preliminaries}

In this section, we recall the most important definitions and results that will be used in the sequel, and we introduce the notation used in this paper.  

We write $\mathbb{K}$ to denote the $\mathrm{C}^*$-algebra of compact operators (on a separable, infinite-dimensional Hilbert space). Given a $\mathrm{C}^*$-algebra $A$, we write $A_+$ for the positive elements of $A$ and $A_{+,1}$ for the positive contractions; we write $\mathrm{Ped}(A)$ for the Pedersen ideal of $A$, which is the minimal dense ideal of $A$ (see \cite[Section 5.6]{Ped79}); and we write $A^\sim$ for the unitisation of $A$. Our convention is that, if $A$ is already unital, then we adjoin a new unit, so $A^\sim \cong A \oplus \mathbb{C}$ as $\mathrm{C}^*$-algebras. For $S \subseteq A$ self-adjoint, we set $S^\perp := \lbrace a \in A: ab = ba = 0, \forall b \in S\rbrace$.  For $\epsilon>0$ and $a,b \in A$, the notation $a \approx_{\epsilon} b$ means $\|a - b\|< \epsilon$. For  $a,b \in A$ with $b$ self-adjoint, we write $a \vartriangleleft b$ to mean that $ab= ba = a$. 
 
We use the common abbreviation c.p.c.\ for completely positive and contractive maps between $\mathrm{C}^*$-algebras. 
A c.p.c.\ map $\phi:A \rightarrow B$ is \emph{order zero} if it preserves orthogonality in the sense that, for $a,b \in A_+$, $\phi(a)\phi(b) = 0$ whenever $ab = 0$.

Following \cite[Definition 2.1]{WZ10}, a $\mathrm{C}^*$-algebra $A$ has \emph{nuclear dimension at most $n$}, if there is a net $(F_i, \psi_i,\phi_i)_{i \in I}$, where $F_i$ is a finite dimensional $\mathrm{C}^*$-algebra, $\psi_i: A \to F_i$ is a c.p.c.\ map, and $\phi_i: F_i \to A$ is a c.p.\ map, such that $\phi_i \circ \psi_i (a) \to a$ for all $a \in A$ and, moreover, each $F_i$ decomposes into $n+1$ ideals $F_i = F_i^{(0)} \oplus \cdots \oplus F_i^{(n)}$ for which the restrictions $\phi_i|_{F_i^{(k)}}$ are c.p.c.\ order zero. The \emph{nuclear dimension} of $A$, denoted by $\dim_{\mathrm{nuc}} A$, is defined to be the smallest such $n$ (and to be $\infty$, if no such $n$ exists).
The \emph{decomposition rank}, a forerunner of nuclear dimension, is obtained if one additionally requires $\phi_i$ to be a c.p.c.\ map \cite[Definition 3.1]{KW04}. We shall denote the decomposition rank of a $\mathrm{C}^*$-algebra $A$ by $\mathrm{dr}(A)$.    
 
By a \emph{trace} on a $\mathrm{C}^*$-algebra we will typically mean a tracial state, i.e.\ a positive linear functional $\tau:A \rightarrow \mathbb{C}$ of operator norm $1$ such that $\tau(ab)=\tau(ba)$ for all $a, b \in A$. We write $T(A)$ for the set of tracial states on $A$ endowed with the weak$^*$-topology. More general notions of traces are discussed in Section \ref{sec:GenTraces} below.

By a \emph{cone} we will mean a convex subset $C$ of a locally convex space that satisfies $C + C \subseteq C$, $\lambda  C \subset C$ for $\lambda > 0$, and $C \cap (-C) = \lbrace 0 \rbrace$. A \emph{base} for a cone $C$ is a closed, convex, and bounded subset $X$ such that for any non-zero $c \in C$ there exist unique $\lambda > 0$ and $x \in X$ such that $c = \lambda x$. By \cite[Theorem II.2.6]{Alf71}, a cone is locally compact if and only if it has a compact base. 
A map $f: C \rightarrow D$ between cones is \emph{linear} if $f(\lambda x + \mu y) = \lambda f(x) + \mu f(y)$ for $\lambda, \mu \geq 0$ and $x,y \in C$. If $X$ is a compact base for the cone $C$, then any continuous affine map $X \rightarrow D$ extends uniquely to a continuous linear map $C \rightarrow D$. 

\subsection{Generalised Traces}\label{sec:GenTraces}

In this preliminary section, we briefly discuss the generalisations of traces that arise in the general theory of $\mathrm{C}^*$-algebras. 

\begin{definition}[{cf.\ \cite[Definition 2.22]{BK04}}]
	A \emph{quasitrace}\footnote{Strictly speaking, a \emph{2-quasitrace}; however, we shall not need this 
	terminology.} on a $\mathrm{C}^*$-algebra $A$ is a function $\tau:A_+ \rightarrow [0,\infty]$ with $\tau(0) = 0$ such that 
	\begin{itemize}
		\item[(i)] $\tau(a^*a) = \tau(aa^*)$ for all $a \in A$,
		\item[(ii)] $\tau(a + b) = \tau(a) + \tau(b)$ for all commuting elements $a,b \in A_+$,
		\item[(iii)] $\tau$ extends to a function $\tau_2:M_2(A)_+ \rightarrow [0,\infty]$ for which (i) and (ii) hold.
	\end{itemize}
	The quasitrace $\tau$ is \emph{additive} if (ii) holds for all $a,b \in A_+$.\footnote{We use the terminology \emph{additive quasitrace} because we are reserving the word \emph{trace} for tracial states. For additive quasitraces, condition (iii) is automatic with $\tau_2$ given by the usual formula.} Setting $\mathrm{Dom}_{1/2}(\tau) := \lbrace a \in A: \tau(a^*a) < \infty \rbrace$, we say that $\tau$ is \emph{densely-defined} if $\mathrm{Dom}_{1/2}(\tau)$ is dense in $A$, and that $\tau$ is \emph{bounded} if $\mathrm{Dom}_{1/2}(\tau) = A$. 
\end{definition}

We write $Q\widetilde{T}(A)$ for the cone of densely-defined, lower-semicontinuous quasitraces; $\widetilde{T}(A)$ for the cone of densely-defined, lower-semicontinuous, additive quasitraces; and $\widetilde{T}_b(A)$ for the cone of bounded, additive quasitraces. The topology on these cones is given by pointwise convergence on $\mathrm{Ped}(A)$.   

Since the traces on a $\mathrm{C}^*$-algebra will play a crucial role in the arguments of this paper, the following existence theorem of Blackadar--Cuntz is fundamental. 

\begin{theorem}[{\cite[Theorem 1.2]{BC82}\label{thm:TracialStatesExist}}]
	Let $A$ be a simple $\mathrm{C}^*$-algebra such that $A \otimes \K$ contains no infinite projections. Then $Q\widetilde{T}(A) \neq 0$. 
\end{theorem}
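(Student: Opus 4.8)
The plan is to produce a nonzero element of $Q\widetilde{T}(A)$ by first constructing a nonzero \emph{dimension function} on $A$ and then invoking the Blackadar--Handelman correspondence, which identifies densely-defined, lower-semicontinuous quasitraces with lower-semicontinuous functionals on the Cuntz semigroup. Concretely, I would pass to $B := A \otimes \K$ and work with the Cuntz semigroup $\mathrm{Cu}(B)$, whose elements are the Cuntz classes $\langle a \rangle$ of positive elements ordered by Cuntz subequivalence $\precsim$. A \emph{functional} on $\mathrm{Cu}(B)$ is an additive, order-preserving map $\lambda : \mathrm{Cu}(B) \to [0,\infty]$ preserving suprema of increasing sequences, and each such $\lambda$ corresponds to a quasitrace $\tau \in Q\widetilde{T}(A)$ via $\lambda(\langle a\rangle) = \sup_{\epsilon > 0}\tau\big((a-\epsilon)_+\big)$. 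Thus it suffices to exhibit a single nonzero functional.

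To build the functional, I would fix a nonzero positive element $a_0 \in \mathrm{Ped}(A)$ and set $u := \langle a_0\rangle$. By simplicity, $u$ is an order unit for $\mathrm{Cu}(B)$, in the sense that every element is a supremum of an increasing sequence of elements dominated by multiples $n\,u$. The aim is a functional $\lambda$ with $0 < \lambda(u) < \infty$, which I would obtain in two stages: first a \emph{state} on the preordered abelian monoid $(\mathrm{Cu}(B), +, \precsim)$ normalized so that $\lambda(u) = 1$, produced by the Goodearl--Handelman state-existence theorem for preordered monoids with an order unit; and then a regularization of this state to make it preserve suprema, yielding the required lower-semicontinuous functional.

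The only hypothesis of the theorem enters through the requirement, in the Goodearl--Handelman criterion, that the order unit $u$ be nondegenerate, i.e.\ that $2\,u \not\precsim u$; otherwise induction gives $n\,u \precsim u$ for all $n$, forcing $\lambda(u) \in \{0, \infty\}$ and precluding normalization. Unwinding the definitions, $2\,u \precsim u$ says exactly that $a_0$ is \emph{properly infinite}, so what I need is a nonzero positive element of $A$ that is not properly infinite. This is precisely where I expect the main difficulty, and where the assumption that $A \otimes \K$ has no infinite projections is indispensable: if \emph{every} nonzero positive element were properly infinite, then the simple algebra $A$ would be purely infinite, and a simple purely infinite $\mathrm{C}^*$-algebra contains an infinite projection in its stabilization --- contradicting the hypothesis. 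Establishing this propagation from the absence of infinite \emph{projections} to the existence of a non-properly-infinite positive \emph{element} is the technical heart of the argument, and is exactly the comparison theory developed in \cite{BC82}.
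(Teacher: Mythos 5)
You should first be aware that the paper does not prove this statement at all: it is imported verbatim as \cite[Theorem 1.2]{BC82}, so there is no internal argument to measure your proposal against. That said, your architecture is the standard modern route to the Blackadar--Cuntz theorem: fix a full class $u=\langle a_0\rangle$ with $a_0\in\mathrm{Ped}(A)_+$, apply the Goodearl--Handelman/Blackadar--R{\o}rdam state-existence theorem for preordered abelian semigroups with order unit, take the lower-semicontinuous regularisation, and convert the resulting dimension function into an element of $Q\widetilde{T}(A)$ via the Blackadar--Handelman correspondence \cite{BH82} in the form used in \cite{ERS11}. Those steps are sound.

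The gap is concentrated exactly where you locate the ``technical heart'', and it is twofold. First, the nondegeneracy hypothesis of the state-existence theorem is $(n+1)u\not\le nu$ for \emph{every} $n$, not merely $2u\not\le u$: your induction goes the wrong way, since $(n+1)u\le nu$ for some $n>1$ gives $mu\le nu$ for all $m\ge n$ (so $nu$ is properly infinite) but, in the absence of cancellation or almost unperforation, does not give $2u\le u$. You therefore need to rule out proper infiniteness of every amplification $a_0^{\oplus n}$, not just of $a_0$. Second, and more seriously, the implication actually required is that a simple $\mathrm{C}^*$-algebra containing a \emph{single} full properly infinite positive element (namely $a_0^{\oplus n}$ for the offending $n$, sitting in the simple algebra $A\otimes\mathbb{K}$) must contain an infinite projection in its stabilisation. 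The fact you invoke instead --- ``\emph{every} nonzero positive element properly infinite $\Rightarrow$ purely infinite $\Rightarrow$ infinite projection'' in the sense of \cite{KR00} --- has the quantifier in the wrong place and does not apply to the situation your contradiction hypothesis produces. The single-element statement is true, but its proof is precisely the scaling-element construction of \cite{BC82}: from $a\oplus a\precsim a$ one extracts $x$ with $xx^*\vartriangleleft x^*x$ and $xx^*\ne x^*x$, and from such an $x$ an isometry with non-full range in a matrix amplification of the unitisation. By deferring this to ``the comparison theory developed in \cite{BC82}'' you have reduced the theorem to its own hardest step, so as a self-contained argument the proposal is circular at the decisive point; to complete it you must either carry out the scaling-element construction or locate the single-element version of the implication (rather than the purely-infinite characterisation) in \cite{KR00}.
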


It is an open question whether $Q\widetilde{T}(A) = \widetilde{T}(A)$ in general. However, when $A$ is exact, this is a famous result of Haagerup; see \cite{Ha14} for the unital case and \cite[Remark 2.29(i)]{BK04} for how to deduce the general case from \cite{Ha14}.

Every $\tau \in Q\widetilde{T}(A)$ has a unique extension to a densely-defined, lower-semicontinuous quasitrace on $A \otimes \K$, which is additive whenever $\tau$ is additive \cite[Remark 2.27(viii)]{BK04}. Therefore, we have canonical isomorphisms $Q\widetilde{T}(A) \cong Q\widetilde{T}(A \otimes \K)$ and $\widetilde{T}(A) \cong \widetilde{T}(A \otimes \K)$, which we treat as identifications. Furthermore, every $\tau \in \widetilde{T}_b(A)$ has a unique extension to a positive linear functional on $A$, which we also denote $\tau$, satisfying the trace condition $\tau(ab) = \tau(ba)$ for all $a,b \in A$.

Let $a,b \in A_+$. If there exists a sequence  $(x_n)_{n\in\N}$ in $A$ such that $b = \sum_{n=1}^\infty x_n^*x_n$ and $\sum_{n=1}^\infty x_nx_n^* \leq a$, then $b$ is said to be \emph{Cuntz--Pedersen subequivalent} to $a$; see \cite{Cu79}. Our notation for this subequivalence will be $b \preccurlyeq  a$. The following proposition is proven by the same method as \cite[Proposition 4.7]{Cu79}. For the benefit of the reader, we give full details.

\begin{proposition}\label{prop:ExtendingTraces}
Let $A$ be a simple, separable $\mathrm{C}^*$-algebra and $B \subseteq A$ a non-zero hereditary subalgebra. The restriction map $\rho:\widetilde{T}(A) \rightarrow \widetilde{T}(B)$ is a linear homeomorphism of cones.
\end{proposition}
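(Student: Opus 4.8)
The plan is to build the bijection out of the Cuntz--Pedersen subequivalence $\preccurlyeq$ together with the trace identity $\tau(x^*x) = \tau(xx^*)$. The starting observation is that every $\tau \in \widetilde{T}(A)$ is \emph{monotone} for $\preccurlyeq$: if $b = \sum_n x_n^* x_n$ and $\sum_n x_n x_n^* \leq a$, then additivity and lower-semicontinuity give $\tau(b) = \sum_n \tau(x_n^* x_n) = \sum_n \tau(x_n x_n^*) = \tau(\sum_n x_n x_n^*) \leq \tau(a)$; in particular $\tau$ is constant on each Cuntz--Pedersen equivalence class. To see that $\rho$ is well defined and linear, I would first record that $\mathrm{Ped}(B) = B \cap \mathrm{Ped}(A)$ for the hereditary subalgebra $B$, so that $\mathrm{Ped}(B)$ lies in the domain of every $\tau \in \widetilde{T}(A)$ and is dense in $B$; the quasitrace axioms (i)--(iii) are inherited by the restriction (using $M_2(B) \subseteq M_2(A)$ hereditarily for (iii)), and density of the domain follows from density of $\mathrm{Ped}(B)$. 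Continuity of $\rho$ is then immediate, since the topologies are pointwise convergence on $\mathrm{Ped}(A)$ and on $\mathrm{Ped}(B) \subseteq \mathrm{Ped}(A)$ respectively.

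The heart of the argument is a decomposition lemma powered by simplicity. Fixing a nonzero $e \in \mathrm{Ped}(B)_+$, simplicity forces the (ring-theoretic) ideal generated by $e$ to be dense, hence to equal the minimal dense ideal $\mathrm{Ped}(A)$. I would use this to show that every $a \in \mathrm{Ped}(A)_+$ is Cuntz--Pedersen equivalent to a finite orthogonal sum $b_1 \oplus \cdots \oplus b_n$ with $b_j \in \mathrm{Ped}(B)_+$: writing $a$ through $e$ and applying $\tau(y e y^*) = \tau\bigl((e^{1/2} y^*)(e^{1/2} y^*)^*\bigr) = \tau(e^{1/2} y^* y e^{1/2})$ transports each piece into $B$, since $e^{1/2} y^* y e^{1/2} \in B$ by heredity. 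The upshot is a formula $\tau(a) = \sum_{j=1}^n \tau(b_j)$ in which the $b_j$ depend only on $a$ (and the fixed $e$), not on $\tau$. Injectivity of $\rho$ is then immediate on $\mathrm{Ped}(A)_+$, and extends to all of $A_+$ by lower-semicontinuity and density.

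For surjectivity I would run the same formula backwards: given $\sigma \in \widetilde{T}(B)$, define $\tilde\sigma(a) := \sum_{j=1}^n \sigma(b_j)$ for $a \in \mathrm{Ped}(A)_+$ and extend to $A_+$ by lower-semicontinuity. \textbf{The main obstacle is well-definedness:} the value must not depend on the chosen decomposition, which reduces to showing that if $b, b' \in B_+$ are Cuntz--Pedersen equivalent \emph{in} $A$, then $\sigma(b) = \sigma(b')$. This is delicate because the implementing elements $x_n$ live in $A$, not in $B$, so $\sigma$ cannot be applied to $x_n^* x_n$ and $x_n x_n^*$ directly. I would resolve it by compression: from $\sum_n x_n x_n^* \leq b' \in B$ and heredity one gets $x_n x_n^* \in B$, whence sandwiching the $x_n$ by an approximate unit $(e_\lambda)$ of $B$ brings the relevant products into $B$ and makes $\sigma(x^*x) = \sigma(xx^*)$ applicable internally; a limiting argument then yields $\sigma(b) \leq \sigma(b')$, and symmetry gives equality. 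Granting well-definedness, checking that $\tilde\sigma$ is additive, lower-semicontinuous, densely defined and satisfies (i)--(iii) (with (iii) handled at the level of $M_2$ via $\widetilde{T}(M_2(B)) \cong \widetilde{T}(B)$) is routine, as is $\rho(\tilde\sigma) = \sigma$. Finally, since the decomposition $b_1, \dots, b_n$ of each $a$ is independent of the trace, the map $\sigma \mapsto \tilde\sigma(a) = \sum_j \sigma(b_j)$ is continuous, so $\rho^{-1}$ is continuous and $\rho$ is a linear homeomorphism of cones. I expect the compression argument establishing well-definedness to be the only genuinely technical point, exactly as in \cite[Proposition 4.7]{Cu79}.
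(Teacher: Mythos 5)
Your route is genuinely different from the paper's. The paper follows Cuntz--Pedersen directly: given $\sigma\in\widetilde{T}(B)$ it defines $\tau(a):=\sup\{\sigma(b):b\in B_+,\ b\preccurlyeq a\}$, proves additivity of this supremum via the Riesz-type decomposition of \cite[Corollary 1.2]{Pe69} (if $b\preccurlyeq a_1+a_2$ then $b=b_1+b_2$ with $b_i\preccurlyeq a_i$, and heredity puts the $b_i$ back in $B$), passes to the lower-semicontinuous regularisation, and then gets injectivity from the order structure of the cone $\widetilde{T}(A)$ (\cite[Proposition 3.2]{ERS11}) and the homeomorphism statement from compact bases (\cite[Proposition 3.4]{TT15}). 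You instead propose an explicit \emph{finite} Cuntz--Pedersen decomposition $a\sim b_1+\cdots+b_n$ with $b_j\in\mathrm{Ped}(B)_+$ for every $a\in\mathrm{Ped}(A)_+$, which, once available, does make injectivity, surjectivity and bicontinuity essentially formal (your continuity argument for $\rho^{-1}$, a finite sum of point evaluations at fixed elements of $\mathrm{Ped}(B)$, is arguably cleaner than the compact-base argument). Your treatment of the well-definedness issue is also sound: the reduction to ``$b,b'\in B_+$ Cuntz--Pedersen equivalent in $A$ implies $\sigma(b)=\sigma(b')$'' is correct, and the implementing elements do land in $B$ (from $x_nx_n^*\le b'$ and $x_n^*x_n\le b$ one gets $x_n\in\overline{BAB}=B$), so the subequivalence is internal to $B$ and monotonicity of $\sigma$ applies.

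The one genuine gap is the decomposition lemma itself, which you assert in a single clause (``writing $a$ through $e$ \dots transports each piece into $B$'') but which is not immediate. Simplicity gives $\mathrm{Ped}(A)\subseteq AeA$, so $a=\sum_i u_iev_i$; but a \emph{positive} element of the algebraic ideal is not in general a sum of terms of the form $y_iey_i^*$, which is what your trace identity $\tau(yey^*)=\tau(e^{1/2}y^*ye^{1/2})$ actually needs. To close this you must first polarise, $u ev+v^*eu^*=\tfrac12\bigl[(u+v^*)e(u+v^*)^*-(u-v^*)e(u-v^*)^*\bigr]$, to obtain a domination $a\le\sum_i y_iey_i^*$, then invoke exactly the Riesz decomposition \cite[Corollary 1.2]{Pe69} that the paper uses to split $a=\sum_i a_i$ with $a_i\preccurlyeq y_iey_i^*\sim e^{1/2}y_i^*y_ie^{1/2}\in B_+$, and finally upgrade each subequivalence $a_i\preccurlyeq b_i$ to an equivalence $a_i\sim d_i$ with $d_i\le b_i$, $d_i\in B_+$ by heredity. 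So your approach does not avoid the Pedersen decomposition result; it relocates it, and in its current form the load-bearing step is missing. Two smaller points: the sum need not (and in general cannot) be taken orthogonal inside $A$ --- plain additivity of the quasitrace suffices --- and you should check that the $d_i$ land in $\mathrm{Ped}(B)$ (they do, since $d_i\le b_i\le\|y_i\|^2e$ with $e\in\mathrm{Ped}(B)$ and the Pedersen ideal is hereditary), as this is what your continuity argument for $\rho^{-1}$ relies on.
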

\begin{proof}
	Since $\mathrm{Ped}(B) \subseteq \mathrm{Ped}(A)$, the restriction of a densely-defined quasitrace on $A$ is a densely-defined quasitrace on $B$. Restriction also preserves additivity and lower-semicontinuity. Hence, $\rho$ is well defined. Continuity of $\rho$ follows immediately from the fact that $\mathrm{Ped}(B) \subseteq \mathrm{Ped}(A)$, and it is clear that $\rho$ is linear.
	
We now turn to proving that $\rho$ is surjective. Let $\sigma \in  \widetilde{T}(B)$. Define $\tau:A_+ \rightarrow [0,\infty]$ by $\tau(a) := \sup \lbrace \sigma(b): b \in B_+, b \preccurlyeq a \rbrace$. The following properties of $\tau$ are easy to verify
\begin{align}
	\tau(0) &= 0,\\
	\tau(a^*a) &= \tau(aa^*), &a \in A,\\ 
	\tau(\lambda a) &= \lambda \tau(a), &\lambda \geq 0, \ a \in A_+,\\
	\tau(a_1 + a_2) &\geq \tau(a_1) + \tau(a_2), &a_1,a_2 \in A_+. 
\end{align}
Let $a_1,a_2 \in A_+$. Suppose $b \in B_+$ and $b \preccurlyeq a_1 + a_2$. By \cite[Corollary 1.2]{Pe69}, there exist $b_1,b_2 \in A_+$ with $b = b_1 + b_2$ such that $b_1 \preccurlyeq a_1$ and $b_2 \preccurlyeq a_2$. Since $B$ is a hereditary subalgebra, $b_1,b_2 \in B_+$. Hence, 
\begin{equation}
	\sigma(b) = \sigma(b_1) + \sigma(b_2) \leq \tau(a_1) + \tau(a_2).
\end{equation} 
Taking the supremum, we get $\tau(a_1 + a_2) \leq \tau(a_1) + \tau(a_2)$. Therefore, we have $\tau(a_1 + a_2) = \tau(a_1) + \tau(a_2)$. This completes the proof that $\tau$ is an additive quasitrace. 

Since $B$ is a hereditary subalgebra of $A$, the restriction of the Cuntz--Pedersen subequivalence relation on $A$ to $B$ is the same as the Cuntz--Pedersen subequivalence relation on $B$. It follows that $\tau|_{B_+}$ is $\sigma$. As $\sigma$ is densely defined on $B$ and $A$ is simple, $\tau$ is densely defined.

Let $\widetilde{\tau}(a) := \sup_{\epsilon > 0} \tau((a - \epsilon)_+)$ be the \emph{lower-semicontinuous regularisation} of $\tau$ (see \cite[Remark 2.27.(iv)]{BK04} and \cite[Lemma 3.1]{ERS11}). 
Then $\widetilde{\tau}$ is a densely-defined, lower-semicontinuous, additive quasitrace on $A$, and we still have $\widetilde{\tau}|_{B_+} = \sigma$ because $\sigma$ is lower-semicontinuous. Therefore, $\rho$ is surjective. 

We now prove that $\rho$ is injective. Let $\sigma$, $\tau$, and $\widetilde{\tau}$ be as above. Suppose $\psi \in \widetilde{T}(A)$ also satisfies $\psi|_B = \sigma$. Since $\psi(b) \leq \psi(a)$ whenever $b \preccurlyeq a$, we must have $\tau \leq \psi$. Since taking lower-semicontinuous regularisations is order-preserving, we have $\widetilde{\tau} \leq \psi$. By \cite[Proposition 3.2]{ERS11}, there exists $\varphi \in \widetilde{T}(A)$ such that $\psi =  \widetilde{\tau} + \varphi$. However, $\psi|_{B_+} = \widetilde{\tau}|_{B_+} = \sigma$ and so $\varphi$ vanishes on $B_+$. Since $A$ is simple, it follows that $\varphi = 0$ and so $\psi = \widetilde{\tau}$. Therefore, $\rho$ is injective.

Finally, we prove $\rho$ that is a homeomorphism. Fix $b \in \mathrm{Ped}(B) \setminus \lbrace 0 \rbrace$. Note that $b$ is also in $\mathrm{Ped}(A)$ and is full in both $A$ and $B$ by simplicity. Set $X_A := \lbrace \tau \in \widetilde{T}(A): \tau(b) = 1\rbrace$ and $X_B := \lbrace \tau \in \widetilde{T}(B): \tau(b) = 1\rbrace$. By \cite[Proposition 3.4]{TT15}, $X_A$ is a compact base for the cone $\widetilde{T}(A)$ and $X_B$ is a compact base for the cone $\widetilde{T}(B)$. Since $b \in B$, we have that $\rho(X_A) = X_B$. Hence, $\rho$ defines a continuous, affine bijection from $X_A$ to $X_B$. Since $X_A$ and $X_B$ are compact Hausdorff space, $\rho$ in fact defines an affine homeomorphism between compact bases for the the cones $\widetilde{T}(A)$ and $\widetilde{T}(B)$. Therefore, $\rho$ is a linear homeomorphism of the cones $\widetilde{T}(A)$ and $\widetilde{T}(B)$.  
\end{proof}

\subsection{Strict Comparison}\label{sec:CuntzComparison}

We first recall the definition of \emph{Cuntz sub-equivalence}. Let $A$ be a $\mathrm{C}^*$-algebra and $a,b \in A_+$. Then $a \precsim b$ if and only if there exists a sequence $(x_n)_{n\in\N}$ in $A$ such that 
\begin{equation}
\lim_{n\rightarrow \infty} \|x_n^*bx_n - a\| = 0.
\end{equation}
If $a \precsim b$ and $b \precsim a$, then $a$ is said to be \emph{Cuntz equivalent} to $b$. We shall write $[a]$ for the Cuntz equivalence class of $a$. 

The \emph{Cuntz semigroup} $\mathrm{Cu}(A)$ is the ordered abelian semigroup obtained by considering the Cuntz equivalence classes of positive elements in $A \otimes \K$ under orthogonal addition and the order induced by Cuntz subequivalence; see \cite{CEI08}. If one only considers the Cuntz equivalence classes of positive elements in $\bigcup_{k=1}^\infty M_k(A)$, then one obtains the \emph{classical Cuntz semigroup} $W(A)$; see \cite{To11}. 

Informally, a $\mathrm{C}^*$-algebra $A$ has \emph{strict comparison} if traces determine the Cuntz comparison theory. In order to formalise this notion, we need to recall the \emph{rank function} associated to a lower-semicontinuous quasitrace. Suppose $\tau:A_+ \rightarrow [0,\infty]$ is a lower-semicontinuous quasitrace. Then the rank function $d_\tau:(A \otimes \K)_+ \rightarrow [0,\infty]$ is given by
\begin{equation}
d_\tau(a) = \lim_{n\rightarrow\infty} \tau(a^{1/n}),
\end{equation} 
where we have made use of the unique extension of $\tau$ to $A\otimes\K$. We have $d_\tau(a) \leq d_\tau(b)$ whenever $a,b \in (A \otimes \K)_+$ satisfy $a \precsim b$ by \cite[Theorem II.2.2]{BH82}. Strict comparison can be viewed as a partial converse.

Since we will be adapting the methods of \cite{BBSTWW}, we shall be working with the same definition of strict comparison that is used there.
\begin{definition}[{\cite[Definition 1.5]{BBSTWW}}]
	\label{defn:StrictComp}
	A $\mathrm C^*$-algebra $A$ has \emph{strict comparison (of positive elements, with respect to bounded traces)} if 
	\begin{align}
	\label{eq:StrictComp}
	(\forall \tau\in T(A),\ d_\tau(a) < d_\tau(b))\Longrightarrow a\precsim b
	\end{align}
	for $k\in\mathbb{N}$ and $a,b\in M_k(A)_+$.
\end{definition}

We alert the reader to two facts about this definition. Firstly, it only concerns positive elements in $\bigcup_{k=1}^\infty M_k(A)$, so it is a property of the classical Cuntz semigroup $W(A)$. Secondly, we only require the condition $d_\tau(a) < d_\tau(b)$ to be shown when $\tau$ is a tracial state. 

In light of the potential confusion that could arise from the variety of definitions of strict comparison that appear in the literature, we include a proof that $\Z$-stability implies strict comparison in the sense of Definition \ref{defn:StrictComp} for the benefit of the reader. The key ingredient in the proof is that $W(A)$ is almost unperforated whenever $A$ is $\Z$-stable, which is due to R{\o}rdam \cite{Ro04}. 

\begin{proposition}\label{prop:ZstableToSC}
	Let $A$ be a simple, separable, $\Z$-stable $\mathrm{C}^*$-algebra with $Q\widetilde{T}(A) = \widetilde{T}_b(A) \neq 0$. Then $A$ has strict comparison of positive elements with respect to bounded traces.
\end{proposition}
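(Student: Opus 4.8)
The plan is to derive strict comparison in the sense of Definition~\ref{defn:StrictComp} from R{\o}rdam's theorem that $\Z$-stability makes the classical Cuntz semigroup $W(A)$ \emph{almost unperforated} \cite{Ro04} --- that is, $(n+1)\langle x\rangle \leq n\langle y\rangle$ implies $\langle x\rangle \leq \langle y\rangle$. The one genuinely new point, compared with the standard unital accounts, is that Definition~\ref{defn:StrictComp} quantifies over the tracial states $T(A)$, whereas the comparison that flows from almost unperforation is naturally phrased using \emph{all} densely-defined, lower-semicontinuous quasitraces. Reconciling these two ranges of quantification is exactly where the hypothesis $Q\widetilde{T}(A) = \widetilde{T}_b(A)$ enters. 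Concretely, fix $k \in \N$ and $a, b \in M_k(A)_+$ with $d_\tau(a) < d_\tau(b)$ for all $\tau \in T(A)$; the goal is to prove $a \precsim b$.

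First I would upgrade the hypothesis to the whole trace cone. By assumption every nonzero $\tau \in Q\widetilde{T}(A) = \widetilde{T}_b(A)$ is a bounded additive quasitrace, so it extends to a bounded positive tracial functional and can be written as $\tau = \|\tau\|\,\sigma$ for a unique tracial state $\sigma \in T(A)$. As the rank functions are homogeneous, $d_\tau = \|\tau\|\,d_\sigma$, and strict inequalities between ranks are scale-invariant; hence $d_\tau(a) < d_\tau(b)$ in fact holds for \emph{every} nonzero $\tau \in Q\widetilde{T}(A)$. Note that this also shows $T(A) \neq \emptyset$ (since $\widetilde{T}_b(A) \neq 0$), so the hypothesis is not vacuous and already forces $b \neq 0$.

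Next I would extract a uniform gap by a compactness argument run on the cone rather than on $T(A)$, which need not be weak$^*$-compact in the non-unital setting. Choosing a nonzero $e \in \mathrm{Ped}(A)$ --- which exists and is full by simplicity --- the set $X := \{\tau \in \widetilde{T}_b(A) : \tau(e) = 1\}$ is a compact base for the cone by \cite[Proposition 3.4]{TT15}, exactly as in the proof of Proposition~\ref{prop:ExtendingTraces}, and the strict inequality holds throughout $X$ by the previous step. Since $a \precsim b$ if and only if $(a-\epsilon)_+ \precsim b$ for all $\epsilon > 0$, I would fix $\epsilon > 0$, set $a_\epsilon := (a-\epsilon)_+$, and choose a continuous function $g$ with $0 \le g \le 1$, $g = 1$ on $[\epsilon, \infty)$ and $g$ vanishing on a neighbourhood of $0$, so that $g(a) \in M_k(\mathrm{Ped}(A))$ and $d_\tau(a_\epsilon) \leq \tau(g(a)) \leq d_\tau(a) < d_\tau(b)$ for every $\tau$. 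On the compact base $X$ the numerator $\tau \mapsto \tau(g(a))$ is continuous while the denominator $\tau \mapsto d_\tau(b)$ is lower-semicontinuous and strictly positive, so $\tau \mapsto \tau(g(a))/d_\tau(b)$ is upper-semicontinuous and, being pointwise strictly less than $1$, attains a maximum strictly below $1$. This yields an integer $n$ with $(n+1)\,d_\tau(a_\epsilon) \leq n\,d_\tau(b)$ for all $\tau \in X$, hence by homogeneity for all $\tau \in Q\widetilde{T}(A)$.

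Following the proof of \cite[Proposition 3.2]{Ro04}, this majorisation of ranks, valid for all quasitraces, combines with almost unperforation of $W(A)$ to yield the Cuntz relation $a_\epsilon \precsim b$; letting $\epsilon \to 0$ then gives $a \precsim b$. The step I expect to demand the most care is the passage from a pointwise strict inequality of ranks to the single integer $n$: one must replace $a$ by $a_\epsilon$ and dominate its rank by a continuous function (to counter the fact that $d_\tau$ is only lower-semicontinuous), and --- crucially in the non-unital case --- carry out the whole compactness argument over the compact base $X$ of the cone rather than over $T(A)$. The homogeneity of $d_\tau$ together with the identification $Q\widetilde{T}(A) = \widetilde{T}_b(A)$ is exactly what legitimises this transfer between the two ranges of quantification.
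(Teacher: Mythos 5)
Your proposal rests on the same two pillars as the paper's proof: R{\o}rdam's theorem that $\Z$-stability forces almost unperforation of the Cuntz semigroup \cite{Ro04}, and the normalisation $\tau = \|\tau\|\,\sigma$ of bounded additive quasitraces by tracial states, which is exactly how the hypothesis $Q\widetilde{T}(A)=\widetilde{T}_b(A)$ is consumed in the paper as well. The difference is the middle step. The paper treats the passage from almost unperforation to comparison as a black box, citing \cite[Propositions 4.4 and 6.2]{ERS11}: these give comparison in $\mathrm{Cu}(A)$ with respect to all functionals, each of which is $d_\tau$ for a lower-semicontinuous quasitrace, so the only remaining work is the side condition $[a]\le\infty[b]$ (from simplicity and \cite[Corollary II.5.2.12]{Bl06}) and the restriction from $A\otimes\K$ back to the hereditary subalgebra $M_k(A)$. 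You instead re-run the comparison argument by hand inside $W(A)$, which is a legitimate and more self-contained route, and your observation that the compactness must be run on a compact base of the cone rather than on $T(A)$ is exactly right in the non-unital setting.

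Two steps need tightening, however. First, the uniform gap $(n+1)\,d_\tau(a_\epsilon)\le n\,d_\tau(b)$ you extract on the base $X$ is a numerical inequality of rank functions, not the semigroup relation $(n+1)\langle a_\epsilon\rangle\le n\langle b\rangle$ on which almost unperforation acts; passing from the former to the latter is the very comparison statement being proved, so as written this step does not feed into almost unperforation. What \cite[Proposition 3.2]{Ro04} actually takes as input is $d(a_\epsilon)<d(b)$ for \emph{every} state $d$ on $(W(A),\langle b\rangle)$ --- every dimension function normalised at $b$, not only the lower-semicontinuous ones of the form $d_\tau$ --- and it is the Goodearl--Handelman argument inside that proof, not a compactness argument over the trace cone, that produces the integer $n$. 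The bridge from lower-semicontinuous quasitraces to arbitrary dimension functions is Blackadar--Handelman \cite{BH82}: the lower-semicontinuous regularisation $\bar{d}$ of a dimension function $d$ is again of the form $d_\tau$, and $d\bigl((a-\epsilon)_+\bigr)\le \bar{d}(a)<\bar{d}(b)\le d(b)$. Your $(a-\epsilon)_+$ cut-down is precisely the right device for this, but you deploy it only to make $\tau\mapsto\tau(g(a))$ continuous; the quantifier gap over dimension functions should be closed explicitly. Second, \cite[Proposition 3.2]{Ro04} also requires $\langle a_\epsilon\rangle\le N\langle b\rangle$ for some $N$; this follows from simplicity exactly as in the paper's verification of $[a]\le\infty[b]$, but you never check it. With these two repairs your argument is complete and amounts to an unpacked version of the \cite{ERS11} citation used in the paper.
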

\begin{proof}	
	As $A$ is $\Z$-stable, so is $A \otimes \K$. Hence, by \cite[Theorem 4.5]{Ro04}, $\mathrm{Cu}(A) = W(A \otimes \K)$ is almost unperforated. Applying \cite[Proposition 6.2]{ERS11} together with \cite[Proposition 4.4]{ERS11}, we find that $A$ has strict comparison in the following sense: for all $a,b \in (A\otimes\K)_+$ with $[a] \leq \infty[b]$ in $\mathrm{Cu}(A)$ if $d_\tau(a) < d_\tau(b)$ for all lower-semicontinuous quasitraces with $d_\tau(b) = 1$ then $[a] \leq [b]$ in $\mathrm{Cu}(A)$. 
	
	We show that under our hypothesis on $A$ this implies that $A$ has strict comparison in the sense of Definition \ref{defn:StrictComp}. Consider $a,b \in M_k(A)_+$ and let $\epsilon > 0$ and $f_\epsilon:[0,1]\rightarrow[0,1]$ be the function that is $0$ on $[0,\epsilon]$, affine on $[\epsilon,2\epsilon]$ and $1$ on $[2\epsilon,1]$. Since $M_k(A)$ is simple, there exists $n \in \mathbb{N}$ such that $[f_\epsilon(a)] \leq n[b] \leq \infty[b]$ in $\mathrm{Cu}(A)$ by \cite[Corollary II.5.2.12]{Bl06}. As $\epsilon$ is arbitrary, we have $[a] \leq \infty[b]$.
	
	Since $Q\widetilde{T}(A) = \widetilde{T}_b(A)$, if $d_\tau(a) < d_\tau(b)$ for all $\tau \in T(A)$, then $a \precsim b$ in $A\otimes \K$. As $M_k(A)$ is a hereditary subalgebra of $A\otimes\K$, we have $a \precsim b$ in $M_k(A)$ by \cite[Lemma 2.2(iii)]{KR00}. 
\end{proof}

\begin{remark}\label{rem:ZstableToSC}
	By replacing $M_k(A)$ with $A \otimes \K$ in the proof of Proposition \ref{prop:ZstableToSC}, we see that \eqref{eq:StrictComp} holds for all $a,b \in (A \otimes \K)_+$.  Therefore, $A$ also has strict comparison by traces in the sense of \cite[Definition 3.1]{NgR16} under the hypotheses of Proposition \ref{prop:ZstableToSC}.
\end{remark}

\subsection{Ultraproducts and Kirchberg's Epsilon Test}

Let $\omega$ be a free ultrafilter on $\mathbb{N}$, which we regard as fixed for the entirety of the paper. The \emph{ultraproduct} $\prod_{n\rightarrow\omega} A_n$ of a sequence of $\mathrm{C}^*$-algebras $(A_n)_{n\in\N}$ is defined by
\begin{equation}
\prod_{n\rightarrow\omega} A_n := \frac{\prod_{n\in\N} A_n}{\lbrace (a_n)_{n\in\N} \in \prod_{n\in\N} A_n: \lim\limits_{n\rightarrow\omega} \|a_n\| = 0\rbrace}.	
\end{equation}
The \emph{ultrapower} $A_\omega$ of a $\mathrm{C}^*$-algebra $A$ is the ultraproduct of the constant sequence $(A_n)_{n\in\N}$ with $A_n = A$ for all $n\in\N$. We identify $A$ with the subalgebra of $A_\omega$ given by constant sequences $(a)_{n\in\N}$.

Every sequence $(\tau_n)_{n\in\N}$ where $\tau_n \in T(A_n)$ defines a tracial state on the ultrapower $\prod_{n\rightarrow\omega} A_n$ via $(a_n) \mapsto \lim_{n\rightarrow\omega} \tau_n(a_n)$. Tracial states of this form are known as \emph{limit traces}. The set of all limit traces will be denoted by $T_\omega(\prod_{n\rightarrow\omega} A_n)$. 

Not all traces on an ultraproduct are limit traces but we have the following density result due to Ng--Robert \cite[Theorem 1.2]{NgR16} (generalising an earlier result of Ozawa \cite[Theorem 8]{Oz13}). 
\begin{theorem}[{\cite{NgR16,Oz13}}]\label{thm:NoSillyTraces}
	Let $(A_n)_{n\in\N}$ be a sequence of simple, separable, $\Z$-stable $\mathrm{C}^*$-algebras with $Q\widetilde{T}(A_n) = \widetilde{T}_b(A_n)$ for all $n \in \N$. Then $T_\omega(\prod_{n\rightarrow\omega} A_n)$ is weak$^*$-dense in $T(\prod_{n\rightarrow\omega} A_n)$.
\end{theorem}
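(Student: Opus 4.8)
The plan is to deduce the density statement from the strict comparison established in Proposition \ref{prop:ZstableToSC} and Remark \ref{rem:ZstableToSC} by a Hahn--Banach argument. Writing $B := \prod_{n\to\omega} A_n$ and extending traces to the minimal unitisation $B^\sim$ by $\tau(1)=1$, the weak$^*$-density of $T_\omega(B)$ in $T(B)$ is equivalent, via the Hahn--Banach separation theorem applied to the weak$^*$-closed convex set $\overline{T_\omega(B)}$, to the single inequality
\begin{equation}
\sup_{\sigma\in T(B)} \sigma(a) \;\leq\; \sup_{\tau\in T_\omega(B)} \tau(a) \qquad \text{for every self-adjoint } a\in B^\sim. \notag
\end{equation}
(The reverse inequality is automatic from $T_\omega(B)\subseteq T(B)$.) An affine shift and rescaling using the adjoined unit reduces this to the case where $a=(a_n)_{n\to\omega}$ is a positive contraction, and one checks that the right-hand side equals $s:=\lim_{n\to\omega} s_n$ where $s_n:=\sup_{\tau_n\in T(A_n)}\tau_n(a_n)$, the supremum being approximable by a single sequence $(\tau_n)_{n\in\N}$. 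Thus the whole problem is to prove $\sigma(a)\leq s$ for an arbitrary tracial state $\sigma$ on $B$.

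First I would set up the bridge between trace values and Cuntz comparison. For a bounded trace and a positive contraction one has the layer-cake identity $\sigma(a)=\int_0^1 d_\sigma\big((a-t)_+\big)\,dt$, and likewise for each $\tau_n$, so that the hypothesis controls the \emph{rank} functions $t\mapsto d_{\tau_n}((a_n-t)_+)$ in an integrated fashion: $\int_0^1 d_{\tau_n}((a_n-t)_+)\,dt=\tau_n(a_n)\leq s_n$ for each $n$. The task is then to transfer this control to $d_\sigma((a-t)_+)$. Since an arbitrary trace $\sigma$ on the ultraproduct sees an element only through Cuntz subequivalences that already hold uniformly in the $A_n$, the mechanism must be to use strict comparison in each $A_n$ (valid by Proposition \ref{prop:ZstableToSC}), together with the divisibility coming from $\Z$-stability, to realise pairwise orthogonal copies of the relevant cut-downs of $a_n$ below a fixed comparison element of controlled rank; pushing these relations into $B$ then forces the corresponding inequality on $d_\sigma$.

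The delicate point -- and what I expect to be the main obstacle -- is that a \emph{uniform trace bound on $a_n$ does not bound the ranks} $d_{\tau_n}(a_n)$: a positive element of small trace can have arbitrarily large (even infinite) support, so a naive single spectral cut controlled level-by-level is hopeless (the crude estimate $d_{\tau_n}((a_n-t)_+)\leq \tau_n(a_n)/t$ integrates only to $\sigma(a)\leq s(1+\log(1/s))$, not to $s$, and even a sharp per-level bound $d_\sigma((a-t)_+)\leq \lim_{n\to\omega} d_{\tau_n}((a_n-t)_+)$ would only give $\sigma(a)\leq \int_0^1 \lim_{n\to\omega}\sup_{\tau_n}d_{\tau_n}((a_n-t)_+)\,dt$, which can exceed $s$). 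Overcoming this requires a genuinely \emph{global} comparison, in which $a_n$ is compared against the scalar $s_n$ inside matrix amplifications of the unitisation of $A_n$, exploiting that $\Z$-stability provides exact rather than merely approximate divisibility of the Cuntz semigroup and that strict comparison then yields Cuntz relations with the sharp constant.

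A secondary technical obstacle, specific to the non-unital setting, is the bookkeeping of the comparison element: the element against which the copies are compared must be located inside the non-unital $A_n$ or its unitisation, and the relevant suprema over $T(A_n)$ must be controlled despite $T(A_n)$ failing to be weak$^*$-compact in general. This is exactly where the standing hypothesis $Q\widetilde{T}(A_n)=\widetilde{T}_b(A_n)$ is used, ensuring every densely-defined quasitrace is a bounded trace so that Proposition \ref{prop:ZstableToSC} and Remark \ref{rem:ZstableToSC} apply and the strict-comparison relations produced in each $A_n$ are uniform enough to survive the passage to the ultraproduct $B$.
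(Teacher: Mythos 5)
The paper's own proof of Theorem \ref{thm:NoSillyTraces} is a two-line verification: Proposition \ref{prop:ZstableToSC} and Remark \ref{rem:ZstableToSC} show that each $A_n$ has strict comparison by traces in the sense of \cite[Definition 3.1]{NgR16}, and the conclusion is then exactly \cite[Theorem 1.2]{NgR16} (which generalises \cite[Theorem 8]{Oz13}). You instead set out to reprove the Ng--Robert/Ozawa density theorem itself, and that is where the gap lies. Your Hahn--Banach reduction to the inequality $\sup_{\sigma\in T(B)}\sigma(a)\leq\sup_{\tau\in T_\omega(B)}\tau(a)$ for self-adjoint $a\in B^\sim$ is fine (the set of limit traces is convex, so separating from its weak$^*$-closed convex hull suffices), as is the reduction to positive contractions. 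But the core step --- transferring the uniform bound $\tau_n(a_n)\leq s_n$ over $T(A_n)$ to an arbitrary trace $\sigma$ on the ultraproduct --- is described rather than performed: you name it as the main obstacle, correctly observe that the naive layer-cake/rank estimates lose a logarithmic factor, and then assert that overcoming it ``requires a genuinely global comparison'' without producing one. That missing step is precisely the content of the cited theorem. In \cite{NgR16} it is carried out via quantitative Cuntz--Pedersen-type commutator estimates: strict comparison together with the divisibility supplied by $\Z$-stability shows that a self-adjoint element on which all limit traces are at most $\delta$ is, up to an error of norm comparable to $\delta$, a sum of a universally bounded number of commutators of controlled norm (the same mechanism that reappears in Proposition \ref{prop:CommTraces} of this paper); this norm-controlled statement survives the passage to the ultraproduct and forces $\sigma(a)\leq s$ for every $\sigma\in T(B)$. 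Nothing in your sketch substitutes for that estimate, so as written the argument does not close.

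A secondary remark: even if the global comparison step were supplied, you would in effect be reproving \cite[Theorem 1.2]{NgR16}. Since the statement is explicitly attributed to \cite{NgR16,Oz13}, the intended (and the paper's) argument is simply to check that the hypotheses of that theorem hold for each $A_n$ --- namely strict comparison by traces in the sense of \cite[Definition 3.1]{NgR16}, which is what the standing assumption $Q\widetilde{T}(A_n)=\widetilde{T}_b(A_n)$ feeds into Proposition \ref{prop:ZstableToSC} and Remark \ref{rem:ZstableToSC} --- and then cite it.
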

\begin{proof}
	By Proposition \ref{prop:ZstableToSC} and Remark \ref{rem:ZstableToSC}, each $A_n$ has strict comparison by traces in the sense of \cite[Definition 3.1]{NgR16}. The result now follows by \cite[Theorem 1.2]{NgR16}.
\end{proof}

We shall also need uniform tracial ultraproducts. Recall that any trace $\tau \in T(A)$ defines a 2-seminorm $\|a\|_{2,\tau} := \tau(a^*a)^{1/2}$. The uniform 2-seminorm is then defined by
\begin{equation}
\|a\|_{2,T(A)} := \sup_{\tau \in T(A)} \|a\|_{2,\tau} = \sup_{\tau \in T(A)} \tau(a^*a)^{1/2}.
\end{equation}
We can then define the \textit{uniform tracial ultraproduct} of a sequence of $\mathrm{C}^*$-algebras $(A_n)_{n\in\N}$ by
\begin{equation}
\prod^{n\rightarrow\omega} A_n := \frac{\prod_{n\in\N} A_n}{\lbrace (a_n)_{n\in\N} \in \prod_{n\in\N} A_n: \lim\limits_{n\rightarrow\omega} \|a_n\|_{2,T(A_n)} = 0\rbrace}.	
\end{equation}
The \emph{uniform tracial ultrapower} $A^\omega$ of a $\mathrm{C}^*$-algebra $A$, which can be defined as the uniform tracial ultraproduct of the constant sequence $(A_n)_{n\in\N}$ with $A_n = A$ for all $n\in\N$, was introduced in \cite{CETWW}. We identify $A$ with the subalgebra of $A^\omega$ given by constant sequences $(a)_{n\in\N}$.

Since $\|a\|_{2,T(A)} \leq \|a\|$ for all $a \in A$, there exists a canonical surjection from the ultraproduct to the uniform tracial ultraproduct. The kernel of this $^*$-homomorphism is the \emph{trace kernel ideal} given by
\begin{align}
J_{(A_n)} &:= \lbrace (a_n)_{n\in\N} \in \prod_{n\rightarrow\omega} A_n: \lim_{n\rightarrow\omega} \|a_n\|_{2,T(A_n)} = 0\rbrace \notag \\
&= \lbrace x \in \prod_{n\rightarrow\omega} A_n: \|x\|_{2,\tau} = 0, \ \tau \in T_\omega(\prod_{n\rightarrow\omega}A_n) \rbrace.
\end{align}
It follows that limit traces also induce traces on the uniform tracial ultraproduct. In the ultrapower case, we therefore use a unified notation $T_\omega(A)$ for the limit traces on $A_\omega$ or the induced traces on $A^\omega$.

An important tool for working with ultrapowers are re-indexing arguments, which allow one to find elements of the ultrapower exactly satisfying some given condition provided one can find elements of the ultrapower which approximately satisfy the condition for any given tolerance. A precise and very general formulation of such re-indexing arguments is \textit{Kirchberg's Epsilon Test}, which we state below.

\begin{lemma}[{Kirchberg's Epsilon Test, \cite[Lemma A.1]{Kir06}}]\label{epstest}
	Let $X_1,X_2,\dots$ be a sequence of non-empty sets, and for each $k,n\in\N$, let $f^{(k)}_n:X_n\rightarrow [0,\infty)$ be a function.
	Define $f^{(k)}_\omega:\prod_{n=1}^\infty X_n\to[0,\infty]$ by $f^{(k)}_\omega((s_n)_{n=1}^\infty):=\lim_{n\rightarrow\omega}f^{(k)}_n(s_n)$ for $(s_n)\in\prod_{n=1}^\infty X_n$.  Suppose that for all $m\in\N$ and $\epsilon>0$, there exists $(s_n)_{n=1}^\infty\in \prod_{n=1}^\infty X_n$ with $f^{(k)}_\omega((s_n))<\epsilon$ for $k=1,\dots,m$.  Then there exists $(t_n)_{n=1}^\infty\in \prod_{n=1}^\infty X_n$ such that $f^{(k)}_\omega((t_n))=0$ for all $k\in\N$.
\end{lemma}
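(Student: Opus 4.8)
The plan is to prove this by a diagonal reindexing argument, exploiting the defining property of the ultrafilter $\omega$ that if $\lim_{n\to\omega} a_n < c$ then $\{n : a_n < c\} \in \omega$. First I would apply the hypothesis once for each $m\in\N$, taking $\epsilon = 1/m$, to obtain a sequence $s^{(m)} = (s^{(m)}_n)_{n} \in \prod_{n=1}^\infty X_n$ satisfying $f^{(k)}_\omega(s^{(m)}) < 1/m$ for every $k = 1, \dots, m$. The goal is then to splice these countably many sequences into a single sequence $(t_n)_n$ that simultaneously drives all the $f^{(k)}_\omega$ to $0$.

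To carry this out, for each $m$ I set
\[
Y_m := \{n \in \N : f^{(k)}_n(s^{(m)}_n) < 1/m \text{ for all } k = 1, \dots, m\}.
\]
Since $f^{(k)}_\omega(s^{(m)}) = \lim_{n\to\omega} f^{(k)}_n(s^{(m)}_n) < 1/m$, the above property of ultralimits shows that each of the finitely many sets $\{n : f^{(k)}_n(s^{(m)}_n) < 1/m\}$ (for $k \le m$) belongs to $\omega$, and hence so does their intersection $Y_m$. I would then define a diagonal index $m(n) := \max\{m \le n : n \in Y_m\}$ and put $t_n := s^{(m(n))}_n$, taking $t_n$ to be an arbitrary element of the non-empty set $X_n$ in those cases where this maximum is over the empty set. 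By construction $n \in Y_{m(n)}$ whenever $m(n)$ is defined, so that $f^{(k)}_n(t_n) < 1/m(n)$ for all $k \le m(n)$.

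Finally, to verify $f^{(k)}_\omega((t_n)) = 0$ I would fix $k$ and $\epsilon > 0$, choose $M \ge k$ with $1/M < \epsilon$, and observe that the cofinite set $\{n \ge M\}$ meets $Y_M \in \omega$ in a set still lying in $\omega$. For every $n$ in this intersection one has $M \le n$ and $n \in Y_M$, so $m(n) \ge M \ge k$, whence $f^{(k)}_n(t_n) < 1/m(n) \le 1/M < \epsilon$. Thus $\{n : f^{(k)}_n(t_n) < \epsilon\} \in \omega$ for every $\epsilon > 0$, giving $\lim_{n\to\omega} f^{(k)}_n(t_n) = 0$, as required for all $k$. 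I expect the main conceptual obstacle to be the diagonalization itself: one must arrange that the index $m(n)$ tends to infinity along $\omega$ (so the bounds $1/m(n)$ become arbitrarily small) while simultaneously keeping $n \in Y_{m(n)}$, and the truncation $m \le n$ in the definition of $m(n)$ is precisely what reconciles these two competing requirements. The remaining steps are routine manipulations of ultrafilter limits.
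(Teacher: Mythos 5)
Your argument is correct: the sets $Y_m$ do lie in $\omega$ by the standard property of ultralimits, the truncated maximum $m(n)=\max\{m\le n: n\in Y_m\}$ is well defined, and the verification that $\{n: f^{(k)}_n(t_n)<\epsilon\}\supseteq\{n\ge M\}\cap Y_M\in\omega$ goes through exactly as you write it. The paper itself gives no proof of this lemma, citing it directly from Kirchberg's appendix; your diagonal reindexing is precisely the standard argument used there, so nothing further is needed.
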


\subsection{Stable Rank One}

A unital $\mathrm{C}^*$-algebra $A$ is said to have \emph{stable rank one} if the invertible elements form a dense subset. In this paper, we shall make use of the following non-unital generalisation.

\begin{definition}\label{def:StableRankOneInUnitisation}
	Let $A$ be a $\mathrm{C}^*$-algebra. We say that $A$ has \emph{stable rank one in $A^\sim$} if every element of $A$ is a limit of invertible elements in $A^\sim$. 
\end{definition}

In the unital case, $A^\sim \cong A \oplus \mathbb{C}$, so $A$ has stable rank one in $A^\sim$ if and only if $A$ has stable rank one. In the non-unital case, $A$ having stable rank one in $A^\sim$ is weaker than requiring that $A^\sim$ itself has stable rank one; see \cite[Example 3.4]{Rob16}.

A related notion is Robert's \emph{almost stable rank one} \cite[Definition 3.1]{Rob16}, which requires that, for all hereditary subalgebras $B \subseteq A$, $B$ has stable rank one in $B^\sim$. Robert proved the following.

\begin{theorem}[{\cite[Corollary 3.2]{Rob16}}]\label{thm:ZstableProjectionless}
	Let $A$ be a $\mathcal{Z}$-stable, projectionless $\mathrm{C}^*$-algebra. Then $A$ has almost stable rank one. In particular, $A$ has stable rank one in $A^\sim$.
\end{theorem}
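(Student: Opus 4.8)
The plan is to split the statement into a permanence reduction followed by a single approximation result, and to carry out the latter via a concrete criterion for approximability by invertibles. For the reduction, I would first observe that to witness almost stable rank one it suffices, given a hereditary subalgebra $B \subseteq A$ and an element $b \in B$, to approximate $b$ by invertibles of $B^\sim$. Setting $h := b^*b + bb^*$ and $B_0 := \overline{hBh}$, one checks that $B_0$ is a $\sigma$-unital hereditary subalgebra of $A$ containing $b$, and that the unital inclusion $B_0^\sim \hookrightarrow B^\sim$ carries invertibles to invertibles. Since $B_0$ is projectionless (being a subalgebra of the projectionless $A$) and inherits $\Z$-stability from $A$ by the standard permanence of $\Z$-stability under passage to $\sigma$-unital hereditary subalgebras, it suffices to prove: \emph{every $\sigma$-unital, $\Z$-stable, projectionless $\mathrm{C}^*$-algebra $A$ has stable rank one in $A^\sim$.}

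For this I would use the elementary reformulation that, for $a \in A$ with polar decomposition $a = v|a|$ in $A^{**}$, the element $a$ is a norm-limit of invertibles of $A^\sim$ if and only if there are unitaries $u_n \in A^\sim$ with $\|u_n|a| - a\| \to 0$. (Given invertibles $b_n \to a$, take $u_n := b_n|b_n|^{-1}$; conversely $u_n(|a| + \tfrac1n)$ is invertible and converges to $a$.) The honest element $w_\delta := a(a^*a + \delta)^{-1/2} \in A$ satisfies $w_\delta^*w_\delta = g_\delta(a^*a)$ and $w_\delta w_\delta^* = g_\delta(aa^*)$, where $g_\delta(t) = t/(t+\delta)$, and approximates $v$ as $\delta \to 0^+$. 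Thus the task reduces to \emph{extending} $w_\delta$ to a unitary $u \in A^\sim$, that is, to filling the domain defect $1 - w_\delta^*w_\delta$ onto the range defect $1 - w_\delta w_\delta^*$ by an invertible while perturbing $w_\delta$ only slightly, so that $u|a| \approx a$.

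The hard part is precisely this extension step. The two defects are the ``kernel'' and ``cokernel'' of $a$, and in a general finite $\mathrm{C}^*$-algebra matching them carries an index- or $K_1$-type obstruction; moreover, converting a Cuntz comparison of positive elements into an honest connecting partial isometry is exactly what stable rank one usually supplies, so the argument must avoid circularity. I would break this by working inside $A \cong A \otimes \Z$: projectionlessness removes any genuine-projection obstruction to matching the two supports, while $\Z$-stability provides the room to realise the needed comparison by honest elements -- through R{\o}rdam's theorem that $\mathrm{Cu}(A)$ is almost unperforated \cite{Ro04} together with the approximate divisibility it supplies -- and, crucially, a reservoir of unitaries in a tensor copy of $\Z$ (whose unitary group is connected) along which $w_\delta^*w_\delta$ can be connected to $w_\delta w_\delta^*$. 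Assembling these ingredients into a single unitary $u \in A^\sim$ with $u|a| \approx a$ is the technical heart of the argument, and the place where I would expect the real work to lie.
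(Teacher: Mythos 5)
First, note that the paper offers no proof of this statement: it is imported verbatim from \cite[Corollary 3.2]{Rob16}, so the only meaningful comparison is with Robert's argument (whose mechanism the paper does transplant into its own setting in Lemmas \ref{lem:Robertsr1New} and \ref{lem:GapInvertibles} of the appendix). Your opening reduction --- pass to $B_0=\overline{hBh}$ with $h=b^*b+bb^*$, invoke permanence of $\Z$-stability and projectionlessness for hereditary subalgebras, and observe that the unital inclusion $B_0^\sim\hookrightarrow B^\sim$ carries invertibles to invertibles --- is correct, as is the reformulation via unitaries $u$ with $u|a|\approx a$ and the introduction of $w_\delta=a(a^*a+\delta)^{-1/2}$. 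The problem is everything after that. You correctly diagnose that matching the two defects $1-w_\delta^*w_\delta$ and $1-w_\delta w_\delta^*$ by upgrading a Cuntz comparison to an exact connecting element is precisely what stable rank one is normally needed for, and then you stop: ``assembling these ingredients into a single unitary\dots is the place where I would expect the real work to lie.'' That step \emph{is} the theorem. Almost unperforation of $\mathrm{Cu}(A)$ and the connectedness of the unitary group of $\Z$ supply comparison and divisibility data, but nothing in the proposal produces an element $r\in A^\sim$ with $r^*r$ and $rr^*$ \emph{exactly} equal to the two defects, and without that the unitary never materialises. As written, the proposal is an accurate framing of the difficulty rather than a proof.

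The argument that actually works is structurally different and sidesteps the polar decomposition entirely; this is the idea you are missing. The engine of \cite{Rob16} is its Lemma 2.1: if $x$ is orthogonal on both sides to a suitable full positive element, then $x\otimes 1_{\Z}$ is a \emph{product of two nilpotent elements} of $A\otimes\Z$. Since a nilpotent $n$ has $\mathrm{sp}(n)=\{0\}$, the element $n+\epsilon 1$ is invertible in the unitisation, so a product of two nilpotents is automatically a norm limit of invertibles --- no unitary, no defect matching, and no $K_1$-type obstruction ever arises. Projectionlessness is then used only to manufacture the orthogonal element: the hereditary subalgebra generated by $a^*a+aa^*$ cannot be unital (a unit would be a projection), so $0$ is a non-isolated point of the spectrum of its strictly positive element $h$; cutting $a$ down by $g(h)$ with $g$ vanishing near $0$ gives arbitrarily good approximants of $a$ that are two-sidedly orthogonal to non-zero positive elements supported at the bottom of the spectrum of $h$, with a further $\Z$-stability argument arranging the fullness required by Lemma 2.1. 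If you want to complete your write-up, the defect-matching step should be replaced by this nilpotent mechanism; the paper's appendix (Lemmas \ref{lem:Robertsr1New} and \ref{lem:GapInvertibles}) exhibits exactly this pattern in the relative-commutant setting where it is actually used.
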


We now prove that having stable rank one in the unitisation passes to ultraproducts. We employ the notation $[(a_n)]$ for the element of the ultraproduct defined by the bounded sequence $(a_n)$. First, let us record that taking unitisations commutes with taking the ultraproduct. The proof of this lemma is straightforward and we omit it.

\begin{lemma}\label{lem:UnitisationUltraproduct}
	Let $(A_n)_{n\in\N}$ be a sequence of $\mathrm{C}^*$-algebras. The canonical inclusion $\prod_{n \rightarrow \omega} A_n \rightarrow \prod_{n \rightarrow \omega} A_n^\sim$ extends to an isomorphism 
	\begin{equation}
	\left(\prod_{n \rightarrow \omega} A_n\right)^\sim \cong \prod_{n \rightarrow \omega} A_n^\sim.
	\end{equation}
\end{lemma}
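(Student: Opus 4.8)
The plan is to realise the unitisation via the canonical scalar character and then pin down its kernel. For each $n$, write $\pi_n : A_n^\sim \rightarrow \mathbb{C}$ for the character sending $(a,\lambda) \mapsto \lambda$, so that $\ker \pi_n = A_n$ and $|\lambda| = |\pi_n(a,\lambda)| \leq \|(a,\lambda)\|_{A_n^\sim}$. Since every element of $\prod_{n \rightarrow \omega} A_n^\sim$ is represented by a \emph{bounded} sequence $(a_n,\lambda_n)_{n\in\N}$, this estimate shows that the scalars $(\lambda_n)$ are bounded, so I can define $\pi_\omega : \prod_{n \rightarrow \omega} A_n^\sim \rightarrow \mathbb{C}$ by $[(a_n,\lambda_n)] \mapsto \lim_{n\rightarrow\omega}\lambda_n$. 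The same estimate shows $\pi_\omega$ is independent of the chosen representative, and since $\pi_n$ and the ultralimit of a bounded scalar sequence are both multiplicative and $*$-preserving, $\pi_\omega$ is a character with $\pi_\omega(1) = 1$ (here I use $\prod_{n\rightarrow\omega}\mathbb{C} \cong \mathbb{C}$).

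The crux is to identify $\ker \pi_\omega$ with the canonical copy of $\prod_{n \rightarrow \omega} A_n$. The inclusion $\prod_{n \rightarrow \omega} A_n \subseteq \ker\pi_\omega$ is immediate. Conversely, if $\lim_{n\rightarrow\omega}\lambda_n = 0$, then $\|(a_n,\lambda_n) - (a_n,0)\|_{A_n^\sim} = |\lambda_n| \rightarrow 0$ along $\omega$, so $[(a_n,\lambda_n)] = [(a_n,0)]$ already lies in $\prod_{n \rightarrow \omega} A_n$. Thus I obtain a short exact sequence $0 \rightarrow \prod_{n \rightarrow \omega} A_n \rightarrow \prod_{n \rightarrow \omega} A_n^\sim \rightarrow \mathbb{C} \rightarrow 0$, with $\pi_\omega$ the surjection, in which the middle algebra is unital.

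To finish, I invoke the elementary fact that if $B$ is a unital $\mathrm{C}^*$-algebra, $\chi : B \rightarrow \mathbb{C}$ a character, and $I := \ker\chi$, then the canonical unital $*$-homomorphism $I^\sim \rightarrow B$, $(a,\lambda) \mapsto a + \lambda 1_B$, is a $*$-isomorphism: surjectivity holds because $b = (b - \chi(b)1_B) + \chi(b)1_B$ with $b - \chi(b)1_B \in I$, and injectivity holds because $a + \lambda 1_B = 0$ forces $\lambda = \chi(a + \lambda 1_B) = 0$ and hence $a = 0$. This remains valid when $I$ is unital, which is the relevant subtlety for our convention that $A^\sim$ always adjoins a new unit, since $\chi(1_B) = 1 \neq 0$ guarantees $1_B \notin I$. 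Applying this with $B = \prod_{n \rightarrow \omega} A_n^\sim$, $\chi = \pi_\omega$, and $I = \prod_{n \rightarrow \omega} A_n$ yields the desired isomorphism; by construction it restricts to the canonical inclusion on $\prod_{n \rightarrow \omega} A_n$ and sends the adjoined unit to $1$, so it is exactly the extension asserted in the statement.

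I expect the only genuine content to be the kernel computation in the second paragraph --- that an ultraproduct class whose scalar parts vanish along $\omega$ already lies in the canonical copy of $\prod_{n \rightarrow \omega} A_n$; everything else is the formal behaviour of characters and unitisations. The main point to watch is the boundedness of the scalar parts, which is precisely what the character estimate $|\lambda| \leq \|(a,\lambda)\|$ supplies.
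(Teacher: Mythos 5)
Your proof is correct and complete: the identification of $\ker\pi_\omega$ with the canonical copy of $\prod_{n\to\omega}A_n$ and the general fact about the kernel of a character on a unital $\mathrm{C}^*$-algebra together give exactly the asserted isomorphism, and you have correctly handled the convention that a new unit is always adjoined. The paper omits the proof as straightforward, so there is nothing to compare against, but your argument is the standard one and I see no gaps.
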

%

We now proceed to show that having stable rank one in the unitisation passes to ultraproducts. 

\begin{proposition}\label{prop:SR1inUnitisationUltrapowers}
	Let $(A_n)$ be a sequence of $\mathrm{C}^*$-algebras. Suppose for each $n \in \mathbb{N}$, $A_n$ has stable rank one in $A_n^\sim$. Then $A_\omega := \prod_{n \rightarrow \omega} A_n$ has stable rank one in $A_\omega^\sim$. 
\end{proposition}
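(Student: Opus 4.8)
The plan is to reduce to a coordinatewise approximation in which the norms of the inverses are controlled uniformly in $n$, and then assemble the approximant in the ultraproduct. Throughout I identify $A_\omega^\sim \cong \prod_{n\rightarrow\omega} A_n^\sim$ using Lemma \ref{lem:UnitisationUltraproduct}, and I fix $x = [(x_n)] \in A_\omega$ with a bounded lift $(x_n)$, where $x_n \in A_n$. By Definition \ref{def:StableRankOneInUnitisation}, it suffices to produce, for each $\delta > 0$, an invertible $u \in A_\omega^\sim$ with $\|x - u\| \leq \delta$.

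The key step is the following sharpening of stable rank one in the unitisation, which I would isolate as a separate lemma: if $B$ has stable rank one in $B^\sim$, then for every $y \in B$ and every $\delta > 0$ there is an invertible $u \in B^\sim$ with $\|y - u\| < \delta$ \emph{and} $\|u^{-1}\| \leq 2/\delta$. To prove this, first choose an invertible $v \in B^\sim$ with $\|y - v\| < \delta/2$. Let $v = w|v|$ be the polar decomposition, so that $w := v|v|^{-1}$ is a unitary in the unital $\mathrm{C}^*$-algebra $B^\sim$ and $|v| = (v^*v)^{1/2}$ is positive and invertible. Applying continuous functional calculus, set $u := w\max(|v|, \delta/2)$. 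Since $\max(|v|,\delta/2) \geq (\delta/2)\,1$, the element $u$ is invertible with $\|u^{-1}\| \leq 2/\delta$; and since $w$ is unitary, $\|v - u\| = \big\||v| - \max(|v|,\delta/2)\big\| \leq \delta/2$, whence $\|y - u\| < \delta$. This functional-calculus trick is the crux of the argument: it converts a ``barely invertible'' approximant into an invertible one whose inverse norm is bounded purely in terms of $\delta$.

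With this in hand the ultraproduct assembly is routine. For fixed $\delta > 0$, I apply the lemma in each $A_n$ to the element $x_n$, obtaining invertibles $u_n \in A_n^\sim$ with $\|x_n - u_n\| < \delta$ and $\|u_n^{-1}\| \leq 2/\delta$. Because the inverses are uniformly bounded, the sequence $(u_n^{-1})$ is bounded and hence defines an element $[(u_n^{-1})] \in \prod_{n\rightarrow\omega} A_n^\sim$; as $[(u_n)]\,[(u_n^{-1})] = [(1)] = [(u_n^{-1})]\,[(u_n)]$, the element $u := [(u_n)]$ is invertible in $\prod_{n\rightarrow\omega} A_n^\sim \cong A_\omega^\sim$. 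Finally $\|x - u\| = \lim_{n\rightarrow\omega}\|x_n - u_n\| \leq \delta$. Since $\delta > 0$ was arbitrary, $x$ is a norm-limit of invertibles in $A_\omega^\sim$, which is exactly the claim.

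I expect the only genuine obstacle to be the uniform bound on $\|u_n^{-1}\|$: the bare hypothesis that every element is a limit of invertibles gives approximants whose inverse norms could blow up with $n$, in which case $[(u_n)]$ would fail to be invertible in the ultraproduct. The polar-decomposition and functional-calculus argument of the second paragraph is precisely what removes this difficulty, and in writing it out carefully I would verify that $w$ is genuinely unitary (which relies on invertibility of $v$ in the unital algebra $B^\sim$) and that all the functional-calculus elements indeed lie in $B^\sim$ rather than in some larger algebra.
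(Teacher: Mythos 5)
Your proof is correct, and it correctly identifies the one genuine issue (uniform control of the inverse norms) that makes the statement non-trivial; but you resolve it by a slightly different device than the paper. The paper invokes \cite[Lemma 1.20]{BBSTWW}, which characterises being a norm limit of invertibles in $A_n^\sim$ by the existence, for each $\epsilon>0$, of a \emph{unitary} $u_n\in A_n^\sim$ with $a_n\approx_\epsilon u_n|a_n|$; since unitaries have inverse norm $1$, the sequence $(u_n)$ passes to a unitary $[(u_n)]$ in the ultraproduct with $x=[(u_n)]|x|$ holding exactly, and a second application of the same lemma finishes the argument. You instead prove a quantitative refinement of Definition \ref{def:StableRankOneInUnitisation}: an invertible approximant $u$ within $\delta$ of $y$ can be arranged to satisfy $\|u^{-1}\|\le 2/\delta$, via the polar decomposition $v=w|v|$ and the cutoff $u:=w\max(|v|,\delta/2)$. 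Both arguments hinge on the same polar-decomposition mechanism; yours is self-contained (it does not delegate to the BBSTWW lemma) at the cost of carrying the explicit bound $2/\delta$ through the ultraproduct, while the paper's is shorter because the unitary formulation makes the inverse-norm issue invisible. The details of your lemma check out: $w$ is unitary because an invertible isometry in a unital $\mathrm{C}^*$-algebra is unitary, $\max(|v|,\delta/2)\ge(\delta/2)1_{B^\sim}$ gives the inverse bound, and $\|\,|v|-\max(|v|,\delta/2)\|\le\delta/2$ by functional calculus, so the assembly in $\prod_{n\to\omega}A_n^\sim\cong A_\omega^\sim$ goes through as you describe.
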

\begin{proof}
	Let $x \in A_\omega$ and say $x = [(a_n)]$. By Theorem \ref{thm:ZstableProjectionless} and \cite[Lemma 1.20]{BBSTWW}, 
	for each $n \in \mathbb{N}$ there is a unitary $u_n \in A_n^\sim$ such that $a_n \approx_{1/n} u_n|a_n|$. We then have $x = [(u_n)]|x| \in \prod_{n \rightarrow \omega} A_n^\sim$. By \cite[Lemma 1.20]{BBSTWW} 
	once more, $x$ is a norm limit of invertible elements in $\prod_{n \rightarrow \omega} A_n^\sim$. By  Lemma \ref{lem:UnitisationUltraproduct}, $\prod_{n \rightarrow \omega} A_n^\sim$ is just $A_\omega^\sim$. 
\end{proof}

\subsection{Complemented Partitions of Unity}
The key technical tool in \cite{CETWW} was the complemented partitions of unity technique which enabled Theorem \ref{thm:NewMain2} to be proven in the unital case.
This property is best formulated in terms of the tracial ultrapower $A^\omega$ of a separable $\mathrm{C}^*$-algebra with $T(A)$ non-empty and compact. These assumptions imply that $A^\omega$ is unital, with any sequential approximate identity representing the unit \cite[Proposition 1.11]{CETWW}. 
We refer to \cite[Definition G]{CETWW} for a detailed explanation of the ideas behind this definition. 

\begin{definition}\label{defn:CPOU}
	Let $A$ be a separable $\mathrm{C}^*$-algebra with $Q\widetilde{T}(A) = \widetilde{T}_b(A) \neq 0$ and $T(A)$ compact.
	We say that $A$ has \emph{complemented partitions of unity} (CPoU) if for every $\|\cdot\|_{2,T_\omega(A)}$-separable subset $S$ of $A^\omega$, every family $a_1,\dots,a_k \in (A^\omega)_+$, and any scalar
	\begin{equation}
	\label{eq:CPoUTraceIneq1}
	\delta>\sup_{\tau\in T_\omega(A)}\min_{i=1,\dots,k}\tau(a_i),
	\end{equation}
	there exist orthogonal projections $p_1,\dots,p_k\in A^\omega\cap S'$ such that
	\begin{equation}
	p_1+\cdots+p_k = 1_{A^\omega} \ \text{and}\ 
	\tau(a_ip_i)\leq \delta\tau(p_i), \tau\in T_\omega(A), i=1,\dots,k.
	\end{equation}
\end{definition}

The following theorem gives sufficient conditions for a $\mathrm{C}^*$-algebra to have complemented partitions of unity. Although not necessary for our purposes, the hypothesis of $\Z$-stability can be weakened to \emph{uniform property $\Gamma$}; see \cite[Section 2]{CETWW} for more details.

\begin{theorem}[{\cite[Theorem I]{CETWW}\label{thm:CPoU}}]
	Let $A$ be a separable, nuclear, $\Z$-stable $\mathrm{C}^*$-algebra with $Q\widetilde{T}(A) = \widetilde{T}_b(A) \neq 0$ and $T(A)$ compact. Then $A$ has complemented partitions of unity.
\end{theorem}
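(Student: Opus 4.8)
The approach is to reduce the statement to two structural consequences of the hypotheses: \emph{strict comparison} and \emph{uniform property} $\Gamma$. By Proposition~\ref{prop:ZstableToSC}, the assumption that $A$ is $\Z$-stable with $Q\widetilde{T}(A) = \widetilde{T}_b(A) \neq 0$ already yields strict comparison of positive elements with respect to bounded traces, and this is what will let us manufacture projections of prescribed trace inside the ultrapower. The genuinely new structural input I would establish first is uniform property $\Gamma$: for every $m \in \N$ and every $\|\cdot\|_{2,T_\omega(A)}$-separable set $S \subseteq A^\omega$ there are orthogonal projections $q_1, \dots, q_m \in A^\omega \cap S'$ with $\sum_j q_j = 1_{A^\omega}$ and $\tau(x q_j) = \tfrac{1}{m}\tau(x)$ for all $\tau \in T_\omega(A)$ and all $x$ in the separable subalgebra generated by $S$ and the $a_i$. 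For $\Z$-stable $A$ this follows from the approximately central matrix structure of $\Z$ together with the weak$^*$-density of limit traces (Theorem~\ref{thm:NoSillyTraces}).

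With these in hand, the plan for CPoU itself is a local-to-global argument over the trace space. The hypothesis $\delta > \sup_{\tau \in T_\omega(A)} \min_i \tau(a_i)$ says precisely that every limit trace $\tau$ admits a \emph{winning index} $i$ with $\tau(a_i) < \delta$. Passing to the tracial von Neumann completion $\M := \pi_{T_\omega}(A^\omega)''$, whose centre $Z(\M)$ is an $L^\infty$-space over the compact trace space, I would build a measurable partition of the trace space into pieces $W_1, \dots, W_k$ on which the local trace of $a_i$ is at most $\delta$, yielding a central partition of unity by the characteristic projections $\chi_{W_i} \in Z(\M)$. The objective is then to lift this to honest mutually orthogonal projections $p_1, \dots, p_k \in A^\omega \cap S'$ with $p_1 + \cdots + p_k = 1_{A^\omega}$; once $p_i$ is tracially supported where the local trace of $a_i$ is at most $\delta$, the inequality $\tau(a_i p_i) \leq \delta\,\tau(p_i)$ follows by disintegrating $\tau(a_i p_i)$ over the trace space.

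This realisation step is where I expect the main obstacle to lie. Translating the measurable central partition $(\chi_{W_i})$ back into genuine orthogonal projections living in the \emph{C$^*$-algebra} $A^\omega \cap S'$ --- rather than merely in its weak closure $\M$ --- with exact trace control and exact summation to the unit, is delicate precisely because projections in C$^*$-algebras are rigid. Here the two structural inputs combine: uniform property $\Gamma$ is used to refine the partition finely enough (subdividing traces evenly via the $q_j$) that the local traces of each $a_i$ are nearly constant on each piece, and to ensure the resulting projections commute with $S$; meanwhile strict comparison is used to produce, inside $A^\omega \cap S'$, projections whose traces match the prescribed central values $\tau(\chi_{W_i})$. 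Together these would first yield, for each tolerance $\epsilon > 0$, an \emph{approximate} solution: projections that are almost orthogonal, almost sum to $1_{A^\omega}$, and satisfy $\tau(a_i p_i) \leq \delta\,\tau(p_i) + \epsilon$.

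Finally, I would upgrade the approximate solutions to an exact one by a reindexing argument. Encoding the requirements that the $p_i$ be orthogonal projections in $S'$ summing to the unit, together with the test functions $\max_i \sup_{\tau \in T(A)} \max\!\bigl(0,\, \tau(a_i p_i) - \delta\tau(p_i)\bigr)$, as a countable family of error functions on the relevant sequence spaces, Kirchberg's Epsilon Test (Lemma~\ref{epstest}) converts the family of $\epsilon$-approximate partitions of unity into a single exact complemented partition of unity, completing the proof.
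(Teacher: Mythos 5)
The first thing to say is that the paper does not prove this statement: it is imported verbatim as \cite[Theorem I]{CETWW}, so the only possible basis for comparison is the proof given in that reference. Your outline does capture several genuine ingredients of that proof. There, too, $\Z$-stability is first converted into uniform property $\Gamma$ (via approximately central order zero maps from matrix algebras and the density of limit traces), the heart of the argument is a local-to-global procedure over the compact trace space, and Kirchberg's Epsilon Test is used at the end to convert approximate partitions into exact ones. So the skeleton is recognisable.

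However, the step you yourself flag as ``where I expect the main obstacle to lie'' --- realising the central partition $(\chi_{W_i})$ of the tracial von Neumann completion by honest orthogonal projections in the $\mathrm{C}^*$-algebra $A^\omega\cap S'$ --- is essentially the entire content of the theorem, and the tools you propose for it do not suffice. Strict comparison cannot ``manufacture projections of prescribed trace'': it is a statement about Cuntz subequivalence of positive elements and produces no projections whatsoever (in the situation this paper actually needs, $A$ is stably projectionless, so every projection in $A^\omega\cap S'$ must be built by hand; in \cite{CETWW} they all come from uniform property $\Gamma$, and strict comparison plays no role in the proof of Theorem I there). More seriously, nuclearity --- a hypothesis of the theorem --- never enters your argument, yet it is exactly what powers the realisation step in \cite{CETWW}: one first produces a partition of unity by \emph{positive contractions} $e_1,\dots,e_k\in A^\omega\cap S'$ with $\sum_i e_i=1_{A^\omega}$ and $\tau(a_ie_i)\le\delta\tau(e_i)$, using completely positive approximations of $A$ through finite-dimensional algebras together with compactness of $T(A)$, and only afterwards uses uniform property $\Gamma$ to round the $e_i$ into orthogonal projections summing to $1_{A^\omega}$. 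Your proposal inverts this order (projections first, traces matched afterwards) and, without an argument that actually uses nuclearity to pass from functions on the trace space back into $A^\omega\cap S'$, it reduces the theorem to an unproved --- and in fact harder --- lifting problem.
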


\section{Reductions}\label{sec:Reductions}

In this section, we show how Brown's Theorem \cite[Theorem 2.8]{Br77} can be used to reduce the task of proving Theorem \ref{thm:NewMain2} in general to proving it for unital $\mathrm{C}^*$-algebras and for stably projectionless $\mathrm{C}^*$-algebras with a compact trace space.
We begin with the general statement of Brown's Theorem.

\begin{theorem}[{\cite[Theorem 2.8]{Br77}}]\label{thm:Brown} 
	Let $B$ be a full hereditary subalgebra of a $\mathrm{C}^*$-algebra $A$. Suppose both $A$ and $B$ are $\sigma$-unital. Then $B$ is stably isomorphic to $A$.
\end{theorem}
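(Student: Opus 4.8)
The plan is to prove that $B$ and $A$ are Morita equivalent and then upgrade Morita equivalence to stable isomorphism, using $\sigma$-unitality together with Kasparov's stabilisation theorem. The $\sigma$-unitality hypotheses enter exactly in this second step.

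First I would construct an explicit imprimitivity bimodule. Set $X := \overline{BA} \subseteq A$, regarded as a right Hilbert $A$-module with right multiplication and inner product $\langle \xi, \eta\rangle_A := \xi^*\eta$, and equipped with the left $B$-action by multiplication. Using that $B$ is hereditary (so $\overline{BAB} = B$) and full (so $\overline{ABA} = A$), one checks that the $A$-valued inner product is full, since $\overline{\langle X, X\rangle_A} = \overline{ABBA} = \overline{ABA} = A$, and that the $B$-valued inner product ${}_B\langle \xi, \eta\rangle := \xi\eta^*$ identifies $B$ with the compact operators $\mathcal{K}(X)$ on the right Hilbert module $X$. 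Thus $X$ is a $B$--$A$ imprimitivity bimodule and $B \cong \mathcal{K}(X)$.

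Next I would note that $X$ is countably generated: since $B$ and $A$ are $\sigma$-unital, a strictly positive element of $B$ together with a countable approximate unit for $A$ generate $X$. Writing $H_A = \ell^2 \otimes A$ for the standard Hilbert $A$-module, the whole theorem reduces to the Hilbert-module absorption statement
\begin{equation}
X \otimes \ell^2 \cong H_A. \notag
\end{equation}
Granting this, I conclude immediately: on the one hand $\mathcal{K}(X \otimes \ell^2) = \mathcal{K}(X) \otimes \mathbb{K} \cong B \otimes \mathbb{K}$, while on the other hand $\mathcal{K}(H_A) \cong A \otimes \mathbb{K}$, so that $B \otimes \mathbb{K} \cong A \otimes \mathbb{K}$, which is the required stable isomorphism.

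The main obstacle is precisely the absorption isomorphism $X \otimes \ell^2 \cong H_A$. Kasparov's stabilisation theorem supplies one direction, namely $X \oplus H_A \cong H_A$ for any countably generated $X$; the content that must be extracted from \emph{fullness} of $X$ is that, after tensoring with $\ell^2$, the standard module $H_A$ is in turn absorbed by $X \otimes \ell^2$, so that the two countable direct sums coincide. I expect this back-and-forth absorption — a Hilbert-module analogue of the Cuntz--Pedersen interpolation technique, and the step where $\sigma$-unitality is genuinely needed — to be the crux of the argument. It is also where the hypotheses cannot be relaxed, since full corners of non-$\sigma$-unital C$^*$-algebras need not be stably isomorphic to the ambient algebra.
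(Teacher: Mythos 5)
The paper does not prove this statement at all: Theorem \ref{thm:Brown} is imported verbatim from Brown's 1977 article, and the only ``proof'' in the paper is the citation. So there is no in-paper argument to match yours against; what can be said is that your route is the standard modern one (the Brown--Green--Rieffel/Hilbert-module proof), whereas Brown's original argument is element-theoretic -- he works directly with strictly positive elements of $B\otimes\K$ and $A\otimes\K$ and produces a partial isometry in the multiplier algebra $M(A\otimes\K)$ conjugating one onto the other. Your bimodule setup is correct: $X=\overline{BA}$ with $\langle\xi,\eta\rangle_A=\xi^*\eta$ is a full right Hilbert $A$-module (fullness of $B$ gives $\overline{ABA}=A$), heredity gives $\xi\eta^*\in B$ and $\overline{BAB}=B$, so $B\cong\mathcal K(X)$, and $\sigma$-unitality of $B$ makes $X$ singly generated (take $X=\overline{hA}$ for $h$ strictly positive in $B$). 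The reduction to $X\otimes\ell^2\cong H_A$ and the passage $\mathcal K(X\otimes\ell^2)\cong B\otimes\K$, $\mathcal K(H_A)\cong A\otimes\K$ are all sound.

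The one genuine gap is the step you yourself flag: the absorption $X\otimes\ell^2\cong H_A$ is asserted, not proved, and it is where essentially all of the content of the theorem lives -- it is not a formal consequence of Kasparov stabilisation. To close it you need two absorptions: $X^\infty\oplus H_A\cong H_A$ (this one \emph{is} Kasparov, since $X^\infty:=X\otimes\ell^2$ is countably generated) and $H_A\oplus X^\infty\cong X^\infty$; commutativity of $\oplus$ then gives $H_A\cong H_A\oplus X^\infty\cong X^\infty$. The second absorption is the missing lemma: using fullness, write a strictly positive element $a\in A$ as a norm limit of finite sums $\sum_n\langle x_n,y_n\rangle_A$ and use this to build an adjointable isometry $A\to X^\infty$ with complemented range, whence $A$ (and then $H_A$, after another application of the swindle) embeds as a direct summand of $X^\infty$. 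This is the Mingo--Phillips argument (see also Lance's book); it genuinely uses $\sigma$-unitality of $A$ and fullness of $X$, exactly as you predict, but as written your proposal reduces Brown's theorem to a statement of essentially equal depth without establishing it.
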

In our applications, we shall be working with $\mathrm{C}^*$-algebras that are simple and separable. Hence, the fullness and $\sigma$-unitality conditions will be satisfied. We shall therefore use the following form of Brown's Theorem.
\begin{corollary} \label{cor:Brown} 
	Let $B$ be a non-zero hereditary subalgebra of a simple, separable $\mathrm{C}^*$-algebra $A$. Then $B$ is stably isomorphic to $A$.
\end{corollary}

The utility of Brown's Theorem for this paper derives from the fact that the hypotheses and conclusion of Theorem \ref{thm:NewMain2} are invariant under stable isomorphism. We state this formally below.  
\begin{proposition}[{\cite{KW04,WZ10,TW07}}]
		\label{thm:StableIsoInvariants}
	Let $A$ be a $\mathrm{C}^*$-algebra. Then
	\begin{enumerate}
		\item $A$ is simple if and only if $A \otimes \mathbb{K}$ is simple,
		\item $A$ is separable if and only if $A \otimes \mathbb{K}$ is separable,
		\item $A$ is nuclear if and only if $A \otimes \mathbb{K}$ is nuclear,
		\item $\dr(A) = \dr(A \otimes \mathbb{K})$,
		\item $\mathrm{dim}_{\mathrm{nuc}}(A) = \mathrm{dim}_{\mathrm{nuc}}(A  \otimes \mathbb{K})$,
		\item $A$ is separable and $\Z$-stable if and only if $A \otimes \mathbb{K}$ is separable and $\Z$-stable.
	\end{enumerate}
\end{proposition}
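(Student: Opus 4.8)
The plan is to prove each ``forward'' implication (from $A$ to $A \otimes \K$) by tensoring up, and each ``reverse'' implication (from $A \otimes \K$ to $A$) by exploiting that $A$ sits inside $A \otimes \K$ as the hereditary subalgebra $A \otimes \mathbb{C}e_{11} \cong A$, where $e_{11} \in \K$ is a rank-one projection; each property then descends via its permanence under passage to hereditary subalgebras. The common ingredient for the forward directions is that $\K$ is separable, nuclear, simple, and satisfies $\dimnuc \K = \dr \K = 0$; the common ingredient for the reverse directions is this corner embedding, which is readily checked to be hereditary by compressing by $1 \otimes e_{11}$ in the multiplier algebra $M(A \otimes \K)$.

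For (1)--(3) I would invoke standard permanence properties. Simplicity: the lattice of closed two-sided ideals of $A \otimes \K$ is isomorphic to that of $A$ (the ideals being precisely those of the form $I \otimes \K$), so $A$ is simple if and only if $A \otimes \K$ is. Separability: $A \otimes \K$ is the minimal tensor product of two separable $\mathrm{C}^*$-algebras, hence separable, while conversely any $\mathrm{C}^*$-subalgebra of a separable $\mathrm{C}^*$-algebra is separable. Nuclearity: $A \otimes \K$ is nuclear whenever $A$ is, since $\K$ is nuclear, and for the converse nuclearity passes to hereditary subalgebras.

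For (4) and (5) I would assemble the permanence properties of $\dimnuc$ and $\dr$ from \cite{WZ10} and \cite{KW04} respectively. Writing $A \otimes \K = \varinjlim_n M_n(A)$, one has $\dimnuc M_n(A) = \dimnuc A$ for each $n$ (matricial stabilisation leaves the nuclear dimension unchanged), and lower semicontinuity of $\dimnuc$ under inductive limits yields $\dimnuc(A \otimes \K) \leq \dimnuc A$. The reverse inequality $\dimnuc A \leq \dimnuc(A \otimes \K)$ follows because $A$ is a hereditary subalgebra of $A \otimes \K$ and $\dimnuc$ does not increase on passing to hereditary subalgebras. The identical argument, using the corresponding results of \cite{KW04}, establishes (4).

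Finally, for (6): separability is handled by (2), so only $\Z$-stability is at issue. The forward direction is immediate, since $A \cong A \otimes \Z$ gives $A \otimes \K \cong A \otimes \Z \otimes \K \cong (A \otimes \K) \otimes \Z$. For the reverse direction I would appeal to \cite{TW07}: as $\Z$ is strongly self-absorbing, $\Z$-stability is a stable-isomorphism invariant among \emph{separable} $\mathrm{C}^*$-algebras, so $A$ is $\Z$-stable whenever $A \otimes \K$ is. This last point is the only genuinely nontrivial step: it rests on the Toms--Winter theory of strongly self-absorbing algebras and, crucially, on separability, which is exactly why the hypothesis is stated explicitly in (6) while being unnecessary in (1)--(5).
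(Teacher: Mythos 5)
Your proposal is correct. The paper itself offers no argument beyond citations (parts (i)--(iii) to \cite[Chapter IV.3]{Bl06}, (iv) to \cite[Corollary 3.9]{KW04}, (v) to \cite[Corollary 2.8]{WZ10}, and (vi) to \cite[Corollary 3.2]{TW07}), and your sketch---tensoring up for the forward directions and descending to the hereditary corner $A \otimes e_{11}$ for the reverse ones, with the inductive-limit and matrix-stability permanence properties handling $\dimnuc$ and $\dr$, and Toms--Winter for $\Z$-stability---is precisely the standard argument underlying those cited results, including the correct observation that separability is only genuinely needed in (vi).
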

\begin{proof}
	Properties (i-iii) are well known; see for example \cite[Chapter IV.3]{Bl06}. Part (iv) is \cite[Corollary 3.9]{KW04}. Part (v) is \cite[Corollary 2.8]{WZ10}. Part (vi) is \cite[Corollary 3.2]{TW07}. 
\end{proof}

Next, we recall that a $\mathrm{C}^*$-algebra $A$ is \emph{stably projectionless} if there are no non-zero projections in $A \otimes \mathbb{K}$. By definition, this property is preserved under stable isomorphism. Stably projectionless $\mathrm{C}^*$-algebras can be viewed as highly non-unital $\mathrm{C}^*$-algebras. Indeed, the following folklore result establishes a dichotomy for simple, separable $\mathrm{C}^*$-algebras.  
\begin{proposition}\label{prop:Dichotomy}
	Let $A$ be a non-zero, simple, separable $\mathrm{C}^*$-algebra. Then exactly one of the following holds.
	\begin{itemize}
		\item[(a)] $A$ is stably isomorphic to a unital $\mathrm{C}^*$-algebra.
		\item[(b)] $A$ is stably projectionless. 	
	\end{itemize}	 
\end{proposition}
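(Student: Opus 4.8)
The plan is to show the two alternatives are mutually exclusive and that at least one of them always holds, the latter being where Brown's Theorem does the work, in the form of Corollary \ref{cor:Brown}.

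First I would verify that (a) and (b) cannot hold simultaneously. Suppose $A$ is stably isomorphic to a unital $\mathrm{C}^*$-algebra $B$, so that $A \otimes \mathbb{K} \cong B \otimes \mathbb{K}$. Then $B \otimes \mathbb{K}$ contains the non-zero projection $1_B \otimes e$, where $e \in \mathbb{K}$ is any rank-one projection; transporting this across the isomorphism produces a non-zero projection in $A \otimes \mathbb{K}$, so $A$ fails to be stably projectionless. Hence (a) rules out (b).

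Next, to see that at least one alternative holds, I would assume (b) fails and deduce (a). If $A$ is not stably projectionless, there is a non-zero projection $p \in A \otimes \mathbb{K}$, and the corner $B := p(A \otimes \mathbb{K})p$ is then a non-zero hereditary subalgebra of $A \otimes \mathbb{K}$ which is \emph{unital}, with unit $p$. By Proposition \ref{thm:StableIsoInvariants}(i)--(ii), $A \otimes \mathbb{K}$ is simple and separable, so Corollary \ref{cor:Brown} applies and gives that $B$ is stably isomorphic to $A \otimes \mathbb{K}$, that is, $B \otimes \mathbb{K} \cong (A \otimes \mathbb{K}) \otimes \mathbb{K}$. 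Using $\mathbb{K} \otimes \mathbb{K} \cong \mathbb{K}$ and associativity of the minimal tensor product, the right-hand side is isomorphic to $A \otimes \mathbb{K}$, whence $B \otimes \mathbb{K} \cong A \otimes \mathbb{K}$. This is precisely the assertion that $A$ is stably isomorphic to the unital $\mathrm{C}^*$-algebra $B$, so (a) holds. Combining the two parts yields the dichotomy: exactly one of (a), (b) holds.

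I do not expect a serious obstacle here; the statement is a direct assembly of Brown's Theorem with the elementary observations above. The only point requiring a little care is the bookkeeping of stable isomorphism as an equivalence relation — in particular the (standard) fact, again a consequence of $\mathbb{K} \otimes \mathbb{K} \cong \mathbb{K}$, that $A$ and $A \otimes \mathbb{K}$ are themselves stably isomorphic — so that the conclusion of Corollary \ref{cor:Brown}, phrased in terms of $B$ and $A \otimes \mathbb{K}$, transfers back to the desired statement relating $A$ and $B$.
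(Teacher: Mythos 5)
Your proposal is correct and follows essentially the same route as the paper: form the unital corner $B = p(A\otimes\mathbb{K})p$ when a non-zero projection exists, apply Brown's Theorem via Corollary \ref{cor:Brown}, and for exclusivity transport $1_B\otimes e$ across the stable isomorphism to produce a projection in $A\otimes\mathbb{K}$. The extra bookkeeping you flag (that $A$ and $A\otimes\mathbb{K}$ are stably isomorphic via $\mathbb{K}\otimes\mathbb{K}\cong\mathbb{K}$) is exactly the step the paper elides with ``and hence is stably isomorphic to $A$.''
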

\begin{proof}
	Let $A$ be a simple, separable $\mathrm{C}^*$-algebra $A$. Then $A \otimes \K$ is simple and separable by Proposition \ref{thm:StableIsoInvariants}. 
	Suppose that $A$ is not stably projectionless. Then there exists a non-zero projection $p \in A \otimes K$. Set $B := p(A \otimes \K)p$. Then $B$ is a unital $\mathrm{C}^*$-algebra with unit $1_B = p$. Moreover, $B$ is a non-zero hereditary subalgebra of $A \otimes \K$.  Therefore, $B$ is stably isomorphic to $A \otimes \K$ by Corollary \ref{cor:Brown}, and hence is stably isomorphic to $A$. 
	
	Now suppose that $A$ is stably isomorphic to a unital $\mathrm{C}^*$-algebra $B$. Then there exists an isomorphism $\phi:B \otimes \K \rightarrow A \otimes \K$. Writing $1_B$ for the unit of $B$ and $e_{ij}$ for the matrix units of $\K$, we have that $\phi(1_B \otimes e_{ii})$ is non-zero projection in $A \otimes \K$. Hence, $A$ cannot be stably projectionless. 
\end{proof}

This dichotomy justifies the terminology \emph{stably unital} for the non--stably projectionless, simple, separable $\mathrm{C}^*$-algebras. The stably unital case of Theorem \ref{thm:NewMain2} follows immediately from \cite[Theorem B]{CETWW} together with Propositions \ref{prop:Dichotomy} and \ref{thm:StableIsoInvariants}. The stably projectionless case on the other hand requires a further reduction and a technical modification of the methods of \cite{BBSTWW}. The purpose of the additional reduction is to pass to the case where the trace space is compact, and it is based on the following folklore result.

\begin{lemma}\label{lem:HerditartyContRankFunction}
Let $A$ be a simple, separable $\mathrm{C}^*$-algebra with $\widetilde{T}(A) \neq  0$.  Let $A_0 := \overline{a(A \otimes \K)a}$ be the hereditary subalgebra generated by a non-zero positive contraction $a \in (A \otimes \K)_{+,1}$ for which the function $\tau \mapsto d_\tau(a)$ is continuous and finite-valued. Then $\widetilde{T}(A_0) = \widetilde{T}_b(A_0) \neq  0$ and $T(A_0)$ is compact.
\end{lemma}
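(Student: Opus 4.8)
The plan is to derive all three assertions by transporting the cone structure of $\widetilde{T}(A\otimes\K)$ to $\widetilde{T}(A_0)$ via restriction, and then to leverage the two hypotheses on $\tau\mapsto d_\tau(a)$: finiteness will give boundedness, and continuity will give compactness of the trace space.

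First I would set up the restriction homeomorphism. Since $A\otimes\K$ is simple and separable (Proposition \ref{thm:StableIsoInvariants}) and $A_0$ is a non-zero hereditary subalgebra, Proposition \ref{prop:ExtendingTraces} provides a linear homeomorphism $\rho:\widetilde{T}(A\otimes\K)\to\widetilde{T}(A_0)$ given by restriction. Combined with $\widetilde{T}(A)\cong\widetilde{T}(A\otimes\K)\neq 0$, this already yields $\widetilde{T}(A_0)\neq 0$. Crucially, $a$ and all its powers $a^{1/n}$ lie in $A_0$, so for $\widehat{\tau}\in\widetilde{T}(A\otimes\K)$ one has $d_{\widehat{\tau}}(a)=d_{\rho(\widehat{\tau})}(a)$; hence the hypotheses that $\tau\mapsto d_\tau(a)$ be continuous and finite-valued transfer, via $\rho$, to the cone $\widetilde{T}(A_0)$.

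For boundedness, fix $\tau\in\widetilde{T}(A_0)$ and $b\in(A_0)_+$. As $b$ lies in the hereditary subalgebra generated by $a$, we have $b\precsim a$, and so $d_\tau(b)\leq d_\tau(a)$ by monotonicity of the rank function (\cite[Theorem II.2.2]{BH82}). A one-line functional calculus estimate bounds $\tau$ by the rank: for a positive contraction $c$ one has $c\leq c^{1/n}$, whence $\tau(c)\leq\tau(c^{1/n})$, and letting $n\to\infty$ gives $\tau(c)\leq d_\tau(c)$; rescaling yields $\tau(b)\leq\|b\|\,d_\tau(b)$ for all $b\in(A_0)_+$. Combining, $\tau(b)\leq\|b\|\,d_\tau(a)<\infty$ by the finiteness hypothesis, so $\tau$ is finite on all of $(A_0)_+$ and $\mathrm{Dom}_{1/2}(\tau)=A_0$. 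This proves $\widetilde{T}(A_0)=\widetilde{T}_b(A_0)$.

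For compactness I would first identify $T(A_0)$ as a distinguished base. Since $(a^{1/n})_n$ is an increasing sequential approximate unit for $A_0=\overline{a(A\otimes\K)a}$, every bounded trace satisfies $\|\tau\|=\lim_n\tau(a^{1/n})=d_\tau(a)$, so $T(A_0)=\{\tau\in\widetilde{T}(A_0):d_\tau(a)=1\}$. The functional $\tau\mapsto d_\tau(a)$ is linear on the cone (ranks are additive and positively homogeneous in the trace), strictly positive off $0$ (if $d_\tau(a)=0$ then $\tau$ vanishes on the approximate unit, hence on $A_0$, using simplicity), and continuous by the first step; thus $T(A_0)$ is a base. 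To upgrade this to a \emph{compact} base I would transfer compactness from the Pedersen-ideal base: fixing $b_0\in\mathrm{Ped}(A_0)\setminus\{0\}$, the set $X:=\{\tau:\tau(b_0)=1\}$ is a compact base by \cite[Proposition 3.4]{TT15} (as in Proposition \ref{prop:ExtendingTraces}). The normalisation map $\Theta:X\to T(A_0)$, $\Theta(\tau):=\tau/d_\tau(a)$, is well defined and continuous since $d_\cdot(a)$ is continuous and bounded below by a positive constant on the compact set $X$, and it is surjective because $X$ is a base; hence $T(A_0)=\Theta(X)$ is the continuous image of a compact Hausdorff set and so is compact. Finally, on the norm-bounded set $T(A_0)$ the cone topology (pointwise convergence on $\mathrm{Ped}(A_0)$) coincides with the weak$^*$ topology, so $T(A_0)$ is weak$^*$ compact.

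The main obstacle I expect is the compactness step: the non-emptiness and boundedness are routine once the restriction homeomorphism is in hand, but the compactness requires the clean identification $T(A_0)=\{d_\tau(a)=1\}$ via the approximate unit $(a^{1/n})_n$ together with the passage from a merely continuous base to a compact one by normalising the already-known compact Pedersen-ideal base. This is exactly where both hypotheses on $d_\tau(a)$—continuity and finiteness—are genuinely used.
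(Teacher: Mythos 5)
Your proposal is correct and follows the same skeleton as the paper's proof: establish the restriction homeomorphism $\rho\colon\widetilde{T}(A\otimes\K)\to\widetilde{T}(A_0)$ via Proposition \ref{prop:ExtendingTraces}, deduce boundedness from the identity $\|\sigma\|_{A_0^*}=d_{\rho^{-1}(\sigma)}(a)<\infty$, and deduce compactness of $T(A_0)$ from continuity of $\sigma\mapsto d_{\rho^{-1}(\sigma)}(a)$. Where you differ is in how the two non-trivial sub-steps are discharged. For boundedness the paper simply cites \cite[Proposition 2.4]{Ti14}, whereas you reprove it from scratch via $b\precsim a$ for $b\in(A_0)_+$ and the elementary estimate $\tau(b)\le\|b\|\,d_\tau(b)$, later recovering the exact equality $\|\tau\|=\lim_n\tau(a^{1/n})=d_\tau(a)$ from the approximate unit $(a^{1/n})_n$; this is a correct and self-contained substitute for the citation. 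For compactness the paper observes that $T(A_0)$ is a weak$^*$-closed subset of the unit ball of $A_0^*$ and invokes Banach--Alaoglu, while you instead exhibit $T(A_0)$ as the image of the compact base $X=\{\tau:\tau(b_0)=1\}$ of \cite[Proposition 3.4]{TT15} under the continuous normalisation $\tau\mapsto\tau/d_\tau(a)$ (well defined since $d_\cdot(a)$ is continuous, finite, and bounded below on $X$), and then pass between the cone topology and the weak$^*$ topology on the norm-bounded set $T(A_0)$. Both mechanisms are valid; the paper's is shorter, while yours makes explicit the base-of-the-cone structure and where each hypothesis on $d_\tau(a)$ enters, at the cost of needing the (correct) observation that the two topologies agree on bounded sets.
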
 
\begin{proof}

By Proposition \ref{prop:ExtendingTraces}, the restriction map $\rho:\widetilde{T}(A \otimes \K) \rightarrow \widetilde{T}(A_0)$ is a linear homeomorphism. Let $\sigma \in \widetilde{T}(A_0)$. Then $\sigma$ has an extension $\tau := \rho^{-1}(\sigma) \in \widetilde{T}(A \otimes \K)$. By \cite[Proposition 2.4]{Ti14}, we have $\|\sigma\|_{A_0^*} = d_{\tau}(a) < \infty$. Therefore, $\widetilde{T}(A_0) = \widetilde{T}_b(A_0) \neq  0$. Since $\sigma \mapsto d_{\rho^{-1}(\sigma)}(a)$ is continuous, $T(A_0)$ is a weak$^*$-closed subspace of the unit ball of $A_0^*$. Therefore, $T(A_0)$ is compact. 
\end{proof}

We now explain how the results of \cite{ERS11} can be used to prove the existence of positive contractions with continuous rank functions under suitable hypotheses.

\begin{proposition}	\label{prop:ExistenceRankFunction}
	Let $A$ be a simple, separable, $\Z$-stable $\mathrm{C}^*$-algebra with $Q\widetilde{T}(A) = \widetilde{T}(A) \neq 0$. Let $f:\widetilde{T}(A) \rightarrow [0,\infty)$ be a strictly positive, continuous, linear function. Then there exists a non-zero positive contraction $a \in (A \otimes \K)_{+,1}$ with $d_\tau(a) = f(\tau)$ for all $\tau \in \widetilde{T}(A)$. 
\end{proposition}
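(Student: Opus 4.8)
The plan is to realise $f$ as a rank function using the computation of the Cuntz semigroup of $\Z$-stable $\mathrm{C}^*$-algebras, and to handle the contraction normalisation by elementary functional calculus. First I would pass to the stabilisation: using the canonical identification $\widetilde T(A) \cong \widetilde T(A \otimes \K)$ recorded in the preliminaries, I regard $f$ as a strictly positive, continuous, linear function on $\widetilde T(A\otimes\K)$, which is the natural home for the rank functions $d_\tau$ defined on $(A\otimes\K)_+$. The output element will be produced in $A\otimes\K$ and then rescaled. Since $\lambda b$ is Cuntz equivalent to $b$ for every scalar $\lambda>0$, we have $d_\tau(\lambda b)=d_\tau(b)$ for all $\tau$; hence, once I have some nonzero $b\in(A\otimes\K)_+$ with $d_\tau(b)=f(\tau)$ for all $\tau$, the contraction $a:=b/\|b\|\in(A\otimes\K)_{+,1}$ satisfies $d_\tau(a)=f(\tau)$. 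Strict positivity of $f$ guarantees $f\neq 0$, so such a $b$ is nonzero and $a\neq 0$.

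The heart of the matter is therefore to find $b$, that is, to show that $f$ lies in the range of the rank map $\mathrm{Cu}(A) \to \{\,\text{l.s.c. linear functions } \widetilde T(A)\to[0,\infty]\,\}$, $[b]\mapsto(\tau\mapsto d_\tau(b))$. Here the hypotheses do the work. By $\Z$-stability and \cite[Theorem 4.5]{Ro04}, $\mathrm{Cu}(A)=W(A\otimes\K)$ is almost unperforated, and $\Z$-stability also supplies the divisibility needed for the Elliott--Robert--Santiago picture; crucially, the assumption $Q\widetilde T(A)=\widetilde T(A)$ means every densely-defined, lower-semicontinuous quasitrace is additive, so that the rank functions are genuinely linear on the trace cone and the Cuntz-semigroup computation of \cite{ERS11} applies as stated. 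I would then invoke that computation to identify the image of the rank map with the cone of lower-semicontinuous linear functions $\widetilde T(A)\to[0,\infty]$ that arise as suprema of rapidly increasing sequences of continuous linear functions. Since $\widetilde T(A)$ has a compact base and $f$ is continuous, finite-valued and strictly positive, the sequence $f_n:=(1-\tfrac1n)f$ is rapidly increasing with supremum $f$; thus $f$ is of the required form and is realised by some $b\in(A\otimes\K)_+$ with $d_\tau(b)=f(\tau)$ for all $\tau\in\widetilde T(A)$.

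The main obstacle is the surjectivity input: confirming that $f$ genuinely lies in the image of the rank map, and not merely that it is a formal lower-semicontinuous linear function. This is precisely the content of the \cite{ERS11} Cuntz-semigroup computation for $\Z$-stable algebras, and the delicate point in a careful write-up is to check that its hypotheses hold in the present non-unital generality --- in particular that the regularity assumption $Q\widetilde T(A)=\widetilde T(A)$, together with almost unperforation and almost divisibility coming from $\Z$-stability, is exactly what is needed to realise \emph{continuous} (and not merely lower-semicontinuous) strictly positive linear functions by honest positive elements of $A\otimes\K$. Once this realisation is secured, the passage back from $A\otimes\K$ to the stated element and the normalisation $a=b/\|b\|$ are routine.
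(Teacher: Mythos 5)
Your proposal follows essentially the same route as the paper: both reduce the statement to the realisation theorem for the Cuntz semigroup of $\Z$-stable algebras, \cite[Theorem 6.6]{ERS11}, using the hypothesis $Q\widetilde{T}(A)=\widetilde{T}(A)$ to identify the functionals on $\mathrm{Cu}(A)$ with (possibly trivial) additive quasitraces, and the final normalisation $a=b/\|b\|$ is harmless since $d_\tau(\lambda b)=d_\tau(b)$ for $\lambda>0$.

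The one substantive point your sketch passes over is how $f$, which is defined only on $\widetilde{T}(A)$, becomes an element of the dual cone $L(F(\mathrm{Cu}(A)))$ to which \cite[Theorem 6.6]{ERS11} applies. Since $A$ is simple, $F(\mathrm{Cu}(A))$ is the one-point compactification of $\widetilde{T}(A)$ obtained by adjoining the trivial quasitrace, so $f$ must be extended to that point by the value $\infty$, and one must check that the extension remains continuous. This is exactly where strict positivity of $f$ enters: with $K$ a compact base of $\widetilde{T}(A)$ (from \cite[Proposition 3.4]{TT15}), $\inf_{\tau\in K}f(\tau)>0$ forces $f(\lambda x)=\lambda f(x)\to\infty$ as $\lambda x$ leaves every compact subset of the cone, which is what continuity at the added point means. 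Strict positivity is therefore not, as you suggest, needed merely to guarantee $f\neq 0$; and your approximating sequence $(1-\tfrac{1}{n})f$ would still have to be extended over the point at infinity in the same way, so this verification cannot be bypassed. Once it is supplied, the rest of your argument matches the paper's.
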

\begin{proof}
	Following \cite[Section 4.1]{ERS11}, we write $F(\mathrm{Cu(A)})$ for the space of functionals on the Cuntz semigroup of $A$. In \cite[Theorem 4.4]{ERS11}, it is shown that all functionals on the Cuntz semigroup are of the form $d_\tau$ for some lower-semicontinuous quasitrace $\tau$ on $A$. However, we alert the reader to the fact that quasitraces are not assumed to be densely defined in \cite{ERS11}. Since $A$ is simple, this means that either $\tau \in Q\widetilde{T}(A)$ or $\tau$ is the \emph{trivial quasitrace}, which satisfies $\tau(0) = 0$ and is infinite otherwise. 
	
We now consider the topology on $F(\mathrm{Cu(A)})$, defined in general in \cite[Section 4.1]{ERS11}, and its relation to the topology on $\widetilde{T}(A)$, which is given by pointwise convergence on $\mathrm{Ped}(A)$. By \cite[Theorem 4.4]{ERS11} and our assumption that all quasitraces are additive, the topology on $F(\mathrm{Cu(A)})$ agrees with the topology on set $\widehat{T}(A)$ of (not-necessarily-densely-defined) lower-semicontinuous, additive quasitraces defined in \cite[Section 3.2]{ERS11}.\footnote{In \cite{ERS11}, the notation $T(A)$ is used instead of $\widehat{T}(A)$, but this clashes with the notation for the tracial states used in this paper.} This topology is shown to be compact and Hausdorff in \cite[Theorem 3.7]{ERS11}. By \cite[Theorem 3.10]{ERS11}, the restriction of this topology to $\widetilde{T}(A)$ is pointwise convergence on $\mathrm{Ped}(A)$. Since $\widehat{T}(A) \setminus \widetilde{T}(A)$ is just one point, it follows that the topology on $\widehat{T}(A)$ is simply the one point compactification of the topology on $\widetilde{T}(A)$.  

By \cite[Proposition 3.4]{TT15}, the cone $\widetilde{T}(A)$ has a compact base $K$. Since $f$ is strictly positive and continuous, $\inf_{\tau \in K} f(\tau) > 0$. Hence, we may extend $f$ to the one-point compactification of $\widetilde{T}(A)$ by setting $f(\infty) = \infty$ and the resulting map is still continuous. It follows that $f$ defines an element of the dual cone $L(F(\mathrm{Cu(A)}))$; see \cite[Section 5.1]{ERS11}. Therefore, as $A$ is $\Z$-stable, there exists $a \in (A \otimes \K)_{+,1}$ such that $f(\tau) = d_\tau(a)$ for all $\tau \in \widetilde{T}(A)$ by \cite[Theorem 6.6]{ERS11}.       
\end{proof}

We end this section with the following summary of all the reductions.
\begin{theorem}\label{thm:simple.reduction}
	Let $A$ be a non-zero, simple, separable, exact, $\Z$-stable $\mathrm{C}^*$-algebra. Then one of the following holds.
	\begin{itemize}
		\item[(a)]	$A$ is stably isomorphic to a unital $\mathrm{C}^*$-algebra,
		\item[(b)]	$A$ is stably isomorphic to a stably projectionless $\mathrm{C}^*$-algebra $A_0$ with $Q\widetilde{T}(A_0) = \widetilde{T}_b(A_0) \neq 0$ and $T(A_0)$ is compact.   
	\end{itemize}
\end{theorem}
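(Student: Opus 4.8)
The plan is to prove the dichotomy by invoking the dichotomy already established in Proposition \ref{prop:Dichotomy} and then, in the non--stably-projectionless case, carrying out the trace-space reduction using Lemma \ref{lem:HerditartyContRankFunction} and Proposition \ref{prop:ExistenceRankFunction}. First I would apply Proposition \ref{prop:Dichotomy} to the simple, separable $\mathrm{C}^*$-algebra $A$. If we land in case (a) of that proposition, then $A$ is stably isomorphic to a unital $\mathrm{C}^*$-algebra and we are immediately in case (a) of the theorem, so nothing further is needed. The entire content of the proof therefore concerns the stably projectionless case, where I must produce a hereditary subalgebra $A_0 \subseteq A \otimes \K$ with $Q\widetilde{T}(A_0) = \widetilde{T}_b(A_0) \neq 0$ and $T(A_0)$ compact, and which is stably isomorphic to $A$.

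So suppose $A$ is stably projectionless. The key observation is that exactness lets us upgrade the quasitrace hypotheses to the ones required by Proposition \ref{prop:ExistenceRankFunction}. Since $A$ is stably projectionless, $A \otimes \K$ contains no infinite projections (indeed no non-zero projections at all), so by the Blackadar--Cuntz existence theorem (Theorem \ref{thm:TracialStatesExist}) we have $Q\widetilde{T}(A) \neq 0$. Exactness then gives $Q\widetilde{T}(A) = \widetilde{T}(A)$ by Haagerup's theorem, as recorded in the discussion following Theorem \ref{thm:TracialStatesExist}. This establishes exactly the hypothesis $Q\widetilde{T}(A) = \widetilde{T}(A) \neq 0$ needed to invoke Proposition \ref{prop:ExistenceRankFunction}.

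Next I would manufacture the positive contraction needed as input to Lemma \ref{lem:HerditartyContRankFunction}. By \cite[Proposition 3.4]{TT15}, the cone $\widetilde{T}(A)$ has a compact base, so a strictly positive, continuous, linear function $f : \widetilde{T}(A) \rightarrow [0,\infty)$ exists (for instance, evaluation at a fixed element of $\mathrm{Ped}(A)$ extended linearly from the compact base, rescaled to be strictly positive). Applying Proposition \ref{prop:ExistenceRankFunction} to $f$ yields a non-zero positive contraction $a \in (A \otimes \K)_{+,1}$ with $d_\tau(a) = f(\tau)$ for all $\tau \in \widetilde{T}(A)$; in particular the function $\tau \mapsto d_\tau(a)$ is continuous and finite-valued. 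Setting $A_0 := \overline{a(A \otimes \K)a}$, Lemma \ref{lem:HerditartyContRankFunction} then delivers $\widetilde{T}(A_0) = \widetilde{T}_b(A_0) \neq 0$ with $T(A_0)$ compact. Finally, $A_0$ is a non-zero hereditary subalgebra of the simple, separable $\mathrm{C}^*$-algebra $A \otimes \K$, so Corollary \ref{cor:Brown} gives that $A_0$ is stably isomorphic to $A \otimes \K$, hence to $A$; and $A_0$ is stably projectionless because stable projectionlessness is a stable-isomorphism invariant. This places us in case (b).

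I expect the main conceptual step to be the construction of the strictly positive continuous linear functional $f$ and the bookkeeping around Proposition \ref{prop:ExistenceRankFunction}, since that is where exactness and the $\Z$-stability really get used via the Cuntz-semigroup computation of \cite{ERS11}; once $a$ and hence $A_0$ are in hand, Lemma \ref{lem:HerditartyContRankFunction} and Brown's Theorem do the remaining work essentially formally. The only other point requiring a little care is verifying that the two cases are genuinely exhaustive and mutually exclusive, but this is inherited directly from Proposition \ref{prop:Dichotomy}.
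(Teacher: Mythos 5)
Your proposal is correct and follows essentially the same route as the paper: Proposition \ref{prop:Dichotomy} to reduce to the stably projectionless case, Theorem \ref{thm:TracialStatesExist} plus Haagerup's theorem (via exactness) to get $Q\widetilde{T}(A)=\widetilde{T}(A)\neq 0$, a strictly positive continuous linear functional on the cone fed into Proposition \ref{prop:ExistenceRankFunction}, and then Lemma \ref{lem:HerditartyContRankFunction} and Corollary \ref{cor:Brown} to conclude. The only point worth a word more than you (or the paper) give is that passing from the lemma's conclusion $\widetilde{T}(A_0)=\widetilde{T}_b(A_0)$ to the stated $Q\widetilde{T}(A_0)=\widetilde{T}_b(A_0)$ again uses exactness of the hereditary subalgebra $A_0$.
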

\begin{proof}
	Suppose (a) does not hold. Then $A$ is stably projectionless by Proposition \ref{prop:Dichotomy}. By Theorem \ref{thm:TracialStatesExist}, $Q\widetilde{T}(A) \neq 0$. Since $A$ is exact, the non-unital version of Haagerup's Theorem gives $Q\widetilde{T}(A) = \widetilde{T}(A) \neq 0$; see \cite{Ha14} and \cite[Remark 2.29(i)]{BK04}. Since $\widetilde{T}(A)$ is a cone with a compact base, there exists a strictly positive, continuous, linear function $f:\widetilde{T}(A) \rightarrow [0,\infty)$. By Proposition \ref{prop:ExistenceRankFunction}, there is a positive contraction $a \in (A \otimes \K)_{+,1}$ such that $f(\tau) = d_\tau(a)$ for all $\tau \in \widetilde{T}(A)$. Set $A_0 := \overline{a(A \otimes \K)a}$. By Lemma \ref{lem:HerditartyContRankFunction}, $Q\widetilde{T}(A_0) = \widetilde{T}_b(A_0) \neq 0$ and $T(A_0)$ compact. By Corollary \ref{cor:Brown}, $A$ is stably isomorphic to $A_0$. Hence, $A_0$ is stably projectionless.
\end{proof}

\section{Existence}

Let $A$ be a separable, nuclear C$^*$-algebra with complemented partitions of unity (CPoU). In this section, we will construct a sequence of maps $A \xrightarrow{\theta_n} F_n \xrightarrow{\eta_n} A$, where $F_n$ are finite dimensional $\mathrm{C}^*$-algebras, $\theta_n$ are c.p.c.\ maps and $\eta_n$ are c.p.c.\ order zero maps, which induces a $^*$-homomorphism $A \rightarrow A^\omega$ that agrees with the diagonal inclusion $A \rightarrow A^\omega$.

We will do this in two steps. First, we will fix a trace $\tau$ and produce maps $A \to F \to A$ that approximate the identity map on $A$ in $\|\cdot \|_{2, \tau}$-norm. We shall then construct the required sequence of maps using complemented partitions of unity (CPoU).

The following lemma will be deduced from \cite[Lemma 5.1]{CETWW}, but it can also be proved by directly applying the methods of \cite[Lemma 2.5]{BCW16}.

\begin{lemma}\label{lem:preexistence}
	Let $A$ be a separable, nuclear $\mathrm{C}^*$-algebra and let $\tau \in T(A)$. For any finite subset $\mathcal{F} \subseteq A$ and $\epsilon >0$ there exist a finite dimensional $\mathrm{C}^*$-algebra $F$, a c.p.c.\ map $\theta: A \to F$, and a c.p.c.\ order zero map $\eta: F \to A$ such that
	\begin{align}
	\label{eq:OneTraceMap1}
	\|\theta(a)\theta(b)\| &< \epsilon \quad && \text{for }a,b \in \mathcal{F} \text{ such that }ab=0\text{, and} \\ 
	\label{eq:OneTraceMap2}
	\|\eta \circ \theta(a)-a\|_{2,\tau} &< \epsilon &&\text{for }a \in \mathcal{F}.
	\end{align}
	If all traces are quasidiagonal,\footnote{See Definition \ref{dfn:QDtraces}.} one can additionally arrange that
	\begin{align}
	\| \theta(ab) - \theta(a)\theta(b) \| < \epsilon, && a,b \in \mathcal{F}.
	\end{align}
\end{lemma}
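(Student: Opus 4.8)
The plan is to deduce Lemma~\ref{lem:preexistence} from \cite[Lemma 5.1]{CETWW}, which provides finite-dimensional approximations of the identity map that are multiplicative and orthogonality-preserving up to a single trace $\tau$. The role of nuclearity is to guarantee that the identity on $A$ is approximated in point-norm by composites $A \to F \to A$ through finite-dimensional algebras, so this is the natural source for the finite-dimensional $C^*$-algebra $F$ and the c.p.c.\ map $\theta$. The main content to establish is that the map $F \to A$ coming back can be taken to be \emph{order zero}, and that the approximation of the identity need only hold in the weaker $\|\cdot\|_{2,\tau}$-norm rather than in operator norm; the latter relaxation is precisely what makes the order zero requirement attainable, so the two features are linked.

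Concretely, I would first invoke \cite[Lemma 5.1]{CETWW} for the fixed trace $\tau \in T(A)$, the finite set $\mathcal{F}$, and the tolerance $\epsilon$ to produce $F$, a c.p.c.\ map $\theta: A \to F$, and a c.p.c.\ order zero map $\eta: F \to A$ satisfying \eqref{eq:OneTraceMap2}, namely $\|\eta\circ\theta(a) - a\|_{2,\tau} < \epsilon$ for $a \in \mathcal{F}$. I would then address \eqref{eq:OneTraceMap1}: the requirement that $\theta$ approximately preserves orthogonality, $\|\theta(a)\theta(b)\| < \epsilon$ whenever $ab=0$ for $a,b \in \mathcal{F}$. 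Since there are only finitely many pairs in $\mathcal{F}$ and the orthogonality is a norm condition on the images in the finite-dimensional algebra $F$, this can be arranged by enlarging $\mathcal{F}$ to include the relevant products and tightening the tolerance in the application of \cite[Lemma 5.1]{CETWW}; one controls $\|\theta(a)\theta(b)\|$ by noting that $\eta$ being a faithful-enough order zero map (or via an auxiliary estimate) transfers near-orthogonality of the images $\eta\theta(a),\eta\theta(b)$, which are close to $a,b$ in $2$-norm, back to $\theta(a)\theta(b)$ in $F$.

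For the final, conditional clause, assuming all traces are quasidiagonal, I would additionally extract approximate multiplicativity $\|\theta(ab) - \theta(a)\theta(b)\| < \epsilon$. Here the quasidiagonality hypothesis is exactly what upgrades the c.p.c.\ map $\theta$ to being approximately multiplicative on the finite set; this is built into the conclusion of \cite[Lemma 5.1]{CETWW} (or the method of \cite[Lemma 2.5]{BCW16}) under the quasidiagonal trace assumption, so the task is to verify that the cited lemma's output, applied with the enlarged finite set and refined tolerance, delivers this estimate simultaneously with \eqref{eq:OneTraceMap1} and \eqref{eq:OneTraceMap2}.

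The main obstacle I anticipate is bookkeeping the three estimates \emph{simultaneously} from a single application of the cited lemma: one must choose the input finite set (including products $ab$ and checking orthogonality relations) and the input tolerance carefully so that the output $\theta$ controls orthogonality $\|\theta(a)\theta(b)\|$, the $2$-norm approximation through $\eta$, and (in the quasidiagonal case) multiplicativity all at once. In particular, the order zero property of $\eta$ must be preserved throughout, and transferring norm-control of products between $A$ and the finite-dimensional $F$ requires using that $\eta\theta$ is $2$-norm close to the identity on $\mathcal{F}$ while $\theta$ itself lands in a finite-dimensional algebra where norm and trace-norm estimates interact cleanly. Provided \cite[Lemma 5.1]{CETWW} is stated with enough flexibility in its finite set and tolerance, these reductions are routine; the essential point is simply to identify the correct enlargement of $\mathcal{F}$ and to confirm that the quasidiagonality hypothesis is what the multiplicativity clause requires.
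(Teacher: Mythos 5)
There is a genuine gap: your proposal invokes \cite[Lemma 5.1]{CETWW} directly for $A$, but that lemma is a statement about \emph{unital} $\mathrm{C}^*$-algebras, and the whole point of this lemma in the present paper is to handle the non-unital case. The paper's proof applies \cite[Lemma 5.1]{CETWW} to the unitisation $A^\sim$ (after extending $\tau$ to $A^\sim$), which produces a c.p.c.\ order zero map $\tilde\eta: F \to A^\sim$ whose image need not lie in $A$. The essential technical content you are missing is the correction of this codomain: one cuts down by an approximate identity $(e_n)$ of $A$, setting $\hat\eta_n(x) = e_n\tilde\eta(x)e_n$, which lands in $A$ and converges to $\tilde\eta$ in point-$\|\cdot\|_{2,\tau}$ because $e_n \nearrow 1_{A^\sim}$ in $\|\cdot\|_{2,\tau}$; but the cut-down maps are only \emph{asymptotically} order zero in $\|\cdot\|_{2,\tau}$, so one must pass to the quotient $\ell^\infty(A)/J_\tau$ and use order zero lifting (\cite[Proposition 1.2.4]{Wi09}, valid since $F$ is finite dimensional) to recover genuine c.p.c.\ order zero maps $\eta_n: F \to A$. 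This step is exactly why the conclusion \eqref{eq:OneTraceMap2} is only a $\|\cdot\|_{2,\tau}$-estimate, and it is absent from your argument.

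A secondary issue: your proposed mechanism for \eqref{eq:OneTraceMap1} --- transferring near-orthogonality of $\eta\theta(a), \eta\theta(b)$ in $A$ back to $\theta(a)\theta(b)$ in $F$ --- would not work, since $\|\cdot\|_{2,\tau}$-closeness to $a,b$ gives no control on the operator norm of $\theta(a)\theta(b)$ in $F$. Fortunately this detour is unnecessary: the orthogonality estimate $\|\tilde\theta(a)\tilde\theta(b)\| < \epsilon/2$ for $ab=0$ is already part of the conclusion of \cite[Lemma 5.1]{CETWW} and is inherited by $\theta := \tilde\theta|_A$, as is the approximate multiplicativity in the quasidiagonal case. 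So the enlargement-of-$\mathcal{F}$ bookkeeping you anticipate as the main obstacle is not where the difficulty lies; the difficulty is entirely in the unitisation and the order zero lifting of the cut-down maps.
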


\begin{proof}
	The trace $\tau$ extends to a trace on $A^\sim$. By \cite[Lemma 5.1]{CETWW} applied to $A^\sim$, there exist a finite dimensional $F$, a c.p.c.\ map $\tilde \theta: A^\sim \to F$, and a c.p.c.\ order zero map $\tilde \eta : F \to A^\sim$ such that
	\begin{align}
	\|\tilde\theta(a)\tilde\theta(b)\| &< \frac{\epsilon}{2} \quad && \text{for }a,b \in \mathcal{F} \text{ satisfying }ab=0\text{, and} \\ 
	\|\tilde\eta \circ \tilde\theta(a)-a\|_{2,\tau} &< \frac{\epsilon}{2} &&\text{for }a \in \mathcal{F}.
	\end{align}
	Let $(e_n)_{n\in \mathbb{N}}$ be an approximate identity of $A$. Then $e_n \nearrow 1_{A^\sim}$ in $\|\cdot\|_{2,\tau}$. Hence, the c.p.c.\ maps $\hat\eta_n: F \to A$ given by $\hat\eta_n(x)= e_n \tilde\eta(x)e_n$ converge to $\tilde \eta$ in the point-$\|\cdot\|_{2,\tau}$ topology. The sequence $\hat\eta_n$ is asymptotically order zero in $\|\cdot\|_{2,\tau}$. Since $F$ is finite dimensional, we can make use of order zero lifting to obtain a sequence of c.p.c.\ order zero maps $\eta_n:F \rightarrow A$ converging to $\tilde \eta$ in the point-$\|\cdot\|_{2,\tau}$ topology.\footnote{Indeed, let $J_\tau := \lbrace (a_n)_{n \in\N} \in \ell^\infty(A): \lim_{n\rightarrow\infty}\tau(a_n^*a_n) = 0\rbrace$ and consider the diagonal map $(\hat\eta_n):F \rightarrow \ell^\infty(A)/J_\tau$. This map is c.p.c.\ order zero and so has a c.p.c.\ order zero lift $(\eta_n): F \rightarrow \ell^\infty(A)$ by  \cite[Proposition 1.2.4]{Wi09}.}

	Set $\theta := \tilde \theta |_A$. Choose $n \in \N$ such that $\|\eta_n(\theta(a)) - \tilde{\eta}(\theta(a))\|_{2,\tau} < \tfrac{\epsilon}{2}$ for all $a \in \mathcal{F}$. We then have
	\begin{align}
	\|\eta \circ \theta(a)  - a \|_{2, \tau} & < \| \tilde{\eta} \circ \theta (a) - a  \|_{2,\tau} + \frac{\epsilon}{2} < \epsilon, && a \in \mathcal{F}.
	\end{align}
	If all traces are quasidiagonal, the map $\tilde \theta$ given by \cite[Lemma 5.1]{CETWW} is approximately a $^*$-homomorphism. Hence, so is $\theta$.
\end{proof}

With the previous lemma in hand, we can now utilise complemented partitions of unity (CPoU) to prove the following.

\begin{lemma}
	\label{lem:NiceFactoring}
	Let $A$ be a separable, nuclear $\mathrm{C}^*$-algebra with $Q\widetilde{T}(A) = \widetilde{T}_b(A) \neq 0$ and $T(A)$ compact. Suppose A has CPoU.
	Then there exists a sequence of c.p.c.\ maps $\phi_n:A \to A$ which factor through finite dimensional algebras $F_n$ as
	\begin{equation}
	\label{eq:NiceFactoring1}
	\xymatrix{A\ar[dr]_{\theta_n}\ar[rr]^{\phi_n}&&A\\&F_n \ar[ur]_{\eta_n}}
	\end{equation}
	with $\theta_n$ c.p.c.\ and $\eta_n$ c.p.c.\ order zero, in such a way that the induced map $(\theta_n)_{n=1}^\infty:A \to \prod_\omega F_n$ is order zero, and the induced map $\overline{\Phi}=(\phi_n)_{n=1}^\infty:A\rightarrow A^\omega$ agrees with the diagonal inclusion $A \to A^\omega$.
	
	If all traces on $A$ are quasidiagonal, then we may arrange that the induced map $(\theta_n)_{n=1}^\infty:A \to \prod_\omega F_n$ is a $^*$-homomorphism.
\end{lemma}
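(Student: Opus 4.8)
The plan is to produce the maps in two stages. First, for a fixed finite set $\mathcal{F}\subseteq A$ and tolerance $\epsilon>0$, I will use complemented partitions of unity to glue together the single-trace approximations of Lemma \ref{lem:preexistence} into one pair of maps $A \xrightarrow{\theta} F \xrightarrow{\eta} A$ with $\theta$ c.p.c.\ and $\eta$ c.p.c.\ order zero, satisfying $\|\theta(a)\theta(b)\|<\epsilon$ for orthogonal $a,b\in\mathcal{F}$ and $\|\eta\theta(a)-a\|_{2,T(A)}<\epsilon$ for $a\in\mathcal{F}$. Second, I will feed these ``one-shot'' solutions into Kirchberg's Epsilon Test (Lemma \ref{epstest}): taking each $X_n$ to be the set of triples $(F,\theta,\eta)$ of the above form, with test functions $\|\eta\theta(a_m)-a_m\|_{2,T(A)}$ for a dense sequence $(a_m)$ in $A$ and $\|\theta(c_j)\theta(d_j)\|$ for a sequence $(c_j,d_j)$ dense in the (separable) set of orthogonal pairs in $A_+$. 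A constant sequence equal to a one-shot solution witnesses approximate solvability for any finite collection of test functions, so driving them all to zero yields the sequence $(\theta_n,\eta_n)$; the first family forces $\overline{\Phi}=(\phi_n)$ to agree with the diagonal inclusion in $A^\omega$, while the second, together with contractivity of the $\theta_n$ and density of $(c_j,d_j)$, forces $(\theta_n):A\to\prod_\omega F_n$ to be order zero.

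For the one-shot construction, I first invoke Lemma \ref{lem:preexistence} at each $\tau\in T(A)$ to obtain $(F_\tau,\theta_\tau,\eta_\tau)$ approximating the identity to within $\epsilon_0$ in $\|\cdot\|_{2,\tau}$; since $\sigma\mapsto\|\eta_\tau\theta_\tau(a)-a\|_{2,\sigma}$ is weak$^*$-continuous on $T(A)$, this persists on an open neighbourhood $U_\tau$ of $\tau$, and compactness of $T(A)$ yields a finite subcover $U_{\tau_1},\dots,U_{\tau_k}$. Writing $(F_i,\theta_i,\eta_i)$ for the corresponding data and setting $b_i:=\sum_{a\in\mathcal{F}}(\eta_i\theta_i(a)-a)^*(\eta_i\theta_i(a)-a)\in A_+$, the covering property gives $\min_i\sigma(b_i)\leq |\mathcal{F}|\epsilon_0^2$ for every $\sigma\in T(A)$, and hence $\sup_{\tau\in T_\omega(A)}\min_i\tau(b_i)\leq|\mathcal{F}|\epsilon_0^2$, since $\min_i$ commutes with the ultrafilter limit over the finite index set $\{1,\dots,k\}$. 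Applying CPoU (Definition \ref{defn:CPOU}, available by Theorem \ref{thm:CPoU}) to $b_1,\dots,b_k$ with $S$ containing $\mathcal{F}$ and a generating set for each $\eta_i(F_i)$, and with $\delta$ slightly larger than $|\mathcal{F}|\epsilon_0^2$, produces orthogonal projections $p_1,\dots,p_k\in A^\omega\cap S'$ with $\sum_i p_i=1$ and $\tau(b_ip_i)\leq\delta\tau(p_i)$ for all $\tau\in T_\omega(A)$.

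I then set $F:=F_1\oplus\cdots\oplus F_k$, $\theta:=\bigoplus_i\theta_i$ (c.p.c.), and $\eta:F\to A^\omega$, $\eta((x_i)_i):=\sum_i p_i\eta_i(x_i)$. Because each $p_j$ commutes with every $\eta_i(F_i)$, the map $\eta$ is c.p.c.\ order zero: orthogonality of the $p_i$ collapses the cross terms and reduces orthogonality preservation to that of the individual $\eta_i$. Using $\sum_i p_i=1$ and that the $p_i$ commute with $a$ and with $\eta_i\theta_i(a)$ for $a\in\mathcal{F}$, one computes $\|\eta\theta(a)-a\|_{2,\tau}^2=\sum_i\tau\big(p_i(\eta_i\theta_i(a)-a)^*(\eta_i\theta_i(a)-a)\big)\leq\sum_i\tau(p_ib_i)\leq\delta\sum_i\tau(p_i)=\delta$ for every $\tau\in T_\omega(A)$, so $\|\eta\theta(a)-a\|_{2,T_\omega(A)}\leq\sqrt{\delta}$, which is below $\epsilon$ once $\epsilon_0$ is small. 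Finally, since order zero maps out of finite-dimensional algebras are projective (as used in the proof of Lemma \ref{lem:preexistence}), $\eta$ lifts to a sequence of c.p.c.\ order zero maps $\eta^{(n)}:F\to A$; the identity $\lim_{n\to\omega}\|\eta^{(n)}\theta(a)-a\|_{2,T(A)}=\|\eta\theta(a)-a\|_{2,T_\omega(A)}$ (valid because $T(A)$ is compact) lets me pick a single index $n$ for which $(F,\theta,\eta^{(n)})$ is an honest one-shot solution into $A$.

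The main obstacle is the CPoU gluing of the third paragraph: the single-trace maps of Lemma \ref{lem:preexistence} are good only near one trace, and combining finitely many of them into a single pair that is good uniformly over $T(A)$---while keeping the second map genuinely c.p.c.\ order zero---is exactly what the complemented projections achieve. Their membership in the commutant $S'$ is what makes $\eta$ order zero, and the trace inequality $\tau(b_ip_i)\leq\delta\tau(p_i)$ coupled with $\sum_i p_i=1$ is what converts the pointwise-in-$\tau$ estimates into a uniform one. For the quasidiagonal addendum, I additionally invoke the last clause of Lemma \ref{lem:preexistence} so that each $\theta_i$ is approximately multiplicative on $\mathcal{F}$; then $\theta=\bigoplus_i\theta_i$ satisfies $\|\theta(ab)-\theta(a)\theta(b)\|<\epsilon_0$ on $\mathcal{F}$, and adding the test functions $\|\theta(a_ma_{m'})-\theta(a_m)\theta(a_{m'})\|$ to the Epsilon Test forces the induced map $(\theta_n):A\to\prod_\omega F_n$ to be multiplicative, hence a $^*$-homomorphism.
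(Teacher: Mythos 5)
Your proposal is correct and follows essentially the same route as the paper's proof: single-trace approximations from Lemma \ref{lem:preexistence}, compactness of $T(A)$ to select finitely many traces, CPoU to glue them via commuting projections, projectivity of order zero maps out of finite-dimensional algebras to lift $\eta$, and Kirchberg's Epsilon Test to pass from one-shot solutions to the full sequence. You in fact supply some details the paper defers to \cite[Lemma 5.2]{CETWW}, such as checking the CPoU hypothesis over limit traces and choosing $S$ so that the $p_i$ commute with the images $\eta_i(F_i)$.
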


\begin{proof}
	As in \cite[Lemma 5.2]{CETWW}, by a standard application of Kirchberg's Epsilon Test, it suffices to show that for a finite set $\mathcal F \subseteq A$ and a tolerance $\epsilon>0$, there is a sequence $(F_n, \theta_n, \eta_n)$ such that $\theta_n: A \to F_n$ is approximately order zero (or approximately multiplicative if all traces are quasidiagonal), $\eta_n: F_n \to A$ is an order zero map, and the induced map $\overline{\Phi}_\epsilon= (\eta_n \circ \theta_n)_{n=1}^\infty: A \to A^\omega$ satisfies $\| a - \overline{\Phi}_\epsilon (a) \|_{2, T_\omega (A)} < \epsilon$ for all $a \in \mathcal{F}$. In fact, we will arrange for all the $F_n$ to be the same finite dimensional algebra $F$, and all the $\theta_n$ to be the same map $\theta$.
	
	Let $\mathcal{F} \subseteq A$ be a finite subset and $\epsilon> 0$.
	By Lemma \ref{lem:preexistence}, for any $\tau \in T(A)$ there exist a finite dimensional $\mathrm{C}^*$-algebra $F_\tau$, a c.p.c.\ map $\theta_\tau: A \to F_\tau$, and an order zero map $\eta_\tau: F_\tau \to A$ such that
	\begin{align}
	\| \theta(a) \theta(b) \| &< \epsilon, && a,b \in \mathcal{F} \text{ such that } ab = 0,\\
	\| \eta_\tau \circ \theta_\tau(x) - x \|_{2, \tau}^2 &< \frac{\epsilon^2}{|\mathcal{F}|}, && x \in \mathcal{F}.
	\end{align}
	
	Set $a_\tau := \sum\limits_{x \in \mathcal{F}} | x - \eta_\tau\tau \circ \theta_\tau (a) |^2$.
	By compactness of $T(A)$, there exist $\tau_1, \ldots, \tau_k \in T(A)$ such that for all $\tau \in T(A)$ there is some $\tau_i$ such that $\tau(a_{\tau_i}) < \epsilon^2$. 
	
	By CPoU, there exist pairwise orthogonal projections $p_1, \ldots, p_k \in A^\omega \cap A'$ adding up to $1_{A^\omega}$ such that $\tau(a_i p_i ) \leq \epsilon^2 \tau(p_i)$ for all $\tau \in T_\omega (A)$.
	Set $F := \bigoplus_{i=1}^k F_{\tau_i}$, $\theta: A \to F$ and $\eta: F \to A^\omega$ by
	\begin{equation}
	\theta(a) := ( \theta_{\tau_1}(a), \ldots, \theta_{\tau_k}(a)), \quad \eta(x_1, \ldots, x_k) := \sum_{i=1}^k \eta_{\tau_i}(x_i) p_i, 
	\end{equation}
	where $a \in A$ and $x_i \in F_{\tau_i}$. By construction (see \cite[Lemma 5.2. Equation (5.16)]{CETWW}), we obtain
	\begin{equation}
	\| a - \eta \circ \theta (a)  \|_{2, T_\omega(A)} < \epsilon, \qquad a \in \mathcal{F}.
	\end{equation}
	
	By \cite[Proposition 1.2.4]{Wi09}, $\eta: F \to A^\omega$ can be lifted to a sequence of order zero maps $\eta_n: F \to A$. Thus the sequence $(F, \theta, \eta_n)$ is the required sequence.
	
	Finally, if all traces are quasidiagonal, the map $\theta$ is approximately multiplicative by Lemma \ref{lem:preexistence}. Combining the previous argument with Kirchberg's Epsilon Test yields that the induced map $(\theta_n): A \to \prod_\omega F_n$ is a $^*$-homomorphism.
\end{proof}

\section{Unitisation}

In this section, we prove that a c.p.c.\ order zero map $\phi:A \rightarrow B_\omega$ from a separable $\mathrm{C}^*$-algebra into a $\mathrm{C}^*$-ultrapower extends to a c.p.c.\ order zero map $\phi^\sim:A^\sim \rightarrow B_\omega$. Moreover, under appropriate conditions, Dini's Theorem can be used to construct an extension for which the tracial behaviour of $\phi^\sim(1_{A^\sim})$ is determined by $\phi$. These results were inspired by the structure theory for order zero maps developed in \cite{WZ09} and the existence of \emph{supporting order zero maps} proved in \cite[Lemma 1.14]{BBSTWW}.
We begin with a technical lemma.    

\begin{lemma}\label{lem:unitising-orderzero}
	Let $\phi:A \rightarrow B$ be a c.p.c.\ order zero map between $\mathrm{C}^*$-algebras. Suppose that $h \in B$ is a positive contraction such that 
	\begin{equation}\label{eqn:OrderZeroIndentity1}
	\phi(a)\phi(b) = h\phi(ab), \qquad \qquad a,b \in A_+.
	\end{equation}
	Then the map $\phi^\sim:A^\sim \rightarrow B$ defined by $\phi^\sim(a+\lambda 1_{A^\sim}) := \phi(a) + \lambda h$ is c.p.c.\ order zero. 
\end{lemma}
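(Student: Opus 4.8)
The goal is to show that $\phi^\sim$ is completely positive, contractive, and order zero. The key structural input is the Wolff--Winter--Zacharias picture: a c.p.c.\ order zero map $\phi:A \to B$ corresponds to a $^*$-homomorphism $\pi:A \to \mathcal{M}(C^*(\phi(A)))$ together with a positive contraction commuting with the image, via $\phi(a) = h_0 \pi(a)$ where $h_0 = \phi$ evaluated suitably. The hypothesis \eqref{eqn:OrderZeroIndentity1}, $\phi(a)\phi(b) = h\phi(ab)$, is precisely the statement that the given $h$ plays the role of this ``supporting'' positive contraction, so $\phi$ factors as multiplication by $h$ through a $^*$-homomorphism. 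The plan is to exploit this factorisation to reduce everything about $\phi^\sim$ to an honest $^*$-homomorphism on $A^\sim$.

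\textbf{Main steps.} First I would record the elementary algebraic consequences of \eqref{eqn:OrderZeroIndentity1}: taking $a = b$ and polarising shows $h$ commutes with every $\phi(a)$ and indeed $h$ lies in the hereditary algebra generated by $\phi(A)$, and that $h\phi(a) = \phi(a)h = \phi(a)$ on a suitable dense set, so $h$ behaves like a unit for the image (this is why defining $\phi^\sim(1) := h$ is natural). Second, using the structure theorem for order zero maps, realise $\phi(a) = h\,\pi(a)$ for a $^*$-homomorphism $\pi$ into the multiplier algebra of $B_0 := C^*(\phi(A))$, with $h$ commuting with the range of $\pi$; then define $\pi^\sim:A^\sim \to \mathcal{M}(B_0)$ by $\pi^\sim(a + \lambda 1) := \pi(a) + \lambda 1_{\mathcal{M}(B_0)}$, which is a genuine unital $^*$-homomorphism. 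Third, verify that $\phi^\sim(a + \lambda 1) = h\,\pi^\sim(a + \lambda 1) = \phi(a) + \lambda h$ matches the stated formula, since $h \cdot 1_{\mathcal{M}(B_0)} = h$. Complete positivity of $\phi^\sim$ is then immediate, as it is multiplication of a $^*$-homomorphism by a positive element commuting with its range. For the order zero property, if $(a+\lambda 1)(a'+\lambda' 1) = 0$ in $A^\sim_+$ then applying $\pi^\sim$ and multiplying by $h^2$ (which commutes through) gives $\phi^\sim(a+\lambda 1)\phi^\sim(a'+\lambda'1) = h^2 \pi^\sim(\text{product}) = 0$. Finally, contractivity: $\|\phi^\sim\| \le 1$ because $\|h\| \le 1$ and $\pi^\sim$ is a unital $^*$-homomorphism hence contractive.

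\textbf{Main obstacle.} The delicate point is making the factorisation through the multiplier algebra rigorous while staying compatible with the prescribed $h$, rather than the canonically produced supporting element from the structure theorem. A priori the Winter--Zacharias picture gives \emph{some} positive contraction $h_0$ with $\phi(a) = h_0 \pi(a)$, and one must check that the hypothesis \eqref{eqn:OrderZeroIndentity1} forces $h$ to agree with $h_0$ on the relevant subalgebra, or at least to satisfy $h\pi(a) = h_0 \pi(a)$ for all $a$, so that $\phi^\sim$ defined via $h$ is well behaved. I expect this identification to be the crux: one would argue that on $\overline{\phi(A)B_0\phi(A)}$ both $h$ and $h_0$ act as the same element because \eqref{eqn:OrderZeroIndentity1} pins down the product structure $\phi(a)\phi(b)$, and any ambiguity lives in the part of $B$ orthogonal to the image, which is annihilated when multiplying by $\phi$-values. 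Alternatively, one can sidestep the multiplier-algebra machinery entirely and verify complete positivity directly by exhibiting $\phi^\sim$ as a compression of a $^*$-homomorphism on $A^\sim \otimes C_0(0,1]$-type cone, but the identification with the given $h$ remains the substantive content either way.
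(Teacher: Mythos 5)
There is a genuine gap, and it is not the one you flagged. Your strategy factors $\phi^\sim$ as $h\,\pi^\sim(\cdot)$, where $\pi^\sim:A^\sim\to\mathcal{M}(B_0)$ is the unitisation of the supporting $^*$-homomorphism and $B_0:=C^*(\phi(A))$; for this to produce the map in the statement you need $h\cdot 1_{\mathcal{M}(B_0)}=h$, and that is false in general. Writing $e$ for the unit of $B_0''\subseteq B^{**}$ (the open projection supporting the hereditary subalgebra generated by $\phi(A)$), the hypothesis \eqref{eqn:OrderZeroIndentity1} forces $he=eh=h_0$, so the compression $ehe$ agrees with the canonical supporting element $h_0$ and the off-diagonal corners vanish, but it says nothing at all about $(1-e)h(1-e)$. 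Concretely, take $A=\mathbb{C}$, $B=M_2(\mathbb{C})$, $\phi(1)=\mathrm{diag}(1/2,0)$ and $h=\mathrm{diag}(1/2,s)$ for any $s\in[0,1]$: the hypothesis holds, yet $h$ does not lie in the hereditary subalgebra generated by $\phi(A)$, $h\phi(1)=\mathrm{diag}(1/4,0)\neq\phi(1)$ (so your preliminary claim that $h$ acts as a unit on the image is also wrong --- $h$ plays the role of an extra factor of the cone generator $t$, not of a unit), and $h\,\pi^\sim(1_{A^\sim})=\mathrm{diag}(1/2,0)\neq h$. Your construction therefore shows that $a+\lambda 1_{A^\sim}\mapsto\phi(a)+\lambda\,ehe$ is c.p.c.\ order zero, which is not the map $\phi^\sim$ of the statement. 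By contrast, the step you identified as the crux --- showing $h\pi(a)=h_0\pi(a)=\phi(a)$ --- is in fact fine: $(h-h_0)$ annihilates $B_0$ (compare $h\phi(ab)=\phi(a)\phi(b)=h_0^2\pi(ab)=h_0\phi(ab)$), hence annihilates all of $\mathcal{M}(B_0)$ by taking weak limits along an approximate unit of $B_0$. What is missing is the corner $(1-e)h(1-e)$, which $\phi^\sim$ must send $\lambda 1_{A^\sim}$ onto and which your factorisation discards.

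The gap is repairable within your framework: since the off-diagonal corners of $h$ vanish, $\phi^\sim=h_0\pi^\sim(\cdot)+\chi(\cdot)\,(1-e)h(1-e)$, where $\chi:A^\sim\to\mathbb{C}$ is the canonical character; the two summands are c.p.c.\ order zero with orthogonal ranges, and one checks directly that their sum is again c.p.c.\ order zero and contractive. The paper avoids this bookkeeping altogether by working with the cone picture rather than the multiplier picture: it promotes $\phi$ to a $^*$-homomorphism $\pi:C_0(0,1]\otimes A\to B$, derives $g(h)\pi(f\otimes a)=\pi(gf\otimes a)$ from \eqref{eqn:OrderZeroIndentity1}, and extends to a $^*$-homomorphism on $C_0(0,1]\otimes A^\sim$ by sending $f\otimes 1_{A^\sim}$ to $f(h)$ --- functional calculus of the full given $h$, which automatically carries the component of $h$ outside the hereditary subalgebra.
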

\begin{proof}
	By \cite[Corollary 4.1]{WZ09}, there exists a $^*$-homomorphism $\pi:C_0(0,1] \otimes A \rightarrow B_\omega$ such that $\phi(a) = \pi(t \otimes a)$ for all $a \in A$, where $t$ denotes the canonical generator of the cone. In terms of $\pi$, equation (\ref{eqn:OrderZeroIndentity1}) gives $h\pi(t \otimes ab) = \pi(t^2 \otimes ab)$ for $a,b \in A_+$, from which we deduce that
	\begin{equation}
	h\pi(t \otimes a) = \pi(t^2 \otimes a), \qquad \qquad a \in A, \label{eqn:hAction1}
	\end{equation}
	since $(A_+)^2 = A_+$ and $A_+$ spans $A$. It then follows that $h^n\pi(t^m \otimes a) = \pi(t^{n+m} \otimes a)$ for $a \in A$ and for all $n,m \in \N_{\geq 1}$, from which we obtain
	\begin{equation}
	g(h)\pi(f \otimes a) =  \pi(gf \otimes a), \qquad \qquad a \in A, f,g \in C_0(0,1]\label{eqn:hAction3},	
	\end{equation}
	since $\mathrm{span}\lbrace t^n:n \in \N_{\geq 1}\rbrace$ is dense in $C_0(0,1]$. Taking adjoints, we also obtain $\pi(f \otimes a)g(h) =  \pi(fg \otimes a)$ for all $a \in A, f,g \in C_0(0,1]$.
	
	We now define a map $\pi^\sim:C_0(0,1] \odot A^\sim \rightarrow  B$ from the algebraic tensor product by setting $\pi^\sim(f \otimes (a + \lambda 1_{A^\sim})) := \pi(f \otimes a) + \lambda f(h)$ on elementary tensors. A straightforward computation using \eqref{eqn:hAction3} and its adjoint shows that $\pi^\sim$ is a $^*$-homomorphism. Hence, $\pi^\sim$ extends to a map $C_0(0,1] \otimes A^\sim \rightarrow  B$. Finally, define $\phi^\sim:A^\sim \rightarrow B$ by $\phi^\sim(x) := \pi^\sim(t \otimes x)$. Then $\phi^\sim$ is a c.p.c.\ order zero map and $\phi^\sim(a + \lambda 1_{A^\sim}) = \phi(a) + \lambda h$ as required. 
\end{proof}

We now prove the unitisation lemma for order zero maps. 
\begin{lemma}\label{lem:Unitisation}
	Let $A$, $B$ be $\mathrm{C}^*$-algebras with $A$ separable and let $\phi:A \rightarrow B_\omega$ be a c.p.c.\ order zero map. 
	\begin{itemize}	
		\item[(a)] There exists a c.p.c.\ order zero map $\phi^\sim:A^\sim \rightarrow B_\omega$ which extends $\phi$.
		\item[(b)] Suppose now that $T(B)$ is compact and non-empty. Let $(e_n)_{n\in\mathbb{N}}$ be an approximate unit for $A$ and suppose the function
		\begin{align*}
		\theta:\overline{T_\omega(B)}^{w*} &\rightarrow [0,1]\\
		\tau  &\mapsto \lim_{n \rightarrow \infty} \tau(\phi(e_n))
		\end{align*}
		is continuous. Then there exists a c.p.c.\ order zero map $\phi^\sim:A^\sim \rightarrow B_\omega$ which extends $\phi$ and satisfies $\tau(\phi^\sim(1_{A^\sim})) = \theta(\tau)$ for all $\tau \in \overline{T_\omega(B)}^{w*}$.
	\end{itemize}
\end{lemma}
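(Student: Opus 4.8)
The plan is to deduce both parts from the technical Lemma~\ref{lem:unitising-orderzero}: it suffices to produce a positive contraction $h \in (B_\omega)_{+,1}$ satisfying the identity $\phi(a)\phi(b) = h\phi(ab)$ for all $a,b \in A_+$, since then $\phi^\sim(a+\lambda 1_{A^\sim}) := \phi(a) + \lambda h$ is the required extension and $\phi^\sim(1_{A^\sim}) = h$. The morally correct choice is $h = \text{``}\phi(1_A)\text{''}$, which I approximate using an approximate unit. Fix an increasing approximate unit $(e_n)_{n\in\N}$ for the separable algebra $A$ and lift $\phi$ to a sequence of c.p.c.\ order zero maps $\phi_m : A \to B$ via \cite[Proposition 1.2.4]{Wi09}, so that $\phi(a) = [(\phi_m(a))_m]$ for every $a \in A$. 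For part (a), writing $\phi(x) = \pi(t\otimes x)$ for the supporting $^*$-homomorphism $\pi : C_0(0,1]\otimes A \to B_\omega$ of \cite[Corollary 4.1]{WZ09}, one computes $\phi(e_n)\phi(ab) = \pi(t^2\otimes e_nab)$ and $\phi(a)\phi(b) = \pi(t^2\otimes ab)$, whence $\|\phi(e_n)\phi(ab) - \phi(a)\phi(b)\| \le \|e_nab - ab\| \to 0$. Thus $h = \phi(e_n)$ solves the identity approximately for finitely many pairs once $n$ is large, and a routine application of Kirchberg's Epsilon Test (Lemma~\ref{epstest}), with $X_m$ the positive contractions of $B$ and functions $f^{(i,j)}_m(h) = \|\phi_m(a_i)\phi_m(a_j) - h\,\phi_m(a_ia_j)\|$ indexed by pairs from a countable dense subset $\{a_i\}\subseteq A_+$, produces an exact $h \in (B_\omega)_{+,1}$; density and continuity of both sides in $a,b$ then give the identity for all $a,b\in A_+$, proving (a).

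For part (b) the extra demand is $\tau(h) = \theta(\tau) = \lim_n \tau(\phi(e_n))$ on $K := \overline{T_\omega(B)}^{w*}$. The functions $\theta_n(\tau) := \tau(\phi(e_n))$ are weak$^*$-continuous on the compact set $K$ and, as $\phi$ is positive and $(e_n)$ is increasing, they increase pointwise to the (by hypothesis continuous) function $\theta$; Dini's Theorem therefore yields uniform convergence, $\delta_n := \sup_{\tau\in K}(\theta(\tau) - \theta_n(\tau)) \to 0$. Two consequences follow. First, using $0 \le \phi(e_{n'}) - \phi(e_n) \le 1$ for $n'\ge n$, one has $\|\phi(e_{n'}) - \phi(e_n)\|_{2,\tau}^2 \le \tau(\phi(e_{n'}) - \phi(e_n)) = \theta_{n'}(\tau) - \theta_n(\tau)$, so $(\phi(e_n))_n$ is Cauchy in the uniform tracial seminorm $\|\cdot\|_{2,T_\omega(B)}$. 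Second, any $h$ with $\|h-\phi(e_n)\|_{2,T_\omega(B)} \to 0$ automatically satisfies $\tau(h) = \lim_n\tau(\phi(e_n)) = \theta(\tau)$ for every $\tau\in T_\omega(B)$ (by Cauchy--Schwarz), and hence on all of $K$ since $\tau\mapsto\tau(h)$ and $\theta$ are continuous and agree on the dense subset $T_\omega(B)$.

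I would then rerun the Epsilon Test, augmenting the order zero functions $f^{(i,j)}_m$ by trace-control functions $s^{(l)}_m(h) = \max\{0,\,\|h - \phi_m(e_{n_l})\|_{2,T(B)} - \epsilon_l\}$, where $n_l \to\infty$ and $\epsilon_l := \delta_{n_l}^{1/2} \to 0$ are dictated by the Dini estimate; here the compactness of $T(B)$ is used to identify $\lim_{m\to\omega}\|h_m - \phi_m(e_{n_l})\|_{2,T(B)}$ with $\|h - \phi(e_{n_l})\|_{2,T_\omega(B)}$, so that $s^{(l)}_\omega(h) = 0$ exactly when $\|h-\phi(e_{n_l})\|_{2,T_\omega(B)} \le \epsilon_l$. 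To verify the hypothesis of the test for a finite family I again take $h = \phi(e_N)$ with $N$ large: the order zero defects are small as in part (a), while for $N \ge n_l$ the Cauchy estimate gives $\|\phi(e_N) - \phi(e_{n_l})\|_{2,T_\omega(B)} \le \epsilon_l$, so each $s^{(l)}_\omega$ vanishes. The test then produces $h \in (B_\omega)_{+,1}$ satisfying the order zero identity exactly and $\|h - \phi(e_{n_l})\|_{2,T_\omega(B)} \le \epsilon_l$ for all $l$; combined with the Cauchy property this forces $\|h - \phi(e_n)\|_{2,T_\omega(B)} \to 0$, giving the trace condition, and Lemma~\ref{lem:unitising-orderzero} finishes (b).

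The main obstacle is reconciling two constraints of different analytic character: the order zero identity is a norm-level algebraic condition, whereas $\tau(h) = \theta(\tau)$ is a tracial $2$-norm condition, and one cannot simply impose the latter as exact zeros in the Epsilon Test, as that would force $h$ to equal every $\phi(e_{n_l})$ in the quotient $B^\omega$. The resolution is to relax the trace conditions to the quantitative tolerances $\epsilon_l$ and let the Cauchy property---itself extracted from the pointwise-to-uniform upgrade furnished by Dini's Theorem---transport them to the exact limit. The delicate point is that the limit of $(\phi(e_n))_n$ exists a priori only in the complete uniform tracial ultrapower $B^\omega$, so the reindexing provided by the Epsilon Test is essential in order to realise $h$ as an honest element of $B_\omega$ rather than merely of the quotient.
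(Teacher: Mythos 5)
Your proposal is correct and follows essentially the same route as the paper: both parts reduce to producing the positive contraction $h$ required by Lemma \ref{lem:unitising-orderzero}, using the supporting $^*$-homomorphism computation $\lim_n\phi(e_n)\phi(ab)=\phi(a)\phi(b)$, Dini's Theorem on the compact set $\overline{T_\omega(B)}^{w*}$, and Kirchberg's Epsilon Test with $\phi(e_N)$ as the approximate solution. The only (harmless) variation is in part (b), where you encode the tracial constraint as $\|h-\phi(e_{n_l})\|_{2,T_\omega(B)}\leq\epsilon_l$ via a $\|\cdot\|_{2,T_\omega(B)}$-Cauchy argument, whereas the paper imposes the two-sided pointwise bounds $\tau(\phi(e_l))\leq\tau(h)\leq\tau(\phi(e_l))+\gamma_l$ directly through the test functions; both yield $\tau(h)=\theta(\tau)$ on $T_\omega(B)$ and hence on its weak$^*$-closure.
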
	
\begin{proof}
	(a) Let $(e_n)_{n\in\mathbb{N}}$ be an approximate unit for $A$. By \cite[Corollary 4.1]{WZ09}, there exists a $^*$-homomorphism $\pi:C_0(0,1] \otimes A \rightarrow B_\omega$ such that $\phi(a) = \pi(t \otimes a)$ for all $a \in A$, where $t$ denotes the canonical generator of the cone. For any $a, b \in A_+$, we have 
	\begin{align}
	\lim_{n\rightarrow\infty} \phi(e_n)\phi(ab) &= \lim_{n\rightarrow\infty} \pi(t^2 \otimes e_nab) \notag \\
	&= \pi(t^2 \otimes ab) \notag \\
	&= \phi(a)\phi(b). \label{eqn:CorrectInTheLimit}
	\end{align}
	
	We shall now prove the existence of a positive contraction $h \in B_\omega$ such that (\ref{eqn:OrderZeroIndentity1}) holds for all $a,b \in A_+$ by an application of Kirchberg's Epsilon Test (Lemma \ref{epstest}).  
	
	Let $X_n := B_{+,1}$ for all $n \in \mathbb{N}$. Let $\phi_n:A_+\rightarrow B_+$ be a sequence of functions such that $(\phi_n(a))_{n \in \N}$ is a representative for $\phi(a)$ for all $a \in A_+$.
	Fix a dense sequence $(a_n)_{n\in \mathbb{N}}$ in $A_+$. Define $f^{(r,s)}_n:X_n \rightarrow [0,\infty]$ for $r,s \in \mathbb{N}$ by
	\begin{equation}
	f^{(r,s)}_n(x) := \|x\phi_n(a_ra_s) - \phi_n(a_r)\phi_n(a_s)\|.
	\end{equation} 
	Then define $f^{(r,s)}_\omega:\prod_{n=1}^\infty  X_n \rightarrow [0,\infty]$ by $(x_n)_{n\in\mathbb{N}} \mapsto \lim_{n\rightarrow\omega} f^{(r,s)}_n(x_n).$
	
	Let $m \in \mathbb{N}$ and $\epsilon > 0$. By (\ref{eqn:CorrectInTheLimit}), there is $k \in \mathbb{N}$ such that
	\begin{align}
	\|\phi(e_k)\phi(a_ra_s) - \phi(a_r)\phi(a_s)\| &< \epsilon, &1 \leq r,s \leq m. 
	\end{align}
	
	Let $x = (x_n)_{n \in \mathbb{N}}$ be a sequence of positive contractions in $B$ representing $\phi(e_k)$.  Then $f^{(r,s)}_\omega(x) < \epsilon$ whenever $1 \leq r,s \leq m$. By Kirchberg's Epsilon Test, there exists a sequence of positive contractions $y = (y_n)_{n\in\mathbb{N}}$ in $B$ such that $f^{(r,s)}_\omega(y) = 0$ for all $r,s \in \mathbb{N}$. Let $h$ be the positive contraction in $B_\omega$ represented by $(y_n)_{n\in\mathbb{N}}$. Then $h$ satisfies (\ref{eqn:OrderZeroIndentity1}) for all $a,b \in \lbrace a_n: n \in \mathbb{N} \rbrace$. By density, $h$ satisfies (\ref{eqn:OrderZeroIndentity1}) for all $a,b \in A_+$. The result now follows by Lemma \ref{lem:unitising-orderzero}.

	(b) By Dini's Theorem, $\tau(\phi(e_n)) \nearrow \theta(\tau)$ uniformly for $\tau \in \overline{T_\omega(B)}^{w*}$.\footnote{Our convention is that approximate units for C$^*$-algebras are by default
	assumed to be increasing.} For each $l \in \mathbb{N}$, set 
	\begin{equation}\label{lem:unitisation.gamma.eq}
	\gamma_l := \max_{\tau \in \overline{T_\omega(B)}^{w*}} (\theta(\tau) - \tau(\phi(e_l))).
	\end{equation}
	Then $\gamma_l \geq 0$ as $\tau(\phi(e_n))$ increases with $n$, and $\lim_{l\rightarrow\infty} \gamma_l = 0$ as the convergence is uniform. 
	
	We shall now prove the existence of a positive contraction $h \in B_\omega$ such that (\ref{eqn:OrderZeroIndentity1}) holds for all $a,b \in A_+$ and that		
	\begin{equation}
	\tau(h) = \lim_{n \rightarrow \infty} \tau(\phi(e_n)), \qquad \tau \in \overline{T_\omega(B)}^{w*}. \label{eqn:hTrace}
	\end{equation} 
	Once again, we use Kirchberg's Epsilon Test (Lemma \ref{epstest}). 
	
	Let $X_n$, $\phi_n$, $f^{(r,s)}_n$, and $f^{(r,s)}_\omega$ be as in (a). Define $g^{(l,+)}_n,g^{(l,-)}_n:X_n \rightarrow [0,\infty]$ for $l \in \mathbb{N}$ by
	\begin{align}
	g^{(l,+)}_n(x) &:= \max\left(\sup_{\tau \in T(B)} (\tau(x) - \tau(\phi_n(e_l))) - \gamma_l, 0 \right),\\
	g^{(l,-)}_n(x) &:= \max\left(\sup_{\tau \in T(B)} (\tau(\phi_n(e_l)) - \tau(x)), 0 \right).	
	\end{align}
	
	Then define $g^{(l,+)}_\omega,g^{(l,-)}_\omega:\prod_{n=1}^\infty X_n \rightarrow [0,\infty]$ by $(x_n)_{n\in\mathbb{N}} \mapsto \lim_{n\rightarrow\omega} g^{(l,+)}_n(x_n)$ and $(x_n)_{n\in\mathbb{N}} \mapsto \lim_{n\rightarrow\omega} g^{(l,-)}_n(x_n)$ respectively. 
	
	The key observation is that a sequence $x = (x_n)_{n \in \mathbb{N}}$ representing a positive contraction $b \in B_\omega$ satisfies $g^{(l,+)}_\omega(x) = g^{(l,-)}_\omega(x)=0$ if and only if 
	\begin{align}
	\tau(\phi(e_l)) &\leq \tau(b) \leq \tau(\phi(e_l)) + \gamma_l, &\tau \in \overline{T_\omega(B)}^{w*}.
	\end{align}
	
	Let $m \in \mathbb{N}$ and $\epsilon > 0$. By (\ref{eqn:CorrectInTheLimit}), there is $k > m$ such that
	\begin{align}
	\|\phi(e_k)\phi(a_ra_s) - \phi(a_r)\phi(a_s)\| &< \epsilon, &1 \leq r,s \leq m.
	\end{align}
	Let $x = (x_n)_{n \in \mathbb{N}}$ be a sequence of positive contractions in $B$ representing $\phi(e_k)$.  Then $f^{(r,s)}_\omega(x) < \epsilon$ whenever $1 \leq r,s \leq m$. Furthermore, as $k > m$, we have by \eqref{lem:unitisation.gamma.eq} that for any $l \leq m$
	\begin{align}
	\tau(\phi(e_l)) \leq \tau(\phi(e_k)) \leq \theta(\tau) \leq \tau(\phi(e_l)) + \gamma_l, \ \tau \in \overline{T_\omega(B)}^{w*}.
	\end{align}
	Therefore, $g^{(l,+)}_\omega(x) = g^{(l,-)}_\omega(x) = 0$ for $l \leq m$. 
	
	By Kirchberg's Epsilon Test, there exists a sequence of positive contractions $y = (y_n)_{n\in\mathbb{N}}$ in $B$ such that $f^{(r,s)}_\omega(y) = g^{(l,+)}_\omega(y) =g^{(l,-)}_\omega(y) = 0$ for all $r,s,l \in \mathbb{N}$. Let $h$ be the positive contraction in $B_\omega$ represented by $(y_n)_{n\in\mathbb{N}}$. Then $h$ satisfies (\ref{eqn:OrderZeroIndentity1}) for all $a,b \in A_+$ as in (a) and
	\begin{align}
	\tau(\phi(e_l)) &\leq \tau(h) \leq \tau(\phi(e_l)) + \gamma_l, &\tau \in \overline{T_\omega(B)}^{w*}, l \in \mathbb{N}.
	\end{align}
	Letting $l \rightarrow \infty$, we obtain (\ref{eqn:hTrace}) because $\lim_{l \rightarrow \infty} \gamma_l = 0$. The result now follows by Lemma \ref{lem:unitising-orderzero}.
\end{proof}

\section{The Uniqueness Theorem}

In this section, we establish the uniqueness theorem for maps from a $\mathrm{C}^*$-algebra into a $\mathrm{C}^*$-ultrapower, which will be used to bound the nuclear dimension of $\Z$-stable $\mathrm{C}^*$-algebras. This theorem is a non-unital version of \cite[Lemma 4.8]{CETWW} which in turn builds on \cite[Theorem 5.5]{BBSTWW}. For notational convenience, we work with ultrapowers throughout rather than general ultraproducts.

\begin{theorem}[cf.\ {\cite[Theorem 5.5]{BBSTWW}}]
	\label{thm:Uniqueness}
	Let $B$ be a simple, separable, $\mathcal Z$-stable $\mathrm{C}^*$-algebra with CPoU, stable rank one in $B^\sim$, $Q\widetilde{T}(B) = \widetilde{T}_b(B) \neq 0$, and $T(B)$ compact. 
	Let $A$ be a unital, separable, nuclear $\mathrm{C}^*$-algebra, let $\phi_1:A\rightarrow B_\omega$ be a c.p.c.\ order zero map such that $\phi_1(a)$ is full for all non-zero $a \in A$ and the induced map $\bar{\phi}_1: A \to B^\omega$ is a $^*$-homomorphism, and let $\phi_2:A\rightarrow B_\omega$ be a c.p.c.\ order zero map such that
	\begin{equation} \label{eq:uniqueness.thm.st}
	\tau\circ\phi_1=\tau\circ\phi_2^m,\quad \tau\in T(B_\omega),\ m\in\mathbb N,
	\end{equation}
	where order zero functional calculus is used to interpret $\phi_2^m$.\footnote{Suppose $\phi_2(x) = \pi_2(t \otimes x)$ where $\pi_2:C_0(0,1] \otimes A \rightarrow B_\omega$ is a $^*$-homomorphism and $t$ is the canonical generator of $C_0(0,1]$. Then $\phi_2^m(x) = \pi_2(t^m \otimes x)$; see \cite[Corollary 4.2]{WZ09}.} Let $k\in \mathcal Z_+$ be a positive contraction with spectrum $[0,1]$, and define c.p.c.\ order zero maps $\psi_i:A\rightarrow (B\otimes\mathcal Z)_\omega$ by $\psi_i(a):=\phi_i(a)\otimes k$.  Then $\psi_1$ and $\psi_2$ are unitarily equivalent in $(B\otimes\mathcal Z)_\omega^\sim$.
\end{theorem}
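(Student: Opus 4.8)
The plan is to reduce the unitary equivalence of $\psi_1$ and $\psi_2$ to a comparison of the supporting order zero data via a $2\times 2$ matrix argument of Connes type, executed in the $\Z$-stabilised ultrapower where the extra unit coming from $k$ gives room to manoeuvre. The tracial hypothesis \eqref{eq:uniqueness.thm.st}, combined with order zero functional calculus, says that $\phi_1$ and $\phi_2$ induce the \emph{same} tracial data at every order; tensoring with the contraction $k$ of full spectrum $[0,1]$ in $\Z$ is precisely the device that converts this tracial agreement into an honest Cuntz (sub)equivalence, because the spectrum of $k$ lets us absorb arbitrary order zero functional calculus corrections. First I would set up the order zero structure: writing $\psi_i(a) = \sigma_i(t\otimes a)$ for $^*$-homomorphisms $\sigma_i : C_0(0,1]\otimes A \to (B\otimes\Z)_\omega$, the goal becomes unitary equivalence of $\sigma_1$ and $\sigma_2$ inside $(B\otimes\Z)_\omega^\sim$.

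\textbf{Key steps.} The core of the argument should follow the blueprint of \cite[Theorem 5.5]{BBSTWW} and its tracial refinement \cite[Lemma 4.8]{CETWW}, now adapted to the non-unital codomain. The main steps I would carry out are: (i) use fullness of $\phi_1$ (hence of $\psi_1$) together with simplicity and strict comparison (Proposition \ref{prop:ZstableToSC}, valid since $B\otimes\Z$ inherits all the hypotheses) to establish the relevant Cuntz comparisons between $\psi_1(a)$ and $\psi_2(a)$; the tracial identity \eqref{eq:uniqueness.thm.st} feeds exactly the inequalities $d_\tau(\psi_2(a)) = d_\tau(\psi_1(a))$ needed to invoke strict comparison of positive elements with respect to bounded traces; (ii) upgrade these comparisons to unitary equivalence using stable rank one in the unitisation, which passes to the ultrapower by Proposition \ref{prop:SR1inUnitisationUltrapowers} (note $B\otimes\Z$ is $\Z$-stable and projectionless, so Theorem \ref{thm:ZstableProjectionless} applies); stable rank one is the mechanism that turns Cuntz equivalence of positive elements into unitary conjugacy via polar decomposition in $(B\otimes\Z)_\omega^\sim$; (iii) handle the unit of $A$ by deploying the unitisation lemma, Lemma \ref{lem:Unitisation}, to extend the order zero maps coherently to $A^\sim$ so that the $2\times 2$ matrix trick of \cite[Section 2]{BBSTWW} can be run despite the absence of a unit in $B$. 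The CPoU hypothesis on $B$ (Theorem \ref{thm:CPoU} guarantees it) is what localises the tracial estimates and patches the local unitaries produced fibrewise into a single global unitary in the ultrapower, exactly as in \cite{CETWW}.

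\textbf{Main obstacle.} The hard part will be step (iii): marshalling the non-unitality. In \cite{BBSTWW} the global unit is used repeatedly — most critically in the Connes $2\times 2$ trick and in forming the supporting order zero maps — and here $B$ has no unit, only a compact trace space making $B^\omega$ unital. The role of Lemma \ref{lem:Unitisation} is precisely to supply, for each $\phi_i$, an order zero extension $\phi_i^\sim$ to $A^\sim$ whose value on $1_{A^\sim}$ has prescribed, \emph{continuous} tracial behaviour; I expect the delicate point to be verifying the continuity hypothesis of Lemma \ref{lem:Unitisation}(b) for the function $\tau \mapsto \lim_n \tau(\phi_i(e_n))$ and matching these extensions so that $\phi_1^\sim$ and $\phi_2^\sim$ remain tracially comparable in the sense required to rerun the argument. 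Once the extensions are in place and tracially matched, the remaining work — reducing to Cuntz comparison, invoking strict comparison and stable rank one, and assembling the global unitary via CPoU — follows the established template, with the tensor factor $k\in\Z$ providing the crucial slack that makes the approximate unitary equivalences exact in the limit.
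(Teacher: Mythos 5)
Your overall skeleton --- the $2\times 2$ matrix trick, strict comparison, stable rank one in the unitisation, and CPoU --- is the right family of tools, but two of your central claims would derail the argument. First, your ``main obstacle'' is misidentified: Lemma \ref{lem:Unitisation} plays no role in the proof of Theorem \ref{thm:Uniqueness}, because here the domain $A$ is already unital; that lemma is used in Section \ref{section.nucleardim} to unitise the maps $\iota$ and $\Phi$ on $A_0$ \emph{before} Theorem \ref{thm:Uniqueness} is invoked. The non-unitality of the codomain $B$ is instead absorbed into the corner construction $C := M_2((B\otimes\Z)_\omega)\cap\pi(A)'\cap\{1_{M_2((B\otimes\Z)_\omega^\sim)}-\pi(1_A)\}^\perp$ and into Proposition \ref{prop:SR1inUnitisationUltrapowers} together with the appendix lemmas; there is nothing to extend to $A^\sim$ and no continuity hypothesis of Lemma \ref{lem:Unitisation}(b) to verify inside this proof.

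Second, and more seriously, your step (i) proposes to compare $\psi_1(a)$ and $\psi_2(a)$ via strict comparison in $(B\otimes\Z)_\omega$. That is not where the comparison happens, and it would not suffice. The paper's argument first builds supporting order zero maps $\hat\phi_i$ (Lemma \ref{lem:SupportingMapNew}, available because $\bar\phi_1(1_A)$ is a projection and \eqref{eq:uniqueness.thm.st} transfers continuity of $\tau\mapsto d_\tau(\phi_i(1_A))$), forms the diagonal map $\pi$ from $\hat\psi_1$ and $\hat\psi_2$, and then must show that the two elements $h_1=\mathrm{diag}(\psi_1(1_A),0)$ and $h_2=\mathrm{diag}(0,\psi_2(1_A))$ are unitarily equivalent in $C^\sim$. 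For that one needs strict comparison \emph{of the relative commutant} $C$ and a description of $T(C)$, which is the technical heart of the proof: it requires property (SI) (Proposition \ref{prop:SI}) combined with CPoU through Theorem \ref{theo:Omnibus}, followed by the classification of totally full positive elements by their tracial data (Theorem \ref{thm:TotallyFullClassFinite}), where the all-powers hypothesis $\tau\circ\phi_1=\tau\circ\phi_2^m$ enters to give $\rho(h_1^m)=\tfrac{1}{2}\tau_{\Z}(k^m)=\rho(h_2^m)$ for every trace $\rho$ on $C$. CPoU is not used to ``patch local unitaries''; it is used to establish these structural properties of $C$. Without this chain your plan has no mechanism for producing the unitary in $C^\sim$ that Proposition \ref{prop:2x2matrixtrick} then converts into a unitary equivalence of $\psi_1$ and $\psi_2$.
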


The proof of Theorem \ref{thm:Uniqueness} follows by a careful adaptation of the arguments from \cite{BBSTWW, CETWW} to handle the potential non-unitality of $B$. In the subsections that follow, we shall first review the key ingredients of the proof of \cite[Lemma 4.8]{CETWW} and \cite[Theorem 5.5]{BBSTWW} and explain clearly the modifications needed in the non-unital setting. We shall then return to the proof of Theorem \ref{thm:Uniqueness}.

\subsection{The $2 \times 2$ Matrix Trick}

We begin by reviewing the $2 \times 2$ matrix trick, which converts the problem of unitary equivalence of maps into the problem of unitary equivalence of positive elements. The version stated below is very similar to \cite[Lemma 2.3]{BBSTWW}; however, for our applications, we must weaken the stable rank one assumption and we have no need for the Kirchberg algebra case.

\begin{proposition}[{cf.\ \cite[Lemma 2.3]{BBSTWW}}]\label{prop:2x2matrixtrick}
	Let $A$ be a separable, unital $\mathrm C^*$-algebra and $B$ be a separable $\mathrm{C}^*$-algebra. Let $\phi_1,\phi_2:A\rightarrow B_\omega$ be c.p.c.\ order zero maps and $\hat{\phi}_1,\hat{\phi}_2:A\rightarrow B_\omega$ be supporting order zero maps (as in \eqref{eq:Supporting}). Suppose that $B_\omega$ has stable rank one in $B_\omega^\sim$. Let $\pi:A\rightarrow M_2(B_\omega)$ be given by
	\begin{equation}
	\pi(a) := \begin{pmatrix} \hat\phi_1(a) & 0 \\ 0 & \hat\phi_2(a) \end{pmatrix}, \quad a\in A,
	\end{equation}
	and set $C:= M_2(B_\omega)\cap \pi(A)'\cap \{1_{M_2(B_\omega^\sim)}-\pi(1_A)\}^\perp$. If 
	\begin{equation}
	\begin{pmatrix}\phi_1(1_A)&0\\0&0\end{pmatrix}\text{ and }\begin{pmatrix}0&0\\0&\phi_2(1_A)\end{pmatrix}
	\end{equation}
	are unitarily equivalent in $C^\sim$, then $\phi_1$ and $\phi_2$ are unitarily equivalent in $B_\omega^\sim$.
\end{proposition}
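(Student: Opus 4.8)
The plan is to run Connes's $2\times 2$ trick in reverse: the hypothesised unitary equivalence of the positive elements $e_1 := \mathrm{diag}(\phi_1(1_A),0)$ and $e_2 := \mathrm{diag}(0,\phi_2(1_A))$ inside $C^\sim$ should be parlayed into a unitary equivalence of the order zero maps $\phi_1$ and $\phi_2$ themselves, with the stable rank one hypothesis entering only at the very last descent step. First I would record the structural facts about the supporting maps $\hat\phi_i$ from \cite[Lemma 1.14]{BBSTWW}: writing $\phi_i(a) = \pi_i(t\otimes a)$ for the structure $^*$-homomorphism of \cite[Corollary 4.1]{WZ09}, they satisfy $\hat\phi_i(a)\phi_i(1_A) = \phi_i(a) = \phi_i(1_A)\hat\phi_i(a)$ and $\hat\phi_i(1_A)\phi_i(a) = \phi_i(a)$. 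From these I would check that $e_1,e_2$ genuinely lie in $C$: commutation with $\pi(A)$ holds because $\phi_i(1_A)$ is a function of the same generator as $\hat\phi_i(a)$ and $1_A$ is central in $A$, while orthogonality to $1_{M_2(B_\omega^\sim)}-\pi(1_A)$ is exactly the identity $\phi_i(1_A)(1-\hat\phi_i(1_A))=0$. This makes the hypothesis meaningful.

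Next, let $u\in C^\sim$ be a unitary with $ue_1u^*=e_2$, and regard it as a unitary in $M_2(B_\omega^\sim)=M_2(B_\omega)^\sim$ via the unital inclusion $C^\sim\hookrightarrow M_2(B_\omega)^\sim$. Setting $\Phi(a):=\mathrm{diag}(\phi_1(a),0)$ and $\Psi(a):=\mathrm{diag}(0,\phi_2(a))$, the relation $\phi_i(a)=\hat\phi_i(a)\phi_i(1_A)$ lets me rewrite $\Phi(a)=\pi(a)e_1$ and $\Psi(a)=\pi(a)e_2$. Since $u$ commutes with $\pi(A)$ (because $C\subseteq\pi(A)'$ and the scalar part of $u$ is central), I can compute $u\Phi(a)u^*=\pi(a)\,ue_1u^*=\pi(a)e_2=\Psi(a)$ for all $a\in A$. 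Thus $u$ already intertwines the two corner maps, and no stable rank one is needed for this part.

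The main obstacle is the final descent from $M_2(B_\omega^\sim)$ to the corner $B_\omega^\sim$. Expanding $u\Phi(a)u^*=\Psi(a)$ entrywise, the off-diagonal entry $w:=u_{21}$ satisfies $w\phi_1(a)w^*=\phi_2(a)$, while the $(1,1)$-entry relation forces $u_{11}\phi_1(a)=0$. A point worth flagging is that $w$ in fact lies in $B_\omega$, not merely $B_\omega^\sim$, since the scalar part of $u$ contributes only to the diagonal entries; this is what makes the stable rank one hypothesis on $B_\omega$ applicable. From the unitarity of $u$ I would deduce, via a short $\mathrm{C}^*$-identity argument, that $w^*w$ is a two-sided unit for $\phi_1(A)$ and $ww^*$ a two-sided unit for $\phi_2(A)$. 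At this stage $w$ is a partial-isometry-like intertwiner but not a unitary, and this is precisely where the assumption that $B_\omega$ has stable rank one in $B_\omega^\sim$ is used.

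To finish, I would apply the polar decomposition of \cite[Lemma 1.20]{BBSTWW} to write $w=\tilde w\lvert w\rvert$ for a unitary $\tilde w\in B_\omega^\sim$, upgrading the approximate decomposition to an exact one inside the ultrapower by a reindexing through Kirchberg's Epsilon Test (Lemma \ref{epstest}) if necessary. Because $w^*w$ acts as a unit on $\phi_1(A)$, continuous functional calculus gives $\lvert w\rvert\phi_1(a)=\phi_1(a)=\phi_1(a)\lvert w\rvert$, whence $\tilde w\phi_1(a)\tilde w^*=w\phi_1(a)w^*=\phi_2(a)$ for all $a$. Thus $\tilde w$ witnesses the unitary equivalence of $\phi_1$ and $\phi_2$ in $B_\omega^\sim$. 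I expect the delicate points to be confirming that $w\in B_\omega$ and handling the non-projection supports $w^*w$ and $ww^*$ in the polar-decomposition step, where the weakened stable rank one hypothesis (in the unitisation only) must be leveraged carefully.
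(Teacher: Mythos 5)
Your proposal is correct and follows essentially the same route as the paper: take the unitary $u\in C^\sim$ implementing the equivalence, note that its off-diagonal corner lies in $B_\omega$ (not merely $B_\omega^\sim$), and use the stable rank one hypothesis together with the polar decomposition lemma \cite[Lemma 1.20]{BBSTWW} and Kirchberg's Epsilon Test to upgrade that corner to a unitary intertwiner, exactly as in the descent step of \cite[Lemma 2.3]{BBSTWW}. The only (harmless) difference is that the paper applies polar decomposition to $u_{21}^*\phi_2(1_A)$ and concludes $\phi_1=w\phi_2w^*$, whereas you apply it to $u_{21}$ directly and conclude $\phi_2=\tilde{w}\phi_1\tilde{w}^*$; both elements lie in $B_\omega$, so the weakened stable rank one hypothesis applies equally to either.
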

\begin{proof}
	Let
	\begin{equation}
	u=\begin{pmatrix} u_{11}&u_{12} \\ u_{21}&u_{22} \end{pmatrix} \in C^\sim
	\end{equation}
	be a unitary implementing the unitary equivalence of the positive elements. Since $B_\omega$ has stable rank one in $B_\omega^\sim$, we have that $u_{21}^*\phi_2(1_A)$ is the limit of invertibles in $B_\omega^\sim$. Hence, by \cite[Lemma 1.20]{BBSTWW} 
	and Kirchberg's Epsilon Test, there is a unitary $w \in B_\omega^\sim$ with $u_{21}^*\phi_2(1_A)= w\,\left|u_{21}^*\phi_2(1_A)\right|$. Arguing exactly as in the proof of \cite[Lemma 2.3]{BBSTWW}, we obtain that $\phi_1(a) = w\phi_2(a)w^*$ for all $a \in A$.   
\end{proof}

\subsection{Property (SI)}

Our goal in this section is to show that c.p.c.\ order zero maps from separable, unital $\mathrm{C}^*$-algebras into ultrapowers of $\mathrm{C}^*$-algebras with compact trace space satisfy property (SI).

The following definition is a variant of \cite[Definition 4.2]{BBSTWW}, which in turn goes back to \cite{MS-CP12}, that allows us to handle cases when the codomain is not unital.

\begin{definition}
	Let $B$ be a simple, separable, $\mathrm{C}^*$-algebra with $Q\widetilde{T}(B) = \widetilde{T}_b(B) \neq 0$. Write $J_{B_\omega}$ for the trace kernel ideal.
	Let $A$ be a separable, unital $\mathrm C^*$-algebra, let $\pi:A\rightarrow B_\omega$ be a c.p.c.\ order zero map, and define 
	\begin{equation} \label{eq.defC}
	C:=B_\omega \cap \pi(A)' \cap \lbrace 1_{B_\omega^\sim}-\pi(1_A) \rbrace^\perp.
	\end{equation}
	The map $\pi$ has \emph{property (SI)} if the following holds. For all $e,f\in C_+$ such that $e\in J_{B_\omega}$, $\|f\|=1$ and $f$ has the property that, for every non-zero $a \in A_+$, there exists $\gamma_a > 0$ such that
	\begin{equation}
	\tau(\pi(a)f^n) > \gamma_a,\quad \tau\in T_\omega(B),\ n\in\mathbb N, \end{equation}
	there exists $s \in C$ such that
	\begin{equation}
	s^*s = e \quad\text{and}\quad fs=s.
	\end{equation}
\end{definition}

The main result of this subsection is that under certain hypotheses, c.p.c.\ maps $A \to B_\omega$ have property (SI). This result is a non-unital version of \cite[Lemma 4.4]{BBSTWW} and
its proof is almost identical to the original proof. 
Since this result is one of the most delicate parts of this work, we include its proof.

\begin{proposition}[cf.\ {\cite[Lemma 4.4]{BBSTWW}}]\label{prop:SI}
	Let $B$ be a simple, separable, $\Z$-stable $\mathrm{C}^*$-algebra with $Q\widetilde{T}(B) = \widetilde{T}_b(B) \neq 0$. 
	Let $A$ be a separable, unital, nuclear $\mathrm C^*$-algebra.
	Then every c.p.c.\ order zero map $\pi:A\rightarrow B_\omega$ has property (SI).
\end{proposition}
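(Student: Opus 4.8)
The plan is to follow the proof of \cite[Lemma 4.4]{BBSTWW} essentially verbatim, making the formal adjustments needed when $B$---and hence $B_\omega$---is non-unital, so that everything takes place inside $B_\omega^\sim$. The only structural input beyond the order zero calculus and Kirchberg's Epsilon Test is \emph{strict comparison}, which is available for $B$ by Proposition \ref{prop:ZstableToSC} since $B$ is simple, separable, $\Z$-stable with $Q\widetilde{T}(B) = \widetilde{T}_b(B) \neq 0$. (No stable rank one is needed here; that hypothesis enters only later, in the $2\times 2$ matrix trick.) First I would fix $e,f \in C_+$ as in the statement and reduce, via Kirchberg's Epsilon Test (Lemma \ref{epstest}), to producing for each finite $\mathcal{F}\subseteq A$ and each tolerance an element $s$ with $s^*s\approx e$, $fs\approx s$, $\|[s,\pi(a)]\|\approx 0$ and $\|s(1_{B_\omega^\sim}-\pi(1_A))\|\approx 0$ for $a\in\mathcal F$; a final application of the test then upgrades these approximate relations to the exact identities $s^*s=e$, $fs=s$ with $s\in C$.

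Lifting $e,f$ to sequences of positive contractions $(e_n),(f_n)$ in $B$, and fixing a representative $(\pi_n(a))_n$ of $\pi(a)$, the two hypotheses translate cleanly to the level of the lifts. Since $e\in J_{B_\omega}$ we have $\lim_{n\to\omega}\|e_n\|_{2,T(B)}=0$, so the Chebyshev bound $\chi_{(\epsilon,\infty)}(t)\leq t^2/\epsilon^2$ gives $\sup_{\sigma\in T(B)} d_\sigma((e_n-\epsilon)_+)\leq \epsilon^{-2}\|e_n\|_{2,T(B)}^2$, which tends to $0$ along $\omega$ for each fixed $\epsilon>0$. Dually, the largeness of $f$---being phrased over all limit traces---yields, by choosing an infimizing sequence of traces, a uniform lower bound $\inf_{\sigma\in T(B)}\sigma(\pi_n(a)f_n^N)\gtrsim \gamma_a$ for $\omega$-most $n$ and every power $N$. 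It is precisely this uniformity over $\sigma\in T(B)$, automatic from the limit-trace formalism and the definition of $J_{B_\omega}$, that replaces the compactness of $T(B)$ used implicitly in the unital case.

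The heart of the argument is the comparison step. Fixing a large power $N$, the splitting $f_n^N\leq \chi_{(1-\delta,1]}(f_n)+(1-\delta)^N$ shows that the top chunk $(f_n-(1-\delta))_+$ satisfies $d_\sigma((f_n-(1-\delta))_+)\geq \gamma_{1_A}/2$ uniformly in $\sigma\in T(B)$, once $N$ is large and $\delta$ small, while $(e_n-\epsilon)_+$ has arbitrarily small rank by the previous paragraph. Choosing $\epsilon$ small compared with this lower bound, strict comparison (Definition \ref{defn:StrictComp}), applied in a matrix amplification of $B$, gives $(e_n-\epsilon)_+\precsim (f_n-(1-\delta))_+$, and R{\o}rdam's lemma then furnishes $r_n$ with $r_n^*(f_n-(1-\delta))_+r_n\approx (e_n-\epsilon)_+$ and $r_nr_n^*$ supported under $(f_n-(1-\delta))_+$. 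Because $f_n\approx 1$ on the support of this top chunk, the resulting $s$ satisfies $fs\approx s$; letting $\epsilon,\delta\to 0$ and $N\to\infty$ recovers $s^*s=e$ and $fs=s$ after the final reindexing. To keep track of how $\pi(A)$ interacts with functions of $f$ throughout, one uses the cone picture $\phi(a)=\Pi(t\otimes a)$ for a $^*$-homomorphism $\Pi:C_0(0,1]\otimes A\to B_\omega$ provided by \cite[Corollary 4.1]{WZ09}.

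The step I expect to be the main obstacle is the bookkeeping required to place every constructed element inside the relative commutant $C=B_\omega\cap\pi(A)'\cap\{1_{B_\omega^\sim}-\pi(1_A)\}^\perp$ when $B_\omega$ is non-unital. The comparison and R{\o}rdam arguments naturally produce witnesses only approximately commuting with $\pi(A)$ and only approximately orthogonal to $1_{B_\omega^\sim}-\pi(1_A)$, and one must check that these approximate relations, together with $s^*s\approx e$ and $fs\approx s$, can be perfected \emph{simultaneously} by a single application of Kirchberg's Epsilon Test. Conceptually this is routine---indeed the comparison core is identical to \cite[Lemma 4.4]{BBSTWW}---but the non-unital orthogonality condition, and the need to carry the weighting by $\pi(a)$ through the spectral-chunk estimates so that the final $s$ lies under $\pi(1_A)$, are where the genuine care is needed.
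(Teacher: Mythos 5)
There is a genuine gap at the heart of your argument: the comparison step you describe produces an intertwiner with no commutation properties. Strict comparison applied to $(e_n-\epsilon)_+\precsim (f_n-(1-\delta))_+$, followed by R{\o}rdam's lemma, yields \emph{some} $r_n$ with $r_n^*(f_n-(1-\delta))_+r_n\approx (e_n-\epsilon)_+$, but nothing forces this $r_n$ to approximately commute with $\pi(\mathcal F)$ or to be approximately dominated by $\pi(1_A)$. You acknowledge this in your final paragraph but dismiss it as ``routine bookkeeping'' to be fixed by Kirchberg's Epsilon Test. The Epsilon Test can only perfect relations for which you can already exhibit approximate witnesses; it cannot manufacture the approximate commutation $\|[s,\pi(a)]\|\approx 0$, and your construction never establishes it. This is precisely the difficulty that makes property (SI) a theorem rather than an exercise in comparison.

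The paper's proof (following Matui--Sato via \cite[Lemma 4.4]{BBSTWW}) resolves this with machinery your proposal omits entirely. First, $\Z$-stability is used not only for strict comparison but to pass to a c.p.c.\ order zero map $\tilde\pi:A\otimes\Z\to B_\omega$, so that no irreducible representation of the domain contains compact operators; this is the hypothesis needed for \cite[Lemma 4.8]{BBSTWW}, which decomposes $x\in\mathcal G$ as $x\approx_\epsilon\sum_{l}\sum_{i,j}\lambda_l(d_{i,l}^*xd_{j,l})c_i^*c_j$ over pairwise inequivalent pure states $\lambda_l$. These pure states are then excised by positive contractions $a_l$ (\cite[Lemma 4.7]{BBSTWW}), and the comparison argument (your ``heart of the argument'', packaged here as Lemma \ref{lem:relSItrick}) is applied only to produce \emph{localized} intertwiners $r_l$ satisfying $\tilde\pi(a_l)r_l=tr_l=r_l$ and $r_l^*r_l=e$. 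The final element $s:=\sum_{l,i}\hat\pi(d_{i,l}a_l)r_l\hat\pi(c_i)$ then satisfies $s^*\pi(x)s=\pi(x)e$ (the weighted relation, not $s^*s=e$ directly) and approximately commutes with $\pi(\mathcal F)$ \emph{because of the algebraic identity built into the excision}, not because of any property of the $r_l$ themselves. Your trace estimates for $e\in J_{B_\omega}$ and for the top spectral chunk of $f$ are sound and do appear inside Lemma \ref{lem:relSItrick}, and you are right that stable rank one is not needed here; but without the excision apparatus the proof does not go through.
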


\begin{proof}
	Let $\pi:A\rightarrow B_\omega$ be a c.p.c.\ order zero map with $A$ and $B$ as in the statement. Let $C$ be as in \eqref{eq.defC} and set $\overline{C} := C / (C \cap J_{B_\omega}). $
	Let $e,f \in C_+$ and $\gamma_a$ be as in the definition of property (SI). 
	As in the proof of \cite[Lemma 4.4]{BBSTWW}, it is enough to exhibit an element $s \in B_\omega$ approximately satisfying
	\begin{equation}\label{eq:propSI1}
	s^*\pi(a)s = \pi(a)e, \quad \text{for all }a \in A\text{ and}\quad fs=s.
	\end{equation}

	Let $\mathcal{F}\subseteq A$ be a finite subset of contractions and $\epsilon > 0$.
	Since $B$ is $\mathcal Z$-stable, using Lemma \ref{lem:Zfacts}.(ii) we can find a c.p.c.\ order zero map $\alpha:\Z\to B_\omega\cap \pi(A)'\cap\{e,f\}'$ such that $\alpha(1_\Z)$ acts as unit on $\pi(A)$. Therefore, we may define a new  c.p.c.\ map $\tilde{\pi}:A\otimes\Z\to B_\omega$ by setting  $\tilde{\pi}(a\otimes z):=\pi(a)\alpha(z)$. It follows by construction that $\pi(a)=\tilde\pi(a \otimes 1_{\mathcal Z})$ for $a\in A$. By \cite[Corollary 4.3]{WZ09}, $\tilde{\pi}$ is a c.p.c.\ order zero map and note that $e$ and $f$ are elements of the relative commutant $B_\omega\cap \tilde{\pi}(A\otimes\Z)'\cap\{1_{B_\omega^\sim}-\tilde{\pi}(1_{A\otimes\Z})\}^\perp$.
	
	Arguing as in the proof of \cite[Lemma 4.4]{BBSTWW}, for any $b \in (A \otimes \mathcal{Z})_+$, there exists a positive constant $\tilde \gamma_b$ such that
	\begin{equation}\label{eq.relSI1}
	\tau (\tilde \pi(b)f^n) > \tilde{\gamma}_b, \qquad \tau \in T_\omega (B_\omega), \ n \in \mathbb{N}.
	\end{equation}
	
	Next, we will apply \cite[Lemmas 4.7 and 4.8]{BBSTWW} to the unital, separable, nuclear $\mathrm{C}^*$-algebra $A$.
	Set $\mathcal G := \{x \otimes 1_\Z : x \in \mathcal{F} \} \subseteq A\otimes\Z$. Since no irreducible representation of $A \otimes \Z$ contains any compact operator, by \cite[Lemma 4.8]{BBSTWW}
	there exist $L,N \in \mathbb{N}$, pairwise inequivalent pure states $\lambda_1, \ldots, \lambda_L$ on $A \otimes \Z$ and elements $c_i, d_{i,l} \in A \otimes \Z$ for $i=1, \ldots, N, \ l = 1,\ldots, L$ such that
	\begin{equation} \label{eq.relSI2}
	x \approx_\epsilon \sum_{l=1}^L \sum_{i,j=1}^N \lambda_l (d_{i,l}^* x d_{j,l}) c_i^* c_j, \qquad x\in\mathcal G.
	\end{equation}
	
	By \cite[Lemma 4.7]{BBSTWW}, 
	applied to the set $\{d_{i,l}^*xd_{j,l'}:x\in\mathcal G,\ i,j=1,\dots,N,\ l,l'=1,\dots,L\}$, there exist positive contractions $a_1,\dots,a_L\in (A\otimes \mathcal Z)_+$ such that for $l=1,\dots,L$, $\lambda_l(a_l)= 1$ and
	\begin{equation}
	\label{eq:relSIExcision} a_ld_{i,l}^*xd_{j,l}a_l \approx_{\delta} \lambda_l(d_{i,l}^*xd_{j,l})a_l^2,\quad x\in\mathcal G,\ i,j=1,\dots,N,\end{equation}
	and for $l\neq l'$,
	\begin{equation}
	\label{eq:relSIOrthog}
	a_ld_{i,l}^*xd_{j,l'}a_{l'}\approx_{\delta} 0,\quad x\in\mathcal G,\ i,j=1,\dots,N,
	\end{equation}
	with $\delta := \epsilon/(N^2L\max_k\|c_k\|^2)$. Note, the condition $\lambda_l(a_l)= 1$ ensures that the $a_l$ have norm 1.
	
	By hypothesis, $B$ is simple, separable, $\Z$-stable and $Q\widetilde{T}(B) = \widetilde{T}_b(B) \neq 0$. Hence, by Proposition \ref{prop:ZstableToSC}, $B$ has strict comparison of positive elements by bounded traces. Thus, for $l=1,\ldots,L$, we may apply Lemma \ref{lem:relSItrick} with $a_l$ in place of $a$. Let  $S_l \subseteq (A\otimes \mathcal Z)_+ \setminus \{0\}$ denote the countable subset such that the conclusion of Lemma \ref{lem:relSItrick} is satisfied with $a_l$ in place of $a$.
	
	Let $\hat{\pi}:A\otimes \mathcal Z\rightarrow B_\omega\cap \{f\}'$ be a supporting c.p.c.\ order zero map for $\tilde\pi$. As in \cite[Lemma 4.4]{BBSTWW}, using (\ref{eq.relSI1}) and Lemma \ref{lem:LargeTraceSubordinate} twice (taking $x:=0$ and with $S_0:=\tilde\pi(S_1 \cup \cdots \cup S_L)$), we find $t,h \in B_\omega \cap \hat\pi(A\otimes \mathcal Z)'\cap \tilde\pi(A\otimes \mathcal Z)'$ satisfying $h \vartriangleleft t \vartriangleleft f$ and, for every $b\in S_1\cup \cdots \cup S_L$, 
	\begin{equation}
	\tau(\tilde\pi(b)h^n) \geq \tilde{\gamma}_b, \quad \tau \in T_\omega(B),\ n\in\mathbb N. 
	\end{equation}
	By Lemma \ref{lem:relSItrick} (with $\tilde\pi$ in place of $\pi$), there is a contraction $r_l \in B_\omega$ such that  $\tilde\pi(a_l)r_l=tr_l=r_l$ and $r_l^*r_l=e$.
	Using $t \vartriangleleft f\vartriangleleft \tilde\pi(1_A)$, we obtain $\tilde\pi(1_A)r_l=r_l$ for each $l$, and hence,
	\begin{eqnarray}
	\label{eq:relSIrDef}
	r_l^*\tilde\pi(a_l^2)r_l &=& 
	\tilde\pi(1_A)^{1/2}e\tilde\pi(1_A)^{1/2}.
	\end{eqnarray}
	Set
	\begin{equation}
	\label{eq:relSIsDef}
	s := \sum_{l=1}^L\sum_{i=1}^N \hat{\pi}(d_{i,l}a_l)r_l \hat{\pi}(c_i) \in B_\omega.
	\end{equation}
	Using $r_l= tr_l$, $t \vartriangleleft f$ and that $t$ commutes with the image of $\hat\pi$, we can obtain $fs=s$.
	For $x\in \mathcal F$, the calculations of \cite[Lemma 4.4, Equation 4.46]{BBSTWW} shows
	\begin{eqnarray}
	s^*\pi(x)s = \pi(x)e,
	\end{eqnarray}
	as required. 
	Then Kirchberg's Epsilon Test produces an element $s \in B_\omega$ that exactly satisfies \eqref{eq:propSI1}. As in the proof of \cite[Lemma 4.10]{BBSTWW}, $s\in C$.
\end{proof}

\subsection{Structural Results for Relative Commutants}

Combining property (SI) with complemented partitions of unity (CPoU), one can now prove important structural properties for the relative commutant algebras $C:=B_\omega\cap \pi(A)'\cap \lbrace 1_{\widetilde{B}_\omega}-\pi(1_A)\rbrace^\perp$ arising from the $2\times 2$ matrix trick.  

\begin{theorem}[cf.\ {\cite[Lemma 4.7]{CETWW}}]\label{theo:Omnibus}
	Let $B$ be a simple, separable, $\Z$-stable $\mathrm{C}^*$-algebra with $Q\widetilde{T}(B) = \widetilde{T}_b(B) \neq 0$ and $T(B)$ compact. Suppose additionally that $B$ has CPoU.
	Let $A$ be a separable unital nuclear $\mathrm{C}^*$-algebra and $\pi:A\rightarrow B_\omega$ a c.p.c.\ order zero map which induces a $^*$-homomorphism $\overline{\pi}:A\rightarrow B^\omega$. Let 
	\begin{equation}
	C:=B_\omega\cap \pi(A)'\cap \lbrace 1_{\widetilde{B}_\omega}-\pi(1_A)\rbrace^\perp,\quad \overline{C}:=C/(C\cap J_{B_\omega}).
	\end{equation}
	Then:
	\begin{enumerate}
		\item All traces on $C$ factor through $\bar{C}$.
		\item $C$ has strict comparison of positive elements by bounded traces.
		\item The traces on $C$ are the closed convex hull of traces of the form $\tau(\pi(a)\cdot)$ for $\tau\in T(B_\omega)$ and $a\in A_+$ with $\tau(\pi(a))=1$.
	\end{enumerate}
\end{theorem}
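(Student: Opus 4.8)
The plan is to establish the three assertions as the non-unital, order zero analogue of \cite[Lemma 4.7]{CETWW}, using property (SI) (Proposition \ref{prop:SI}), complemented partitions of unity (Theorem \ref{thm:CPoU}), and the strict comparison of $B$ (Proposition \ref{prop:ZstableToSC} and Remark \ref{rem:ZstableToSC}). The structural observations I would set up first are as follows. Writing $h:=\pi(1_A)$, every $c\in C$ satisfies $ch=hc=c$ since $C\subseteq\{1_{\widetilde B_\omega}-h\}^\perp$, so $h$ acts as a unit for $C$ and $c\leq h$ for every positive contraction $c\in C$. Moreover, as $\overline\pi$ is a $^*$-homomorphism, $\overline\pi(1_A)$ is a projection in $B^\omega$ which one checks to be the unit of $\overline C$, so that $h-h^2\in J_{B_\omega}$ and $\overline C$ is unital. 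Finally, the limit traces $T_\omega(B)$ are weak$^*$-dense in $T(B_\omega)$ by Theorem \ref{thm:NoSillyTraces} and vanish on $J_{B_\omega}$; this is the bridge between the traces on $C$ and the distinguished traces $\tau(\pi(a)\cdot)$ of assertion (3).

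For (1) I would show that every $\tau\in T(C)$ annihilates $(C\cap J_{B_\omega})_+$ by a divisibility argument powered by property (SI). Given a positive contraction $e\in C\cap J_{B_\omega}$ and $n\in\mathbb N$, the $\mathcal Z$-stability of $B$ provides, as in the proof of Proposition \ref{prop:SI} via Lemma \ref{lem:Zfacts}, pairwise orthogonal norm-one elements $f_1,\dots,f_n\in C_+$ that are tracially large in the sense required by property (SI). Applying property (SI) to each $f_i$ yields $s_i\in C$ with $s_i^*s_i=e$ and $f_is_i=s_i$; the relation $f_is_i=s_i$ forces $s_is_i^*\in\overline{f_iCf_i}$, so the $s_is_i^*$ are pairwise orthogonal positive contractions in $C$ and $\sum_i s_is_i^*$ is a positive contraction in $C$. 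Since $\tau(s_is_i^*)=\tau(s_i^*s_i)=\tau(e)$, we obtain $n\,\tau(e)=\sum_i\tau(s_is_i^*)\leq 1$ for all $n$, whence $\tau(e)=0$. Here I would record that the tracial largeness of the $f_i$ requires $\pi(1_A)$ to be uniformly tracially full, which holds in the situation of Theorem \ref{thm:Uniqueness} where $\pi$ is full; alternatively, (1) follows a posteriori from (3), since the distinguished traces built from limit traces visibly vanish on $C\cap J_{B_\omega}$ (limit traces vanish on $J_{B_\omega}$ and $\pi(a)(C\cap J_{B_\omega})\subseteq J_{B_\omega}$), and hence so does every trace in their closed convex hull.

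For (2) I would transfer strict comparison from $B$ to $C$ using CPoU, following the Matui--Sato strategy as implemented in \cite{CETWW}. Starting from $a,b\in M_k(C)_+$ with $d_\tau(a)<d_\tau(b)$ for all $\tau\in T(C)$, I would replace $a$ by $(a-\epsilon)_+$ and use part (1) together with compactness of a base for the traces to convert the rank inequality into a uniform tracial gap; feeding the associated comparison-defect elements into CPoU produces orthogonal projections $p_1,\dots,p_m\in C$ summing to $h$ and commuting with the relevant data. On each corner the gap lets me invoke the strict comparison of $B$ (through $B^\omega$, as in Remark \ref{rem:ZstableToSC}) to obtain local subequivalences, which I would then assemble into $(a-\epsilon)_+\precsim b$ in $M_k(C)$; letting $\epsilon\to 0$ gives $a\precsim b$.

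For (3) I would first verify that each $c\mapsto\tau(\pi(a)c)$ with $\tau\in T(B_\omega)$, $a\in A_+$ and $\tau(\pi(a))=1$ is a tracial state on $C$ (using that $c$ commutes with $\pi(A)$ and the trace property of $\tau$), and that by density of limit traces it suffices to take $\tau\in T_\omega(B)$. Writing $K$ for the closed convex hull of these traces, I would prove $T(C)=K$ by Hahn--Banach separation: a putative $\tau_0\in T(C)\setminus K$ would yield a self-adjoint $c\in C$ with $\tau_0(c)>\sup_{\rho\in K}\rho(c)$, and I would contradict this by establishing the domination $\tau(c)\leq\sup_{\rho\in K}\rho(c)$ for all $\tau\in T(C)$ via CPoU, partitioning $h$ into projections adapted to $c$ so that on each piece $\tau$ is controlled by one of the distinguished traces. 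The step I expect to be the main obstacle is precisely this CPoU orchestration in parts (2) and (3), carried out with careful bookkeeping of the non-unitality: because $h=\pi(1_A)$ is only a projection modulo $J_{B_\omega}$ and $\pi$ is merely order zero, every tracial estimate must be performed in the quotient picture $\overline C$, where $\overline\pi(1_A)$ is a genuine unit, and the distinguished traces must be handled through the order zero functional calculus for $\pi$ rather than through multiplicativity.
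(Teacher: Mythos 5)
Your proposal follows the same overall route as the paper: property (SI) (Proposition \ref{prop:SI}) for assertion (1), a CPoU-based transfer of strict comparison for (2), and a CPoU/commutator argument combined with Hahn--Banach separation for (3). The paper's own proof is deliberately terse---it runs the proofs of \cite[Theorem 4.1(i) and Lemma 3.20]{BBSTWW} and the relevant lemmas of \cite{CETWW} with the non-unital substitutions listed---and you have reconstructed the substance of those arguments correctly at the level of strategy.

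Two points deserve attention. First, for (1): you rightly observe that the (SI)-divisibility argument needs the $f_i$ to satisfy the tracial largeness hypothesis of property (SI), which amounts to $\inf_{\tau\in T_\omega(B)}\tau(\pi(a))>0$ for nonzero $a\in A_+$; this is not a stated hypothesis of the theorem (though it is available in the paper's one application, Theorem \ref{thm:Uniqueness}, and the paper's own substitution Lemma \ref{lem:RelSurjectivity} likewise carries a fullness hypothesis on $\pi(1_A)$). However, your proposed fallback---deducing (1) a posteriori from (3)---is circular: both the paper's proof of (3) and your own sketch of it operate in $B^\omega$ (that is where CPoU, Proposition \ref{prop:CommTraces}, and the identification of the image of $C$ with the relative commutant in $B^\omega$ live), and one needs (1) precisely in order to descend an arbitrary trace on $C$ to $\overline{C}$ before those tools apply. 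So (1) must be proved directly, carrying whatever fullness is needed, exactly as in \cite[Theorem 4.1(i)]{BBSTWW}. Second, in (2) the step you describe as assembling local subequivalences into $(a-\epsilon)_+\precsim b$ in $M_k(C)$ conceals the real work: CPoU and Lemma \ref{lem:StrictClosureStrictComp} only yield comparison in $\overline{C}\subseteq B^\omega$, and pulling a Cuntz subequivalence back from $\overline{C}$ to $C$ across the trace-kernel ideal requires the $\mathcal Z$-stability divisibility argument that the paper imports as \cite[Lemma 1.8]{CETWW} (in place of \cite[Lemma 3.10]{BBSTWW}); your sketch omits this lifting step. Neither point changes the architecture, but both are where the actual content of the proof sits.
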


First, we discuss two preliminary lemmas, which originate from \cite[Lemma 3.20, Lemma 3.22]{BBSTWW}, and were generalised in \cite[Lemma 4.3, Lemma 4.6]{CETWW} where the newly discovered CPoU was used in place of the earlier methods that required further assumptions on $T(B)$. Both results are proven by checking that these lemmas approximately hold for $\pi_\tau(B^\omega)''$ for any trace in $\tau \in  \overline{T_\omega(B)}^{w*}$, which in turn follows from the fact that $\pi_\tau(B^\omega)''$ is a finite von Neumann algebra,  and then using CPoU to patch local solutions together. In \cite{CETWW}, these results are stated for $B$ unital, but the proofs do not make use of the unit. They only require that $T(B)$ is compact, as this guarantees that $B^\omega$ is unital \cite[Proposition 1.11]{CETWW}.

\begin{lemma}[cf.\ {\cite[Lemma 4.3]{CETWW}}] \label{lem:StrictClosureStrictComp}
	Let $B$ be a separable $\mathrm{C}^*$-algebra with $Q\widetilde{T}(B) = \widetilde{T}_b(B) \neq 0$ and $T(B)$ compact. Suppose $B$ has CPoU. 
	Let $S\subseteq B^\omega$ be a $\|\cdot\|_{2,T_\omega(B)}$-separable and self-adjoint subset, and let $p$ be a projection in the centre of $B^\omega\cap S'$.
	Then $p(B^\omega\cap S')$ has strict comparison of positive elements by bounded traces.
\end{lemma}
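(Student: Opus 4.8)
The plan is to verify Definition~\ref{defn:StrictComp} directly, in the local-to-global spirit described above. Since $T(B)$ is compact, $B^\omega$ is unital by \cite[Proposition~1.11]{CETWW}, so $1_{B^\omega}\in B^\omega\cap S'$ and, $p$ being a central projection, $C:=p(B^\omega\cap S')$ is unital with unit $p$; thus $T(C)$ genuinely consists of tracial states. Fix $k\in\N$ and $a,b\in M_k(C)_+$ with $d_\tau(a)<d_\tau(b)$ for all $\tau\in T(C)$. As $a\precsim b$ follows once $(a-\epsilon)_+\precsim b$ for every $\epsilon>0$, I would fix $\epsilon>0$ and aim to produce $d\in M_k(C)$ with $d^*bd=(a-\epsilon)_+$, which suffices because $d^*bd\precsim b$ always. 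The structural fact underpinning everything is that the uniform $2$-norm is \emph{faithful} on $B^\omega$ and its matrix amplifications (an element vanishes exactly when its $\|\cdot\|_{2,T_\omega(B)}$ does), so an identity valid exactly in $\|\cdot\|_{2,T_\omega(B)}$ is valid exactly in $M_k(B^\omega)$.

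First I would build local witnesses. For each $\tau\in\overline{T_\omega(B)}^{w*}$ the GNS construction gives a \emph{finite} von Neumann algebra $\pi_\tau(B^\omega)''$ with faithful trace, and hence a finite von Neumann algebra $N_\tau:=p\,\pi_\tau(B^\omega\cap S')''$ (a corner of a finite algebra is finite). In $M_k(N_\tau)$ the rank function $d_\tau$ is a complete invariant for Cuntz subequivalence, so from the strict inequality $d_\tau((a-\epsilon)_+)\le d_\tau(a)<d_\tau(b)$ I obtain, for any prescribed $\eta>0$, an element $v_\tau\in M_k(N_\tau)$ with $\|v_\tau^*\pi_\tau(b)v_\tau-\pi_\tau((a-\epsilon)_+)\|_{2,\tau}<\eta$. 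Approximating $v_\tau$ in $\|\cdot\|_{2,\tau}$ by some $d_\tau\in M_k(B^\omega\cap S')$ supported under $p$, and using $\|\cdot\|_{2,\tau}$-continuity of multiplication on bounded sets, gives $\|d_\tau^*bd_\tau-(a-\epsilon)_+\|_{2,\tau}$ as small as desired. Finiteness of $\pi_\tau(B^\omega)''$ is exactly what makes this step work.

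Next I would patch these witnesses using CPoU. By weak$^*$ compactness of $\overline{T_\omega(B)}^{w*}$ and $\|\cdot\|_{2,T_\omega(B)}$-continuity of the failure functionals $\tau\mapsto\tau(c_j)$, where $c_j:=|d_{\tau_j}^*bd_{\tau_j}-(a-\epsilon)_+|^2$, I can select finitely many traces $\tau_1,\dots,\tau_m$ so that $\sup_{\tau\in T_\omega(B)}\min_j\tau(c_j)$ is arbitrarily small. Applying CPoU to $B$ (reducing the matrix condition to the scalar case by slicing with the partial trace $\mathrm{id}\otimes\mathrm{Tr}_k$ on $M_k(B^\omega)$), with separable set containing $S$ together with $a,b$ and the $d_{\tau_j}$, produces orthogonal projections $p_1,\dots,p_m\in B^\omega\cap S'$ summing to $1$, commuting with all the data, and satisfying $\tau(c_jp_j)\le\delta\tau(p_j)$. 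Setting $d:=\sum_{j=1}^m d_{\tau_j}p_j$ and using orthogonality of the $p_j$, a routine estimate gives $\|d^*bd-(a-\epsilon)_+\|_{2,T_\omega(B)}$ below any preassigned tolerance, with $d\in M_k(C)$.

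The previous steps supply, for every tolerance, an element of $M_k(C)$ approximately solving $d^*bd=(a-\epsilon)_+$ in $\|\cdot\|_{2,T_\omega(B)}$ while commuting with $S$ and sitting under $p$. Kirchberg's Epsilon Test (Lemma~\ref{epstest}), run at the level of representing sequences on functions recording the $\|\cdot\|_{2,T}$-defect of the equation together with the $\|\cdot\|_{2,T}$-commutators with a countable $\|\cdot\|_{2,T_\omega(B)}$-dense subset of $S$ and the support defect against $p$, then yields a single $d$ with all these quantities equal to $0$ in $\|\cdot\|_{2,T_\omega(B)}$. By faithfulness of the uniform $2$-norm these vanishings are exact identities in $M_k(B^\omega)$, so $d\in M_k(C)$ and $d^*bd=(a-\epsilon)_+$, whence $(a-\epsilon)_+\precsim b$; letting $\epsilon\to0$ gives $a\precsim b$. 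The main obstacle is precisely this transfer between the von Neumann completions $\pi_\tau(B^\omega)''$, where comparison is automatic but only in $\|\cdot\|_{2,\tau}$, and the C$^*$-algebra $C$, where a genuine Cuntz subequivalence is demanded; it is resolved by marrying the faithfulness of the uniform $2$-norm on $B^\omega$ to the reindexing of the Epsilon Test, with CPoU serving to reconcile the trace-by-trace von Neumann solutions into one global solution.
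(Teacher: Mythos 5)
Your strategy is the same one the paper relies on: the paper gives no self-contained argument but defers to \cite[Lemma 4.3]{CETWW}, noting only that the proof there never uses a unit of $B$ and needs just compactness of $T(B)$ (so that $B^\omega$ is unital); the cited proof is exactly your local-to-global scheme of solving $d^*bd=(a-\epsilon)_+$ approximately in each finite von Neumann completion, gluing with CPoU, and reindexing with Kirchberg's Epsilon Test against the faithful uniform $2$-norm. The gluing, the matrix-amplification of CPoU, and the Epsilon Test step are all fine as you describe them.

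There is, however, a genuine flaw in your local step. You assert that in $M_k(N_\tau)$ the single rank function $d_\tau$ is a complete invariant for Cuntz subequivalence and deduce a witness $v_\tau$ from the one inequality $d_\tau((a-\epsilon)_+)<d_\tau(b)$. That is false unless $N_\tau$ is a factor, which it has no reason to be: already in $\mathbb{C}\oplus\mathbb{C}$ with the trace $\tfrac13\oplus\tfrac23$ one has $d_\tau(1\oplus 0)<d_\tau(0\oplus 1)$ with no subequivalence. Comparison in a finite von Neumann algebra is governed by \emph{all} normal tracial states (equivalently the centre-valued trace), so the local step needs $d_\sigma(\pi_\tau(a))\leq d_\sigma(\pi_\tau(b))$ for every normal trace $\sigma$ on $N_\tau$, not just for the GNS trace $\bar\tau$. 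The repair is available from the hypothesis you were given but never fully use: every normal tracial state $\sigma$ on $N_\tau$ pulls back along $\pi_\tau$ to an element of $T(C)$ (it is tracial, positive, and normalised at $p$), and normality gives $d_{\sigma\circ\pi_\tau}(x)=\sigma(\mathrm{supp}\,\pi_\tau(x))$; since the hypothesis $d_{\tau'}(a)<d_{\tau'}(b)$ is assumed for \emph{all} $\tau'\in T(C)$ rather than only for those induced from $\overline{T_\omega(B)}^{w*}$, it applies to each such $\sigma\circ\pi_\tau$ and yields $\pi_\tau((a-\epsilon)_+)\precsim\pi_\tau(b)$ in $M_k(N_\tau)$ by standard finite von Neumann algebra comparison. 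With the local step corrected in this way, the rest of your argument goes through.
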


\begin{proposition}[cf.\ {\cite[Lemma 4.6]{CETWW}}]\label{prop:CommTraces}
	Let $B$ be a separable $\mathrm{C}^*$-algebra with $Q\widetilde{T}(B) = \widetilde{T}_b(B) \neq 0$ and $T(B)$ compact. Suppose $B$ has CPoU.
	Let $A$ be a separable, unital, nuclear $\mathrm{C}^*$-algebra and $\phi:A\rightarrow B^\omega$ a $^*$-homomorphism.  Set $C:= B^\omega\cap\phi(A)'\cap \lbrace 1_{B^\omega}-\phi(1_A) \rbrace^\perp$. Define $T_0$ to be the set of all traces on $C$ of the form $\tau(\phi(a)\cdot)$ where $\tau \in T(B^\omega)$ and $a \in A_+$ satisfies $\tau(\phi(a))=1$.
	
	Suppose $z \in C$ is a contraction and $\delta>0$ satisfies $|\rho(z)|\leq \delta$ for all $\rho\in T_0$. Write $K := 12 \cdot 12 \cdot (1+\delta)$. Then there exist contractions $w, x_1,\dots,x_{10},y_1,\dots,y_{10} \in C$, such that
	\begin{equation}\label{T3.19:3.37}
	z = \delta w + K\sum_{i=1}^{10} [x_i,y_i].
	\end{equation}
	In particular, $T(C)$ is the closed convex hull of $T_0$.
\end{proposition}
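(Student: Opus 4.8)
The substance of the proposition is the decomposition $z=\delta w+K\sum_{i=1}^{10}[x_i,y_i]$; the concluding sentence is a formal consequence of it. Since $C$ is unital with unit $\phi(1_A)$, the set $T(C)$ is weak$^*$-compact, and each functional $\tau(\phi(a)\cdot)$ with $\tau(\phi(a))=1$ is a tracial state on $C$ (positivity and the trace identity use $a\geq 0$ together with $C\subseteq\phi(A)'$, and normalisation uses $1_C=\phi(1_A)$), so $\overline{\mathrm{conv}}^{w*}(T_0)\subseteq T(C)$. For the reverse inclusion it suffices, by Hahn--Banach separation, to prove $\sigma(z)\leq\sup_{\rho\in T_0}\rho(z)$ for every self-adjoint $z\in C$ and $\sigma\in T(C)$. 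Writing $\mu:=\sup_{T_0}\rho(z)$ and $\nu:=\inf_{T_0}\rho(z)$, I would centre at the midpoint: the self-adjoint element $z_0:=z-\tfrac{\mu+\nu}{2}\phi(1_A)$ satisfies $|\rho(z_0)|\leq\tfrac{\mu-\nu}{2}$ for all $\rho\in T_0$. Applying the decomposition to $z_0/\|z_0\|$ with $\delta:=\tfrac{\mu-\nu}{2\|z_0\|}$ and using that every trace annihilates commutators gives $|\sigma(z_0)|\leq\tfrac{\mu-\nu}{2}$, whence $\sigma(z)=\sigma(z_0)+\tfrac{\mu+\nu}{2}\leq\mu$, as required.

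For the decomposition itself I would follow the local--global strategy of \cite[Lemma 4.6]{CETWW} (adapting \cite[Lemma 3.22]{BBSTWW}), noting that non-unitality of $B$ is harmless because $T(B)$ compact forces $B^\omega$, and hence $C$, to be unital by \cite[Proposition 1.11]{CETWW}. First, fix a limit trace $\tau\in\overline{T_\omega(B)}^{w*}$ and pass to the finite von Neumann algebra $M_\tau:=\pi_\tau(B^\omega)''$ with its faithful normal trace. Under $\pi_\tau$ the projection $\phi(1_A)$ maps to a projection $q$, and $C$ maps into the finite von Neumann algebra $N_\tau:=qM_\tau q\cap\pi_\tau(\phi(A))'$; the hypothesis $|\rho(z)|\leq\delta$ over the $\rho\in T_0$ arising from $\tau$ translates into a norm bound on the image of $z$ under the trace-preserving conditional expectation onto the subalgebra generated by $\pi_\tau(\phi(A))$ and the centre. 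I would then invoke the finite von Neumann algebra decomposition used in \cite{BBSTWW}: a self-adjoint contraction whose relevant conditional expectation has norm at most $\delta$ can be written as $\delta w+K\sum_{i=1}^{10}[x_i,y_i]$ with contractions, the constants $10$ and $12\cdot 12\,(1+\delta)$ coming from the explicit commutator estimates there. This solves the problem approximately in $\|\cdot\|_{2,\tau}$.

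With a local solution available for each $\tau$, I would patch them using CPoU. By compactness of $T(B)$ choose finitely many limit traces $\tau_1,\dots,\tau_n$ so that every $\tau$ is well approximated by some $\tau_j$ on the finite data involved, and record the local contractions $w^{(j)},x_i^{(j)},y_i^{(j)}\in C$. Encoding ``the $\tau_j$-solution approximates $z$'' as positive elements $a_1,\dots,a_n$ and applying CPoU (Definition~\ref{defn:CPOU}, available by Theorem~\ref{thm:CPoU}) to a separable set $S$ containing $z$, $\phi(A)$ and all local data produces pairwise orthogonal projections $p_1,\dots,p_n\in B^\omega\cap S'$ summing to $1_{B^\omega}$ with $\tau(a_jp_j)\leq\delta\tau(p_j)$; cutting by $\phi(1_A)$ (with which they commute) moves them into $C$ with sum $1_C$. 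Setting $w:=\sum_j w^{(j)}p_j$, $x_i:=\sum_j x_i^{(j)}p_j$, $y_i:=\sum_j y_i^{(j)}p_j$, orthogonality of the $p_j$ and the fact that they commute with the local data yield $\sum_i[x_i,y_i]=\sum_j p_j\sum_i[x_i^{(j)},y_i^{(j)}]$ and $z=\sum_j p_j z$, so the local decompositions assemble into $z\approx\delta w+K\sum_i[x_i,y_i]$ in $\|\cdot\|_{2,T_\omega(B)}$. Kirchberg's Epsilon Test (Lemma~\ref{epstest}) then upgrades this to an exact identity with all elements genuinely in $C$.

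The step I expect to be hardest is the local von Neumann algebra decomposition into a bounded number of commutators with uniform norm control, together with the bookkeeping that keeps the correction $w$ and every $x_i,y_i$ inside the relative commutant $C$, i.e.\ commuting with $\phi(A)$ and supported under $\phi(1_A)$; correspondingly the CPoU projections must be chosen in $C\cap(\text{local data})'$ so that patching simultaneously respects the commutator structure and the $\delta$-estimate. The only genuinely new point relative to \cite{CETWW} is verifying that the absence of a unit in $B$ never intervenes, which reduces to the single observation that compactness of $T(B)$ already makes $B^\omega$ unital.
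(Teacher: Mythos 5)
Your proposal is correct and takes essentially the same route as the paper, which proves this proposition simply by citing \cite[Lemma 4.6]{CETWW} and observing that its proof (local commutator decompositions in the finite von Neumann algebras $\pi_\tau(B^\omega)''$, patched together via CPoU and Kirchberg's Epsilon Test) never uses a unit in $B$, only that compactness of $T(B)$ forces $B^\omega$ to be unital. Your Hahn--Banach derivation of the final sentence from the commutator decomposition is the standard argument underlying the cited sources and is consistent with them.
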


With these preparatory lemmas now established, we explain how to adapt the original proof of \cite[Theorem 4.1]{BBSTWW} to prove Proposition \ref{theo:Omnibus}.

\begin{proof}[Proof of Theorem \ref{theo:Omnibus}]
	For (i), the proof of \cite[Theorem 4.1(i)]{BBSTWW} still works in our situation with the following minor modifications. We use Lemma \ref{lem:LargeTraceSubordinate} instead of \cite[Lemma 1.18]{BBSTWW}, Proposition \ref{prop:SI} in place of \cite[Lemma 4.4]{BBSTWW} and Lemma \ref{lem:RelSurjectivity} in place of \cite[Lemma 1.19]{BBSTWW}.
	
	Similarly, for (ii) we use the proof from \cite[Lemma 3.20]{BBSTWW} with the following modifications.
	Since $B$ is $\mathcal{Z}$-stable, any matrix algebra embeds into $B^\omega \cap \overline{\pi}(A)' \cap \{\overline{c}\}'$ \cite[Proposition 2.3]{CETWW}.
	We use Lemma \ref{lem:StrictClosureStrictComp} to see that $\overline{C}$ has strict comparison of positive elements by traces in place of \cite[Lemma 3.20]{BBSTWW}, and \cite[Lemma 1.8]{CETWW} in place of \cite[Lemma 3.10]{BBSTWW}.
	
	In the same vein, (iii) follows from (i), \cite[Lemma 1.5]{CETWW}, and Proposition \ref{prop:CommTraces}.
\end{proof}

\subsection{Unitary Equivalence of Totally Full Positive Elements}

The main theorem of this section is a non-unital version of the classification of totally full positive elements up to unitary equivalence in relative commutant sequence algebras obtained in \cite[Lemma 5.1]{BBSTWW}.\footnote{Recall that a non-zero $h \in C_+$ is \emph{totally full} if $f(h)$ is full in $C$ for every non-zero $f \in C_0((0,\|h\|])_+$ \cite[Definition 1.1]{BBSTWW}.}

Let us begin by stating the following lemma which can be proved in exactly the same way as \cite[Lemma 5.3]{BBSTWW} since the Robert--Santiago argument (\cite{RS10}) at the core of the proof has no unitality hypothesis. All that is required is to formally replace all occurrences of $1_{B_\omega}$ with $1_{B^\sim_\omega}$, and replace \cite[Lemma 1.17]{BBSTWW} with Lemma \ref{NewEpsLemmaNew}, \cite[Lemma 2.2]{BBSTWW} with Lemma \ref{lem:GapInvertibles}, \cite[Lemma 5.4]{BBSTWW} with Lemma \ref{lem:RSLem}.

\begin{lemma}[{cf.\ \cite[Lemma 5.3]{BBSTWW}}]
	\label{lem:TotallyFullClass}
	Let $B$ be a separable, $\mathcal Z$-stable $\mathrm C^*$-algebra and 
	let $A$ be a separable, unital $\mathrm C^*$-algebra. Let $\pi:A\rightarrow B_\omega$ be a c.p.c.\ order zero map such that
	\begin{equation}
	\label{eq:TotallyFullClassCdef}
	C:=B_\omega \cap \pi(A)' \cap \{1_{B_\omega^\sim}-\pi(1_A)\}^\perp
	\end{equation}
	is full in $B_\omega$.
	
	Assume that every full hereditary subalgebra $D$ of $C$ satisfies the following: if $x \in D$ is such that there exist totally full elements $e_l,e_r \in D_+$ such that $e_l x = xe_r = 0$, then there exists a full element $s \in D$ such that $sx = xs = 0$.
	
	Let $a,b \in C_+$ be totally full positive contractions.
	Then $a$ and $b$ are unitarily equivalent in $C^\sim$ if and only if for every $f \in C_0(0,1]_+$, $f(a)$ is Cuntz equivalent to $f(b)$ in $C$.
\end{lemma}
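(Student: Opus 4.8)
The plan is to prove the two implications separately, with essentially all the work lying in the reverse direction. The forward implication is a direct functional calculus computation: if $u \in C^\sim$ is a unitary with $uau^* = b$, then for every $f \in C_0(0,1]_+$ the element $f(a)$ lies in $C$ (since $f$ vanishes at $0$), and because $C$ is an ideal of $C^\sim$ we get $f(b) = f(uau^*) = uf(a)u^* \in C$; thus $f(a)$ and $f(b)$ are unitarily, and hence Cuntz, equivalent in $C$. So I would dispose of this direction in a line or two and concentrate on the converse.

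For the reverse implication, the conceptual reformulation I would use is to regard the positive contractions $a$ and $b$ as $^*$-homomorphisms $\phi_a, \phi_b : C_0(0,1] \to C$ carrying the canonical generator $t$ to $a$ and $b$. The hypothesis that $f(a)$ is Cuntz equivalent to $f(b)$ for all $f \in C_0(0,1]_+$ says precisely that $\phi_a$ and $\phi_b$ induce the \emph{same} morphism on Cuntz semigroups. At this point I would invoke the Robert--Santiago classification of homomorphisms out of $C_0(0,1]$ (\cite{RS10}), whose engine is an inductive construction of approximate unitary intertwiners from agreement of Cuntz data, and which carries no unitality assumption. Concretely, one partitions the spectrum $[0,1]$ into short intervals and, on the corresponding spectral cut-downs of $a$ and $b$, uses the matching Cuntz classes together with stable rank one (available in the ambient ultrapower via Proposition \ref{prop:SR1inUnitisationUltrapowers}), the gap-in-invertibles lemma (Lemma \ref{lem:GapInvertibles}), and the perturbation lemma (Lemma \ref{NewEpsLemmaNew}) to build partial isometries intertwining the pieces; the near-orthogonality coming from the gaps between intervals lets these assemble into near-unitaries $u_n \in C^\sim$ with $\|u_n a u_n^* - b\| \to 0$. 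The orthogonality hypothesis imposed on full hereditary subalgebras $D$ of $C$ is exactly what supplies a full element $s$ orthogonal to the portion of the intertwiner already built, i.e.\ the ``room'' to continue the induction; the technical heart of this step is the non-unital replacement Lemma \ref{lem:RSLem} for \cite[Lemma 5.4]{BBSTWW}. Finally, since $C$ is a relative commutant inside the ultrapower $B_\omega$ it supports reindexing, so an application of Kirchberg's Epsilon Test (Lemma \ref{epstest}) upgrades the approximate unitary equivalence to an exact one, producing a single unitary $u \in C^\sim$ with $uau^* = b$.

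I expect the main obstacle to be the non-unitality rather than any new structural idea, since the Robert--Santiago argument is already unit-free. In \cite[Lemma 5.3]{BBSTWW} the ambient unit is used repeatedly --- to form complements, to normalise partial isometries, and to house the implementing unitaries --- so the adaptation requires systematically replacing $1_{B_\omega}$ by $1_{B_\omega^\sim}$, working throughout in the unitization $C^\sim$, and substituting each auxiliary lemma by its non-unital counterpart (Lemmas \ref{NewEpsLemmaNew}, \ref{lem:GapInvertibles}, \ref{lem:RSLem} in place of \cite[Lemmas 1.17, 2.2, 5.4]{BBSTWW}). The genuinely delicate point is verifying that the orthogonality hypothesis slots correctly into each stage of the induction as the precise substitute for unitality: it must guarantee, at every step, a full complementary element orthogonal to the current approximation, which is what a projection complement to the unit would have provided in the unital setting. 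Once these replacements are checked to be compatible, the construction proceeds exactly as in the unital case.
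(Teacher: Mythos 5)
Your proposal is correct and follows essentially the same route as the paper, which proves this lemma by running the Robert--Santiago argument of \cite[Lemma 5.3]{BBSTWW} verbatim after formally replacing $1_{B_\omega}$ by $1_{B_\omega^\sim}$ and substituting \cite[Lemmas 1.17, 2.2, 5.4]{BBSTWW} by Lemmas \ref{NewEpsLemmaNew}, \ref{lem:GapInvertibles} and \ref{lem:RSLem} --- exactly the three replacements you identify. The only quibble is your appeal to Proposition \ref{prop:SR1inUnitisationUltrapowers}: stable rank one of the $B_n$ is not a hypothesis here and is not needed, since the approximation by invertibles is supplied entirely by Lemma \ref{lem:GapInvertibles} (via $\mathcal{Z}$-stability and the orthogonality hypothesis), which you also cite.
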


With this lemma in hand, we can now prove the main theorem of this section.

\begin{theorem}[{cf.\ \cite[Theorem 5.1]{BBSTWW}}]
	\label{thm:TotallyFullClassFinite}
	Let $B$ be a separable, $\mathcal Z$-stable $\mathrm C^*$-algebra with $Q\widetilde{T}(B)=\widetilde{T}_b(B) \neq 0$.
	Let $A$ be a separable, unital $\mathrm C^*$-algebra and let $\pi:A\rightarrow B_\omega$ be a c.p.c.\ order zero map such that
	\begin{equation}\label{e6.1}
	C:=B_\omega \cap \pi(A)' \cap \{1_{B_\omega^\sim}-\pi(1_A)\}^\perp
	\end{equation}
	is full in $B_\omega$ and has strict comparison of positive elements with respect to bounded traces.

	Let $a,b \in C_+$ be totally full positive elements.
	Then $a$ and $b$ are unitarily equivalent in $C^\sim$ if and only if $\tau(a^k) = \tau(b^k)$ for every $\tau \in T(C)$ and $k\in \N$. 
\end{theorem}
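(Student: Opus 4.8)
The forward implication is immediate. If $a = ubu^*$ for some unitary $u \in C^\sim$, then $a^k = ub^ku^*$ for all $k \in \N$; since every $\tau \in T(C)$ extends to a tracial state $\widetilde\tau$ on $C^\sim$, we get $\tau(a^k) = \widetilde\tau(ub^ku^*) = \widetilde\tau(b^k) = \tau(b^k)$. The content is in the converse, and the plan is to deduce it from Lemma \ref{lem:TotallyFullClass}, which reduces the unitary equivalence of the totally full contractions $a$ and $b$ in $C^\sim$ to the assertion that $f(a)$ is Cuntz equivalent to $f(b)$ in $C$ for every $f \in C_0(0,1]_+$. Two things must therefore be supplied: (A) that $C$ satisfies the orthogonality hypothesis of Lemma \ref{lem:TotallyFullClass}, and (B) that the moment condition $\tau(a^k) = \tau(b^k)$ forces $f(a)$ and $f(b)$ to be Cuntz equivalent for all $f$.

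For (A), let $D \subseteq C$ be a full hereditary subalgebra and let $x \in D$ admit totally full $e_l,e_r \in D_+$ with $e_l x = x e_r = 0$; the task is to find a full $s \in D$ with $sx = xs = 0$. I would build $s$ inside $\{x\}^\perp \cap D$: the cut-downs of the totally full elements $e_l$ and $e_r$ generate full hereditary subalgebras sitting in the left- and right-annihilators of $x$, and strict comparison of positive elements with respect to bounded traces then furnishes a single full positive $s$ that is Cuntz-subordinate to both, hence orthogonal to $x$ on both sides. This is the non-unital counterpart of the construction underlying \cite[Lemma 5.3]{BBSTWW}, and I expect it to go through after formally replacing $1_{B_\omega}$ by $1_{B_\omega^\sim}$ as in Lemma \ref{lem:TotallyFullClass}.

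For (B), since the polynomials vanishing at the origin are dense in $C_0(0,1]$ by Stone--Weierstrass, the hypothesis upgrades to $\tau(g(a)) = \tau(g(b))$ for every $g \in C_0(0,1]$ and every $\tau \in T(C)$. Taking $g = f^{1/n}$ and letting $n \to \infty$ yields equality of rank functions $d_\tau(f(a)) = d_\tau(f(b))$ for all $f \in C_0(0,1]_+$ and $\tau \in T(C)$. To convert this into Cuntz equivalence, I would show $(f(a)-\epsilon)_+ \precsim f(b)$ for every $\epsilon > 0$, which gives $f(a) \precsim f(b)$, and symmetrically $f(b) \precsim f(a)$. Here total fullness of $a$ forces the spectrum of $a$ to contain $[0,\|a\|]$, so that $(f(a)-\epsilon)_+$ is purely positive, and one combines $d_\tau((f(a)-\epsilon)_+) \leq d_\tau(f(a)) = d_\tau(f(b))$ with the strict comparison of $C$ to obtain the subordination.

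The principal obstacle will be step (B), precisely the passage from \emph{equality} rather than \emph{strict inequality} of rank functions to Cuntz subequivalence. Strict comparison as formulated in Definition \ref{defn:StrictComp} requires $d_\tau((f(a)-\epsilon)_+) < d_\tau(f(b))$ for \emph{all} $\tau$, and this strictness can fail when a trace assigns no mass to the spectral region where $0 < f \leq \epsilon$. Circumventing this requires the almost unperforation and almost divisibility of the Cuntz semigroup of $C$, which $C$ inherits from the $\mathcal Z$-stability of $B$ (for instance, matrix algebras embed unitally into the relevant relative commutant by \cite[Proposition 2.3]{CETWW}); these regularity properties are exactly what allow the equality of rank functions available here to substitute for the missing strict inequality. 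Verifying (A) is the other genuinely delicate point, since it is the mechanism that makes Lemma \ref{lem:TotallyFullClass} applicable in the first place.
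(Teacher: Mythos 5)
Your overall architecture coincides with the paper's: verify the annihilator hypothesis of Lemma \ref{lem:TotallyFullClass}, upgrade the moment condition to Cuntz equivalence of $f(a)$ and $f(b)$, and then invoke that lemma (the paper does exactly this by citing parts (i) and (ii) of the proof of \cite[Theorem 5.1]{BBSTWW}, with Lemma \ref{lem:Zfacts}(iv) replacing its unital predecessor). However, the mechanisms you propose for both substantive steps fail as stated. For (A), a \emph{positive} $s$ that is ``Cuntz-subordinate to'' $e_l$ and $e_r$ does not do the job: Cuntz subequivalence $s\precsim e_l$ gives no control on where $s$ itself sits (only $(s-\epsilon)_+$ is moved into $\overline{e_lDe_l}$, and only up to Murray--von Neumann equivalence), so it does not yield $sx=0$; and a positive $s$ with $sx=xs=0$ would have to lie in $\overline{e_lDe_l}\cap\overline{e_rDe_r}$, which can be zero. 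The element the argument actually produces is not self-adjoint: almost divisibility (Lemma \ref{lem:Zfacts}(iv) --- this is where the $\Z$-stability of $B$ enters) and strict comparison yield suitably comparable full positive elements of $\overline{e_lDe_l}$ and $\overline{e_rDe_r}$, and Lemma \ref{NewEpsLemmaNew}(iii) then gives $s\in D$ with $s^*s$ full in $\overline{e_lDe_l}$ and $ss^*$ in $\overline{e_rDe_r}$; writing $s=\lim_n s(s^*s)^{1/n}=\lim_n(ss^*)^{1/n}s$ gives $sx=xs=0$, and $s$ is full because $s^*s$ is.

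For (B), your diagnosis of the $\leq$ versus $<$ problem is correct, but the proposed remedy via almost unperforation is not the right one: almost unperforation does not convert $d_\tau(f(a))=d_\tau(f(b))$ for all $\tau$ into $f(a)\precsim f(b)$ (it upgrades $(n+1)[x]\leq n[y]$ to $[x]\leq[y]$, which is not what equality of rank functions provides), and you have not verified it for $C$ in any case. The strictness is instead recovered from total fullness, which is why the hypothesis is there: since $f(a)$ is totally full, for any $\epsilon>0$ a non-zero $h\in C_0((0,\|f(a)\|])_+$ supported in $(0,\epsilon)$ gives a full positive element $h(f(a))$, so $\tau(h(f(a)))>0$ for every $\tau\in T(C)$, whence
\begin{equation}
d_\tau\bigl((f(a)-\epsilon)_+\bigr)\;\leq\; d_\tau(f(a))-\|h\|^{-1}\tau\bigl(h(f(a))\bigr)\;<\;d_\tau(f(b)),\qquad \tau\in T(C).
\end{equation}
Strict comparison then gives $(f(a)-\epsilon)_+\precsim f(b)$ for every $\epsilon>0$, hence $f(a)\precsim f(b)$, and symmetrically; no unperforation or divisibility is needed for this step, which is precisely why the paper can run it with only the hypothesised strict comparison of $C$.
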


\begin{proof}
	 Let $a,b \in C_+$ be totally full positive elements satisfying $\tau(a^k)= \tau(b^k)$ for every $\tau \in T(C)$ and $k \in \mathbb{N}$. Without loss of generality, assume that $a$ and $b$ are contractions. 

After replacing \cite[Lemma 1.22(iv)]{BBSTWW} with Lemma \ref{lem:Zfacts}(iv), part (i) of the proof of \cite[Theorem 5.1]{BBSTWW} shows that the technical hypothesis of Lemma \ref{lem:TotallyFullClass} is satisfied for every full hereditary subalgebra $D \subseteq C$. The argument of part (ii) of the proof of \cite[Theorem 5.1]{BBSTWW} then shows that $f(a)$ is Cuntz equivalent to $f(b)$ for all $f\in C_0(0,1]_+$. (This part of the proof of \cite[Theorem 5.1]{BBSTWW} does not make any use of the unit; only strict comparison is needed.) 
By Lemma \ref{lem:TotallyFullClass}, $a$ and $b$ are unitarily equivalent by unitaries in $C^\sim$. The converse is straightforward.
\end{proof}

\subsection{Proof of the Uniqueness Theorem}

We now have all the ingredients we need for the proof of Theorem \ref{thm:Uniqueness}.

\begin{proof}[Proof of Theorem \ref{thm:Uniqueness}]
	By hypothesis, $\overline{\phi}_1(1_A) \in B^\omega$ is a projection. Hence $d_\tau (\phi_1 (1_A)) = \tau(\phi_1(1_A))$ and we inmediately can conclude that the map $\tau \mapsto d_\tau (\phi_1 (1_A))$ is continuous. 
	Similarly, by equation \eqref{eq:uniqueness.thm.st}, the map $\tau \mapsto d_\tau (\phi_2 (1_A))$ is continuous. Hence, by Lemma \ref{lem:SupportingMapNew}, there exist supporting order zero maps $\hat{\phi}_1, \hat{\phi}_2: A \to B_\omega$ such that 
	\begin{equation}
	\tau( \hat{\phi}_i (a)) = \lim_{m \to \infty} \tau ( \phi_i^{1/m}(a)),\, \qquad a\in A, \, \tau \in T_\omega(B), \ i=1,2,
	\end{equation}
	and the maps $\overline{\hat{\phi}}_i: A \to B^\omega$ are $^*$-homomorphisms.
	In particular,
	\begin{align}
	\tau( \hat{\phi}_2 (a)) \overset{\eqref{eq:uniqueness.thm.st}}{=} \tau(\phi_1(a)), \qquad a \in A, \ \tau \in T_\omega(B).
	\end{align}
	
	By Proposition \ref{prop:SR1inUnitisationUltrapowers}, $B_\omega$ has stable rank one in $B_\omega^\sim$. Thus, we may use the $2 \times 2$ matrix trick (Proposition \ref{prop:2x2matrixtrick}). Recall $\psi_i (a) := \phi_i (a)\otimes k$ and define $\hat{\psi}_1, \hat{\psi}_2  : A \to (B \otimes \mathcal{Z})_\omega$ by $\hat{\psi}_i(a) := \hat{\phi}_i (a) \otimes 1_{\mathcal{Z}}$, with $i=1,2$. It is immediate that $\hat{\psi}_i$ is a supporting order zero map for $\psi_i$. Then define $\pi: A \to M_2 (B_\omega) \subseteq M_2 ((B \otimes \Z)_\omega)$ by
	\begin{equation}
	\pi(a) := \left(
	\begin{matrix}
	\hat{\psi}_1(a) & 0 \\
	0 & \hat{\psi}_2(a)
	\end{matrix}
	\right), \qquad a \in A,
	\end{equation}
	and set $C:= M_2 ((B \otimes \Z)_\omega) \cap \pi(A)' \cap \lbrace 1_{M_2((B \otimes \Z)_\omega^\sim)} - \pi(1_A)\rbrace^\perp$. 
	We will show that 
	\[
	h_1:=\left( \begin{matrix}
	\psi_1(1_A) & 0 \\
	0 & 0
	\end{matrix}\right) \qquad \text{and} \qquad 
	h_2:=\left( \begin{matrix}
	0 & 0 \\
	0 & \psi_2(1_A)
	\end{matrix}\right)
	\]
	are unitarily equivalent in $C^\sim$.
	For non-zero $a \in A$, observe that
	\begin{equation}
	0 \leq \left( \begin{matrix}
	\psi_1(a) & 0 \\
	0 & 0
	\end{matrix}\right) \leq \left( \begin{matrix}
	\hat{\psi}_1(a) & 0 \\
	0 & \hat{\psi}_2(a)
	\end{matrix}\right) = \pi(a),
	\end{equation}
	and using that $\psi_1 (a)$ is full in $(B \otimes \mathcal{Z})_\omega$ since $\phi_1(a)$ is full, we conclude that $\pi(a)$ is full in $M_2((B \otimes \Z)_\omega)$. By construction, the induced map $\overline{\pi}:A \to M_2(B^\omega)$ is a $^*$-homomorphism. Thus, by Theorem \ref{theo:Omnibus}, $C$ has strict comparison. 
	
	Notice that $h_1 \in C$ is full in $M_2(B_\omega)$, and hence $C$ is also full in $M_2(B_\omega)$. Let $\rho$ be a trace on $C$ of the form $\tau (\pi(x) \cdot)$ where $\tau \in T(M_2((B \otimes \Z)_\omega)), x \in A_+$ and $\tau (\pi(x))=1$. Set a trace $\tilde{\tau}$ on $B_\omega$ by $\tilde{\tau}(b):= \tau(1_{M_2} \otimes b \otimes 1_{\mathcal{Z}})$. Thus, as in \cite[Theorem 5.5, equation (5.41)]{BBSTWW},
	\begin{align}
	\rho(h_1^m) = \frac{1}{2} \tau_\mathcal{Z}(k^m) = \rho(h_2^m), \qquad m \in \mathbb{N}.
	\label{eq.uniq.thm.1}
	\end{align}
	By Theorem \ref{theo:Omnibus}, equation \eqref{eq.uniq.thm.1} holds for any trace on $C$.
	
	An standard strict comparison argument shows that $f(h_1)$ and $f(h_2)$ are full in $C$ for any $f \in C_0(0,1]_+$, so $h_1$ and $h_2$ are totally full. By Theorem \ref{thm:TotallyFullClassFinite}, $h_1$ is unitarily equivalent to $h_2$ in $C^\sim$. By the $2 \times 2$ matrix trick (Proposition \ref{prop:2x2matrixtrick}), $\psi_1$ and $\psi_2$ are unitarily equivalent in  $(B\otimes\mathcal Z)_\omega^\sim$.
\end{proof}

\section{Nuclear Dimension and $\Z$-Stability} \label{section.nucleardim}

In this section, we prove Theorems \ref{thm:NewMain2} and \ref{thm:NewMain}, and deduce Corollaries \ref{cor:NewTrichotomy} and \ref{cor:Newclassification}.

\begin{theorem}\label{thm:Main2}
	Let $A$ be a simple, separable, nuclear and $\mathcal{Z}$-stable $\mathrm{C}^*$-algebra. Then $\dim_{\mathrm{nuc}} A \leq 1$.
\end{theorem}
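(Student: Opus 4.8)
The strategy is to reduce, via the results of Section \ref{sec:Reductions}, to the two tractable cases and then to deploy the existence and uniqueness machinery of Sections 3--5. Concretely, I would begin by invoking Theorem \ref{thm:simple.reduction}: since $A$ is simple, separable, nuclear (hence exact) and $\Z$-stable, either (a) $A$ is stably isomorphic to a unital $\mathrm{C}^*$-algebra, or (b) $A$ is stably isomorphic to a stably projectionless $A_0$ with $Q\widetilde{T}(A_0) = \widetilde{T}_b(A_0) \neq 0$ and $T(A_0)$ compact. By Proposition \ref{thm:StableIsoInvariants}(v), nuclear dimension is a stable isomorphism invariant, so it suffices to bound $\dim_{\mathrm{nuc}}$ for the representative in each case. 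In case (a), the representative is unital (and remains simple, separable, nuclear, $\Z$-stable by Proposition \ref{thm:StableIsoInvariants}), so \cite[Theorem B]{CETWW} immediately gives $\dim_{\mathrm{nuc}} \leq 1$. The entire content of the theorem therefore lies in case (b), which I treat for the remainder.

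In case (b), I would set $B := A_0$, so that $B$ is separable, nuclear, simple, $\Z$-stable with $Q\widetilde{T}(B) = \widetilde{T}_b(B) \neq 0$ and $T(B)$ compact; by Theorem \ref{thm:CPoU} it has CPoU, and by Theorem \ref{thm:ZstableProjectionless} together with Proposition \ref{prop:SR1inUnitisationUltrapowers} the ultrapower $B_\omega$ has stable rank one in $B_\omega^\sim$. Applying Lemma \ref{lem:NiceFactoring} produces c.p.c.\ maps $\theta_n: B \to F_n$ into finite dimensional algebras and c.p.c.\ order zero maps $\eta_n: F_n \to B$ whose induced maps assemble into an order zero map $\Theta = (\theta_n): B \to \prod_\omega F_n$ and a map $\overline{\Phi} = (\eta_n \circ \theta_n): B \to B^\omega$ agreeing with the diagonal inclusion. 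Composing $\Theta$ with the canonical inclusion $\prod_\omega F_n \hookrightarrow (\prod_\omega F_n)_\omega$ is not quite what I want; rather, the key is that $\eta := (\eta_n): \prod_\omega F_n \to B_\omega$ is a single c.p.c.\ order zero map, and $\eta \circ \Theta$ induces the diagonal inclusion on $B^\omega$. The goal is then to show that, after tensoring with $\Z$, this order zero factorisation can be split into \emph{two} order zero pieces whose sum recovers the identity up to the relevant ultrapower identification, which is exactly the structure witnessing $\dim_{\mathrm{nuc}} \leq 1$.

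The decisive step is the uniqueness theorem. I would compare the order zero map $\Phi := \eta \circ \Theta: B \to B_\omega$ (built from the finite dimensional approximations) against the canonical embedding $\iota: B \to B_\omega$ using Theorem \ref{thm:Uniqueness}: one checks that $\iota$ is full and induces a $^*$-homomorphism into $B^\omega$, that $\Phi$ matches $\iota$ on traces in the sense of \eqref{eq:uniqueness.thm.st}, and hence (after the canonical amplification $\psi_i(a) = \phi_i(a) \otimes k$ with $k \in \Z_+$ of full spectrum) that $\iota \otimes k$ and $\Phi \otimes k$ are unitarily equivalent in $(B \otimes \Z)_\omega^\sim$. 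Transporting this unitary back through the reindexing/$\Z$-absorption $B \cong B \otimes \Z$ yields, for each tolerance, finitely many c.p.c.\ order zero maps through finite dimensional algebras whose images approximate the identity on finite sets in norm, with the order zero pieces grouped into $n+1 = 2$ colours coming from the two-part decomposition of $\Theta$ (the even/odd or $1_\Z$-versus-complement splitting inside $\Z$). Extracting honest finite dimensional approximations from the ultrapower equivalence via a standard diagonal argument then gives the nuclear dimension estimate $\dim_{\mathrm{nuc}} B \leq 1$, completing case (b).

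I expect the main obstacle to be the passage from the \emph{unitary equivalence in the ultrapower} furnished by Theorem \ref{thm:Uniqueness} to an \emph{honest $2$-coloured finite dimensional approximation of the identity on $B$}. This requires carefully absorbing the auxiliary $\Z$-tensor factor (so as to turn the contraction $k \in \Z$ of full spectrum into the mechanism that produces exactly two order zero colours), verifying that the unitaries in $(B \otimes \Z)_\omega^\sim$ can be lifted and reindexed to genuine approximate unitaries at finite stages, and checking that the non-unitality of $B$ causes no loss when converting the relative-commutant data back into the c.p.\ maps of Definition \cite[2.1]{WZ10}. The hypotheses of Theorem \ref{thm:Uniqueness}---fullness of $\phi_1(a)$, the $^*$-homomorphism condition on $\overline{\phi}_1$, and the trace-matching \eqref{eq:uniqueness.thm.st}---must all be verified for the specific maps $\iota$ and $\Phi$, and it is the trace-matching, obtained from the construction in Lemma \ref{lem:NiceFactoring}, that ties the whole argument together.
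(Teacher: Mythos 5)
Your overall architecture matches the paper's: reduce via Theorem \ref{thm:simple.reduction} and Proposition \ref{thm:StableIsoInvariants}, dispatch the stably unital case with \cite[Theorem B]{CETWW}, and in the stably projectionless case compare the map $\Phi$ from Lemma \ref{lem:NiceFactoring} with the canonical embedding $\iota$ via Theorem \ref{thm:Uniqueness}, using $k$ and $1_\Z - k$ to produce the two colours. However, there is a genuine gap at the decisive step: Theorem \ref{thm:Uniqueness} requires its domain to be a \emph{unital} C$^*$-algebra, and in case (b) the algebra $B = A_0$ is stably projectionless, hence as far from unital as possible. You propose to apply the uniqueness theorem directly to $\iota, \Phi : B \to B_\omega$, but as stated it simply does not apply; moreover its proof genuinely uses $\pi(1_A)$ to form the relative commutant $C = B_\omega \cap \pi(A)' \cap \{1_{B_\omega^\sim} - \pi(1_A)\}^\perp$ in the $2\times 2$ matrix trick, in property (SI), and in the structural results for $C$, so this is not a removable hypothesis.

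The missing idea is precisely the content of Section 4 (Lemma \ref{lem:Unitisation}), which the introduction flags as the key new tool: one must first extend $\iota$ and $\Phi$ to c.p.c.\ order zero maps $\iota^\sim, \Phi^\sim : A_0^\sim \to (A_0)_\omega$ on the (unital) unitisation, and one must do so with tracial control. Concretely, using compactness of $T(A_0)$ one checks $\lim_n \tau(\iota(e_n)) = 1$ for all $\tau \in \overline{T_\omega(A_0)}^{w*}$, so the unitisation lemma yields extensions with $\tau(\iota^\sim(1_{A_0^\sim})) = \tau(\Phi^\sim(1_{A_0^\sim})) = 1$; this forces the induced maps $\overline{\iota^\sim}, \overline{\Phi^\sim} : A_0^\sim \to A_0^\omega$ to be \emph{unital} $^*$-homomorphisms which agree (they agree on $A_0$ and on the unit), and from this one derives the trace-matching condition \eqref{eq:uniqueness.thm.st} (after invoking Theorem \ref{thm:NoSillyTraces} to pass from limit traces to all of $T((A_0)_\omega)$) and the fullness of $\iota^\sim(x)$ for all non-zero $x \in A_0^\sim$ (which needs a short separate argument via strict comparison, since $x$ need not lie in $A_0$). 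Only then can Theorem \ref{thm:Uniqueness} be applied, to $\iota^\sim$ and $\Phi^\sim$ rather than to $\iota$ and $\Phi$. Without this unitisation step your plan stalls exactly at the point where the non-unital case differs from \cite{CETWW}.
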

\begin{proof}
	
	By Theorem \ref{thm:simple.reduction}, either $A$ is stably isomorphic to a unital $\mathrm{C}^*$-algebra $B$, or 
	$A$ is stably isomorphic to a stably projectionless $\mathrm{C}^*$-algebra $A_0$  with $Q\widetilde{T}(A_0) = \widetilde{T}_b(A_0) \neq 0$ and $T(A_0)$ compact.   
	
	The stably unital case follows immediately from \cite[Theorem B]{CETWW} together with Proposition \ref{thm:StableIsoInvariants}.	Indeed, 
	if $A$ is stably isomorphic to a unital $\mathrm{C}^*$-algebra $B$, then $B$ is also simple, separable, nuclear and $\mathcal{Z}$-stable by Proposition \ref{thm:StableIsoInvariants}.
	Hence, $\dimnuc B \leq 1$ by \cite[Theorem B]{CETWW}. Therefore, $\dimnuc A \leq 1$ by a second application of Proposition \ref{thm:StableIsoInvariants}.
	
	We now consider the case when $A$ is stably isomorphic to a stably projectionless $\mathrm{C}^*$-algebra $A_0$ with $Q\widetilde{T}(A_0) = \widetilde{T}_b(A_0) \neq 0$ and $T(A_0)$ compact.  
	By Proposition \ref{thm:StableIsoInvariants}, $A_0$ is simple, separable, nuclear and $\mathcal{Z}$-stable. Since $A_0$ is stably projectionless and $\mathcal{Z}$-stable, $A_0$ has stable rank one in $A_0^\sim$ by Theorem \ref{thm:ZstableProjectionless}. Furthermore, $A_0$ has CPoU by Theorem \ref{thm:CPoU}.
	
	In light of Proposition \ref{thm:StableIsoInvariants}, it suffices to prove that $\dim_{\mathrm{nuc}} A_0 \leq 1$. We now show this using the same fundamental strategy of \cite{BBSTWW} (taking into account the modification introduced in \cite{CETWW}). We shall estimate the nuclear dimension of the first factor embedding $j:A_0 \rightarrow A_0 \otimes \mathcal{Z}$, $j(x)= x\otimes 1_{\mathcal{Z}}$ in the sense of \cite[Definition 2.2]{TW14}. Since $A_0$ is $\mathcal{Z}$-stable and $\mathcal{Z}$ is strongly self-absorbing, we have $\dim_{\mathrm{nuc}}(A_0)=\dim_{\mathrm{nuc}}(j)$; see \cite[Proposition 2.6]{TW14}.
	
	Let $\iota:A_0 \rightarrow (A_0)_\omega$ be the canonical embedding. Let $h$ be a strictly positive contraction in $A_0$, and let $(e_n)_{n \in \N}$ be the approximate identity given by $e_n := h^{1/n}$. Then $\lim_{n\rightarrow\infty}\tau(e_n) = 1$ for all $\tau \in T(A_0)$. Since $T(A_0)$ is compact, $\tau \circ \iota \in T(A_0)$ for all $\tau \in T_\omega(A_0)$ and so for all $\tau \in \overline{T_\omega(A_0)}^{w*}$. It follows that
	\begin{equation}
	\lim_{n\rightarrow\infty} \tau(\iota(e_n)) = 1, \quad \quad \tau \in \overline{T_\omega(A_0)}^{w*}.
	\end{equation} 
	Therefore, applying Lemma \ref{lem:unitising-orderzero}, we obtain a c.p.c.\ order zero extension $\iota^\sim:A_0^\sim \rightarrow (A_0)_\omega$ with $\tau(\iota^\sim(1_{A_0^\sim})) = 1$ for all $\tau \in \overline{T_\omega(A_0)}^{w*}$. Writing $\overline{\iota^\sim}:A_0^\sim \rightarrow A_0^\omega$ for the induced map into the uniform tracial ultrapower, we observe that $1_{A_0^\omega} - \overline{\iota^\sim}(1_{A_0^\sim})$ is a positive element in $A_0^\omega$ that vanishes on all limit traces, so must be zero. Hence, $\overline{\iota^\sim}$ is a unital c.p.c.\ order zero map, so must be a unital $^*$-homomorphism.

Let $(\phi_n:A_0 \rightarrow A_0)_{n=1}^\infty$ be the sequence of c.p.c.\ maps constructed in Lemma \ref{lem:NiceFactoring}, which factorize as $\eta_n\circ\theta_n$ through finite dimensional algebras $F_n$ as in \eqref{eq:NiceFactoring1}. By construction, the induced map $\Phi:A_0 \rightarrow (A_0)_\omega$ is c.p.c.\ order zero and the induced map $\overline{\Phi}: A_0 \to A_0^\omega$ agrees with the diagonal inclusion $\overline{\iota}:A_0 \rightarrow A_0^\omega$. It follows that $\tau \circ \Phi = \tau \circ \iota$ for all $\tau \in \overline{T_\omega(A)}^{w*}$. Hence,
	\begin{equation}
	\lim_{n\rightarrow\infty} \tau(\Phi(e_n)) = 1, \quad \quad \tau \in \overline{T_\omega(A_0)}^{w*}.
	\end{equation}  
	Therefore, applying Lemma \ref{lem:unitising-orderzero} again, we obtain a c.p.c.\ order zero extension $\Phi^\sim:A_0^\sim \rightarrow (A_0)_\omega$ with $\tau(\Phi^\sim(1_{A_0^\sim})) = 1$ for all $\tau \in \overline{T_\omega(A_0)}^{w*}$. Arguing as before, $\overline{\Phi^\sim}:A_0^\sim \rightarrow A_0^\omega$ is a unital $^*$-homomorphism. In fact, we have $\overline{\Phi^\sim} = \overline{\iota^\sim}$ since both maps agree on $A_0$ by construction and are unital.  
	
	We are almost ready to apply Theorem \ref{thm:Uniqueness} to the c.p.c.\ order zero maps $\iota^\sim$ and $\Phi^\sim$. We observe that $A_0$ is a simple, separable, $\Z$-stable with CPoU, stable rank one in $A_0^\sim$, $Q\widetilde{T}(A_0) = \widetilde{T}_b(A_0) \neq 0$, and $T(A_0)$ compact; that $A_0^\sim$ is unital, separable and nuclear; and that both maps induce a unital $^*$-homomorphism $\overline{\iota^\sim} = \overline{\Phi^\sim}:A_0^\sim \rightarrow A_0^\omega$. Since $\overline{\iota^\sim} = \overline{\Phi^\sim}$ and both maps are $^*$-homomorphisms, we have
	\begin{equation}
	\tau \circ \iota = \tau \circ \Phi^m, \quad \quad \tau \in \overline{T_\omega(A_0)}^{w*}, \ m \in \N. 
	\end{equation}
	The tracial condition \eqref{eq:uniqueness.thm.st} follows because $T_\omega(A_0)$ is dense in $T((A_0)_\omega)$ by Theorem \ref{thm:NoSillyTraces}.
	
	Before we may apply Theorem \ref{thm:Uniqueness}, we must show that $\iota^\sim(x)$ is full for all non-zero $x \in A_0^\sim$. By Proposition \ref{prop:ZstableToSC}, $A_0$ has strict comparison by bounded traces because $A_0$ is simple, separable, $\Z$-stable and $Q\widetilde{T}(A_0) = \widetilde{T}_b(A_0) \neq 0$. Hence, $(A_0)_\omega$ has strict comparison in the sense of Lemma \ref{lem:StrictCompLimTraces}.
	
	Using that $A_0$ is simple and $T(A_0)$ is compact, the minimum $\gamma_a := \min_{\tau \in T(A)}\tau(a)$ exists and is strictly positive for any non-zero $a \in (A_0)_{+,1}$. Since $\tau \circ \iota \in T(A_0)$ for any $\tau \in \overline{T_\omega(A_0)}^{w*}$, we have $d_\tau(\iota(a)) \geq \gamma_a$ for any $\tau \in \overline{T_\omega(A_0)}^{w*}$. Hence, $\iota(a)$ is full in $(A_0)_\omega$ using Lemma \ref{lem:StrictCompLimTraces}. 
	
	For any non-zero $x \in A_0^\sim$, the ideal $I_x$ of $A_0^\sim$ generated by $x$ contains a non-zero positive contraction $a \in A_{+,1}$. A simple computation using supporting order zero maps shows that the ideal of $(A_0)_\omega$ generated by $\iota^\sim(x)$ contains $\iota^\sim(I_x)$, which is full since it contains the full element $\iota^\sim(a)$. Hence, $\iota^\sim(x)$ is full in $(A_0)_\omega$. 
	
	Fix a positive contraction $k \in \Z_+$ of full spectrum. Applying Theorem \ref{thm:Uniqueness} to the maps $\iota^\sim$ and $\Phi^\sim$, we obtain unitaries $w^{(0)},w^{(1)}\in (A_0 \otimes\Z)_\omega^\sim$ such that
	\begin{align}
	x \otimes k &=w^{(0)}(\Phi(x) \otimes k)w^{(0)}{}^*,\\
	x \otimes (1_\Z-k)&=w^{(1)}(\Phi(x)\otimes (1_\Z-k))w^{(1)}{}^*,\quad x\in A.
	\end{align}
	Choose representing sequences $(w^{(0)}_n)_{n=1}^\infty$ and $(w^{(1)}_n)_{n=1}^\infty$ of unitaries in $(A_0 \otimes \Z)^\sim$ for $w^{(0)}$ and $w^{(1)}$, respectively.  
	We have c.p.c.\ maps $\theta_n\oplus\theta_n:A_0 \rightarrow F_n\oplus F_n$, and $\tilde{\eta}_n:F_n\oplus F_n\rightarrow A_0 \otimes \Z$, where 
	\begin{equation}
	\tilde{\eta}_n(y_0,y_1):=w^{(0)}_n(\eta_n(y_0)\otimes k)w_n^{(0)}{}^*+w_n^{(1)}(\eta_n(y_1)\otimes (1_\Z-k))w_n^{(1)}{}^*.
	\end{equation}
	Hence, $j(x)$ is the limit, as $n \to \omega$, of $(\tilde{\eta}_n\circ(\theta_n\oplus\theta_n)(x))_{n=1}^\infty$ and, since $\tilde{\eta}_n$ is the sum of two c.p.c.\ order zero maps, $\dimnuc(j)\leq 1$.
\end{proof}

\begin{theorem}\label{cor:TW2}
	Let $A$ be a non-elementary, simple, separable, nuclear $\mathrm{C}^*$-algebra. Then $A$ has finite nuclear dimension if and only if it is $\mathcal{Z}$-stable.
\end{theorem}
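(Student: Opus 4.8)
The plan is to prove the two implications separately, observing that all of the substantive new content is carried by the implication (ii) $\Rightarrow$ (i), which is immediate from Theorem \ref{thm:Main2}. Indeed, if $A$ is $\Z$-stable then $A$ is in particular simple, separable, nuclear and $\Z$-stable, so $\dimnuc A \leq 1 < \infty$ by Theorem \ref{thm:Main2}. The non-elementary hypothesis plays no role here; in fact elementary $\mathrm{C}^*$-algebras are never $\Z$-stable, so condition (ii) forces $A$ to be non-elementary automatically.

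For the converse implication (i) $\Rightarrow$ (ii), which predates this paper, I would assemble the known results of Winter \cite{Wi12} and Tikuisis \cite{Ti14} using the dichotomy of Proposition \ref{prop:Dichotomy}. Both finite nuclear dimension and $\Z$-stability are invariant under stable isomorphism by Proposition \ref{thm:StableIsoInvariants}(v)--(vi), as is the property of being non-elementary, since $A$ is elementary precisely when $A \otimes \K \cong \K$, a condition depending only on the stable isomorphism class. Proposition \ref{prop:Dichotomy} then splits the argument into two cases. If $A$ is stably isomorphic to a unital $\mathrm{C}^*$-algebra $B$, then $B$ inherits simplicity, separability, nuclearity, non-elementariness and finite nuclear dimension, so Winter's theorem \cite{Wi12} gives that $B$ is $\Z$-stable; transferring back along the stable isomorphism shows that $A$ is $\Z$-stable. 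If instead $A$ is stably projectionless, then the main result of Tikuisis \cite{Ti14} applies directly to conclude that $A$ is $\Z$-stable.

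The only point requiring care is bookkeeping: checking that each hypothesis of \cite{Wi12} and \cite{Ti14} survives the reductions, and recording precisely where the non-elementary assumption is used — it enters only in the unital case via \cite{Wi12}, where it is genuinely necessary. I expect no real obstacle in this argument; the entire difficulty of the Toms--Winter equivalence resides in passing from $\Z$-stability to finite nuclear dimension, which has already been resolved as Theorem \ref{thm:Main2}, so this final statement is essentially an act of collation.
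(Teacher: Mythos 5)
Your proof is correct and follows essentially the same route as the paper: the implication from $\Z$-stability to finite nuclear dimension is exactly Theorem \ref{thm:Main2}, and the converse is delegated to prior work. The only difference is that the paper cites \cite[Theorem 8.5]{Ti14} directly for the whole converse (that theorem already covers the general, possibly non-unital, simple separable non-elementary case), so your case division via Proposition \ref{prop:Dichotomy} and Proposition \ref{thm:StableIsoInvariants}, with separate appeals to \cite{Wi12} in the stably unital case and \cite{Ti14} in the stably projectionless case, is a correct but unnecessary unpacking of that single citation.
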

\begin{proof}
	Let $A$ be a non-elementary, simple, separable, nuclear $\mathrm{C}^*$-algebra. If $A$ is $\Z$-stable, then $\dimnuc(A) \leq 1 < \infty$ by Theorem \ref{thm:Main2}. Conversely, if $\dimnuc(A) < \infty$, then $A$ is $\Z$-stable by \cite[Theorem 8.5]{Ti14}
\end{proof}

\begin{corollary}\label{cor:Trichotomy2}
	The nuclear dimension  
	of a simple $\mathrm{C}^*$-algebra is $0, \, 1$ or $\infty$.
\end{corollary}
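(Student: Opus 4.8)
The plan is to reduce everything to the separable, simple case treated by the paper's main results and there to split on the value of $\dimnuc A$. If $\dimnuc A=\infty$ there is nothing to show, so assume $\dimnuc A<\infty$. The first observation is that finite nuclear dimension already forces $A$ to be nuclear: in any system of approximations $\phi_i\circ\psi_i\to\id_A$ witnessing $\dimnuc A\le n$, each $\phi_i$ is a finite sum of c.p.c.\ order zero maps and hence completely positive, so $A$ has the completely positive approximation property. Thus in all remaining cases $A$ is a simple, nuclear $\mathrm{C}^*$-algebra.

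Assume first that $A$ is separable. If $A$ is elementary, then $A\cong\K(H)$ for some Hilbert space $H$; since every finite subset of $\K(H)$ is approximately contained in a matrix subalgebra (compress to a finite-dimensional subspace), $A$ is approximately finite-dimensional and so $\dimnuc A=0$ by \cite[Remark 2.2(iii)]{WZ10}. If $A$ is non-elementary, then it is a simple, separable, nuclear, non-elementary $\mathrm{C}^*$-algebra of finite nuclear dimension, so Theorem \ref{cor:TW2} shows that $A$ is $\Z$-stable and Theorem \ref{thm:Main2} then gives $\dimnuc A\le 1$. Either way $\dimnuc A\in\{0,1\}$, which together with the excluded case $\dimnuc A=\infty$ proves the corollary for separable $A$.

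To remove separability I would use a downward L\"owenheim--Skolem argument: given an arbitrary simple $A$, find a separable subalgebra $B\subseteq A$ that is again simple and satisfies $\dimnuc B=\dimnuc A$, and then apply the separable case to conclude $\dimnuc A=\dimnuc B\in\{0,1,\infty\}$. One would construct $B$ as the union of an increasing chain of separable subalgebras, adjoining at each stage countably many elements witnessing the relevant approximate conditions. I expect this to be the main obstacle, for two reasons. First, simplicity does not descend to arbitrary subalgebras, so for every pair of nonzero elements one must absorb into $B$ the approximate factorisations realising membership in the generated ideal. Second, and more seriously, a bound on nuclear dimension is \emph{not} inherited by general subalgebras: every separable exact $\mathrm{C}^*$-algebra embeds into $\mathcal{O}_2$, whose nuclear dimension is $1$, yet such subalgebras can have infinite nuclear dimension. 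Hence $B$ cannot be an arbitrary separable subalgebra containing prescribed witnesses; it must be chosen so that the \emph{value} $\dimnuc A$ transfers, i.e.\ so that the obstructions in $A$ to a smaller nuclear dimension survive in $B$. Making this transfer precise---for instance by taking $B$ to be a separable elementary submodel and invoking the appropriate permanence of nuclear dimension under elementary submodels---is the crux of the non-separable case.
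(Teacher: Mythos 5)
Your separable case is correct and is exactly the paper's argument: finite nuclear dimension gives nuclearity, elementary algebras have $\dimnuc=0$, and non-elementary ones are $\Z$-stable by Theorem \ref{cor:TW2} and hence satisfy $\dimnuc\leq 1$ by Theorem \ref{thm:Main2}.

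The non-separable reduction, however, is left as an acknowledged gap, and your framing of what is needed there is subtly off. You write that the separable subalgebra $B$ must be chosen so that ``the \emph{value} $\dimnuc A$ transfers,'' i.e.\ so that the obstructions to a smaller nuclear dimension survive in $B$. This is not required. What is needed is only the transfer of \emph{upper bounds} in each direction, and both directions are supplied by \cite[Proposition 2.6]{WZ10}, which is how the paper (following \cite[Corollary C]{CETWW}, which it cites for this step) proceeds. Concretely: if $\dimnuc A=n<\infty$, then for any countable subset of $A$ one builds an increasing chain of separable subalgebras by alternately adjoining (a) the images of the finite-dimensional approximations witnessing $\dimnuc A\leq n$ for a countable dense subset of the previous stage, and (b) countably many elements witnessing fullness of each element of a countable dense subset, so that the limit $C$ is separable, simple, and satisfies $\dimnuc C\leq n$ \emph{by construction} --- your $\mathcal{O}_2$ example does not bite because $C$ is not an arbitrary subalgebra but one that has absorbed its own approximating system. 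The separable case then gives $\dimnuc C\leq 1$. For the converse direction, every finite subset of $A$ lies in such a $C$, and nuclear dimension at most $1$ passes from an exhausting upward-directed net of subalgebras to $A$ (again \cite[Proposition 2.6]{WZ10}), whence $\dimnuc A\leq 1$. No elementary-submodel machinery or preservation of the exact value of $\dimnuc$ is needed; the point you identify as ``the crux'' dissolves once one notices that only the inequality $\dimnuc C\leq\dimnuc A$, arranged by construction, is ever used.
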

\begin{proof}
	Let $A$ be a simple, separable $\mathrm{C}^*$-algebra with finite nuclear dimension. Then, in particular, $A$ is nuclear. If $A$ is elementary, then $\dimnuc(A) = 0$; otherwise, $A$ is $\Z$-stable by Corollary \ref{cor:TW2}. Hence, $\dimnuc(A) \leq 1$ by Theorem \ref{thm:Main2}. The non-separable case follows from the separable one as in the proof of \cite[Corollary C]{CETWW}.
\end{proof}

In \cite[Theorem 5.2.2]{Ell96}, a stably projectionless, simple, separable, nuclear C$^*$-algebra with a unique trace, $K_0 = \mathbb{Z}$ and $K_1 = 0$ is constructed as a limit of 1 dimensional non-commutative CW complexes. By \cite[Theorem 1.4]{Go17}, there is a unique C$^*$-algebra with theses properties that has finite nuclear dimension and satisfies the UCT. This C$^*$-algebra is denoted $\mathcal{Z}_0$ \cite[Definition 8.1]{Go17}, reflecting its role as an stably projectionless analogue of the Jiang--Su algebra $\mathcal{Z}$. An important further property of $\mathcal{Z}_0$, which follows from its construction, is that $\mathcal{Z}_0$ is $\mathcal{Z}$-stable \cite[Remark 7.3, Definition 8.1]{Go17}. 

It has recently been shown that simple, separable $\mathrm{C}^*$-algebras which satisfy the UCT and have finite nuclear dimension are classified up to stabilisation with $\mathcal{Z}_0$ by the Elliott invariant \cite[Theorem 1.2]{Go17}. The appropriate form of the Elliott invariant in this setting is detailed in \cite[Definition 2.9]{Go17}. In light of the main result of this paper, we can weaken the hypothesis of finite nuclear dimension in \cite[Theorem 1.2]{Go17} to that of nuclearity.

\begin{corollary}[{cf.\ \cite[Theorem 1.2]{Go17}}]
	Let $A$ and $B$ be simple, separable, nuclear $\mathrm{C}^*$-algebras which satisfy the UCT. Then 
	\begin{equation}
	A \otimes \mathcal{Z}_0 \cong B \otimes \mathcal{Z}_0 \text{ if and only if } \mathrm{Ell}(A \otimes \mathcal{Z}_0) \cong \mathrm{Ell}(B \otimes \mathcal{Z}_0). \notag
	\end{equation}
\end{corollary}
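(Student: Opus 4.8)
The plan is to deduce the statement from Gong--Lin's classification theorem \cite[Theorem 1.2]{Go17}, using Theorem \ref{thm:Main2} to supply the finite nuclear dimension hypothesis automatically. The forward implication is purely formal: the Elliott invariant is a functor, so isomorphic $\mathrm{C}^*$-algebras have isomorphic invariants. The content is therefore the reverse implication, and we suppose $\mathrm{Ell}(A \otimes \mathcal{Z}_0) \cong \mathrm{Ell}(B \otimes \mathcal{Z}_0)$.

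First I would record the permanence properties of $A' := A \otimes \mathcal{Z}_0$ and $B' := B \otimes \mathcal{Z}_0$. Since $A$ is nuclear and $\mathcal{Z}_0$ is simple, the tensor product $A'$ is simple; it is evidently separable and nuclear; and it satisfies the UCT because the bootstrap class is closed under tensor products and $\mathcal{Z}_0$ lies in it \cite{Go17}. Crucially, $A'$ is $\mathcal{Z}$-stable: as $\mathcal{Z}_0$ is $\mathcal{Z}$-stable we have $A' \otimes \mathcal{Z} = A \otimes (\mathcal{Z}_0 \otimes \mathcal{Z}) \cong A \otimes \mathcal{Z}_0 = A'$. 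Hence Theorem \ref{thm:Main2} gives $\dimnuc A' \leq 1 < \infty$, and likewise $\dimnuc B' < \infty$. Thus $A'$ and $B'$ are simple, separable, UCT $\mathrm{C}^*$-algebras of finite nuclear dimension, so \cite[Theorem 1.2]{Go17} applies to them and yields
\begin{equation}
A' \otimes \mathcal{Z}_0 \cong B' \otimes \mathcal{Z}_0 \iff \mathrm{Ell}(A' \otimes \mathcal{Z}_0) \cong \mathrm{Ell}(B' \otimes \mathcal{Z}_0). \notag
\end{equation}

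The remaining step is to absorb the extra copy of $\mathcal{Z}_0$ using the self-absorption $\mathcal{Z}_0 \otimes \mathcal{Z}_0 \cong \mathcal{Z}_0$. This gives $A' \otimes \mathcal{Z}_0 = A \otimes \mathcal{Z}_0 \otimes \mathcal{Z}_0 \cong A \otimes \mathcal{Z}_0 = A'$, and similarly $B' \otimes \mathcal{Z}_0 \cong B'$; in particular $\mathrm{Ell}(A' \otimes \mathcal{Z}_0) \cong \mathrm{Ell}(A')$ and $\mathrm{Ell}(B' \otimes \mathcal{Z}_0) \cong \mathrm{Ell}(B')$. Substituting these isomorphisms into the displayed equivalence collapses it to $A' \cong B' \iff \mathrm{Ell}(A') \cong \mathrm{Ell}(B')$, and the standing hypothesis $\mathrm{Ell}(A') \cong \mathrm{Ell}(B')$ then delivers $A \otimes \mathcal{Z}_0 = A' \cong B' = B \otimes \mathcal{Z}_0$, as required.

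The only input beyond Theorem \ref{thm:Main2} and routine bookkeeping is the self-absorption $\mathcal{Z}_0 \cong \mathcal{Z}_0 \otimes \mathcal{Z}_0$, and this is the main point to pin down, since it is what upgrades the ``classification up to $\mathcal{Z}_0$-stabilisation'' of \cite[Theorem 1.2]{Go17} into a genuine isomorphism theorem for the $\mathcal{Z}_0$-stable algebras in play. I would cite it directly from \cite{Go17}; alternatively it can be derived self-containedly from the uniqueness theorem \cite[Theorem 1.4]{Go17}, since $\mathcal{Z}_0 \otimes \mathcal{Z}_0$ is simple, separable, nuclear, stably projectionless and satisfies the UCT, has finite nuclear dimension by Theorem \ref{thm:Main2} (being $\mathcal{Z}$-stable), and has Elliott invariant ($K_0 = \mathbb{Z}$, $K_1 = 0$, a unique trace) matching that of $\mathcal{Z}_0$ by a Künneth computation. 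The one subtlety on that alternative route is confirming that $\mathcal{Z}_0 \otimes \mathcal{Z}_0$ remains stably projectionless, which must be checked before invoking the uniqueness theorem.
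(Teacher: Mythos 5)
Your proposal is correct and follows essentially the same route as the paper: verify that $A \otimes \mathcal{Z}_0$ and $B \otimes \mathcal{Z}_0$ are simple, separable, nuclear and $\mathcal{Z}$-stable (via $\mathcal{Z}$-stability of $\mathcal{Z}_0$), apply Theorem \ref{thm:Main2} to get nuclear dimension at most $1$, and then invoke \cite[Theorem 1.2]{Go17}. The only difference is that you make explicit the self-absorption $\mathcal{Z}_0 \otimes \mathcal{Z}_0 \cong \mathcal{Z}_0$ needed to collapse the extra stabilisation, a step the paper leaves implicit in ``the result now follows''; spelling it out (and noting where it comes from in \cite{Go17}) is a reasonable refinement rather than a departure.
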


\begin{proof}
	Since $A$ and $\mathcal{Z}_0$ are simple, separable and nuclear, so is $A\otimes \mathcal{Z}_0$. Using that $\Z_0$ is $\Z$-stable, it follows that $A\otimes \mathcal{Z}_0$ is $\mathcal{Z}$-stable. 
	Therefore, $\dim_{\mathrm{nuc}} A \otimes \mathcal{Z}_0 \leq 1$ by Theorem \ref{thm:NewMain2}. Similarly, $\dim_{\mathrm{nuc}} B \otimes \mathcal{Z}_0 \leq 1$.  The result now follows from \cite[Theorem 1.2]{Go17}.
\end{proof}

\section{Decomposition Rank and $\Z$-Stability}

Using the machinery developed to prove Theorem \ref{thm:NewMain2}, we can also prove similar results for the decomposition rank of simple $\Z$-stable $\mathrm{C}^*$-algebras under suitable finiteness and quasidiagonality assumptions. To this end, we recall the definition of quasidiagonality for tracial states.   

\begin{definition}[cf.\ {\cite[Definition 3.3.1]{Br06}}]\label{dfn:QDtraces}
Let $A$ be a $\mathrm{C}^*$-algebra. A tracial state $\tau \in T(A)$ is \emph{quasidiagonal} if there exists a net\footnote{When $A$ is separable, one can work with sequences instead of general nets.}  of c.p.c.\ maps $\phi_n:A \rightarrow M_{k_n}(\mathbb{C})$ with $\|\phi_n(ab) - \phi_n(a)\phi_n(b)\| \rightarrow 0$ and $\mathrm{tr}_{k_n}(\phi_n(a)) \rightarrow \tau(a)$.
\end{definition}

 In the unital case, the c.p.c.\ maps in Definition \ref{dfn:QDtraces} can be taken to be unital (see the proof of \cite[Lemma 7.1.4]{Br08}). Moreover, a trace $\tau \in T(A)$ is quasidiagonal if and only if its extension to $A^\sim$ is quasidiagonal  \cite[Proposition 3.5.10]{Br06}. We write $T_{QD}(A)$ for the set of all quasidiagonal tracial states on $A$.

We can now state a decomposition rank version of Theorem \ref{thm:NewMain2}. 

\begin{theorem}\label{thm:dr}
	Let $A$ be a simple, separable, nuclear and $\mathcal{Z}$-stable $\mathrm{C}^*$-algebra. Suppose further that $A$ is stably finite and that $T(B) = T_{QD}(B)$ for all non-zero hereditary subalgebras $B \subseteq A \otimes \K$. Then $\dr(A) \leq 1$.
\end{theorem}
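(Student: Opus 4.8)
The plan is to follow the same architecture as the proof of Theorem \ref{thm:Main2}, but to upgrade the nuclear dimension estimate to a decomposition rank estimate by ensuring that \emph{both} completely positive maps in the final factorisation are contractive. The key structural observation from \cite{KW04,WZ10} is that the only difference between the two definitions is that for decomposition rank we additionally need the \emph{upward} maps $\phi_i:F_i \to A$ to be c.p.c.\ (not merely c.p.). In the proof of Theorem \ref{thm:Main2}, the downward maps $\theta_n$ and the pieces $\eta_n$ are already c.p.c.\ order zero; the issue is that the reconstructed map $\tilde\eta_n$, built from the two order zero summands twisted by unitaries $w^{(0)}_n, w^{(1)}_n$ and the complementary positive contractions $k$ and $1_\Z-k$, is a \emph{sum} of two order zero maps and hence only c.p.\ with norm up to $2$. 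To get decomposition rank at most $1$, I would instead arrange that the map $\Phi$ induced by the finite-dimensional approximation is itself a $^*$-homomorphism into the uniform tracial ultrapower, so that the two summands genuinely decompose a contractive map.

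First I would run the reduction of Theorem \ref{thm:simple.reduction} exactly as before. The extra hypotheses --- stable finiteness and $T(B) = T_{QD}(B)$ for all non-zero hereditary $B \subseteq A \otimes \K$ --- are themselves stable-isomorphism invariants (stable finiteness trivially, and the quasidiagonality hypothesis because every hereditary subalgebra of $A_0 \otimes \K$ is a hereditary subalgebra of $A \otimes \K$, using Proposition \ref{prop:ExtendingTraces} to match up the trace spaces). So after reduction I may assume $A$ is either stably isomorphic to a unital algebra $B$ --- in which case $\mathrm{dr}(B) \leq 1$ follows from the unital decomposition rank theorem of \cite{BBSTWW} combined with the classification input, and $\mathrm{dr}(A) = \mathrm{dr}(B)$ by Proposition \ref{thm:StableIsoInvariants}(iv) --- or stably isomorphic to a stably projectionless $A_0$ with $Q\widetilde{T}(A_0) = \widetilde{T}_b(A_0) \neq 0$ and $T(A_0)$ compact, and it suffices to bound $\mathrm{dr}(A_0)$.

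The decisive point is that in the stably projectionless case \emph{every} trace on $A_0$ is now quasidiagonal. This is exactly the hypothesis that activates the final clause of Lemma \ref{lem:NiceFactoring}: since all traces on $A_0$ are quasidiagonal, the lemma produces finite-dimensional approximations $A_0 \xrightarrow{\theta_n} F_n \xrightarrow{\eta_n} A_0$ for which the induced map $(\theta_n):A_0 \to \prod_\omega F_n$ is a genuine $^*$-homomorphism, and the induced map $\overline{\Phi}: A_0 \to A_0^\omega$ agrees with the diagonal inclusion. I would then carry out the unitisation step (Lemma \ref{lem:unitising-orderzero}) and the uniqueness comparison (Theorem \ref{thm:Uniqueness}) with $\iota^\sim$ and $\Phi^\sim$ verbatim as in Theorem \ref{thm:Main2}, obtaining unitaries $w^{(0)}, w^{(1)} \in (A_0 \otimes \Z)_\omega^\sim$ intertwining $\iota^\sim$ and $\Phi^\sim$ against $k$ and $1_\Z - k$ respectively. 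Because $\theta_n$ is now asymptotically multiplicative, the composite $\tilde\eta_n \circ (\theta_n \oplus \theta_n)$ can be estimated as an asymptotically c.p.c.\ factorisation: the upward map $\tilde\eta_n$ restricted to each summand $F_n$ is c.p.c.\ order zero, and the two pieces sum to something that is asymptotically contractive precisely because $\theta_n(x)$ is asymptotically a projection-valued (multiplicative) image, so $\eta_n(\theta_n(x)) \otimes k$ and $\eta_n(\theta_n(x)) \otimes (1_\Z - k)$ add to a contraction rather than merely a norm-$2$ element.

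The main obstacle I anticipate is precisely this last contractivity bookkeeping: showing that the two order zero summands of $\tilde\eta_n$ can be combined into a \emph{contractive} map in the limit. In the nuclear dimension proof one simply does not care, but here one must exploit multiplicativity of $\theta_n$ together with the fact that $k + (1_\Z - k) = 1_\Z$ and that conjugation by unitaries preserves norms, so that on the diagonal the two twisted order zero pieces assemble into $w^{(0)}_n(\eta_n(\theta_n(x)) \otimes k)w^{(0)}_n{}^* + w^{(1)}_n(\eta_n(\theta_n(x)) \otimes (1_\Z - k))w^{(1)}_n{}^*$ with controlled norm. I expect this to follow the corresponding contractivity argument in the decomposition rank portion of \cite{BBSTWW}, transplanted into the non-unital framework using the unitisation and uniqueness machinery already developed in Sections 4 and 5. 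The conclusion is $\mathrm{dr}(j) \leq 1$ for the first-factor embedding $j:A_0 \to A_0 \otimes \Z$, and hence $\mathrm{dr}(A_0) \leq 1$ since $\Z$ is strongly self-absorbing and $A_0$ is $\Z$-stable.
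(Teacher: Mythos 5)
Your overall architecture is exactly the paper's: reduce via Theorem \ref{thm:simple.reduction}, dispose of the stably unital case by the known unital result, and in the stably projectionless case observe that the hypothesis $T(B)=T_{QD}(B)$ for hereditary subalgebras gives $T(A_0)=T_{QD}(A_0)$, which activates the final clause of Lemma \ref{lem:NiceFactoring} and makes the downward maps $\theta_n$ approximately multiplicative. (One small point in the unital case: you should quote \cite[Theorem B]{CETWW} rather than the decomposition rank theorem of \cite{BBSTWW}, since the latter carries extra hypotheses on the trace simplex that are not available here.)

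The gap is in your final ``contractivity bookkeeping'' step, and it is not mere bookkeeping. The definition of decomposition rank requires the upward map $\tilde\eta_n:F_n\oplus F_n\to A_0\otimes\Z$ to be c.p.c.\ \emph{on all of $F_n\oplus F_n$}, not merely that the composite $\tilde\eta_n\circ(\theta_n\oplus\theta_n)$ is asymptotically contractive on $A_0$. Your proposed mechanism --- that the two order zero pieces ``assemble'' into a contraction because $k+(1_\Z-k)=1_\Z$ and conjugation preserves norms --- does not work: the two summands are conjugated by \emph{different} unitaries $w^{(0)}_n$ and $w^{(1)}_n$, so they do not recombine into a single conjugate of $\eta_n(y)\otimes 1_\Z$, and $\|\tilde\eta_n(1_{F_n\oplus F_n})\|$ can genuinely be close to $2$. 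What approximate multiplicativity of $\theta_n$ actually buys is the hypothesis of a separate perturbation lemma: the paper concludes the stably projectionless case by citing \cite[Lemma 1.9]{BGSW}, which converts a system of approximately multiplicative downward maps together with an upward map that is a sum of two c.p.c.\ order zero maps into an honest decomposition rank estimate, by correcting the upward map to a c.p.c.\ one. Something of this kind (or the corresponding argument in the decomposition rank portion of \cite{BBSTWW}, which you gesture at) must be invoked explicitly; as written, your direct norm argument would fail.
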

\begin{proof}
By Theorem \ref{thm:simple.reduction}, either $A$ is stably isomorphic to a unital $\mathrm{C}^*$-algebra $B$, or $A$ is stably isomorphic to a stably projectionless $\mathrm{C}^*$-algebra $A_0$  with $Q\widetilde{T}(A_0) = \widetilde{T}_b(A_0) \neq 0$ and $T(A_0)$ compact.
	
In the first case, $B$ is simple, separable, nuclear and $\Z$-stable by Proposition \ref{thm:StableIsoInvariants}. Moreover, $B$ is finite and $T(B) = T_{QD}(B)$ by our additional hypotheses on $A$. Hence $\dr(B) \leq 1$ by \cite[Theorem B]{CETWW}. By Proposition \ref{thm:StableIsoInvariants} once more, $\dr(A) \leq 1$.

In the second case, we have that $T(A_0) = T_{QD}(A_0)$ by our additional hypotheses on $A$, so in the proof of Theorem \ref{thm:Main2} the maps $\theta_n$ from Lemma \ref{lem:NiceFactoring} can be taken to be approximately multiplicative. Therefore, $\dr(A_0)\leq 1$ by \cite[Lemma 1.9]{BGSW}. Hence, $\dr(A) \leq 1$ by Proposition \ref{thm:StableIsoInvariants}.   
\end{proof}
\begin{remark}
If $A$ is a simple, separable, nuclear $\mathrm{C}^*$-algebra in the UCT class, then $T(B) = T(B)_{QD}$ for all hereditary subalgebras $B \subseteq A \otimes \K$ by \cite[Theorem A]{TWW17} since the UCT class is closed under stable isomorphism. 
\end{remark}  

As with nuclear dimension, we obtain a trichotomy result for decomposition rank as a corollary of Theorem \ref{thm:dr}.

\begin{corollary}
The decomposition rank of a simple $\mathrm{C}^*$-algebra is $0, \, 1$ or $\infty$.
\end{corollary}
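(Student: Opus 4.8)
The plan is to mirror the proof of Corollary \ref{cor:Trichotomy2}, replacing nuclear dimension by decomposition rank and Theorem \ref{thm:Main2} by Theorem \ref{thm:dr}. The only values we must exclude are the intermediate integers $2,3,\dots$; equivalently, we must show that a simple $\mathrm{C}^*$-algebra of finite decomposition rank which is not elementary in fact has $\dr \leq 1$ (an elementary one having $\dr = 0$).

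First I would reduce to the separable case exactly as in the proof of \cite[Corollary C]{CETWW}, and so assume $A$ is simple, separable with $\dr(A) < \infty$. Since the decomposition rank is obtained by imposing contractivity on the upward maps in the definition of nuclear dimension, every decomposition-rank system is a nuclear-dimension system; hence $A$ is nuclear and $\dimnuc(A) \leq \dr(A) < \infty$. If $A$ is elementary, then $\dr(A) = 0$ and we are done. Otherwise $A$ is non-elementary with finite nuclear dimension, and so $A$ is $\Z$-stable by Corollary \ref{cor:TW2}. It remains to verify the two extra hypotheses of Theorem \ref{thm:dr}. Stable finiteness is automatic, as a $\mathrm{C}^*$-algebra of finite decomposition rank is quasidiagonal and therefore stably finite \cite{KW04}. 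For the tracial condition, I would invoke the permanence of finite decomposition rank: by Proposition \ref{thm:StableIsoInvariants}(iv) we have $\dr(A \otimes \K) = \dr(A)$, and the decomposition rank does not increase upon passing to a hereditary subalgebra \cite{KW04}, so every non-zero hereditary subalgebra $B \subseteq A \otimes \K$ again has finite decomposition rank. As finite decomposition rank forces every trace to be quasidiagonal, we obtain $T(B) = T_{QD}(B)$ for all such $B$. Theorem \ref{thm:dr} then yields $\dr(A) \leq 1$.

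The main obstacle is precisely this tracial input: one must know that finite decomposition rank on its own---without appealing to the UCT, as is done in the Remark following Theorem \ref{thm:dr}---guarantees that every tracial state is quasidiagonal in the sense of Definition \ref{dfn:QDtraces}, and that this property is inherited by the hereditary subalgebras of the stabilisation. The first point is a structural consequence of the completely positive \emph{contractive} factorisations defining the decomposition rank, from which the asymptotically multiplicative, asymptotically trace-preserving approximations into matrix algebras can be extracted; the second rests on the permanence of finite decomposition rank under stabilisation (Proposition \ref{thm:StableIsoInvariants}(iv)) and under passage to hereditary subalgebras. Correctly assembling these permanence properties so that the concrete hypotheses of Theorem \ref{thm:dr} are met is the crux of the argument, after which the trichotomy---together with the final non-separable reduction---follows as above.
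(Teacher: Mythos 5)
Your argument is correct and follows essentially the same route as the paper: reduce to the separable case, dispose of the elementary case, deduce $\Z$-stability from finite nuclear dimension, verify stable finiteness and $T(B)=T_{QD}(B)$ for hereditary subalgebras of $A\otimes\K$ via permanence of finite decomposition rank, and apply Theorem \ref{thm:dr}. The one step you flag as the crux --- that finite decomposition rank forces all traces to be quasidiagonal --- is handled in the paper not by a direct structural argument but by reducing to the unital case via $\dr(A)=\dr(A^\sim)$ \cite[Proposition 3.4]{KW04} and citing \cite[Proposition 8.5]{BBSTWW}; the paper also obtains $\dr(B)=\dr(A)$ for hereditary subalgebras via Brown's Theorem and stable-isomorphism invariance rather than the hereditary-subalgebra permanence of \cite{KW04}, but both routes suffice.
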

\begin{proof}

Elementary $\mathrm{C}^*$-algebras have decomposition rank zero, so are covered by this result.

Let $A$ be a non-elementary, simple, separable $\mathrm{C}^*$-algebra with finite decomposition rank. Then $A$ has finite nuclear dimension, and so is $\Z$-stable by \cite[Corollary 8.6]{Ti14}. 
Since $\dr(A) < \infty$, $A$ is stably finite and $T(A) = T_{QD}(A)$.\footnote{One can reduce to the unital case because $\dr(A) = \dr(A^\sim)$ \cite[Proposition 3.4]{KW04}. Then $T(A) = T_{QD}(A)$ by \cite[Proposition 8.5]{BBSTWW}. Stably finiteness of $A$ follows from \cite[Proposition 5.1]{KW04} and \cite[Theorem 7.1.15]{Br08} for example.} Moreover, by Corollary \ref{cor:Brown} and Proposition \ref{thm:StableIsoInvariants}, $\dr(B) = \dr(A) < \infty$ for any non-zero hereditary subalgebra $B \subseteq A \otimes \K$. Therefore, we have $T(B) = T_{QD}(B)$. Now, $\dr(A) \leq 1$ by Theorem \ref{thm:dr}.

The non-separable case follows from the separable case as in the proof of \cite[Corollary C]{CETWW} since the proof of \cite[Proposition 2.6]{WZ10} works equally well for decomposition rank.
\end{proof}

\appendix

\section{Non-unital Lemmas}

The purpose of this appendix is to state appropriate non-unital versions of the technical lemmas from \cite{BBSTWW}. In cases where substantial modifications to the proof are required, we give full details. In cases where the modifications are trivial, we refer the reader to the proof of the corresponding result from \cite{BBSTWW} and explain the modifications in a remark.

We begin with the existence of supporting order zero maps.   

\begin{lemma}[{cf.\ \cite[Lemma 1.14]{BBSTWW}}]
	\label{lem:SupportingMapNew}
	Let $A,B_n$ be $\mathrm C^*$-algebras with $A$ separable and unital, set $B_\omega := \prod_\omega B_n$, and suppose that $S\subseteq B_\omega$ is separable and self-adjoint.
	Let $\phi:A \to B_\omega\cap S'$ be a c.p.c.\ order zero map.
	Then there exists a c.p.c.\ order zero map $\hat\phi:A \to B_\omega\cap S'$ such that
	\begin{align}
	\label{eq:Supporting}
	\phi(ab)&=\hat{\phi}(a)\phi(b)=\phi(a)\hat{\phi}(b),\quad a,b\in A.
	\end{align}
	
	Suppose now that $T(B_n)$ is non-empty for all $n \in \N$. If the map $\tau \mapsto d_\tau(\phi(1_A))$ from $\overline{T_\omega(B_\omega)}^{w*}$ to $[0,1]\subseteq\mathbb R$ is continuous (with respect to the weak$^*$-topology) then we can, in addition, arrange that
	\begin{align}
	\label{eq:SupportingTrace}
	\tau(\hat{\phi}(a)) &= \lim_{m\to\infty} \tau(\phi^{1/m}(a)), \quad a\in A_+,\ \tau \in T_\omega(B_\omega),
	\end{align}
	where order zero map functional calculus is used to interpret $\phi^{1/m}$. In this case, the induced map $\overline{\hat{\phi}}:A \to B^\omega$ is a $^*$-homomorphism.
\end{lemma}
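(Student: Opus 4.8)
The plan is to realise the supporting map as ``evaluation at $t=1$'' of the cone $^*$-homomorphism associated with $\phi$, made rigorous inside $B_\omega:=\prod_{n\to\omega}B_n$ by a reindexing argument. By \cite[Corollary 4.1]{WZ09} there is a $^*$-homomorphism $\pi:C_0(0,1]\otimes A\to B_\omega\cap S'$ with $\phi(a)=\pi(t\otimes a)$, where $t$ generates $C_0(0,1]$, and by \cite[Corollary 4.2]{WZ09} the order zero functional calculus is $\phi^{1/m}(a)=\pi(t^{1/m}\otimes a)$. For each $m$ the map $a\mapsto\phi^{1/m}(a)$ is c.p.c.\ order zero into $B_\omega\cap S'$, and since $\|t^{1+1/m}-t\|_\infty\to 0$ these maps satisfy the supporting identities \eqref{eq:Supporting} up to an error tending to $0$ as $m\to\infty$. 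The first assertion then follows exactly as in \cite[Lemma 1.14]{BBSTWW}: a standard application of Kirchberg's Epsilon Test (Lemma \ref{epstest}), with test functions recording the failure of \eqref{eq:Supporting} and of commutation with a countable dense subset of $S$ on a countable dense subset of $A$, upgrades the approximate supporting maps $\phi^{1/m}$ to an honest c.p.c.\ order zero map $\hat\phi:A\to B_\omega\cap S'$ satisfying \eqref{eq:Supporting}.

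For the tracial refinement I would first note that $\overline{T_\omega(B_\omega)}^{w*}$ is weak$^*$-compact, being a weak$^*$-closed subset of the unit ball of $B_\omega^*$, and non-empty since each $T(B_n)\neq\emptyset$. The functions $\tau\mapsto\tau(\phi^{1/m}(1_A))$ are weak$^*$-continuous and increase with $m$ because $t^{1/m}\nearrow 1$ on $(0,1]$; moreover, as $\phi^{1/m}(1_A)=\phi(1_A)^{1/m}$ when $A$ is unital, their pointwise limit is $\tau\mapsto d_\tau(\phi(1_A))$, which is continuous by hypothesis. Dini's Theorem therefore gives $\tau(\phi^{1/m}(1_A))\nearrow d_\tau(\phi(1_A))$ \emph{uniformly} on $\overline{T_\omega(B_\omega)}^{w*}$. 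This uniform convergence is precisely what is needed to add trace-controlling functions to the Epsilon Test, exactly as in the proof of Lemma \ref{lem:Unitisation}(b): setting $\gamma_l:=\max_\tau(d_\tau(\phi(1_A))-\tau(\phi^{1/l}(1_A)))\to 0$ and running the test with the additional sandwiching functions forcing $\tau(\phi^{1/l}(1_A))\le\tau(\hat\phi(1_A))\le\tau(\phi^{1/l}(1_A))+\gamma_l$ for all $l$, one obtains $\hat\phi$ satisfying \eqref{eq:Supporting} and $\tau(\hat\phi(1_A))=d_\tau(\phi(1_A))$ for all $\tau\in\overline{T_\omega(B_\omega)}^{w*}$.

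It then remains to identify $\bar h:=\overline{\hat\phi}(1_A)\in B^\omega$ as a projection and to deduce the trace formula for general $a$. Taking $a=1_A$ in \eqref{eq:Supporting} gives $\hat\phi(1_A)\phi(b)=\phi(b)$, so in the tracial representation $\pi_\tau(B^\omega)''$, a finite von Neumann algebra, associated with any $\tau\in\overline{T_\omega(B_\omega)}^{w*}$, the element $\bar h$ dominates the range projection $p_\tau$ of $\overline{\phi}(1_A)=\overline{\pi}(t\otimes 1_A)$, while $\tau(\bar h)=d_\tau(\phi(1_A))=\tau(p_\tau)$; faithfulness of $\tau$ on $\pi_\tau(B^\omega)''$ forces $\bar h=p_\tau$ there, whence $\tau(\bar h-\bar h^2)=0$ for every limit trace and $\bar h$ is a projection in $B^\omega$. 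A c.p.c.\ order zero map whose value at the unit is a projection is a $^*$-homomorphism, so $\overline{\hat\phi}$ is a $^*$-homomorphism. Finally, uniqueness of the supporting $^*$-homomorphism of $\overline{\pi}(t\otimes\cdot)$ gives $\overline{\hat\phi}(a)=\overline{\pi}(1\otimes a)$ on each $\pi_\tau(B^\omega)''$; since $t^{1/m}\otimes a\to 1\otimes a$ strongly there, normality of $\tau$ yields $\lim_m\tau(\phi^{1/m}(a))=\tau(\overline{\pi}(1\otimes a))=\tau(\overline{\hat\phi}(a))=\tau(\hat\phi(a))$ for all $a\in A_+$, which is \eqref{eq:SupportingTrace}.

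The main obstacle is arranging the trace values \emph{exactly} rather than approximately: because $B_\omega$ is not assumed unital, $\hat\phi(1_A)$ is only the support of $\phi$ in a tracial sense and is genuinely not a projection in $B_\omega$ itself. Controlling $\tau(\hat\phi(1_A))$ on the nose for every limit trace is what forces us to use both the continuity hypothesis on $\tau\mapsto d_\tau(\phi(1_A))$ and the resulting Dini uniform convergence; passing to $B^\omega$ and invoking faithfulness of the traces is then the mechanism that converts this tracial support into an actual projection, and hence $\overline{\hat\phi}$ into a $^*$-homomorphism.
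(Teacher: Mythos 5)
Your proof is correct and takes essentially the same route as the paper: the paper's ``proof'' is just a remark deferring wholesale to \cite[Lemma 1.14]{BBSTWW}, observing that the argument only uses continuity of $\tau\mapsto d_\tau(\phi(1_A))$ on the compact set $\overline{T_\omega(B_\omega)}^{w*}$ and makes no further use of unitality of the $B_n$, and what you have written is a faithful reconstruction of that argument (approximate supporting maps $\phi^{1/m}$, Kirchberg's Epsilon Test, Dini on $\overline{T_\omega(B_\omega)}^{w*}$, and the passage to $B^\omega$ where $\overline{\hat\phi}(1_A)$ becomes a projection). The only steps you compress are the standard encoding of the c.p.c.\ order zero property into the Epsilon Test and the identification $\overline{\hat\phi}(a)=\rho(1\otimes a)$ in each $\pi_\tau(B^\omega)''$, both of which go through as you indicate.
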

\begin{proof}[Remarks]
	The proof of \cite[Lemma 1.14]{BBSTWW} only actually requires continuity of $\tau \mapsto d_\tau(\phi(1_A))$ on $\overline{T_\omega(B_\omega)}^{w*}$ (as opposed to $T(B_\omega)$) and $\overline{T_\omega(B_\omega)}^{w*}$ is compact in the non-unital case too. There is no further use of the unitality of the $B_n$ in the proof of \cite[Lemma 1.14]{BBSTWW}.  
\end{proof}

We now record some more straightforward applications of the Kirchberg's Epsilon Test. These results are almost identical to those proven in \cite[Section 1]{BBSTWW}. However, we shall need slightly more general statements because we wish to apply them to the algebras of the form $B_\omega \cap S' \cap \lbrace 1_{B_\omega^\sim}-d \rbrace^\perp$.   

\begin{lemma}[{cf.\ \cite[Lemma 1.16]{BBSTWW}}]
	\label{lem:ActAsUnitNew} 
	Let $(B_n)_{n=1}^\infty$ be a sequence of $\mathrm C^*$-algebras and set $B_\omega := \prod_\omega B_n$.
	Let $S_1,S_2$ be separable self-adjoint subsets of $B_\omega^\sim$, and let $T$ be a separable subset of $B_\omega \cap S_1' \cap S_2^\perp$.
	Then there exists a contraction $e \in (B_\omega \cap S_1' \cap S_2^\perp)_{+}$ that acts as a unit on $T$, i.e., such that $et=te=t$ for every $t\in T$.
\end{lemma}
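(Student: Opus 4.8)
The plan is to deduce the lemma from Kirchberg's Epsilon Test (Lemma~\ref{epstest}), realising $e$ as a class $[(\tilde e_n)]$ with $\tilde e_n \in (B_n)_{+,1}$. The starting observation is structural: since $S_1$ and $S_2$ are self-adjoint, the relative commutant $S_1'$ and the orthogonal complement $S_2^\perp$ are both C*-subalgebras of $B_\omega^\sim$, and hence so is
\[
C := B_\omega \cap S_1' \cap S_2^\perp.
\]
The scalar parts of elements of $S_1,S_2\subseteq B_\omega^\sim$ cause no trouble here: commuting with $x+\lambda 1_{B_\omega^\sim}$ is the same as commuting with $x$, and one checks directly, using self-adjointness of $S_2$, that $S_2^\perp$ is closed under adjoints. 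By hypothesis $T\subseteq C$, so the C*-algebra $D:=C^*(T)$ generated by $T$ satisfies $D\subseteq C$.

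This containment is the conceptual heart of the argument, and the reason no quasicentral approximate unit is needed. Any approximate unit $(u_\lambda)$ of $D$ consisting of positive contractions lies inside $C$, so each $u_\lambda$ \emph{exactly} commutes with $S_1$ and is \emph{exactly} orthogonal to $S_2$; only the ``unit on $T$'' condition must be arranged approximately. Concretely, for any $t_1,\dots,t_r\in T$ and $\epsilon>0$, choosing $\lambda$ large gives a positive contraction $u\in C^*(t_1,\dots,t_r)\subseteq C$ with $\|u t_i-t_i\|,\|t_i u-t_i\|<\epsilon$ for $i\le r$.

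To upgrade these approximate solutions to an exact one, I would fix dense sequences $(t_j)_j,(s^{(1)}_j)_j,(s^{(2)}_j)_j$ in $T,S_1,S_2$ with representatives in $B_n$ (resp.\ $B_n^\sim$), set $X_n:=(B_n)_{+,1}$, and enumerate as $f^{(k)}$ the defect functions
\[
\|e t_j-t_j\|,\ \|t_j e-t_j\|,\ \|e s^{(1)}_j-s^{(1)}_j e\|,\ \|e s^{(2)}_j\|,\ \|s^{(2)}_j e\|,
\]
each realised at the level of the $B_n$ through the chosen representatives. Given $m$ and $\epsilon$, only finitely many conditions are involved, hence only finitely many of the $t_j,s^{(1)}_j,s^{(2)}_j$; the element $u\in C^*(t_1,\dots,t_r)\subseteq C$ from the previous paragraph makes all these defects $<\epsilon$, with the commutation and orthogonality defects in fact equal to $0$. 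Lifting $u$ to a representative of positive contractions in $X_n$ verifies the hypothesis of the Epsilon Test.

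Kirchberg's Epsilon Test then yields $(\tilde e_n)\in\prod_{n=1}^\infty X_n$ on which every $f^{(k)}_\omega$ vanishes. Setting $e:=[(\tilde e_n)]\in B_\omega$ gives a positive contraction with $e t_j=t_j=t_j e$, $[e,s^{(1)}_j]=0$ and $e s^{(2)}_j=s^{(2)}_j e=0$ for all $j$; by density of the chosen sequences and continuity, $e$ acts as a unit on all of $T$, commutes with all of $S_1$, and is orthogonal to all of $S_2$, so $e\in C$ is the desired element. I do not expect a genuine obstacle: the argument is routine once one makes the reduction in the second paragraph, namely that $C^*(T)\subseteq C$ so that the relative commutant and orthogonality constraints are automatically met by any approximate unit of $C^*(T)$, leaving only the unit condition for the Epsilon Test to handle.
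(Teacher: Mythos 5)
Your proposal is correct and follows essentially the same route as the paper, which simply defers to the proof of \cite[Lemma 1.16]{BBSTWW}: observe that $C=B_\omega\cap S_1'\cap S_2^\perp$ is a $\mathrm{C}^*$-subalgebra containing the separable algebra $C^*(T)$, take a sequential approximate unit of positive contractions therein, and upgrade to an exact unit via Kirchberg's Epsilon Test. You also correctly handle the only new point in this version, namely that allowing $S_1,S_2\subseteq B_\omega^\sim$ changes nothing.
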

\begin{proof}[Remarks]
	The only change to the statement is that $S_1,S_2$ are subsets of $B_\omega^\sim$ (as opposed to $B_\omega$). The proof is not affected. 
\end{proof}

\begin{lemma}[{cf.\ \cite[Lemma 1.17]{BBSTWW}}]\label{NewEpsLemmaNew} 
	Let $(B_n)_{n=1}^\infty$ be a sequence of $\mathrm C^*$-algebras and set $B_\omega := \prod_\omega B_n$.
	Let $S_1,S_2$ be separable self-adjoint subsets of $B_\omega^\sim$, and set $C:=B_\omega\cap S_1'\cap S_2^\perp$.
	\begin{enumerate}
		\item Let $h_1,h_2\in C_+$. Then $h_1$ and $h_2$ are unitarily equivalent via a unitary from $C^\sim$ if and only if they are approximately unitarily equivalent, i.e., for any $\epsilon>0$ there exists a unitary $u\in C^\sim$ with $uh_1u^*\approx_\epsilon h_2$.
		\item Let $a\in C$.  Then there exists a unitary $u\in C^\sim$ with $a=u|a|$ if and only if for each $\epsilon>0$ there exists a unitary $u\in C^\sim$ with $a\approx_\epsilon u|a|$.
		\item Let $h_1,h_2 \in C_+$. Then $h_1$ and $h_2$ are Murray-von Neumann equivalent if and only if they are approximately Murray-von Neumann equivalent, i.e., for any $\epsilon>0$ there exists $x\in C$ with $xx^*\approx_\epsilon h_1$ and $x^*x\approx_\epsilon h_2$.
	\end{enumerate}
\end{lemma}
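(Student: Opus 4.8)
The plan is to prove each of the three equivalences by Kirchberg's Epsilon Test (Lemma \ref{epstest}), exactly as in the proof of \cite[Lemma 1.17]{BBSTWW}; the forward implications are trivial in all three parts, so only the backward implications require work. The one new feature compared with \cite{BBSTWW} is that $S_1,S_2$ are now subsets of $B_\omega^\sim$ rather than of $B_\omega$, and that the relevant elements live in $C^\sim$ with $C=B_\omega\cap S_1'\cap S_2^\perp$. Throughout I would use the identification $B_\omega^\sim\cong\prod_{n\to\omega}B_n^\sim$ from Lemma \ref{lem:UnitisationUltraproduct}, so that unitaries in $C^\sim\subseteq B_\omega^\sim$ may be represented by sequences of unitaries $u_n\in B_n^\sim$, and I would fix countable dense self-adjoint sequences $(s_1^{(k)})_k\subseteq S_1$ and $(s_2^{(k)})_k\subseteq S_2$ together with representing sequences $(s_{i,n}^{(k)})_n$ in $B_n^\sim$.

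For part (i), take $X_n:=\mathcal U(B_n^\sim)$ and fix representing sequences $(h_{1,n})_n,(h_{2,n})_n$ of $h_1,h_2$. I would set up test functions measuring, for $u=(u_n)_n$, the quantities $\|u_nh_{1,n}u_n^*-h_{2,n}\|$ (the target equation), $\|[u_n,s_{1,n}^{(k)}]\|$ (commutation with $S_1$), and $\|(u_n-\chi_n(u_n)1)s_{2,n}^{(k)}\|+\|s_{2,n}^{(k)}(u_n-\chi_n(u_n)1)\|$ (orthogonality of $u-\chi(u)1$ to $S_2$), where $\chi_n:B_n^\sim\to\mathbb C$ is the canonical character isolating the scalar part. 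The hypothesis of approximate unitary equivalence supplies, for each tolerance, a unitary $u\in C^\sim$ with $uh_1u^*\approx_\epsilon h_2$; lifting $u$ to a sequence of unitaries in $B_n^\sim$ makes the commutation and orthogonality functions vanish in the limit and the equation function at most $\epsilon$, verifying the input to Lemma \ref{epstest}. The resulting exact solution $(w_n)_n$ gives a unitary $w\in B_\omega^\sim$ with $wh_1w^*=h_2$; vanishing of the commutation functions places $w$ in $S_1'$, while vanishing of the orthogonality functions shows $w-\chi(w)1\in C$, whence $w\in C^\sim$ as required. Parts (ii) and (iii) are handled the same way: for (ii) one keeps $X_n=\mathcal U(B_n^\sim)$ and replaces the equation function by $\|a_n-u_n|a_n|\|$ (using that $|a|$ is represented by $(|a_n|)_n$); for (iii) one takes $X_n$ to be a fixed ball of $B_n$ of radius larger than $\|h_2\|^{1/2}$, uses the equation functions $\|x_nx_n^*-h_{1,n}\|$ and $\|x_n^*x_n-h_{2,n}\|$, and---since the intertwiner $x$ lies in $C$ rather than $C^\sim$---the orthogonality functions simplify to $\|x_ns_{2,n}^{(k)}\|+\|s_{2,n}^{(k)}x_n\|$.

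The only genuinely new point relative to \cite{BBSTWW} is the bookkeeping around the unitization: one must ensure the element produced by the Epsilon Test lands in $C^\sim$ (respectively $C$) rather than merely in $B_\omega^\sim$. This is exactly what the character $\chi$ in the orthogonality test functions achieves, together with the observation that $\chi(w)=\chi(w^*w)^{1/2}$-type reasoning forces $\chi(w)$ to be a scalar of modulus one when $w$ is unitary. I expect this unitization bookkeeping, and the routine fact that a unitary in the ultraproduct $\prod_{n\to\omega}B_n^\sim$ lifts to a sequence of unitaries, to be the only steps where the absence of a unit in $B_\omega$ plays any role; everything else is identical to the argument in \cite[Lemma 1.17]{BBSTWW}.
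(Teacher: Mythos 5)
Your proposal is correct and takes essentially the same route as the paper, whose ``proof'' is simply the remark that the Kirchberg Epsilon Test argument of \cite[Lemma 1.17]{BBSTWW} goes through unchanged once one allows $S_1,S_2\subseteq B_\omega^\sim$ and accounts for the unitisation convention for $C^\sim$. The explicit bookkeeping you supply --- lifting unitaries of $\prod_{n\to\omega}B_n^\sim$ to sequences of unitaries and using the character $\chi$ to force the exact solution into $C^\sim$ (respectively $C$) --- is precisely the routine content hidden in that one-line remark.
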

\begin{proof}[Remarks]
	The statement of \cite[Lemma 1.17]{BBSTWW} uses the convention that $C^\sim := C$ when $C$ is already unital. In this paper, we use the convention that a new unit is still adjoined, so $C^\sim \cong C \oplus \mathbb{C}$ when $C$ is unital. The choice of convention does not affect the validity of the lemma.\footnote{For example, if $C$ is unital $h_1,h_2$ are unitary equivalent in $C$ if and only if they are unitary equivalent in $C \oplus \mathbb{C}$.} Apart from this, the only change to the statement is that $S_1,S_2$ are subsets of $B_\omega^\sim$ (as opposed to $B_\omega$), which does not affect the proof. 
\end{proof}

\begin{lemma}[cf.\ {\cite[Lemma 1.18]{BBSTWW}}]
	\label{lem:LargeTraceSubordinate}
	Let $(B_n)_{n=1}^\infty$ be a sequence of $\mathrm C^*$-algebras with $T(B_n)$ non-empty for each $n\in\N$. Write $B_\omega := \prod_\omega B_n$.  Let $S_0$ be a countable self-adjoint subset of $(B_\omega)_+$ and let $T$ be a separable self-adjoint subset of $B_\omega$.
	If $x,f \in (B_\omega \cap S_0'\cap T')_+$ are contractions with $x\vartriangleleft f$ and with the property that for all $a \in S_0$ there exists $\gamma_a \geq 0$ such that $\tau(af^m) \geq \gamma_a$ for all $m\in \N,\ \tau \in T_\omega(B_\omega)$, then there exists a contraction $f' \in (B_\omega \cap S_0'\cap T')_+$ such that $x\vartriangleleft f' \vartriangleleft f$ and $\tau(a(f')^m) \geq \gamma_a$ for all $m\in\N,\ \tau \in T_\omega(B_\omega)$, and $a\in S_0$.
	
	If each $B_n$ is simple, separable, $\mathcal Z$-stable and $Q\widetilde{T}(B_n) = \widetilde{T}_b(B_n) \neq 0$ for all $n \in \N$, then the above statement holds with $T(B_\omega)$ in place of $T_\omega(B_\omega)$.
\end{lemma}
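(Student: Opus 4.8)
The plan is to prove the first statement by a single application of Kirchberg's Epsilon Test (Lemma~\ref{epstest}), and then to deduce the second statement from the first via the Ng--Robert density theorem (Theorem~\ref{thm:NoSillyTraces}). Throughout, I fix representatives $(f_n)$, $(x_n)$ for $f,x$ and $(a_n)$ for each $a\in S_0$, and a countable dense subset $\{t_j\}$ of the separable set $T$ with representatives $(t_{j,n})$. For $\delta\in(0,1)$ let $g_\delta\colon[0,1]\to[0,1]$ be the continuous function that is $0$ on $[0,1-\delta]$, rises affinely on $[1-\delta,1]$, and satisfies $g_\delta(1)=1$. The candidate element is $g_\delta(f)$, which lies in $B_\omega\cap S_0'\cap T'$ because it is a continuous function of $f$ vanishing at $0$.

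Next I would record the properties of this candidate. Since $x\vartriangleleft f$ gives $xf^k=f^kx=x$ for all $k\geq 1$, approximating $g_\delta$ uniformly by polynomials without constant term yields $x\,g_\delta(f)=g_\delta(f)\,x=g_\delta(1)x=x$, so $x\vartriangleleft g_\delta(f)$ holds \emph{exactly}; likewise $g_\delta(f)$ commutes with $S_0$ and $T$ exactly. The only relation that fails exactly is $g_\delta(f)\vartriangleleft f$, and here the pointwise estimate $\|g_\delta(f)f-g_\delta(f)\|=\sup_t|g_\delta(t)(t-1)|\leq\delta$ holds since $g_\delta$ is supported in $[1-\delta,1]$. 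For the tracial bound, note $a$ and $f$ commute (as $f\in S_0'$) and $g_\delta(t)^m\geq\chi_{\{1\}}(t)$ pointwise, so $g_\delta(f)^m\geq p:=\chi_{\{1\}}(f)$; as $f^{m'}\searrow p$ strongly and $\tau$ is normal on $\pi_\tau(B_\omega)''$, we get $\tau(a\,g_\delta(f)^m)\geq\tau(ap)=\lim_{m'}\tau(af^{m'})\geq\gamma_a$ for every $m$ and every $\tau\in T_\omega(B_\omega)$, using the hypothesis.

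I would then run the Epsilon Test with $X_n:=(B_n)_{+,1}$ and the countable family $f^{(k)}_n\colon X_n\to[0,\infty)$ encoding: $\|yf_n-y\|$ and $\|f_ny-y\|$ (for $y\vartriangleleft f$); $\|x_ny-x_n\|$ and $\|yx_n-x_n\|$; the commutators $\|ya_n-a_ny\|$ and $\|yt_{j,n}-t_{j,n}y\|$; and, for each $a\in S_0$ and $m\in\N$, the term $\max(\gamma_a-\inf_{\sigma\in T(B_n)}\sigma(a_ny^m),\,0)$. Taking $y=g_\delta(f_n)$, every function except the first pair has $\omega$-limit $0$, while the first pair is at most $\delta$; for the tracial terms one uses the identity $\inf_{\tau\in T_\omega(B_\omega)}\tau(c)=\lim_{n\to\omega}\inf_{\sigma\in T(B_n)}\sigma(c_n)$, valid since $T(B_n)\neq\emptyset$ (build a limit trace from column-wise near-minimisers), together with the estimate above. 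Thus the hypothesis of Lemma~\ref{epstest} is met for every finite subfamily and every $\epsilon>0$, and the test produces $f'\in B_\omega$ on which all functions vanish: $f'$ is a positive contraction in $B_\omega\cap S_0'\cap T'$ with $x\vartriangleleft f'\vartriangleleft f$, and the vanishing tracial terms, reread through the same identity, give $\tau(a(f')^m)\geq\gamma_a$ for all $\tau\in T_\omega(B_\omega)$, $m\in\N$, $a\in S_0$. I expect the main obstacle to be precisely this faithful column-wise encoding of the tracial inequalities, namely verifying that the per-column infimum over $T(B_n)$ captures exactly the infimum over limit traces while the spectral-projection estimate lets the candidate meet all $m$ at once.

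Finally, for the second statement the hypotheses on the $B_n$ give, via Theorem~\ref{thm:NoSillyTraces}, that $T_\omega(B_\omega)$ is weak$^*$-dense in $T(B_\omega)$. The given bound on $f$ holds in particular for all limit traces, so the first statement yields $f'$ with $\tau(a(f')^m)\geq\gamma_a$ for all $\tau\in T_\omega(B_\omega)$. For fixed $a$ and $m$ the map $\tau\mapsto\tau(a(f')^m)$ is weak$^*$-continuous, being evaluation at the fixed element $a(f')^m\in B_\omega$; hence the closed set $\{\tau:\tau(a(f')^m)\geq\gamma_a\}$ contains the dense subset $T_\omega(B_\omega)$ and therefore all of $T(B_\omega)$, as required.
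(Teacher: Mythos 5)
Your argument is correct and is essentially the paper's own proof: the paper just invokes the Kirchberg Epsilon Test argument of \cite[Lemma 1.18]{BBSTWW} with the candidates $g_\delta(f)$, noting that the only change needed is to replace the per-column minimum over $T(B_n)$ by an infimum (exactly your $\inf_{\sigma\in T(B_n)}\sigma(a_ny^m)$ encoding), and then deduces the final sentence from the weak$^*$-density of $T_\omega(B_\omega)$ in $T(B_\omega)$ via Theorem \ref{thm:NoSillyTraces}, as you do. No substantive differences.
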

\begin{proof}[Remarks]
	The only change to the proof of \cite[Lemma 1.18]{BBSTWW} is to replace $\min_{\tau \in T(B_n)}$ with $\inf_{\tau \in T(B_n)}$ in \cite[Equation (1.34)]{BBSTWW}, as the minimum need not exist in the non-unital case. The final sentence follows since $T_\omega(B_\omega)$ is weak$^*$-dense in $T_\omega(B_\omega)$ under the additional hypotheses by Theorem \ref{thm:NoSillyTraces}.
\end{proof}

\begin{lemma}[cf.\ {\cite[Lemma 1.19]{BBSTWW}}]
	\label{lem:RelSurjectivity}
	Let $(B_n)_{n=1}^\infty$ be a sequence of separable $\mathrm C^*$-algebras with $T(B_n)\neq\emptyset$ for each $n\in\N$ and set $B_\omega:=\prod_\omega B_n$.
	Let $A$ be a separable, unital $\mathrm C^*$-algebra and let $\pi:A\rightarrow B_\omega$ be a  c.p.c.\ order zero map such that $\pi(1_A)$ is full and the induced map $\bar{\pi}:A \to B^\omega$ is a $^*$-homomorphism.
	Define $C:=B_\omega \cap \pi(A)' \cap \{1_{B_\omega^\sim}-\pi(1_A)\}^\perp$.
	Let $S \subseteq C$ be a countable self-adjoint subset and let $\bar{S}$ denote the image of $S$ in $B^\omega$.
	\begin{enumerate}
		\item[(i)] Then the image of $C\cap S'$ in $B^\omega$ is precisely
		\begin{eqnarray}
		\lefteqn{ \bar{\pi}(1_A)\left(B^\omega \cap \bar{\pi}(A)' \cap
			\bar{S}'\right) } \nonumber \\
		& = &B^\omega \cap
		\bar{\pi}(A)'\cap \bar{S}'\cap \{1_{(B^\omega)^\sim}-\bar{\pi}(1_A)\}^\perp,
		\end{eqnarray}
		a $\mathrm C^*$-subalgebra of $B^\omega$ with unit $\bar{\pi}(1_A)$.
		\item[(ii)] Let $\tau\in T_\omega(B_\omega)$ be a limit trace and $a\in A_+$ and form the tracial functional $\rho:=\tau(\pi(a)\cdot)$ on $C$.
		Then  $\|\rho\|=\tau(\pi(a))$.  If each $B_n$ is additionally simple, $\mathcal Z$-stable and $Q\widetilde{T}(B_n) = \widetilde{T}_b(B_n) \neq 0$ for all $n \in \N$, then this holds for all traces $\tau\in T(B_\omega)$.
	\end{enumerate}
\end{lemma}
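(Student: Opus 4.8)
The plan is to follow the proof of \cite[Lemma 1.19]{BBSTWW}, treating part (i) as a surjectivity statement for relative commutants under the quotient map $B_\omega \to B^\omega$ and part (ii) as a direct norm computation; the only genuinely new feature is the orthogonality condition phrased with $1_{B_\omega^\sim}$ rather than $1_{B_\omega}$, which is harmless once one knows $\bar\pi(1_A)$ is a projection. For part (i), I would first dispose of the algebraic identity on the right-hand side. Since $\bar\pi$ is a $^*$-homomorphism, $\bar p := \bar\pi(1_A)$ is a projection lying in the centre of $B^\omega \cap \bar\pi(A)'$ (it belongs to $\bar\pi(A)$ and commutes with its commutant), so compression by $\bar p$ coincides with intersecting with $\{1_{(B^\omega)^\sim} - \bar p\}^\perp$, yielding
\[
\bar p\big(B^\omega \cap \bar\pi(A)' \cap \bar S'\big) = B^\omega \cap \bar\pi(A)' \cap \bar S' \cap \{1_{(B^\omega)^\sim} - \bar p\}^\perp,
\]
a corner, hence a $\mathrm{C}^*$-subalgebra, unital with unit $\bar p$. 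The inclusion of the image of $C\cap S'$ into this algebra is then immediate: the relations defining $C\cap S'$ pass to the quotient, and $c(1_{B_\omega^\sim} - \pi(1_A)) = 0$ gives $\bar c(1_{(B^\omega)^\sim} - \bar p) = 0$ because $\overline{\pi(1_A)} = \bar p$.

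The substance is surjectivity. Given a contraction $\bar x$ in the right-hand algebra (reducing to the self-adjoint case), I would lift it to a self-adjoint contraction $y \in B_\omega$. By hypothesis the defects $[\pi(a), y]$, $[s, y]$, $(1_{B_\omega^\sim} - \pi(1_A))y$ and $y(1_{B_\omega^\sim} - \pi(1_A))$ all lie in the trace-kernel ideal $J_{B_\omega}$, so they vanish in $\|\cdot\|_{2,T_\omega(B_\omega)}$ but not necessarily in operator norm; the task is to correct $y$ within its $J_{B_\omega}$-coset so that these relations hold exactly in $B_\omega$. This is exactly the reindexing of \cite[Lemma 1.19]{BBSTWW}, which uses that $J_{B_\omega}$ is a $\sigma$-ideal: one applies Lemma \ref{lem:ActAsUnitNew} to produce a positive contraction in $C\cap S'$ acting as a unit on a countable dense collection of defect elements, cuts $y$ down accordingly, and invokes Kirchberg's Epsilon Test (Lemma \ref{epstest}) to pass from approximate to exact commutation while leaving the image $\bar x$ unchanged. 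The only modification for the non-unital setting is to run this argument inside $B_\omega^\sim$, replacing each $1_{B_\omega}$ by $1_{B_\omega^\sim}$; the orthogonality relation to $1_{B_\omega^\sim} - \pi(1_A)$ is handled on the same footing as the commutation relations, which is legitimate because its image is the complement of the genuine projection $\bar p$.

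For part (ii), fix $a \in A_+$ and $\tau \in T_\omega(B_\omega)$ and set $\rho := \tau(\pi(a)\,\cdot\,)$ on $C$. The upper bound $\|\rho\| \le \tau(\pi(a))$ holds for every trace: for a contraction $c \in C$, Cauchy--Schwarz for $\tau$ gives $|\tau(\pi(a)c)| \le \tau(\pi(a))^{1/2}\tau(c^*\pi(a)c)^{1/2}$, and $\tau(c^*\pi(a)c) = \tau(\pi(a)cc^*) \le \tau(\pi(a))$ since $cc^* \le 1$. For the lower bound, take an increasing approximate unit $(e_\lambda)$ of $C$ by positive contractions. By part (i) with $S=\emptyset$, the image of $C$ in $B^\omega$ is the unital algebra $\bar p(B^\omega \cap \bar\pi(A)')$ with unit $\bar p$, so $(\bar e_\lambda)$ is an approximate unit of it and $\|\bar p - \bar e_\lambda\|_{2,\tau} \to 0$. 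As the limit trace $\tau$ factors through $B^\omega$, this gives $\rho(e_\lambda) = \tau(\bar\pi(a)\bar e_\lambda) \to \tau(\bar\pi(a)\bar p) = \tau(\pi(a))$, using $\bar\pi(a)\bar p = \bar\pi(a)$; hence $\|\rho\| = \tau(\pi(a))$. For the final sentence, the additional hypotheses let me invoke Theorem \ref{thm:NoSillyTraces}: limit traces are weak$^*$-dense in $T(B_\omega)$, so every $\tau \in T(B_\omega)$ is a weak$^*$-limit of limit traces and therefore vanishes on $J_{B_\omega}$, i.e.\ factors through $B^\omega$. The computation above then applies verbatim to any $\tau \in T(B_\omega)$.

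The main obstacle is the surjectivity in part (i): upgrading the merely $\|\cdot\|_{2,T_\omega(B_\omega)}$-small defects of an arbitrary lift to exact operator-norm relations in $B_\omega$. This is precisely where the $\sigma$-ideal structure of the trace-kernel ideal does the real work, via Lemma \ref{lem:ActAsUnitNew} and the Epsilon Test, and where I must check that adjoining the condition $\{1_{B_\omega^\sim} - \pi(1_A)\}^\perp$ introduces no new difficulty. The $^*$-homomorphism hypothesis on $\bar\pi$, which makes $\bar p$ a projection, is exactly what keeps the corner $\bar p(\cdots)\bar p$ on the quotient side under control and guarantees that the orthogonality defect behaves like the commutation defects.
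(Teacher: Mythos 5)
Your proposal is correct and follows essentially the same route as the paper, which simply observes that the proof of \cite[Lemma 1.19]{BBSTWW} goes through once one reads $\{1_{B_\omega^\sim}-\pi(1_A)\}^\perp$ as the subalgebra on which $\pi(1_A)$ acts as a unit and invokes Theorem \ref{thm:NoSillyTraces} for the final sentence of (ii). One small slip in your sketch of the surjectivity step: the correcting contraction supplied by the $\sigma$-ideal property of the trace-kernel ideal lives in $J_{B_\omega}$ (commuting with the lift, $\pi(A)$ and $S$, and acting as a unit on the defect elements), not in $C\cap S'$, and it is this $\sigma$-ideal fact rather than Lemma \ref{lem:ActAsUnitNew} that does the work there --- but this does not affect the overall argument.
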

\begin{proof}[Remarks]
	The proof of \cite[Lemma 1.19]{BBSTWW} does not need the $B_n$ to be unital. Note that the notation $\{1_{B_\omega^\sim}-\pi(1_A)\}^\perp$ is just an alternative notation for subalgebra on which $\pi(1_A)$ acts as a unit, and similarly for $\{1_{(B^\omega)^\sim}-\bar{\pi}(1_A)\}^\perp$. The final sentence of (ii) follows since $T_\omega(B_\omega)$ is weak$^*$-dense in $T_\omega(B_\omega)$ under the additional hypotheses by Theorem \ref{thm:NoSillyTraces}.
\end{proof}

Next, we consider some properties of ultraproducts of separable, $\Z$-stable $\mathrm{C}^*$-algebras.  
\begin{lemma}[cf.\ {\cite[Lemma 1.22]{BBSTWW}}]
	\label{lem:Zfacts}
	Let $(B_n)_{n\in\N}$ be a sequence of separable, $\Z$-stable $\mathrm C^*$-algebras and set $B_\omega:=\prod_\omega B_n$. Then:
	\begin{itemize}
		
		\item[(i)]\label{lem:Zfacts2} If $S\subseteq B_\omega$ is separable, then there exist isomorphisms $\phi_n:B_n \to B_n \otimes \Z$ such that the induced isomorphism $\Phi:B_\omega \to \prod_\omega (B_n \otimes \Z)$ maps $x\in S$ to $x \otimes 1_{\mathcal Z} \in (\prod_\omega B_n) \otimes \Z\subseteq \prod_\omega (B_n\otimes\Z)$. 
		
		\item[(ii)]\label{lem:Zfacts1} Let $S_1,S_2 \subseteq B_\omega^\sim$ be separable and self-adjoint. For any separable subset $ T \subseteq B_\omega \cap S_1' \cap S_2^\perp$, there exists a c.p.c.\ order zero map $\psi:\Z \rightarrow B_\omega \cap S_1' \cap S_2^\perp \cap T'$ such that $\psi(1_\Z)$ acts as a unit on $T$.
		\item[(ii')]\label{lem:Zfacts1'} Let $S_1,S_2 \subseteq B_\omega^\sim$ be separable and self-adjoint. For any separable subalgebra $C \subseteq B_\omega \cap S_1' \cap S_2^\perp$, there exists a $^*$-homomorphism $\Psi:C \otimes \Z \rightarrow B_\omega \cap S_1' \cap S_2^\perp$ such that $\Psi(x \otimes 1_\Z) = x$ for all $x \in C$. 
		
		\item[(iii)]\label{lem:Zfacts4} If each $B_n$ is projectionless, then $B_\omega$ has stable rank one in $B_\omega^\sim$.
		\item[(iv)]\label{lem:Zfacts6} If $S \subseteq B_\omega$ is separable and self-adjoint, and $b \in (B_\omega \cap S')_+$, then for any $n\in \mathbb{N}$ there exists $c\in (B_\omega\cap S')_+$ with $c \leq b$ such that $n[c] \leq [b] \leq (n+1)[c]$ in $W(B_\omega\cap S')$.
		
	\end{itemize}
\end{lemma}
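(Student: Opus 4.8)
The plan is to isolate the one part that genuinely depends on the non-unital theory developed in this paper, namely (iii), and to obtain the remaining parts from the corresponding arguments of \cite[Lemma 1.22]{BBSTWW}, checking only that enlarging the ambient algebra from $B_\omega$ to $B_\omega^\sim$ in the commutation and orthogonality conditions is harmless. Throughout, the guiding observation will be that commuting with an element $b+\lambda 1 \in B_\omega^\sim$ is equivalent to commuting with $b \in B_\omega$, so the hypotheses ``$S_1,S_2 \subseteq B_\omega^\sim$'' are no stronger than their $B_\omega$ analogues as far as relative commutants are concerned.

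For (iii) I would argue directly from results already in hand. Since each $B_n$ is $\Z$-stable and projectionless, Theorem \ref{thm:ZstableProjectionless} shows that each $B_n$ has stable rank one in $B_n^\sim$; Proposition \ref{prop:SR1inUnitisationUltrapowers} then transports this property to the ultraproduct, giving that $B_\omega$ has stable rank one in $B_\omega^\sim$. This is a two-line deduction requiring nothing from the BBSTWW argument, and it is precisely here that the non-unital stable rank one notion of Definition \ref{def:StableRankOneInUnitisation} does its work.

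For (i) I would reproduce the standard reindexing argument for the strongly self-absorbing algebra $\Z$: using $\Z \cong \Z \otimes \Z$ and the approximate innerness of the flip, the first-factor embedding $B_n \to B_n \otimes \Z$ is approximately unitarily equivalent to an isomorphism, and a diagonalisation over a countable dense subset of the separable set $S$ produces isomorphisms $\phi_n:B_n \to B_n \otimes \Z$ whose induced map $\Phi$ sends each $x \in S$ exactly to $x \otimes 1_\Z$. As this construction manipulates only approximately central sequences of copies of $\Z$, it never uses unitality of the $B_n$, so \cite[Lemma 1.22(i)]{BBSTWW} applies verbatim. Parts (ii) and (ii') then follow by applying (i) to a separable set containing $T$ (respectively $C$) together with the $B_\omega$-components of $S_1$ and $S_2$, and transporting the canonical copy $1_{B_\omega}\otimes\Z$ of the Jiang--Su algebra into the relative commutant; to respect the corner encoded by $S_2^\perp$ one first uses Lemma \ref{lem:ActAsUnitNew} to produce a contraction acting as a unit on $T$ inside $B_\omega \cap S_1' \cap S_2^\perp$ and then builds the order zero map (respectively $^*$-homomorphism) within the associated hereditary subalgebra, so that its image automatically lies in $S_2^\perp$ and commutes with the adjoined scalars.

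Finally, for (iv) I would follow the divisibility argument of \cite[Lemma 1.22]{BBSTWW}: apply (ii) to produce a c.p.c.\ order zero map $\psi:\Z \to B_\omega \cap S' \cap \{b\}'$ whose unit acts as a unit on $b$, and exploit the internal divisibility of $\Z$ together with the strict comparison available at the level of each $B_n$ (Proposition \ref{prop:ZstableToSC}) to carve out $c \leq b$ with $n[c] \leq [b] \leq (n+1)[c]$ in $W(B_\omega \cap S')$. The main obstacle throughout is purely bookkeeping: one must track whether each separable set lies in $B_\omega$ or in $B_\omega^\sim$ and confirm that every orthogonality condition $S_2^\perp$ both remains meaningful and is preserved when the copy of $\Z$ is inserted. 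Beyond the input of (iii), no essentially new analytic ingredient is needed.
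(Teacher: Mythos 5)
Parts (i), (ii), (ii$'$) and (iii) of your proposal track the paper's proof essentially verbatim: (iii) is exactly the combination of Theorem \ref{thm:ZstableProjectionless} with Proposition \ref{prop:SR1inUnitisationUltrapowers}, (i) is quoted from \cite[Lemma 1.22(i)]{BBSTWW} as $\Z$ is strongly self-absorbing, and for (ii) the paper likewise first produces, via Lemma \ref{lem:ActAsUnitNew}, a positive contraction $h \in B_\omega \cap S_1' \cap S_2^\perp$ acting as a unit on $T$, applies (i) to a separable set containing $h$, $T$, $S_1$ and $S_2$, and sets $\psi(z) := \Phi^{-1}(h \otimes z)$; part (ii$'$) is then deduced from (ii) by defining $\Psi(x \otimes z) := x\psi(z)$ and using the order zero identity to verify multiplicativity. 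Up to this point your bookkeeping remarks are accurate and harmless.

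The genuine gap is in (iv). You invoke ``the strict comparison available at the level of each $B_n$ (Proposition \ref{prop:ZstableToSC})'', but the hypotheses of the present lemma are only that the $B_n$ are separable and $\Z$-stable: they are not assumed simple, and no condition $Q\widetilde{T}(B_n) = \widetilde{T}_b(B_n) \neq 0$ is imposed, so Proposition \ref{prop:ZstableToSC} simply does not apply. Even where strict comparison of the $B_n$ were available, it would still have to be transported to the relative commutant $B_\omega \cap S'$ --- which in this paper is done only via the CPoU machinery of Lemma \ref{lem:StrictClosureStrictComp}, under much stronger hypotheses --- before it could produce the required inequalities in $W(B_\omega \cap S')$. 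The actual argument needs no comparison theory at all: take $C := \mathrm{C}^*(b)$ and let $\Psi: C \otimes \Z \to B_\omega \cap S'$ be the $^*$-homomorphism from (ii$'$) with $\Psi(b \otimes 1_\Z) = b$; by \cite[Lemma 4.2]{Ro04} there is $e_n \in \Z_{+,1}$ with $n[e_n] \leq [1_\Z] \leq (n+1)[e_n]$ in $W(\Z)$, hence $n[b \otimes e_n] \leq [b \otimes 1_\Z] \leq (n+1)[b \otimes e_n]$, and setting $c := \Psi(b \otimes e_n) \leq b$ one pushes these purely algebraic Cuntz inequalities forward along $\Psi$. Note also that your sketch works with the order zero map from (ii) rather than the homomorphism from (ii$'$), thereby losing the multiplicativity that makes this push-forward immediate; this is precisely why the paper routes (iv) through (ii$'$).
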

\begin{proof}
	Observe that (i) is the same as in \cite[Lemma 1.22.(i)]{BBSTWW} and follows as $\Z$ is strongly self-absorbing.
	
	For (ii), by Lemma \ref{lem:ActAsUnitNew}, there exists a positive contraction $h \in B_\omega \cap S_1' \cap S_2^\perp$ that acts as a unit on $T$. Set $S := S_1 \cup S_2 \cup T \cup \lbrace h \rbrace$. Let $\Phi:B_\omega \rightarrow\prod_{\omega} (B_n \otimes \Z)$ be the isomorphism from (i) with $\Phi(x) = x \otimes 1_\Z$ for all $x \in S$. Define a c.p.c.\ order zero map $\psi':\Z \rightarrow \prod_{\omega} (B_n \otimes \Z) $ by $\psi'(z) := h \otimes z$. By the choice of $h$, $\psi(1_\Z)$ acts as a unit on $T \otimes 1_\Z$ and the image of $\psi$ lies in $(B_\omega \otimes 1_\Z) \cap (S_1 \otimes 1_\Z)' \cap (S_2 \otimes 1_\Z)^\perp \cap (T \otimes 1_\Z)'$. Now set $\psi := \Phi^{-1} \circ \psi'$.
	
	For (ii'), by part (ii), there exists a c.p.c.\ order zero map $\psi:\Z \rightarrow  B_\omega \cap S_1' \cap S_2^\perp \cap T'$ such that $\psi(1_Z)$ acts as a unit on $C$. Since $\Z$ is nuclear, we can define a c.p.c.\ order zero map $\Psi:C \otimes \Z \rightarrow B_\omega \cap S_1' \cap S_2^\perp$ by $x \otimes z \mapsto x\psi(z)$. Let $x_1,x_2 \in C$ and $z_1,z_2 \in \Z$. Then
	\begin{align}
	\Psi(x_1 \otimes z_1)\Psi(x_1 \otimes z_1) &= x_1\psi(z_1)x_2\psi(z_2) \notag \\
	&= x_1x_2\psi(1_Z)\psi(z_1z_2) \notag \\
	&= x_1x_2\psi(z_1z_2) \notag \\
	&= \Psi(x_1x_2 \otimes z_1z_2),
	\end{align}
	where we have used the order zero identity in the second line. Hence, $\Psi$ is in fact a $^*$-homomorphism. Moreover, we have $\Psi(x \otimes 1_\Z) = x\psi(1_\Z) = x$ for all $x \in C$.
	
	Part (iii) follows by combining Theorem \ref{thm:ZstableProjectionless} with Proposition \ref{prop:SR1inUnitisationUltrapowers}.
	
	For (iv), let $C$ be the $\mathrm{C}^*$-algebra generated by $b$. By (ii'), there is a $^*$-homomorphism $\Psi:C \otimes \Z \rightarrow B_\omega \cap S'$. By \cite[Lemma 4.2]{Ro04}, there exists $e_n \in \Z_{+,1}$ with $n[e_n] \leq [1_\Z] \leq (n+1)[e_n]$. Set $c := \Psi(b \otimes e_n)$.  
\end{proof}

The following lemmas are crucial to the results of \cite[Section 5]{BBSTWW}. The proof of the first needs to be adapted slightly to the non-unital setting. 

\begin{lemma}[cf.\ {\cite[Lemma 2.1]{BBSTWW}}]
	\label{lem:Robertsr1New}
	Let $(B_n)_{n=1}^\infty$ be a sequence of $\mathcal Z$-stable $\mathrm C^*$-algebras and set $B_\omega :=\prod_\omega B_n$.
	Let $S \subseteq B_\omega$ be separable and self-adjoint, and let $d \in (B_\omega \cap S')_+$ be a contraction.
	Suppose that $x,f \in C:=B_\omega \cap S' \cap \{1_{B_\omega^\sim}-d\}^\perp$ are such that $xf=fx=0$, $f \geq 0$ and $f$ is full in $C$.
	Then $x$ is approximated by invertibles in $C^\sim$.
\end{lemma}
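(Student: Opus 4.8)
The plan is to follow the proof of \cite[Lemma 2.1]{BBSTWW}, systematically replacing the unit $1_{B_\omega}$ by $1_{B_\omega^\sim}$ and taking $C^\sim$ as the ambient unital algebra in which invertibility and unitarity are tested. I stress at the outset that we cannot simply invoke stable rank one of $B_\omega$ in $B_\omega^\sim$, since Proposition \ref{prop:SR1inUnitisationUltrapowers} requires the $B_n$ to be projectionless; here it is precisely the full orthogonal element $f$ that must do the work a projectionlessness hypothesis would otherwise do.

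First I would reduce the claim to producing an \emph{approximate} unitary polar decomposition. By Lemma \ref{NewEpsLemmaNew}(ii) it suffices to show that for every $\epsilon>0$ there is a unitary $u\in C^\sim$ with $x\approx_\epsilon u|x|$, because an exact decomposition $x=u|x|$ then gives $x=\lim_{\delta\to 0}u(|x|+\delta 1_{C^\sim})\in\overline{\mathrm{GL}(C^\sim)}$. I would also record the basic orthogonality: from $xf=fx=0$ and $f=f^*$ one gets $x^*f=fx^*=0$, so $f$ is orthogonal to both $x^*x$ and $xx^*$; in particular the full hereditary subalgebra $\overline{fCf}$ is supported away from the source and range projections of the polar part of $x$.

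The heart of the argument is constructing this approximate unitary. Fixing $\epsilon>0$, functional calculus applied to $x$ produces an honest element $v_\epsilon\in C$ that is approximately a partial isometry, with source and range projections agreeing with those of $x$ above level $\epsilon$; these are Murray--von Neumann equivalent via $v_\epsilon$ itself. To extend $v_\epsilon$ to a unitary of $C^\sim$ one must connect the complementary pieces, and this is exactly where the full orthogonal element enters: since $f$ is orthogonal to $x^*x$ and $xx^*$, the full hereditary subalgebra $\overline{fCf}$ lies inside both complements, furnishing the room needed to build the missing partial isometry. Using $\mathcal Z$-stability through Lemma \ref{lem:Zfacts}(ii')--(iv) to obtain divisibility and orthogonal full copies, together with strict comparison in the ultraproduct and Lemma \ref{NewEpsLemmaNew}(iii), I would assemble this connecting partial isometry inside $C$ and combine it with $v_\epsilon$ to obtain a unitary $u\in C^\sim$ with $x\approx_\epsilon u|x|$, as in the corner-excision rotation of \cite[Lemma 2.1]{BBSTWW} and \cite{Rob16}. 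Kirchberg's Epsilon Test (Lemma \ref{epstest}) then upgrades these approximate solutions to an exact unitary polar decomposition, whence $x\in\overline{\mathrm{GL}(C^\sim)}$.

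The main obstacle is the non-unital bookkeeping rather than a new idea: every occurrence of a unit must be read in $C^\sim$ (equivalently $B_\omega^\sim$) rather than in $B_\omega$, and one must check that it is the room provided by $f$, not a unit of $B_\omega$, that powers the rotation. Since the hypotheses $xf=fx=0$ and the fullness of $f$ are intrinsic to $C$ and make no reference to a unit of the codomain, no step of the original proof genuinely uses unitality, so after the replacement $1_{B_\omega}\rightsquigarrow 1_{B_\omega^\sim}$ the argument goes through unchanged.
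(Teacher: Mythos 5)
Your reduction to an approximate unitary polar decomposition via Lemma \ref{NewEpsLemmaNew}(ii) is sound as far as it goes, but the route you then take is not the one in \cite[Lemma 2.1]{BBSTWW}, and the step where all the content lies is missing. The paper's proof (following \cite[Lemma 2.1]{BBSTWW}, which rests on \cite[Lemma 2.1]{Rob16}) constructs no unitary and no polar decomposition at all: one first uses Lemma \ref{lem:ActAsUnitNew} to find a positive contraction $e\in C_+$ with $x^*x,xx^*\vartriangleleft e$ and $ef=0$, cuts down to a separable subalgebra $C_0\subseteq C$ containing $x,e,f$ in which $f$ remains full, and embeds $C_0\otimes\Z$ into $C$ via a $^*$-homomorphism $\Psi$ with $\Psi(y\otimes 1_\Z)=y$ using Lemma \ref{lem:Zfacts}(ii'). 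Robert's lemma then factorises $x\otimes 1_\Z$ as a product $n_1n_2$ of two \emph{nilpotent} elements of $C_0\otimes\Z$, so $x=\Psi(n_1)\Psi(n_2)$ is a product of two nilpotents in $C$; since $y+\epsilon 1_{C^\sim}$ is invertible in $C^\sim$ whenever $y$ is nilpotent, the invertibles $(\Psi(n_1)+\epsilon 1_{C^\sim})(\Psi(n_2)+\epsilon 1_{C^\sim})$ converge to $x$ as $\epsilon\to 0$. This nilpotent factorisation is the key idea, and it appears nowhere in your sketch.

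By contrast, the ``corner-excision rotation'' you describe cannot be executed under the stated hypotheses. The $B_n$ carry no trace assumptions in this lemma, so strict comparison with respect to (limit) traces, as in Lemma \ref{lem:StrictCompLimTraces}, is simply unavailable; $C$ need not contain any projections, so the ``source and range projections of the polar part of $x$'' live only in a bidual and cannot be connected by a partial isometry inside $C$; and producing, for each $\epsilon>0$, a unitary $u\in C^\sim$ with $x\approx_\epsilon u|x|$ is essentially the almost-stable-rank-one conclusion one is trying to extract from the orthogonal full element, so asserting that $f$ ``furnishes the room needed'' without an explicit mechanism begs the question. The assembly step in your middle paragraph is therefore a genuine gap, not mere bookkeeping; to close it you would have to reproduce the nilpotent-product argument of \cite[Lemma 2.1]{Rob16}, at which point your approximate-polar-decomposition framework becomes unnecessary.
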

\begin{proof}
	By Lemma \ref{lem:ActAsUnitNew} (with $T:=\{x^*x,xx^*\},S_1:=S,S_2:=\{1_{B_\omega^\sim}-d,f\}$), we obtain a contraction $e \in C_+$ such that $xx^*,x^*x\vartriangleleft e$ and $ef=0$. Polar decomposition yields $ex=xe=x$. As in the proof of \cite[Lemma 2.1]{BBSTWW}, we may find a separable subalgebra $C_0$ of $C$ containing $x,e$, and $f$, such that $f$ is full in $C_0$. By Lemma \ref{lem:Zfacts}(ii'), there is a $^*$-homomorphism $\Psi: C_0 \otimes \Z \rightarrow C$ such that $\Psi(x \otimes 1_\Z) = x$ for all $x \in C_0$. By \cite[Lemma 2.1]{Rob16}, $x\otimes 1_{\mathcal Z}$ is a product of two nilpotent elements $n_1,n_2 \in C_0\otimes\mathcal Z$. It follows that $x = \Psi(x \otimes 1_\Z) = \Psi(n_1)\Psi(n_2)$ is the product of two nilpotent elements in $C$. If $y\in C$ is nilpotent and $\epsilon>0$, the operator $y+\epsilon 1_{C^\sim}$ is invertible in $C^\sim$ (with inverse $-\sum_{k=1}^N(-\epsilon)^{-k}y^{k-1}$, where $N\in\mathbb{N}$ satisfies $y^N=0$).  Therefore, $x$ can be approximated by invertible elements in $C^\sim$.
\end{proof}

\begin{lemma}[cf.\ {\cite[Lemma 2.2]{BBSTWW}}]\label{lem:GapInvertibles}
	Let $(B_n)_{n=1}^\infty$ be a sequence of $\mathcal Z$-stable $\mathrm C^*$-algebras and set $B_\omega :=\prod_\omega B_n$.
	Let $S \subseteq B_\omega$ be separable and self-adjoint, and let $d \in (B_\omega \cap S')_+$ be a contraction.
	Suppose that $x,s \in C:=B_\omega \cap S' \cap \{1_{B_\omega^\sim}-d\}^\perp$ are such that $xs=sx=0$ and $s$ is full in $C$.
	Then $x$ is approximated by invertibles in $C^\sim$.
\end{lemma}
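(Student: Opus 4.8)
The plan is to mirror the proof of Lemma \ref{lem:Robertsr1New} as closely as the non-unital setting allows, reducing the assertion to a product-of-nilpotents decomposition coming from Robert's work; the only structural difference is that the full element $s$ is now no longer assumed positive. The mechanism at the end is the same as in Lemma \ref{lem:Robertsr1New}: if $x = n_1 n_2$ with $n_1, n_2$ nilpotent in $C$, then for each $\epsilon > 0$ the elements $n_i + \epsilon 1_{C^\sim}$ are invertible in $C^\sim$ (by the finite Neumann-type series used there) and their product approximates $x$ as $\epsilon \to 0$, so $x$ is a limit of invertibles in $C^\sim$. Thus it suffices to write $x$ as a product of two nilpotents in $C$.

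First I would pass to a separable subalgebra, exactly as in the proof of Lemma \ref{lem:Robertsr1New} and \cite[Lemma 2.1]{BBSTWW}: choose a separable $C_0 \subseteq C$ containing $x$ and $s$, together with countably many elements witnessing the fullness of $s$ in $C$, so that $s$ remains full in $C_0$. Applying Lemma \ref{lem:Zfacts}(ii') with $S_1 := S$ and $S_2 := \{1_{B_\omega^\sim} - d\}$ produces a $^*$-homomorphism $\Psi : C_0 \otimes \Z \to C$ with $\Psi(y \otimes 1_\Z) = y$ for all $y \in C_0$. In the $\Z$-stable algebra $C_0 \otimes \Z$ the element $s \otimes 1_\Z$ is full and $(x \otimes 1_\Z)(s \otimes 1_\Z) = (s \otimes 1_\Z)(x \otimes 1_\Z) = 0$, so $x \otimes 1_\Z$ is orthogonal to a full element. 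By \cite[Lemma 2.1]{Rob16} there are nilpotents $n_1, n_2 \in C_0 \otimes \Z$ with $x \otimes 1_\Z = n_1 n_2$; applying $\Psi$ gives $x = \Psi(n_1)\Psi(n_2)$, a product of two nilpotents in $C$.

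The hard part, and the one genuine point of departure from Lemma \ref{lem:Robertsr1New}, is the non-positivity of $s$. In Lemma \ref{lem:Robertsr1New} one uses Lemma \ref{lem:ActAsUnitNew} to build a positive local unit $e$ for $x$ (that is, $x \vartriangleleft e$) orthogonal to the positive full element $f$, which works precisely because $x^*x$ and $xx^*$ are then orthogonal to $f$. Here no positive full element orthogonal to $x$ on both sides is available: from $xs = sx = 0$ one extracts only the one-sided relations $|s|\,x = 0$ and $x\,|s^*| = 0$, where the support of $|s|$ avoids $\mathrm{supp}(xx^*)$ but not $\mathrm{supp}(x^*x)$ and the support of $|s^*|$ avoids $\mathrm{supp}(x^*x)$ but not $\mathrm{supp}(xx^*)$; these cannot be combined into a single positive $f$ with $xf = fx = 0$, so the local-unit construction of Lemma \ref{lem:Robertsr1New} is simply unavailable. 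The resolution, and the thing I would need to verify carefully, is that Robert's decomposition into nilpotents requires nothing beyond orthogonality to a full element inside a $\Z$-stable C$^*$-algebra -- a hypothesis entirely insensitive to the positivity of $s$ -- with the $\Z$-stability of $C_0 \otimes \Z$ supplying the internal room that the positive local unit provided before. Confirming that \cite[Lemma 2.1]{Rob16} applies verbatim to the non-positive full element $s \otimes 1_\Z$ (and that $s$ is kept full in $C_0$) is therefore the crux of the argument.
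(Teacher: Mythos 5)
There is a genuine gap, and it sits exactly where you flag it: the application of \cite[Lemma 2.1]{Rob16} to the non-positive full element $s \otimes 1_{\Z}$. That lemma, as used in the proof of Lemma \ref{lem:Robertsr1New}, requires a \emph{positive} full element $f$ together with a positive local unit $e$ for $x$ satisfying $ef=0$ (so that $x$ is two-sidedly orthogonal to the hereditary subalgebra generated by $f$, inside which the nilpotents are manufactured); positivity is not a cosmetic hypothesis there. You correctly observe that the relations $xs=sx=0$ yield no positive element that is full and orthogonal to $x$ on both sides, and that the local-unit construction of Lemma \ref{lem:ActAsUnitNew} is therefore unavailable --- but you then resolve this by asserting that Robert's lemma is ``insensitive to the positivity of $s$,'' which is precisely the claim that needs proof and is not true as stated. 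Indeed, the only reason \cite[Lemma 2.2]{BBSTWW} (and the present lemma) exists as a separate statement from \cite[Lemma 2.1]{BBSTWW} (respectively Lemma \ref{lem:Robertsr1New}) is to remove the positivity assumption, and doing so requires an argument.

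The paper's proof supplies that argument by manufacturing positivity rather than dispensing with it. With $\Psi: C_0 \otimes \Z \to C$ as in your second paragraph, one picks non-zero orthogonal $z_1, z_2 \in \Z_+$ and sets $s' := \Psi(s \otimes z_1)$ and $f := \Psi(|s| \otimes z_2)$. Then $f$ is positive, full, and satisfies $s'f = fs' = 0$, so Lemma \ref{lem:Robertsr1New} (applied to $s'$ in the role of $x$) shows $s'$ is a limit of invertibles in $C^\sim$. By Lemma \ref{NewEpsLemmaNew}(ii) one then writes $s' = u|s'|$ for a unitary $u \in C^\sim$, whence $|s'| = u^*s'$ is positive and full, and $(xu)|s'| = xs' = 0$ while $|s'|(xu) = u^*s'xu = 0$. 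A second application of Lemma \ref{lem:Robertsr1New}, now to $xu$ against the positive full element $|s'|$, shows $xu$ is a limit of invertibles, and hence so is $x = (xu)u^*$. Your write-up needs this (or some substitute for it) in place of the unverified claim about \cite[Lemma 2.1]{Rob16}.
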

\begin{proof}
	Let $C_0$ be the $\mathrm{C}^*$-subalgebra of $C$ generated by $x$ and $s$. By Lemma \ref{lem:Zfacts}(ii'), there exists a $^*$-homomorphism $\Psi:C_0 \otimes \Z \rightarrow C$ with $\Psi(y \otimes 1_\Z) = y$ for all $y \in C_0$.  Let $z_1,z_2 \in \mathcal Z_+$ be non-zero orthogonal elements. Set $s' := \Psi(s \otimes z_1)$ and $f:=\Psi(|s| \otimes z_2)$. As in the proof of \cite[Lemma 2.2]{BBSTWW}, it follows by Lemma \ref{lem:Robertsr1New} (with $s'$ in place of $x$) that $s'$ is approximated by invertibles in $C^\sim$. We finish the proof exactly as in the proof of \cite[Lemma 2.2]{BBSTWW} where we replace  \cite[Lemma 1.17]{BBSTWW} with Lemma \ref{NewEpsLemmaNew}, and \cite[Lemma 2.1]{BBSTWW} with Lemma \ref{lem:Robertsr1New}.
\end{proof}

\begin{lemma}[{cf.\ \cite[Lemma 5.4]{BBSTWW} and \cite[Lemma 2]{RS10}}]
	\label{lem:RSLem}
	Let $B$ be a separable, $\mathcal Z$-stable $\mathrm C^*$-algebra and let $A$ be a separable, unital $\mathrm C^*$-algebra. 
	Let $\pi:A\rightarrow B_\omega$ be a c.p.c.\ order zero map such that
	\begin{equation}
	C:=B_\omega \cap \pi(A)' \cap \{1_{B^\sim_\omega}-\pi(1_A)\}^\perp
	\end{equation}
	is full in $B_\omega$. Assume that every full hereditary subalgebra $D$ of $C$ satisfies the following: if $x \in D$ is such that there exist totally full elements $e_l,e_r \in D_+$ such that $e_l x = xe_r = 0$, then there exists a full element $s \in D$ such that $sx = xs = 0$.
	Let $e,f,f',\alpha, \beta \in C_+$ be such that
	\begin{equation}
	\label{eq:RSLem2Hyp1}
	\alpha \vartriangleleft e,\quad \alpha \sim \beta \vartriangleleft f, \quad\text{and} \quad f \sim f' \vartriangleleft e.
	\end{equation}
	Suppose also that there exist $d_e,d_f \in C_+$ that are totally full, such that
	\begin{align}
	\notag
	d_e &\vartriangleleft e, \quad d_e\alpha = 0, \quad \text{and} \\
	d_f &\vartriangleleft f, \quad d_f\beta = 0.\label{eq:RSLem2Hyp2}
	\end{align}
	Then there exists $e' \in C_+$ such that
	\begin{align}
	\alpha \vartriangleleft e' \vartriangleleft e, \quad \text{and} \quad \alpha+e' \sim \beta+f.
	\end{align}
\end{lemma}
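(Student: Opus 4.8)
The plan is to follow the proof of \cite[Lemma 5.4]{BBSTWW}, which is itself a packaging of the Robert--Santiago interpolation argument \cite[Lemma 2]{RS10}. That argument is carried out entirely in the language of Cuntz subequivalence ($\precsim$) and Murray--von Neumann equivalence ($\sim$) of positive elements, and the only role played by a unit is in the formation of unitaries, invertibles, and partial isometries. Since $B_\omega$ need not be unital here, I would perform all such operations in the unitisation $B^\sim_\omega$ instead; this is legitimate because $B_\omega$ has stable rank one in $B^\sim_\omega$ (Proposition \ref{prop:SR1inUnitisationUltrapowers} together with Lemma \ref{lem:Zfacts}(iii)), so that every element of $C \subseteq B_\omega$ is still a norm limit of invertibles in $C^\sim$ and admits an approximate polar decomposition there.

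Concretely, the chain of relations $\alpha \vartriangleleft e$, $\alpha \sim \beta \vartriangleleft f$, and $f \sim f' \vartriangleleft e$ sets up the situation in which a copy of $f$ must be transported into $e$ orthogonally to $\alpha$, and the desired $e'$ is then assembled from $\alpha$ together with this transported copy. The room needed to place such a copy is exactly what the totally full elements supply: $d_e \vartriangleleft e$ with $d_e\alpha = 0$ gives a full element inside $e$ disjoint from $\alpha$, and $d_f \vartriangleleft f$ with $d_f\beta = 0$ plays the dual role on the $f$-side. The standing technical hypothesis on $C$ -- that every full hereditary subalgebra contains a full element orthogonal to any given element having totally full left and right annihilators -- is precisely the Robert--Santiago input that drives the enlargement, and it is invoked here in place of unitality. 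I would first produce, for each tolerance, an approximate solution $e'$ satisfying $\alpha \vartriangleleft e' \vartriangleleft e$ and $\alpha + e' \approx \beta + f$ up to approximate Murray--von Neumann equivalence, using Lemma \ref{lem:GapInvertibles} (in place of \cite[Lemma 2.2]{BBSTWW}) at the point where elements with full orthogonal complement must be approximated by invertibles in $C^\sim$. Passing from approximate to exact equivalence is then handled by Lemma \ref{NewEpsLemmaNew} (in place of \cite[Lemma 1.17]{BBSTWW}) and a final application of Kirchberg's Epsilon Test (Lemma \ref{epstest}).

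In summary, the substitutions are: replace every occurrence of $1_{B_\omega}$ by $1_{B^\sim_\omega}$; replace \cite[Lemma 1.17]{BBSTWW} by Lemma \ref{NewEpsLemmaNew}; and replace \cite[Lemma 2.2]{BBSTWW} by Lemma \ref{lem:GapInvertibles}. The main obstacle is not any new piece of mathematics but the careful bookkeeping required to confirm that the unit of $B_\omega$ is genuinely used only for these algebraic operations, so that rewriting them inside $C^\sim \subseteq B^\sim_\omega$ leaves the argument intact; and to check that the totally full elements $d_e, d_f$ together with the technical hypothesis on $C$ furnish exactly the full orthogonal elements that the original proof extracted from the ambient structure. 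Once this verification is complete, the construction of $e'$ proceeds verbatim as in \cite[Lemma 5.4]{BBSTWW}.
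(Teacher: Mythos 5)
Your proposal is correct and takes essentially the same route as the paper, whose proof of this lemma consists precisely of the three substitutions you list (replace $1_{B_\omega}$ by $1_{B_\omega^\sim}$, replace \cite[Lemma 1.17]{BBSTWW} by Lemma \ref{NewEpsLemmaNew}, and replace \cite[Lemma 2.2]{BBSTWW} by Lemma \ref{lem:GapInvertibles}) together with the observation that \cite[Lemma 2]{RS10} never uses a unit. One small caveat: your appeal to stable rank one in $B_\omega^\sim$ via Lemma \ref{lem:Zfacts}(iii) is not available here, since that part of the lemma requires each $B_n$ to be projectionless, which is not assumed; but this aside is also unnecessary, as the approximation by invertibles is supplied exactly where needed by Lemma \ref{lem:GapInvertibles}, as you correctly note.
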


\begin{proof}[Remarks]
	The proof of \cite[Lemma 2]{RS10} does not assume unitality.
	The proof from \cite{BBSTWW} is still valid after replacing \cite[Lemma 1.17]{BBSTWW} with Lemma \ref{NewEpsLemmaNew}, \cite[Lemma 2.2]{BBSTWW} with Lemma \ref{lem:GapInvertibles}, and using $1_{B^\sim_\omega}$ in place of $1_{B_\omega}$. 
\end{proof}

The following lemma concerns the interplay between strict comparison and ultraproducts in the non-unital setting.  
\begin{lemma}[cf.\ {\cite[Lemma 1.23]{BBSTWW}}] \label{lem:StrictCompLimTraces} 
	Let $(B_n)_{n=1}^\infty$ be a sequence of $\mathrm C^*$-algebras with $T(B_n)$ non-empty and set $B_\omega:=\prod_\omega B_n$. Suppose each $B_n$ has strict comparison of positive elements with respect to bounded traces.
	Then $B_\omega$ has strict comparison of positive elements with respect to limit traces, in the following sense:
	If $a,b \in M_k(B_\omega)_+$ for some $k\in\mathbb N$ satisfy $d_\tau(a) < d_\tau(b)$ for all $\tau$ in the weak$^{*}$-closure of $T_\omega(B_\omega)$, then $a \preceq b$.
\end{lemma}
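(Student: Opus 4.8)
The plan is to mirror the proof of \cite[Lemma 1.23]{BBSTWW}, replacing the single trace space by the weak$^*$-compact set $K:=\overline{T_\omega(B_\omega)}^{w*}$ and taking care that all trace estimates descend to the fibres through the ultrafilter. First I would reduce to the case $k=1$: since $M_k(\prod_\omega B_n)\cong\prod_\omega M_k(B_n)$ and Definition \ref{defn:StrictComp} already quantifies over all matrix amplifications, each $M_k(B_n)$ has strict comparison with respect to bounded traces, while the limit traces of $\prod_\omega M_k(B_n)$ correspond to those of $B_\omega$ via the canonical extension of traces to matrix amplifications; thus I may assume $a,b\in(B_\omega)_{+,1}$. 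By R{\o}rdam's lemma it then suffices to prove $(a-\epsilon)_+\precsim b$ for every $\epsilon>0$, so I fix $\epsilon>0$.

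The conceptual heart is converting the hypothesis $d_\tau(a)<d_\tau(b)$, which involves the lower-semicontinuous (hence badly behaved) rank functions, into a continuous statement to which compactness applies. For $\eta>0$ let $f_\eta\colon[0,\infty)\to[0,1]$ be $0$ on $[0,\eta]$, $1$ on $[2\eta,\infty)$ and linear in between, so that $\chi_{(2\eta,\infty)}\le f_\eta\le\chi_{(\eta,\infty)}$ and hence $d_\sigma(c)=\sup_{\eta>0}\sigma(f_\eta(c))$ for every bounded trace $\sigma$ and every $c\in(B_\omega)_{+,1}$. Using $\sigma(f_{\epsilon/2}(a))\le d_\sigma(a)<d_\sigma(b)=\sup_\eta\sigma(f_\eta(b))$ for each $\sigma\in K$, the weak$^*$-continuity of $\sigma\mapsto\sigma(f_{\epsilon/2}(a))$ and $\sigma\mapsto\sigma(f_\eta(b))$ on the compact set $K$, together with a finite-subcover argument and the monotonicity $f_s\ge f_\eta$ for $s\le\eta$, produces a single $s>0$ with $\sigma(f_{\epsilon/2}(a))<\sigma(f_s(b))$ for all $\sigma\in K$; compactness then upgrades this to a uniform gap $\kappa:=\min_{\sigma\in K}\big(\sigma(f_s(b))-\sigma(f_{\epsilon/2}(a))\big)>0$.

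Next I would descend to the fibres. Writing $(a_n),(b_n)$ for representing sequences of contractions, so that $f_{\epsilon/2}(a)=[(f_{\epsilon/2}(a_n))]$ and $f_s(b)=[(f_s(b_n))]$, I claim there is $W\in\omega$ with $\tau_n(f_s(b_n))-\tau_n(f_{\epsilon/2}(a_n))\ge\kappa/2$ for all $n\in W$ and all $\tau_n\in T(B_n)$. Indeed, if the set of bad indices lay in $\omega$, then choosing witnessing tracial states on those fibres (and arbitrary ones elsewhere) would produce a limit trace $\tau=\lim_{n\to\omega}\tau_n\in T_\omega(B_\omega)\subseteq K$ violating the gap $\kappa$; this is precisely where working with the whole weak$^*$-closure $K$ is essential. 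For $n\in W$, the inequalities $d_{\tau_n}((a_n-\epsilon)_+)\le\tau_n(f_{\epsilon/2}(a_n))$ and $\tau_n(f_s(b_n))\le d_{\tau_n}((b_n-s)_+)$ then give $d_{\tau_n}((a_n-\epsilon)_+)<d_{\tau_n}((b_n-s)_+)$ for all $\tau_n\in T(B_n)$, whence $(a_n-\epsilon)_+\precsim(b_n-s)_+$ in $B_n$ by strict comparison (Definition \ref{defn:StrictComp}).

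Finally I would reassemble a Cuntz subequivalence in $B_\omega$, and this is the step where the usual obstruction — that witnesses of fibrewise subequivalence may have unbounded norm — must be circumvented. For fixed $\mu>0$ and $n\in W$, R{\o}rdam's lemma yields $v_n$ with $(a_n-\epsilon-\mu)_+=v_n^*(b_n-s)_+v_n$; the key trick is to pass to $y_n:=(b_n-s)_+^{1/2}v_n$, which is a \emph{contraction} with $y_n^*y_n=(a_n-\epsilon-\mu)_+$, so that $y:=[(y_n)]$ is a genuine bounded element of $B_\omega$. Letting $e\colon[0,\infty)\to[0,1]$ be $1$ on $[s,\infty)$ and $0$ on $[0,s/2]$, the exact identity $e(b_n)y_n=y_n$ passes to $B_\omega$ as $e(b)y=y$, so $yy^*\in\mathrm{Her}(e(b))$ and therefore $(a-\epsilon-\mu)_+=y^*y\sim yy^*\precsim e(b)\precsim b$. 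As $\mu>0$ and then $\epsilon>0$ are arbitrary, $a\precsim b$. The hard part throughout is this interface between the merely lower-semicontinuous rank functions on the ultraproduct and fibrewise strict comparison: the continuity-and-compactness argument of the second paragraph and the contraction trick of the last paragraph are exactly what make the descent and the norm-controlled reassembly work.
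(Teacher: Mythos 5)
Your argument is correct and is essentially the proof of \cite[Lemma 1.23]{BBSTWW} that the paper's remark defers to: reduce to $k=1$, use test functions $f_\eta$ and compactness of $\overline{T_\omega(B_\omega)}^{w*}$ to extract a uniform gap, push the resulting strict inequality of rank functions down to a set of fibres in $\omega$, apply fibrewise strict comparison, and reassemble via R{\o}rdam's lemma with the contraction $(b_n-s)_+^{1/2}v_n$ to control norms. You also correctly run the compactness and ultrafilter steps over the whole weak$^*$-closure rather than $T(B_\omega)$, which is exactly the non-unital subtlety the paper's remark highlights.
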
 

\begin{proof}[Remarks]
	The proof is identical to that of \cite[Lemma 1.23]{BBSTWW}. However, it is important to note that, in the non-unital case, we do not necessarily have that $\overline{T_\omega(B_\omega)}^{w*} \subseteq T(B_\omega)$, as the later need not be closed. Indeed, we may have $0 \in \overline{T_\omega(B_\omega)}^{w*}$ in which case $d_\tau(a) < d_\tau(b)$ cannot hold for all $\tau \in \overline{T_\omega(B_\omega)}^{w*}$.     
\end{proof}

Finally, we record a technical lemma needed for the proof of the main theorem of the property (SI) section. 

\begin{lemma}[cf.\ {\cite[Lemma 4.9]{BBSTWW}}] \label{lem:relSItrick}
	Let $B$ be a simple, separable,  $\mathrm{C}^*$-algebra with $Q\widetilde T(B) = \widetilde{T}_b(B)\neq 0$. Suppose $B$ has strict comparison of positive elements by bounded traces. Let $A$ be a separable, unital $\mathrm{C}^*$-algebra and let $\pi:A\rightarrow B_\omega$ be a c.p.c.\ order zero map. Let $a \in A_+$ be a positive contraction of norm $1$. Then there exists a countable set $S \subseteq A_+\setminus \{0\}$ such that the following holds:
	If $e,t,h \in (B_\omega \cap \pi(A)' \cap \lbrace 1_{B_\omega^\sim}-\pi(1_A)\rbrace^\perp)_+$ are contractions such that
	\begin{equation} e \in J_{B_\omega}\quad \text{and} \quad h \vartriangleleft t,
	\end{equation}
	and if for all $b \in S$, there exists $\gamma_b > 0$ such that
	\begin{equation}\label{eq:relSItrickTr}
	\tau(\pi(b)h) > \gamma_b,\quad \tau\in T_\omega(B_\omega), \end{equation}
	then there exists a contraction $r \in B_\omega$ such that
	\begin{equation} \label{eq:relSItrickrDef}
	\pi(a)r=tr=r \quad \text{and} \quad r^*r=e. 
	\end{equation}
\end{lemma}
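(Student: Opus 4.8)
The plan is to adapt the proof of \cite[Lemma 4.9]{BBSTWW} to the relative commutant $C := B_\omega \cap \pi(A)' \cap \{1_{B_\omega^\sim}-\pi(1_A)\}^\perp$, the essential new point being that strict comparison must now be accessed through limit traces rather than through the unital machinery of \cite{BBSTWW}. Since $B$ has strict comparison of positive elements with respect to bounded traces by hypothesis, Lemma \ref{lem:StrictCompLimTraces} provides strict comparison in $B_\omega$ with respect to (the weak$^*$-closure of) the limit traces, and this is the form of comparison I would use throughout.

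First I would fix the countable set $S$. Since $\|a\|=1$, for each $k\in\N$ the function $g_k\in C_0(0,1]_+$ that vanishes on $[0,1-\tfrac1k]$ and equals $1$ on $[1-\tfrac1{2k},1]$ satisfies $g_k(a)\neq 0$; I would take $S:=\{g_k(a)/\|g_k(a)\| : k\in\N\}$ (enlarging it to a countable set should the comparison below require trace control on further functions of $a$). The role of $S$ is to force, via the hypothesis $\tau(\pi(b)h)>\gamma_b$, the element $h$ to carry uniformly positive trace-mass on the part of the spectrum of $\pi(a)$ lying near $1$; this is exactly what is needed to arrange the exact relation $\pi(a)r=r$.

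The key structural input is that $\pi(1_A)$ acts as a unit on $C$. Writing $\pi(a)=\pi(1_A)\rho(a)$ with $\rho$ the $^*$-homomorphism from the cone supplied by \cite[Corollary 4.1]{WZ09}, a computation shows that on the hereditary subalgebra generated by $h$ the element $\pi(x)$ acts as $\rho(x)$, so that $\pi(g_k(a))h=g_k(\rho(a))h$ and, because $\|a\|=1$, the element $g_k(\rho(a))h$ is non-zero. For a tolerance $\delta=\tfrac1k$ I would set $w:=g_k(\rho(a))^{1/2}\,h\,g_k(\rho(a))^{1/2}$. As $g_k(\rho(a))$ commutes with $h$ and with $t$, one has $w\leq h$, $tw=w$ exactly (using $th=h$), and $\pi(a)w\approx_\delta w$; moreover $\tau(w)=\tau(\pi(g_k(a))h)=\|g_k(a)\|\,\tau(\pi(b)h)>0$ for every limit trace $\tau$, where $b:=g_k(a)/\|g_k(a)\|\in S$, so $d_\tau(w)\geq\tau(w)>0$ there. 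On the other hand $e\in J_{B_\omega}$ forces $\pi_\tau(e)=0$ in the GNS representation of each limit trace $\tau$, whence $d_\tau(e)=0$. Strict comparison then yields $e\precsim w$, from which (via the standard passage from Cuntz subequivalence to Murray--von Neumann equivalence in $B_\omega$, using Lemma \ref{NewEpsLemmaNew}) I would extract a contraction with square-modulus $e$ and range in the hereditary subalgebra generated by $w$. The relations $tw=w$ and $\pi(a)w\approx_\delta w$ give $t$ acting as a unit on this range and $\pi(a)$ acting as a unit up to $O(\delta)$. Letting $\delta\to 0$ and applying Kirchberg's Epsilon Test (Lemma \ref{epstest}) produces a single $r\in B_\omega$ with $\pi(a)r=tr=r$ and $r^*r=e$.

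The main obstacle is the strict comparison step in the non-unital setting. One must produce an honest element $r$ (not merely a Cuntz relation) with $r^*r=e$ exactly and range under $w$, and, more delicately, one must ensure the strict trace inequality $d_\tau(e)<d_\tau(w)$ can actually be fed into Lemma \ref{lem:StrictCompLimTraces}. As flagged in the remark following that lemma, the weak$^*$-closure $\overline{T_\omega(B_\omega)}^{w*}$ may contain $0$, at which point $d_\tau(e)<d_\tau(w)$ degenerates to $0<0$; the point is that the uniform gaps $\gamma_b>0$ supplied by the hypotheses on $S$ confine the comparison to the genuine limit traces, where $0=d_\tau(e)<d_\tau(w)$ holds with room to spare. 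The remaining work---the order zero functional calculus identities relating $\pi(g_k(a))$ to $g_k(\rho(a))$, and the verification that all support manipulations take place inside the corner on which $\pi(1_A)$ acts as a unit, with $1_{B_\omega^\sim}$ used throughout in place of $1_{B_\omega}$---is routine but must be carried out with care.
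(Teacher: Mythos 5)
Your proposal is correct and follows essentially the same route as the paper, which simply runs the proof of \cite[Lemma 4.9]{BBSTWW} with Lemma \ref{lem:StrictCompLimTraces} substituted for the unital strict comparison lemma: take $S$ to consist of normalised functions of $a$ supported near the top of its spectrum, compare $e$ (with $d_\tau(e)=0$ on limit traces) against $w=\pi(g_k(a))h$ (with $d_\tau(w)\geq\gamma_b\|g_k(a)\|$ uniformly, so the strict inequality survives on $\overline{T_\omega(B_\omega)}^{w*}$), and finish with R{\o}rdam's lemma and Kirchberg's Epsilon Test. The only cosmetic point is that $g_k(\rho(a))$ lives a priori in the bidual rather than in $B_\omega$; working directly with $w=\pi(g_k(a))h$ (or with the supporting map $\hat\pi$ of Lemma \ref{lem:SupportingMapNew}) avoids this and changes nothing in the argument.
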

\begin{proof}[Remarks]
	The proof of \cite[Lemma 4.9]{BBSTWW} works in our situation using Lemma \ref{lem:StrictCompLimTraces} in place of \cite[Lemma 1.23]{BBSTWW}.
\end{proof}

\end{document}